\documentclass{article}
\usepackage{latexsym}
\usepackage{amsmath}
\usepackage{amssymb}
\usepackage{graphicx}
\usepackage{subfigure}
\usepackage{ae}
\usepackage{multirow}

\usepackage{xypic}
\usepackage[all]{xy}

\usepackage{enumerate}

\newcommand{\lfis}{{\bf LFI}s}
\newcommand{\lfi}{{\bf LFI}}

\newcommand{\mbc}{{\bf mbC}}
\newcommand{\cplp}{\text{\bf CPL$^+$}}
\newcommand{\cpl}{\text{\bf CPL}}
\newcommand{\ecpl}{\ensuremath{\mathbf{CPL}^+_{\mathbf{e}}}}
\newcommand{\eecpl}{\ensuremath{\mathbf{CPL}_{\mathbf{e}}}}

\newcommand{\mbcciw}{{\bf mbCciw}}
\newcommand{\mbcci}{{\bf mbCci}}
\newcommand{\ci}{{\bf Ci}}

\newcommand{\ciore}{{\bf Ciore}}
\newcommand{\dacdot}{{\bf J3}}

\newcommand{\lfium}{{\bf LFI1}}
\newcommand{\mpt}{{\bf MPT}}
\newcommand{\luka}{\ensuremath{\mathbf{L3}}}
\newcommand{\nel}{\text{\bf N4}}

\newcommand{\malg}{{\bf MAlg}}
\newcommand{\swmbc}{\ensuremath{\mathbf{SW}_{\mbc}}}
\newcommand{\sw}{\ensuremath{\mathbf{SW}_{\ecpl}}}
\newcommand{\swmbcciw}{\ensuremath{\mathbf{SW}_{\mbcciw}}}
\newcommand{\sweecpl}{\ensuremath{\mathbf{SW}_{\eecpl}}}
\newcommand{\swmbcci}{\ensuremath{\mathbf{SW}_{\mbcci}}}

\newcommand{\swlfium}{\ensuremath{\mathbf{SW}_{\lfium_\cons}}}
\newcommand{\swciore}{\ensuremath{\mathbf{SW}_{\ciore}}}
\newcommand{\balg}{{\bf BAlg}}

\newcommand{\mA}{\ensuremath{\mathcal{A}}}
\newcommand{\mB}{\ensuremath{\mathcal{B}}}
\newcommand{\matM}{\ensuremath{\mathcal{M}}}
\newcommand{\Ksw}{\ensuremath{\mathbb{K}_{\ecpl}}}
\newcommand{\Kmbc}{\ensuremath{\mathbb{K}_{\mbc}}}
\newcommand{\Kmbcciw}{\ensuremath{\mathbb{K}_{\mbcciw}}}
\newcommand{\Kmbcci}{\ensuremath{\mathbb{K}_{\mbcci}}}
\newcommand{\Kci}{\ensuremath{\mathbb{K}_{\ci}}}

\newcommand{\Klfium}{\ensuremath{\mathbb{K}_{\lfium_\cons}}}

\newcommand{\Keecpl}{\ensuremath{\mathbb{K}_{\eecpl}}}
\newcommand{\Kciore}{\ensuremath{\mathbb{K}_{\ciore}}}

\newcommand{\axwci}{{\bf ciw}}
\newcommand{\axci}{{\bf ci}}

\newcommand{\axcons}{{\bf cons}}
\newcommand{\MP}{\textbf{MP}}
\newcommand{\axnegou}{\textbf{neg$\lor$}}
\newcommand{\axnege}{\textbf{neg$\land$}}
\newcommand{\axnegimp}{\textbf{neg$\imp$}}

\newcommand{\kax}{\textbf{Ax1}}
\newcommand{\axTrans}{\textbf{Ax2}}
\newcommand{\axed}{\textbf{Ax3}}
\newcommand{\axeea}{\textbf{Ax4}}
\newcommand{\axeeb}{\textbf{Ax5}}
\newcommand{\axouda}{\textbf{Ax6}}
\newcommand{\axoudb}{\textbf{Ax7}}
\newcommand{\axoue}{\textbf{Ax8}}
\newcommand{\axouimp}{\textbf{Ax9}}
\newcommand{\axtnd}{\textbf{Ax10}}
\newcommand{\axexp}{\textbf{bc1}}

\newcommand{\axce}{\textbf{ce}}
\newcommand{\axcf}{\textbf{cf}}
\newcommand{\axco}{\textbf{co}}

\newcommand{\algcpl}{classical implicative lattice}

\newcommand{\defin}{\ensuremath{~\stackrel{\text{{\tiny def }}}{=}~}}

\newcommand{\imp}{\rightarrow}
\newcommand{\cons}{\ensuremath{{\circ}}}

\newcommand{\valou}{\textit{vOr}}
\newcommand{\vale}{\textit{vAnd}}
\newcommand{\valimp}{\textit{vImp}}
\newcommand{\valnot}{\textit{vNeg}}
\newcommand{\valbola}{\textit{vCon}}

\newcommand{\valCi}{\textit{vCi}}

\newcommand{\valcef}{\textit{vCeCf}}

\newcommand{\valco}{\textit{vCo}}
\newcommand{\valcip}{\textit{vCIp}}
\newcommand{\valdm}{\textit{vDM}}

\newtheorem{definition}{\vspace{1mm}Definition}[section]
\newtheorem{remark}[definition]{\vspace{1mm}Remark}

\newtheorem{theorem}[definition]{\vspace{1mm}Theorem}
\newtheorem{corollary}[definition]{\vspace{1mm}Corollary}
\newtheorem{proposition}[definition]{\vspace{1mm}Proposition}
\newtheorem{Notation}[definition]{\sc Notation}

\newenvironment{proof}{\begin{trivlist}\item{\bf Proof:}}{\qurtains\end{trivlist}}

\def\pushright#1{{\parfillskip=0pt\widowpenalty=10000
  \displaywidowpenalty=10000\finalhyphendemerits=0
  \leavevmode\unskip\nobreak\hfil\penalty50\hskip.2em\null
  \hfill{#1}\par}}

\newcommand{\qurtains}     {\pushright{$\blacksquare$}\par \vspace{1ex}}

%%%%%%%%%%%%%%%%%%%%%%

\begin{document}

\title{Non-deterministic algebraization of logics by  swap structures}

\author{Marcelo E.~Coniglio$^{1}$, Aldo Figallo-Orellano$^{2}$ and Ana C. Golzio$^{3}$\\ [2mm] %
{\small $^1$Institute of Philosophy and the Humanities (IFCH) and}\\
{\small Centre for Logic, Epistemology and The History of Science (CLE),}\\
{\small University of Campinas (UNICAMP), Campinas, SP, Brazil.}\\
{\small E-mail:~{coniglio@cle.unicamp.br}}\\[1mm]
{\small $^2$Department of Mathematics, National University of the South (UNS),}\\
{\small  Bahia Blanca, Argentina and}\\
{\small Centre for Logic, Epistemology and The History of Science (CLE),}\\
{\small University of Campinas (UNICAMP), Campinas, SP, Brazil.}\\
{\small E-mail:~{aldofigallo@gmail.com}}\\[1mm]
{\small $^3$Centre for Logic, Epistemology and The History of Science (CLE),}\\
{\small University of Campinas (UNICAMP), Campinas, SP, Brazil.}\\
{\small E-mail:~{anaclaudiagolzio@yahoo.com.br}}}

\date{}

\maketitle

\begin{abstract}
Multialgebras (or hyperalgebras, or non-deterministic algebras) have been very much studied in Mathematics and in Computer Science. In 2016 Carnielli and Coniglio introduced a class of multialgebras  called swap structures, as a  semantic framework for dealing with several logics of formal inconsistency (or \lfis) which cannot be semantically characterized by a single finite matrix.  In particular, these \lfis\ are not algebraizable by the standard tools of abstract algebraic logic.
In this paper,  the first steps towards a theory of non-deterministic algebraization of logics by swap structures are given. Specifically, a formal study of swap structures for \lfis\ is developed, by adapting concepts of universal algebra to multialgebras in a suitable way. A decomposition theorem similar to Birkhoff's representation theorem is obtained for each class of swap structures. Moreover, when applied to the 3-valued algebraizable logic \dacdot\ the usual class of algebraic models is recovered, and the swap structures semantics became twist-structures semantics (as introduced by Fidel-Vakarelov). This fact, together with the existence of a functor from the category of Boolean algebras to the category of swap structures for each \lfi, which is closely connected with Kalman's functor, suggests that swap structures can be considered as non-deterministic twist structures, opening so interesting possibilities for dealing with non-algebraizable logics by means of multialgebraic semantics. 
\end{abstract}

\

\noindent
{\bf Keywords:} Non-deterministic algebras, multialgebras, hyperalgebras, twist structures,  swap structures, non-de\-ter\-ministic semantics, non-deterministic matrices, logics of formal inconsistency, Kalman's functor, Birkhoff's representation theorem.

\section{Introduction} \label{intro}

As it is well-known, several logics in the hierarchy of the so-called {\em Logics of Formal Inconsistency} (in short \lfis, see \cite{CM,CCM,CC16}) cannot be semantically characterized by a single finite matrix. Moreover, they lie outside the scope of the usual techniques of algebraization of logics such as Blok and Pigozzi's method (see~\cite{blok:pig:89}). Several alternative semantical tools were introduced in the literature in order to deal with such systems: non-truth-functional bivaluations, possible-translations semantics, and non-deterministic matrices (or Nmatrices), obtaining so decision procedures for these logics.
However, the problem of finding an algebraic counterpart for this kind of logic, in a sense to be determined, remains open.

A semantics based on an special kind of multialgebra  called  {\em swap structure}  was proposed in \cite[Chapter~6]{CC16}, which generalizes the characterization results of \lfis\ by means of finite Nmatrices due to Avron (see~\cite{avr:05}). Moreover, the swap structures semantics allows soundness and completeness theorems by means of a very natural generalization of the well-known  Lindenbaum-Tarski process (for an example applied to non-normal modal logics see~\cite{CG16} and~\cite[Chapter~3]{GC17}).

Multialgebras (also known as hyperalgebras or non-deterministic algebras) have been very much studied in the literature. Besides their use in Logic by means of Nmatrices, they have been applied to several areas of Computer Science such as automata theory. Multialgebras has also  been studied in Mathematics, in areas such as algebra, geometry, topology, graph theory and probability theory. An historical survey on multialgebras can be found in~\cite[Chapter~1]{GC17}.

From the algebraic perspective, the formal study of multialgebras is not so immediate: the generalization from universal algebra to multialgebras of even basic conceps such as homomorphism, subalgebras and congruences is far to be obvious, and several different alternatives were proposed in the literature. In particular, the possibility of defining an algebraic theory of non-deterministic structures for logics along the same lines of the so-called abstract algebraic logic (see, for instance, \cite{font:16}) is an open question which deserves to be investigated.

This paper give some steps along this direction, by adapting concepts of universal algebra to multialgebras in a suitable way in order to analyze  categories of swap structures for some \lfis. Specifically, we will concentrate our efforts on the algebraic theory  of  the class $\Kmbc$ of swap structures for the logic \mbc\ (the weakest  system in the hierarchy of \lfis\ proposed in~\cite{CCM} and~\cite{CC16}).
In order to do this,  and taking into account that swap structures are special cases of multialgebras, a category of multialgebras over a given signature is considered, based on  very natural notions of homomorphism and submultialgebras. From this, products  and congruences are analyzed, showing that the class $\Kmbc$ is closed under substructures and products, but it is not closed under homomorphic images. From this,  it is possible to give a representation theorem for $\Kmbc$ (see Theorem~\ref{teo-repr-Kmbc}) which resembles  the well-known representation theorem for algebras obtained by G. Birkhoff in 1944 (see~\cite{GB2}). As a consequence of our result,  the class $\Kmbc$ is generated by the structure  with five elements, which is constructed over the 2-element Boolean algebra. Such structure is precisely Avron's 5-valued characteristic Nmatrix for \mbc\ introduced in~\cite{avr:05}.

This approach is extended to several axiomatic extensions of \mbc, including the 3-valued paraconsistent logic \dacdot\ (see~\cite{dot:dac:70}), which is algebraizable. The classes of swap structures for  each of such systems are subclasses of $\Kmbc$. They are obtained by requiring that its elements satisfy precisely the additional axioms which define the corresponding logic. Analogous Birkhoff-like representation theorems for each class of swap structures are found.  This allow a modular treatment of the algebraic theory of swap structures, as happens in the traditional algebraic setting.

In the case of the algebraizable 3-valued logic \dacdot, our representation theorem coincides with the original Birkhoff's representation theorem. Moreover, the swap structures became twist structures in the sense of Fidel~\cite{fid:78} and Vakarelov~\cite{vaka:77}. This fact, together with the existence of a functor from the category of Boolean algebras to the category of swap structures for each \lfi, which is closely connected with the Kalman's functor naturally associated to twist structures (see~\cite{kal:58,cin:86}), suggests that swap structures can be considered as non-deterministic twist structures, as analyzed in Section~\ref{secLFI1}.

\

\section{The category of multialgebras}  \label{sect-multi}

As mentioned in the Introduction,  the generalization to multialgebras of concepts from standard algebra such as homomorphism and subalgebras is not unique, and several choices are possible.

In this section the basic notions and results concerning the category of multialgebras, adopted here to be used along the paper, will be described (see also \cite{GC15} and~\cite{GC17}).

\begin{Notation} 
Let $A$ and $B$ be two sets. The set of all the functions $f:A \to B$ will be denoted by $B^A$. If $f:A \to B$ is a function, $X \subseteq A$ and $Y \subseteq B$ then $f[X]$ and $f^{-1}(Y)$ will stand for the sets $\{f(x) \ : \ x \in X\}$ and   
$\{x \in X \ : \ f(x) \in Y\}$, respectively.
If $\vec  a = (a_1 \ldots,a_n) \in A^n$ (for $n > 0$) then $f(\vec a)$ will stand for $(f(a_1),\ldots,f(a_n))$. If $\vec  b = (b_1 \ldots,b_n) \in B^n$ (for $n > 0$) then $f^{-1}(\vec b)$ will stand for $\{\vec a \in A^n \ : \ f(\vec a) = \vec b\}$. If $A$ is a nonempty set then $\wp(A)_+$ denotes the set of nonempty subsets of $A$.
\end{Notation}

\begin{definition} A {\em signature} is a denumerable family $\Sigma = \{\Sigma_n \ : \ n \geq 0\}$ of pairwise disjoint sets. Elements of $\Sigma_n$ are called {\em operator symbols of arity} $n$. Elements of $\Sigma_0$ are called {\em constants}.
\end{definition}

\begin{definition} Let $\Sigma$ be a signature. 
A {\em multialgebra} (or {\em hyperalgebra} or {\em non-deterministic algebra}) over $\Sigma$ is a pair $\mathcal{A}=(A,\sigma_\mathcal{A})$ such that $A$ is a nonempty set (the {\em support} of $\mathcal{A}$) and $\sigma_\mathcal{A}$  is a mapping assigning, to each $c \in \Sigma_n$, a function (called {\em multioperation} or {\em hyperoperation})  $c^\mathcal{A}:A^n \to \wp(A)_+$.  In particular, $\emptyset \neq c^\mathcal{A} \subseteq A$ if $c \in \Sigma_0$. 
\end{definition}

In the sequel, and when there is no risk of confusion, sometimes  we will refer to a multialgebra  $\mathcal{A}=(A,\sigma_\mathcal{A})$ by means of its support $A$. The support of  $\mathcal{A}$ will be frequently denoted by  $|\mathcal{A}|$.

\begin{definition} \label{subal}
Let $\mathcal{A}=(A,\sigma_\mA)$ and  $\mathcal{B}=(B,\sigma_\mB)$  be two multialgebras over $\Sigma$. Then $\mathcal{B}$ is said to be  a {\em submultialgebra} of $\mathcal{A}$, denoted by  $\mathcal{B} \subseteq \mathcal{A}$, if the following conditions hold:
\begin{enumerate}[(i)]
 \item $B\subseteq A$,
 \item if $c\in \Sigma_n$ and $\vec{a}\in B^n$, then $c^\mB(\vec{a}) \subseteq c^\mA(\vec{a})$; in particular, $c^\mB \subseteq c^\mA$ if $c \in \Sigma_0$. 
\end{enumerate}
\end{definition}

\begin{definition} \label{defHom}
Let $\mathcal{A}=(A,\sigma_\mA)$ and  $\mathcal{B}=(B,\sigma_\mB)$  be two multialgebras, and
let $f:A \to B$ be a function.\\[1mm]
(i)  $f$  is said to be a {\em homomorphism} from $\mathcal{A}$ to $\mathcal{B}$, denoted by $f: \mathcal{A} \to \mathcal{B}$, if $f[c^\mA(\vec{a})] \subseteq c^\mB(f(\vec{a}))$, for every  $c\in \Sigma_n$ and  $\vec{a}\in A^n$. In particular,  $f[c^\mA] \subseteq c^\mB$ for every $c \in \Sigma_0$.\\[1mm]
(ii) $f$ is said to be a {\em full homomorphism}  from $\mathcal{A}$ to $\mathcal{B}$, which is denoted by $f: \mathcal{A} \to_s \mathcal{B}$, if $f[c^\mA(\vec{a})] = c^\mB(f(\vec{a}))$ for every  $c\in \Sigma_n$ and  $\vec{a}\in A^n$. In particular,  $f[c^\mA] = c^\mB$ for every $c \in \Sigma_0$.
\end{definition}

\begin{remark}
If $\mathcal{B}$ and  $\mathcal{A}$  are two multialgebras over $\Sigma$ such that $|\mathcal{B}| \subseteq |\mathcal{A}|$ then: $\mathcal{B} \subseteq \mathcal{A}$ iff the inclusion map $i:|\mathcal{B}| \to |\mathcal{A}|$ is a homomorphism from $\mathcal{B}$ to $\mathcal{A}$.
\end{remark}

Observe that, if $f: |\mathcal{A}| \to |\mathcal{B}|$ and $g:|\mathcal{B}| \to |\mathcal{C}|$ are homomorphisms of multialgebras then $g \circ f:|\mathcal{A}| \to |\mathcal{C}|$ is also a homomorphism of multialgebras. On the other hand, the identity mapping $i_A:A \to A$ is a homomorphism from $\mathcal{A}$ to $\mathcal{A}$, for every multialgebra $\mathcal{A}=(A,\sigma_\mA)$. This means that there is a category of multialgebras over $\Sigma$ and their morphisms, that will be called  $\malg(\Sigma)$.

The following results will be useful in the sequel:

\begin{proposition} \label{isos}
Let $\mathcal{A}=(A,\sigma_\mA)$ and  $\mathcal{B}=(B,\sigma_\mB)$  be two multialgebras over $\Sigma$, and
let $f:A\to B$ be a function. Then, $f$ is an isomorphism  $f:\mathcal{A}\to \mathcal{B}$  in the category $\malg(\Sigma)$ iff $f$ is a full homomorphism  $f:\mathcal{A}\to_s \mathcal{B}$ which is a bijective function. 
\end{proposition}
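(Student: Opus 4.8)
The plan is to prove the two implications separately, recalling that $f$ is an isomorphism in $\malg(\Sigma)$ precisely when it is a homomorphism admitting a two-sided inverse morphism $g$, i.e.\ a homomorphism $g:\mB\to\mA$ with $g\circ f=i_A$ and $f\circ g=i_B$.

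For the direction from right to left, I would start from a bijective full homomorphism $f:\mA\to_s\mB$ and take its set-theoretic inverse $g=f^{-1}:B\to A$ as the candidate inverse morphism. Since $g\circ f=i_A$ and $f\circ g=i_B$ already hold as identities of functions, the only thing to check is that $g$ is a homomorphism. Fixing $c\in\Sigma_n$ and $\vec{b}\in B^n$, I would use bijectivity to write $\vec{b}=f(\vec{a})$ with $\vec{a}=g(\vec{b})$, apply the fullness equality $c^{\mB}(f(\vec{a}))=f[c^{\mA}(\vec{a})]$, and then apply $g$ to both sides. Since $g\circ f=i_A$, this computation collapses to $g[c^{\mB}(\vec{b})]=c^{\mA}(g(\vec{b}))$, which is even stronger than the inclusion required of a homomorphism; the case of constants ($n=0$) follows from the same computation with no arguments. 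Hence $g$ is a (full) homomorphism and $f$ is an isomorphism.

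For the converse, I would assume $f$ is an isomorphism, so there is a homomorphism $g:\mB\to\mA$ with $g\circ f=i_A$ and $f\circ g=i_B$; these two identities immediately force $f$ to be a bijection. It then remains to upgrade $f$ to a full homomorphism, that is, to prove the inclusion $c^{\mB}(f(\vec{a}))\subseteq f[c^{\mA}(\vec{a})]$, the reverse inclusion being exactly the homomorphism condition already assumed for $f$. The key step is to instantiate the homomorphism condition for $g$ at $\vec{b}=f(\vec{a})$, obtaining $g[c^{\mB}(f(\vec{a}))]\subseteq c^{\mA}(\vec{a})$ (using $g(f(\vec{a}))=\vec{a}$), and then to apply $f$ to both sides; since $f\circ g=i_B$ acts as the identity on subsets of $B$, the left-hand side becomes $c^{\mB}(f(\vec{a}))$, yielding the desired inclusion.

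The argument is essentially routine, and I do not expect a genuine obstacle: the fullness condition for $f$ and the homomorphism condition for its inverse $g$ are dual to one another, and the two inverse identities $g\circ f=i_A$, $f\circ g=i_B$ are exactly what is needed to transport one to the other side. The only point demanding care is the bookkeeping of direct images under $f$ and $g$ together with these identities, so as to combine the set inclusions in the correct direction.
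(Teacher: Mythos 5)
Your proof is correct: both directions are sound, and the bookkeeping of images under $f$, $g$ and the identities $g\circ f=i_A$, $f\circ g=i_B$ is handled properly. The paper itself dismisses this as an immediate consequence of the definitions, and your argument is precisely that routine definitional verification spelled out in full, so the approaches coincide.
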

\begin{proof}
It is an immediate consequence of the definitions.
\end{proof}

\begin{proposition} \label{monos}
Let $\mathcal{A}=(A,\sigma_\mA)$ and  $\mathcal{B}=(B,\sigma_\mB)$  be two multialgebras over $\Sigma$, and
let  $f:\mathcal{A}\to \mathcal{B}$ be a homomorphism. If $f:A\to B$ is an injective function then  $f$ is a monomorphism in the category $\malg(\Sigma)$. 
\end{proposition}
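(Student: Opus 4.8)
The plan is to verify directly the left-cancellation property that characterizes monomorphisms in the category $\malg(\Sigma)$. Recall that the morphisms of $\malg(\Sigma)$ are homomorphisms of multialgebras, that two such morphisms coincide exactly when their underlying functions coincide, and that composition is the usual composition of the underlying functions (as noted just after Definition~\ref{defHom}). First I would fix an arbitrary multialgebra $\mathcal{C}=(C,\sigma_{\mathcal{C}})$ over $\Sigma$ together with two homomorphisms $g,h:\mathcal{C}\to\mathcal{A}$ satisfying $f \circ g = f \circ h$, and the goal becomes to show that $g=h$ as morphisms, that is, that $g(c)=h(c)$ for every $c \in C$.

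The key step is then immediate at the level of the underlying functions. Since $f \circ g = f \circ h$ holds as an equality of morphisms, it holds as an equality of functions $C \to B$, so that $f(g(c)) = f(h(c))$ for every $c \in C$. Injectivity of $f:A \to B$ now yields $g(c) = h(c)$ for each $c \in C$, whence $g = h$. As $\mathcal{C}$, $g$ and $h$ were arbitrary, $f$ is left-cancellable, i.e.\ a monomorphism in $\malg(\Sigma)$.

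The only point that requires attention --- and it is what makes the argument go through rather than a genuine obstacle --- is the observation that equality and composition of morphisms in $\malg(\Sigma)$ are computed on the underlying functions, so that an injective function is a monomorphism here for exactly the same reason it is one in the category of sets. No appeal to the multioperations or to the homomorphism condition of $g$ and $h$ is needed beyond the fact that they are genuine morphisms: the whole content of the statement reduces to the set-theoretic cancellation property of an injection.
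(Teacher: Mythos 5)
Your proof is correct and is exactly the argument the paper has in mind: the paper dismisses this proposition as ``an immediate consequence of the definitions,'' and what you have written out --- that composition and equality of morphisms in $\malg(\Sigma)$ are computed on the underlying functions, so an injective function is left-cancellable for the same reason as in the category of sets --- is precisely that immediate consequence, made explicit. No difference in approach, only in level of detail.
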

\begin{proof}
It is also an immediate consequence of the definitions.
\end{proof}

\begin{proposition} \label{epis}
Let $\mathcal{A}=(A,\sigma_\mA)$ and  $\mathcal{B}=(B,\sigma_\mB)$  be two multialgebras over $\Sigma$, and
let $f:A\to B$ be a function. Then, $f$ is an epimorphism  $f:\mathcal{A}\to \mathcal{B}$  in the category $\malg(\Sigma)$ iff  $f$ is a homomorphism in $\malg(\Sigma)$ such that  $f$ is a surjective function. 
\end{proposition}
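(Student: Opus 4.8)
The plan is to prove both implications, noting that one direction is routine while the converse requires a genuine construction. First I would establish that a surjective homomorphism is always an epimorphism. This is the standard argument valid in any category whose objects are structured sets and whose morphisms are structure-preserving functions under ordinary composition: if $g\circ f = h\circ f$ for homomorphisms $g,h:\mathcal{B}\to\mathcal{C}$ and $f$ is onto $B$, then for every $b\in B$ one picks $a$ with $f(a)=b$, whence $g(b)=g(f(a))=h(f(a))=h(b)$; thus $g=h$ and $f$ is right-cancellable. Since $\malg(\Sigma)$ was already shown to be a category with these features, nothing more is needed here.

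The substantial direction is that an epimorphism must be surjective, and I would prove its contrapositive: assuming $f:\mathcal{A}\to\mathcal{B}$ is a homomorphism that is \emph{not} onto, I would exhibit a multialgebra $\mathcal{C}$ together with two distinct homomorphisms $g,h:\mathcal{B}\to\mathcal{C}$ satisfying $g\circ f = h\circ f$, contradicting right-cancellability. Write $B_0 = f[A]$ and $B_1 = B\setminus B_0$, so that $B_1\neq\emptyset$ by hypothesis. I would build $\mathcal{C}$ by \emph{doubling} the part of $B$ lying outside the image: let $\overline{B_1} = \{\bar b : b\in B_1\}$ be a disjoint copy of $B_1$, put $C = B\cup \overline{B_1}$, and define the two candidate maps $g,h:B\to C$ by letting $g$ be the inclusion and $h$ be the identity on $B_0$ together with $b\mapsto\bar b$ on $B_1$. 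By construction $g$ and $h$ agree exactly on $B_0=f[A]$, so $g\circ f = h\circ f$, while they differ on the nonempty set $B_1$, so $g\neq h$.

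The main obstacle --- and the only place where the multioperation structure really intervenes --- is to equip $C$ with multioperations making \emph{both} $g$ and $h$ homomorphisms simultaneously. The idea is to introduce the collapsing map $\pi:C\to B$ that is the identity on $B$ and sends $\bar b\mapsto b$, and to set, for each $c\in\Sigma_n$ and $\vec c\in C^n$,
\[
c^\mathcal{C}(\vec c) \defin c^\mB(\pi(\vec c))\,\cup\, h\bigl[c^\mB(\pi(\vec c))\bigr],
\]
with the analogous clause $c^\mathcal{C} \defin c^\mB\cup h[c^\mB]$ for constants. Each value is a nonempty subset of $C$ because $c^\mB(\pi(\vec c))$ is, so $\mathcal{C}$ is a genuine multialgebra. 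To verify the homomorphism conditions I would use that $\pi\circ g$ and $\pi\circ h$ both equal the identity on $B$: for $\vec b\in B^n$ this gives $\pi(g(\vec b))=\pi(h(\vec b))=\vec b$, hence $c^\mathcal{C}(g(\vec b)) = c^\mathcal{C}(h(\vec b)) = c^\mB(\vec b)\cup h[c^\mB(\vec b)]$, a set that contains both $c^\mB(\vec b)=g[c^\mB(\vec b)]$ and $h[c^\mB(\vec b)]$, yielding $g[c^\mB(\vec b)]\subseteq c^\mathcal{C}(g(\vec b))$ and $h[c^\mB(\vec b)]\subseteq c^\mathcal{C}(h(\vec b))$ respectively; the constant case is identical. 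Thus $g,h:\mathcal{B}\to\mathcal{C}$ are the required distinct homomorphisms equalized by $f$, completing the contrapositive and hence the proposition.
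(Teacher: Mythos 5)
Your proof is correct, but it takes a genuinely different route from the paper's. For the hard direction (epi implies surjective) the paper does not double the codomain: it fixes once and for all the two-element multialgebra $\mathcal{A}'$ with domain $\{0,1\}$ in which \emph{every} multioperation returns the whole set $\{0,1\}$. Since the homomorphism condition in $\malg(\Sigma)$ only asks for an inclusion $f[c^\mA(\vec a)] \subseteq c^\mB(f(\vec a))$, every function into $\mathcal{A}'$ is automatically a homomorphism; the paper then takes $g$ to be the characteristic function of $f[A]$ and $h$ the constant function $1$, gets $g\circ f = h\circ f$ immediately, and concludes $g=h$, i.e.\ surjectivity. Your argument is the classical cokernel-pair construction (glue two copies of $B$ along the image $f[A]$), and the only real work --- making the doubled set $C$ into a multialgebra for which both the inclusion $g$ and the swapping map $h$ are homomorphisms --- you handle correctly by taking $c^\mathcal{C}(\vec c) = c^\mB(\pi(\vec c)) \cup h[c^\mB(\pi(\vec c))]$, which exploits exactly the same laxity of the homomorphism condition. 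The trade-off: the paper's proof is shorter and uses a single universal witness object independent of $f$, with no verification needed at all; yours is the more standard categorical technique, transfers to settings where no such ``chaotic'' object exists, and incidentally shows how to build amalgamation-style objects in $\malg(\Sigma)$, at the cost of a bespoke construction and a (correct) homomorphism check.
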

\begin{proof}
If $f$ is a surjective homomorphism then it is clear that it is an epimorphism in $\malg(\Sigma)$. Conversely, suppose that $f:\mathcal{A}\to \mathcal{B}$ is an epimorphism in 
$\malg(\Sigma)$ and let $\mathcal{A}'$ be a multialgebra over $\Sigma$ with domain $\{0,1\}$ such that $c^{\mathcal{A}'}(\vec a)=\{0,1\}$ for every $c \in \Sigma_n$ and $\vec{a} \in \{0,1\}^n$; in particular, $c^{\mathcal{A}'}=\{0,1\}$ for every $c \in \Sigma_0$. Consider $g:B \to \{0,1\}$ such that $g(x)=1$ if there exists $y \in A$ such that $x=f(y)$, and $g(x)=0$ otherwise. Clearly, $g$ is a homomorphism $g:\mathcal{B} \to \mathcal{A}'$ in $\malg(\Sigma)$. Finally, let  $h:B \to \{0,1\}$ such that $h(x)=1$ for every $x \in B$. It is also clear that $h$ is a homomorphism $g:\mathcal{B} \to \mathcal{A}'$ in $\malg(\Sigma)$. Since $g\circ f = h\circ f$ and $f$ is epimorphism in $\malg(\Sigma)$ then $g=h$. This means that $f$ is a surjective function.
\end{proof}

\begin{proposition} \label{prods}
The category $\malg(\Sigma)$ has arbitrary products.
\end{proposition}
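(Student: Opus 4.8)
The plan is to imitate the construction of products in the category of ordinary algebras, replacing the coordinatewise application of an operation by a coordinatewise application followed by forming the Cartesian product of the resulting sets. First I would fix a family $\{\mathcal{A}_i\}_{i \in I}$ of multialgebras over $\Sigma$, with $\mathcal{A}_i = (A_i, \sigma_{\mathcal{A}_i})$, and take as support of the candidate product the Cartesian product $A = \prod_{i \in I} A_i$, whose elements are the maps $h$ with $h(i) \in A_i$ for every $i$. For each $c \in \Sigma_n$ and each $(h_1, \ldots, h_n) \in A^n$ I would set $c^{\mathcal{A}}(h_1, \ldots, h_n)$ to be the set of all $h \in A$ such that $h(i) \in c^{\mathcal{A}_i}(h_1(i), \ldots, h_n(i))$ for every $i \in I$; equivalently, $c^{\mathcal{A}}(h_1, \ldots, h_n) = \prod_{i \in I} c^{\mathcal{A}_i}(h_1(i), \ldots, h_n(i))$. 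Since each factor is a nonempty subset of $A_i$, the axiom of choice guarantees that this product is a nonempty subset of $A$, so $c^{\mathcal{A}}$ is a genuine multioperation; for $c \in \Sigma_0$ this reads $c^{\mathcal{A}} = \prod_{i \in I} c^{\mathcal{A}_i}$. This yields a multialgebra $\mathcal{A} = (A, \sigma_\mathcal{A})$ over $\Sigma$.

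Next I would introduce the projections $\pi_j : A \to A_j$, $\pi_j(h) = h(j)$, and check that each is a homomorphism $\pi_j : \mathcal{A} \to \mathcal{A}_j$. By the very definition of $c^{\mathcal{A}}$, if $h \in c^{\mathcal{A}}(h_1, \ldots, h_n)$ then $\pi_j(h) = h(j) \in c^{\mathcal{A}_j}(h_1(j), \ldots, h_n(j)) = c^{\mathcal{A}_j}(\pi_j(h_1), \ldots, \pi_j(h_n))$, which gives the inclusion $\pi_j[c^{\mathcal{A}}(h_1, \ldots, h_n)] \subseteq c^{\mathcal{A}_j}(\pi_j(h_1), \ldots, \pi_j(h_n))$ demanded by Definition~\ref{defHom}. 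In fact equality holds, since every element of each factor is realized by some member of the product, so each $\pi_j$ is even a full homomorphism.

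Finally I would verify the universal property. Given any multialgebra $\mathcal{C}$ together with homomorphisms $f_i : \mathcal{C} \to \mathcal{A}_i$ for each $i \in I$, I define $f : |\mathcal{C}| \to A$ by $f(x) = (f_i(x))_{i \in I}$; this is the only map with $\pi_i \circ f = f_i$ for all $i$, so uniqueness is immediate. To see that $f$ is a homomorphism $f : \mathcal{C} \to \mathcal{A}$, take $c \in \Sigma_n$, a tuple $\vec{a} \in |\mathcal{C}|^n$, and $x \in c^{\mathcal{C}}(\vec{a})$. Since each $f_i$ is a homomorphism, $f_i(x) \in c^{\mathcal{A}_i}(f_i(\vec{a}))$ for every $i$, whence $f(x) = (f_i(x))_{i \in I} \in \prod_{i \in I} c^{\mathcal{A}_i}(f_i(\vec{a})) = c^{\mathcal{A}}(f(\vec{a}))$; this gives $f[c^{\mathcal{C}}(\vec{a})] \subseteq c^{\mathcal{A}}(f(\vec{a}))$, as required.

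I do not expect a serious obstacle: once the multioperation on the product is fixed as the Cartesian product of the coordinatewise outputs, both the homomorphism condition for the projections and the universal property reduce to unwinding the definitions. The only points requiring a little care are confirming that the product multioperation always returns a nonempty set, so that $\mathcal{A}$ is indeed a multialgebra, which is exactly where the axiom of choice enters, and noticing that the mere inclusion in the definition of homomorphism (rather than the equality characterizing full homomorphisms) is precisely what makes the mediating map $f$ a morphism even when the $f_i$ are not full.
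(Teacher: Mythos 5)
Your proposal is correct and follows essentially the same route as the paper: the product is built on the Cartesian product of the supports with $c^{\mathcal{A}}(\vec a)=\prod_{i\in I}c^{\mathcal{A}_i}(\pi_i(\vec a))$, the projections are (full) homomorphisms, and the universal property follows by unwinding definitions. The only differences are presentational: the paper splits off the case $I=\emptyset$ (the terminal object) explicitly, while your construction subsumes it via the convention on empty products, and you spell out the universal-property and nonemptiness (axiom of choice) details that the paper leaves to the reader.
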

\begin{proof} Let $\{\mathcal{A}_i \ : \ i \in I\}$ be a family of multialgebras over $\Sigma$. If $I=\emptyset$ then the result is obvious: the multialgebra $\mathbf{1} =(\{\ast\},\sigma_{\bf 1})$ such that $c^{\bf 1}(\ast, \ldots, \ast)=\{\ast\}$ for every $c \in \Sigma_n$ (with $n> 0$) and $c^{\bf 1}=\{\ast\}$ for every $c \in \Sigma_0$ is the terminal object in  $\malg(\Sigma)$. Now, assume that $I \neq \emptyset$, and let $A=\prod_{i \in I} A_i$ be the standard construction of the cartesian product of the family of sets $\{A_i \ : \ i \in I\}$ with canonical projections $\pi_i:A \to A_i$ for every $i \in I$. That is, $A =\big\{a \in \big(\bigcup_{i\in I} A_i\big)^I \ : \  a(i)\in A_i \ \mbox{ for every $i \in I$}\big\}$ and, for every $i \in I$ and every $a \in A$, $\pi_i(a) = a(i)$. Consider the multialgebra  $\mathcal{A}=(A,\sigma_\mA)$ over $\Sigma$ such that, for every $c \in \Sigma_n$ and every $\vec a \in A^n$, $c^\mA(\vec a) = \prod_{i \in I} c^{\mA_i}(\pi_i(\vec a))$. In particular, $c^\mA = \prod_{i \in I} c^{\mA_i}$ for every $c \in \Sigma_0$.
It is easy to see that each $\pi_i$ is a (full) homomorphism from  $\mathcal{A}$ to $\mathcal{A}_i$ such that $\langle \mathcal{A}, \{\pi_i \ : \ i \in I\}\rangle$ is the product in  $\malg(\Sigma)$  of the family $\{\mathcal{A}_i \ : \ i \in I\}$.
\end{proof}

\begin{definition} \label{imdir}
Let $\mathcal{A}=(A,\sigma_\mA)$ and  $\mathcal{B}=(B,\sigma_\mB)$  be two multialgebras over $\Sigma$, and
let  $f: \mathcal{A} \to \mathcal{B}$ be a homomorphism in $\malg(\Sigma)$. The {\em direct image of $f$} is the submultialgebra $f(\mathcal{A})=(f[A],\sigma_{f(\mathcal{A})})$ of  $\mathcal{B}$ such that, for every $c \in \Sigma_n$ and $\vec b \in f[A]$, $c^{f(\mathcal{A})}(\vec b) = \bigcup\big\{f[c^\mathcal{A}(\vec a)] \ : \ \vec a \in f^{-1}(\vec b) \big\}$. In particular, $c^{f(\mathcal{A})} = f[c^\mathcal{A}]$ for every $c \in \Sigma_0$. 
\end{definition}

Observe that, if $\vec b \in f[A]$ and $\vec a \in f^{-1}(\vec b)$ then $f[c^\mathcal{A}(\vec a)] \subseteq c^\mathcal{B}(f(\vec a)) =   c^\mathcal{B}(\vec b)$ whence $c^{f(\mathcal{A})}(\vec b) \subseteq c^\mathcal{B}(\vec b)$. This means that $f(\mathcal{A})$ is, indeed, a submultialgebra of $\mathcal{B}$. Moreover, the following useful result holds in $\malg(\Sigma)$:

\begin{proposition} [Epi-mono factorization] \label{epi-mono}
Consider  two multialgebras $\mathcal{A}=(A,\sigma_\mA)$ and  $\mathcal{B}=(B,\sigma_\mB)$  over $\Sigma$, and
let  $f:\mathcal{A}\to \mathcal{B}$ be a homomorphism in $\malg(\Sigma)$. Let $\bar f:A \to f[A]$ be the mapping given by $\bar f(x)=f(x)$ for every $x \in A$, and let $g:f[A] \to B$ be the inclusion map. Then $\bar f$ and $g$ are homomorphisms $\bar f:\mathcal{A}\to f(\mathcal{A})$ and  $g:f(\mathcal{A}) \to \mathcal{B}$ such that $\bar f$ is an epimorphism in $\malg(\Sigma)$, $g$ is a monomorphism in $\malg(\Sigma)$, and $f=g \circ \bar f$. 
$$\xymatrix{\mathcal{A} \ar[rr]^{\hspace*{-2mm}f} \ar[rrd]_{\bar f} && \mathcal{B}\\
&&  f(\mathcal{A})  \ar@{^{(}->}[u]_g}$$
Moreover, if $f$ is injective (as a function)  then $\bar f$ is an isomorphism in $\malg(\Sigma)$.
\end{proposition}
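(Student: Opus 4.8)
The plan is to verify the claimed properties of $\bar f$ and $g$ directly from the relevant definitions, proceeding in four short stages. First I would check that $\bar f:\mathcal{A}\to f(\mathcal{A})$ is a homomorphism. Fixing $c\in\Sigma_n$ and $\vec a\in A^n$, I have $\vec a\in f^{-1}(f(\vec a))$, so by the very definition of the direct image (Definition~\ref{imdir}) the set $f[c^\mathcal{A}(\vec a)]$ is one of the sets in the union defining $c^{f(\mathcal{A})}(f(\vec a))$; hence $\bar f[c^\mathcal{A}(\vec a)]=f[c^\mathcal{A}(\vec a)]\subseteq c^{f(\mathcal{A})}(\bar f(\vec a))$, which is exactly the homomorphism condition. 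The constant case ($n=0$) is analogous, using $c^{f(\mathcal{A})}=f[c^\mathcal{A}]$.

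Next I would treat $g:f(\mathcal{A})\to\mathcal{B}$. Since $f(\mathcal{A})$ is a submultialgebra of $\mathcal{B}$ (already established in the paragraph preceding the statement), the inclusion map $g$ is a homomorphism by the Remark following Definition~\ref{defHom}. The factorization identity $f=g\circ\bar f$ is immediate: for every $x\in A$ we have $(g\circ\bar f)(x)=g(\bar f(x))=\bar f(x)=f(x)$, since $g$ is the inclusion. Then $\bar f$ is surjective onto $f[A]$ by construction, so it is an epimorphism in $\malg(\Sigma)$ by Proposition~\ref{epis}; and $g$ is injective, being an inclusion, so it is a monomorphism by Proposition~\ref{monos}.

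For the final clause, suppose $f$ is injective as a function. Then $\bar f:A\to f[A]$ is a bijection. By Proposition~\ref{isos} it remains only to upgrade $\bar f$ to a \emph{full} homomorphism, i.e.\ to show $\bar f[c^\mathcal{A}(\vec a)]=c^{f(\mathcal{A})}(\bar f(\vec a))$ rather than mere inclusion. Here injectivity of $f$ forces $f^{-1}(f(\vec a))=\{\vec a\}$ (the fibre over each tuple is a singleton, since $f$ is injective componentwise on tuples), so the union in Definition~\ref{imdir} collapses to the single term $f[c^\mathcal{A}(\vec a)]=\bar f[c^\mathcal{A}(\vec a)]$. This yields equality, and the constant case is identical. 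Thus $\bar f$ is a bijective full homomorphism, hence an isomorphism by Proposition~\ref{isos}.

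The whole argument is a direct unwinding of the definitions, so I do not anticipate a genuine obstacle; the only point requiring a moment's care is the collapse of the union in the direct-image multioperation when $f$ is injective. The subtlety is that injectivity of $f:A\to B$ must be transferred to injectivity of the induced map on $n$-tuples so that each fibre $f^{-1}(\vec b)$ is a singleton; once this is noted the full-homomorphism equality follows, and one must also remember to invoke Proposition~\ref{isos} (rather than just surjectivity plus injectivity) to conclude that a bijective full homomorphism is an isomorphism in $\malg(\Sigma)$.
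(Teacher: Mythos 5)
Your proof is correct and follows exactly the route the paper intends: the paper's own proof is just the remark that the result ``is immediate from the previous results,'' and your argument is precisely the unwinding of those results (Definition~\ref{imdir} and the submultialgebra observation following it, the Remark on inclusion maps, and Propositions~\ref{isos}, \ref{monos} and~\ref{epis}). Your care about the fibre $f^{-1}(\vec b)$ collapsing to a singleton when $f$ is injective is exactly the detail the paper leaves implicit, and it is handled correctly.
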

\begin{proof}
It is immediate from the previous results.
\end{proof}
\

It is important to observe  that our epi-mono factorization could not be unique (up to isomorphism).

\

\begin{definition} \label{mcongru}
Let   $\mathcal{A}=(A,\sigma_\mA)$ be a multialgebra, and let $\Theta \subseteq A\times A$. Then $\Theta$ is said to be a {\em multicongruence} over $\mathcal{A}$ if the following properties hold:
\begin{enumerate}[(i)]
 \item $\Theta$ is an equivalence relation;
 \item for every $n>0$, $c\in \Sigma_n$  and $\vec a, \vec b\in A^n$: if $(a_i,b_i) \in \Theta$ for every $1 \leq i \leq n$ then, for every  
 $a\in c^\mathcal{A}(\vec a)$ there is $b \in c^\mathcal{A}(\vec b)$ such that $(a,b)\in \Theta$;
 \item for every  $c\in \Sigma_0$ and every $a,b \in A$: if $a,b \in c^\mathcal{A}$ then $(a,b)\in \Theta$.
\end{enumerate}
\end{definition}

\begin{definition} \label{quomultialg}
Let   $\mathcal{A}=(A,\sigma_\mA)$ be a multialgebra, and let $\Theta$ be a multicongruence over  $\mathcal{A}$. The {\em quotient multialgebra} (or {\em factor multialgebra}) of $\mathcal{A}$ modulo $\Theta$ is the multialgebra  $\mathcal{A}/_\Theta=(A/_\Theta,\sigma_{\mathcal{A}/_\Theta})$ such that, for every $c \in \Sigma_n$ and every $(a_1/_\Theta,\ldots,a_n/_\Theta) \in (A/_\Theta)^n$, $c^{\mathcal{A}/_\Theta}(a_1/_\Theta,\ldots,a_n/_\Theta) = \big\{ a/_\Theta \ : \  a \in c^{\mathcal{A}}(\vec a)\big\}$. In particular, $c^{\mathcal{A}/_\Theta} = \big\{ a/_\Theta \ : \  a \in c^{\mathcal{A}}\big\}$  for every $c \in \Sigma_0$. The canonical map $p:A \to A/_\Theta$ is given by  $p(a)=a/_\Theta$ for every $a \in A$.
\end{definition}

\begin{proposition} \label{canproj-quo-epi}
Let   $\mathcal{A}=(A,\sigma_\mA)$ be a multialgebra, and let $\Theta$ be a multicongruence over  $\mathcal{A}$. Then  $\mathcal{A}/_\Theta$  is a multialgebra, and the canonical map $p:A \to A/_\Theta$  determines a (full) homomorphism of multialgebras $p:\mA \to \mA/_\Theta$ such that $p(\mathcal{A})=\mathcal{A}/_\Theta$.
\end{proposition}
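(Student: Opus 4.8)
The plan is to verify the three assertions in turn, the only substantive one being that the multioperations of $\mathcal{A}/_\Theta$ are well defined; the other two reduce to unfolding the relevant definitions.

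First I would check that $\mathcal{A}/_\Theta$ is genuinely a multialgebra, i.e.\ that for each $c \in \Sigma_n$ the value $c^{\mathcal{A}/_\Theta}(a_1/_\Theta,\ldots,a_n/_\Theta) = \{a/_\Theta : a \in c^{\mathcal{A}}(\vec a)\}$ does not depend on the chosen representatives $a_1,\ldots,a_n$. This is exactly the point where clause (ii) of Definition~\ref{mcongru} enters. Suppose $a_i/_\Theta = b_i/_\Theta$, that is $(a_i,b_i)\in\Theta$, for every $1 \le i \le n$. Given $a/_\Theta$ in the set computed from $\vec a$, with $a \in c^{\mathcal{A}}(\vec a)$, clause (ii) yields some $b \in c^{\mathcal{A}}(\vec b)$ with $(a,b)\in\Theta$, hence $a/_\Theta = b/_\Theta$ lies in the set computed from $\vec b$; applying clause (ii) again to the pair $\vec b,\vec a$ (legitimate since $\Theta$ is symmetric, being an equivalence relation) gives the reverse inclusion. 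Thus the two sets coincide and the operation is well defined. Nonemptiness of $c^{\mathcal{A}/_\Theta}(a_1/_\Theta,\ldots,a_n/_\Theta)$ is inherited from $c^{\mathcal{A}}(\vec a)\neq\emptyset$, and the case $c\in\Sigma_0$ is immediate (here clause (iii) moreover forces $c^{\mathcal{A}/_\Theta}$ to be a singleton, although this is not needed for the present claim).

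Next, that $p$ is a full homomorphism $p:\mathcal{A}\to_s\mathcal{A}/_\Theta$ should follow by simply comparing definitions: for every $c\in\Sigma_n$ and $\vec a\in A^n$ one has
$$p[c^{\mathcal{A}}(\vec a)] = \{a/_\Theta : a \in c^{\mathcal{A}}(\vec a)\} = c^{\mathcal{A}/_\Theta}(p(\vec a)),$$
the last equality being the very definition of $c^{\mathcal{A}/_\Theta}$ in Definition~\ref{quomultialg}, and likewise $p[c^{\mathcal{A}}] = c^{\mathcal{A}/_\Theta}$ for $c\in\Sigma_0$. So this step requires no real work once the quotient is known to be well defined.

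Finally, to obtain $p(\mathcal{A})=\mathcal{A}/_\Theta$, I would use that $p$ is surjective, so the support of the direct image (Definition~\ref{imdir}) is $p[A]=A/_\Theta$, and then compute, for $\vec b\in (A/_\Theta)^n$,
$$c^{p(\mathcal{A})}(\vec b) = \bigcup\big\{p[c^{\mathcal{A}}(\vec a)] : \vec a\in p^{-1}(\vec b)\big\} = \bigcup\big\{c^{\mathcal{A}/_\Theta}(\vec b) : \vec a\in p^{-1}(\vec b)\big\} = c^{\mathcal{A}/_\Theta}(\vec b),$$
where the middle equality uses the fullness established above together with $p(\vec a)=\vec b$ for $\vec a\in p^{-1}(\vec b)$, and the last uses that $p^{-1}(\vec b)$ is nonempty by surjectivity; the constant case is analogous. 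The only genuine obstacle in the whole argument is the well-definedness in the first step, and the crux there is to exploit the symmetry of $\Theta$ so that the one-directional clause (ii) delivers both inclusions.
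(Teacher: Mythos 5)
Your proof is correct. The paper in fact states Proposition~\ref{canproj-quo-epi} without giving any proof, treating it as immediate from Definitions~\ref{mcongru}, \ref{quomultialg} and~\ref{imdir}; your argument supplies exactly the details left implicit there, and it correctly isolates the one substantive point, namely that well-definedness of the quotient multioperations follows from clause~(ii) of Definition~\ref{mcongru} applied in both directions via the symmetry of $\Theta$, after which fullness of $p$ and the identity $p(\mathcal{A})=\mathcal{A}/_\Theta$ are routine unfoldings of the definitions.
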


\section{From \cplp\ to the logic \mbc} \label{sect-mbC}

The class of paraconsistent logics known as {\em Logics of Formal Inconsistency} (\lfis, for short) was introduced by W. Carnielli and J. Marcos in~\cite{CM}. In its simplest form, they have a non-explosive negation $\neg$, as well as  a (primitive or derived) {\em consistency connective} $\circ$ which allows to recover the explosion law in a controlled way. 

\begin{definition} \label{defLFI}
Let  ${\bf L}=\langle  \Theta,\vdash \rangle$ be a Tarskian, finitary and structural logic defined over a propositional signature $\Theta$, which contains a
negation $\neg$, and let  $\circ$ be a (primitive or defined) unary connective. Then,
${\bf L}$ is said to be  a {\em Logic of Formal Inconsistency} with respect to $\neg$ and $\circ$ if the following holds:
\begin{itemize}
       \item [(i)] $\varphi,\neg\varphi\nvdash\psi$ for some $\varphi$ and $\psi$;
       \item [(ii)] there are two formulas $\alpha$ and $\beta$ such that
     \begin{itemize}
       \item [(ii.a)] $\circ\alpha,\alpha \nvdash \beta$;
       \item [(ii.b)] $\circ\alpha, \neg \alpha \nvdash \beta$; 
\end{itemize}
       \item [(iii)]  $\circ\varphi,\varphi,\neg\varphi\vdash\psi$ for every $\varphi$ and $\psi$. 
\end{itemize}
\end{definition}

\

Condition (ii) of the definition of \lfis\ is required in order to satisfy condition~(iii) in  a non-trivial way. The hierarchy of \lfis\ studied in~\cite{CCM} and~\cite{CC16} starts from a logic called \mbc, which extends  positive classical logic \cplp\ by adding a negation $\neg$ and an unary {\em consistency} operator $\circ$ satisfying minimal requirements in order to define an \lfi. 

From now on, the following three signatures will be mainly considered:
\begin{enumerate}
\item[] $\Sigma_+=\{\land, \lor, \to\}$;
\item[] $\Sigma_{\rm BA}=\{\land, \lor, \to,0,1\}$; and
\item[] $\Sigma=\{\land, \lor, \to, \neg, \cons\}$.
\end{enumerate}

If $\Theta$ is a propositional signature, then $For(\Theta)$ will denote the (absolutely free) algebra of formulas over $\Theta$ generated by a given denumerable set $\mathcal{V}=\{p_n \ : \  n \in \mathbb{N}\}$ of propositional variables.

\begin{definition} [Classical Positive Logic] The {\em classical  positive logic} \cplp\ is defined over the language $For(\Sigma_+)$ by the following Hilbert  calculus:\\[2mm]
  {\bf Axiom schemas:}
  \begin{gather}
    \alpha \imp \big(\beta \imp \alpha\big)             \tag{\kax} \\
   \Big(\alpha\imp\big(\beta\imp\gamma\big)\Big) \imp
            \Big(\big(\alpha\imp\beta\big)\imp\big(\alpha\imp\gamma\big)\Big)
              \tag{\axTrans}\\
    \alpha \imp \Big(\beta \imp \big(\alpha \land \beta\big)
                                             \Big)  \tag{\axed}\\
    \big(\alpha \land \beta\big) \imp \alpha         \tag{\axeea}\\
    \big(\alpha \land \beta\big) \imp \beta          \tag{\axeeb}\\
    \alpha \imp \big(\alpha \lor \beta\big)          \tag{\axouda}\\
    \beta \imp \big(\alpha \lor \beta\big)           \tag{\axoudb}\\
    \Big(\alpha \imp \gamma\Big) \imp \Big(
        (\beta \imp \gamma) \imp
           \big(
          (\alpha \lor \beta) \imp \gamma
           \big)\Big)                               \tag{\axoue} \\
    \big(\alpha \imp \beta\big) \lor \alpha          \tag{\axouimp}
  \end{gather}

{\bf Inference rule:}
\[\frac{\alpha \ \ \ \ \alpha\imp
      \beta}{\beta}  \tag{\MP}\]
\end{definition}
 
\

\begin{definition} The logic \mbc, defined over signature $\Sigma$, is obtained from  \cplp\ by adding the following axiom schemas:
  \begin{gather}
    \alpha \lor \lnot \alpha                        \tag{\axtnd}\\
    \cons \alpha \imp \Big(\alpha \imp \big(\lnot \alpha \imp \beta\big)\Big)
                                                     \tag{\axexp}
  \end{gather}
\end{definition}

\

For convenience, the expansion of   \cplp\ over signature $\Sigma$ will be considered from now on, besides \cplp\ itself. This logic, denoted by \ecpl, is nothing more than  \cplp\ defined over $\Sigma$  by adding $\neg$ and $\circ$ as additional unary connectives without any axioms or rules for them.

\section{Swap structures for \ecpl} \label{sect-sw}

In  \cite{CC16} was introduced the notion of swap structures for \mbc, as well as for some axiomatic extensions of it.
In this section, these structures will be reintroduced in a slightly more general form, in order to define a hierarchy of classes of multialgebras associated to the corresponding hierarchy of logics. This is in line with the traditional approach of algebraic logic, in which hierarchies of  classes of algebraic models are  associated to hierachies of logics.
From now on, $\Sigma$ will denote the signature for \mbc.

Since \mbc\ is an axiomatic extension of \ecpl, it is natural to begin with swap structures for the latter logic.
Recall the following:

\begin{definition} An {\em implicative lattice} is an algebra $\mathcal{A}=\langle A,\land,\lor,\imp\rangle$ where $\langle A,\land,\lor\rangle$ is a lattice  such that $\bigvee \{ c \in A \ : \  a \wedge c \leq b\}$ exists for every $a,b \in A$,\footnote{Here, $\leq$ denotes the partial order associated with the lattice, namely: $a \leq b$ iff $a=a\wedge b$ iff $b=a \vee b$, and $\bigvee X$ denotes the supremum of the set $X \subseteq A$ w.r.t. $\leq$, provided that it exists.} and $\imp$ is  the induced  implication given by $a \imp b =\bigvee \{ c \in A \ : \  a \wedge c \leq b\}$ for every $a,b \in A$ (note that $1\defin a \to a$ is the  top element of $A$,  for any $a \in A$). 
If, additionally,  $a \vee (a \imp b)=1$ for every $a,b$ then $\mathcal{A}$ is said to be a {\em \algcpl}.\footnote{The name was taken from H. Curry, see~\cite{C77}.}
\end{definition}

The following results are well-known:

\begin{proposition} \label{BA-HA} Let $\mathcal{A}$ be an implicative lattice. Then:\\
(1) If $\mathcal{A}$ has a bottom element $0$, then it is  a Heyting algebra.\\
(2) If $\mathcal{A}$ is a  \algcpl\ and it  has a bottom element $0$, then it is  a Boolean algebra.
\end{proposition}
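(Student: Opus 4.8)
The plan is to recognize both parts as instances of standard lattice theory, once the right reformulation of the implication is in place. A Heyting algebra can be characterized as a bounded lattice $\langle A,\land,\lor,0,1\rangle$ equipped with a binary operation $\imp$ satisfying the residuation (adjunction) law $a \land c \leq b \iff c \leq (a \imp b)$ for all $a,b,c$; distributivity of the underlying lattice is then an automatic consequence rather than a separate hypothesis. So the real task in part~(1) is to verify residuation for the operation $a \imp b = \bigvee\{c \in A : a \land c \leq b\}$.

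First I would record boundedness: the definition already notes that $1 \defin a \imp a$ is the top element (since $a \land c \leq a$ holds for every $c$, so $\{c : a \land c \leq a\}=A$), and the bottom $0$ is supplied by hypothesis; hence $\mathcal{A}$ is a bounded lattice. Next I would check the two halves of the adjunction. The implication $a \land c \leq b \Rightarrow c \leq (a\imp b)$ is immediate, since such a $c$ lies in the very set whose supremum defines $a \imp b$. The converse half, $c \leq (a\imp b) \Rightarrow a \land c \leq b$, reduces to showing $a \land (a \imp b) \leq b$, i.e. that the defining supremum is actually the \emph{greatest} element of $\{c : a \land c \leq b\}$ --- in other words that $a \imp b$ is the relative pseudocomplement of $a$ with respect to $b$.

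This pseudocomplement inequality is exactly the defining feature of an implicative lattice in Curry's sense, and it is the one genuinely substantive point of the argument; I expect it to be the main obstacle, since a naive non-distributive lattice (e.g. $M_3$) has all such suprema existing without them being maxima, so mere existence of $\bigvee\{c : a \land c \leq b\}$ is not enough. Granting it, distributivity follows formally: for $d=(a\land b)\lor(a\land c)$ the easy half gives $b,c \leq (a\imp d)$, hence $b\lor c \leq (a\imp d)$, and the hard half then yields $a\land(b\lor c)\leq d$, which is the nontrivial distributive inequality. Thus $\mathcal{A}$ is a bounded distributive lattice with a residuated implication, i.e. a Heyting algebra.

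For part~(2), $\mathcal{A}$ is additionally a \algcpl, so $a\lor(a\imp b)=1$ for all $a,b$, and by~(1) it is a Heyting algebra. I would set $\neg a \defin a \imp 0$. The pseudocomplement property gives $a\land\neg a \leq 0$, hence $a \land \neg a = 0$; and instantiating the classical condition at $b=0$ gives $a\lor\neg a = a\lor(a\imp 0)=1$. Therefore every $a$ has a complement $\neg a$. Since a bounded distributive lattice in which every element is complemented is a Boolean algebra (complements being unique by distributivity), $\mathcal{A}$ is Boolean. Here the only delicate ingredient is again the inequality $a\land\neg a\leq 0$, inherited from part~(1); the remaining steps are routine.
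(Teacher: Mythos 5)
The paper offers no proof of this proposition: it is stated as ``well-known'' immediately after the definition of implicative lattice, so there is no argument of the paper to compare yours against step by step; your proposal effectively supplies the proof the paper omits. Your argument is the standard one and is correct, granted the one point you yourself single out. The crux is the inequality $a \land (a \imp b) \leq b$, i.e.\ that the supremum $\bigvee\{c \in A \ : \ a \wedge c \leq b\}$ is actually a \emph{maximum} of that set (equivalently, that the residuation law $a \land c \leq b \iff c \leq a \imp b$ holds). As you correctly observe, this does not follow from the paper's definition read literally, which only requires the supremum to exist: in the five-element non-distributive lattice $M_3$ every such supremum exists (the lattice is finite, hence complete), yet $M_3$ has a bottom and is not distributive, so under the literal reading part (1) would be outright false. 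The proposition is true precisely under Curry's reading of ``implicative lattice'' (the reading the paper's own footnote citing Curry points to), in which $a \imp b$ is the relative pseudocomplement; taking that as the definition, as you do, is the right call, and the rest of your argument --- boundedness via $1 = a \imp a$ and the hypothesized $0$, distributivity derived formally from residuation, and in part (2) complementation via $\neg a \defin a \imp 0$ together with the classical law instantiated at $b=0$ --- is routine and correct. The only loose end, a minor one, is in part (2): to conclude that $\mathcal{A}$ is a Boolean algebra in the paper's signature $\{\land,\lor,\to,0,1\}$ you should also check that in a complemented distributive lattice the relative pseudocomplement $a \imp b$ coincides with $\neg a \lor b$, so that the induced implication is indeed the Boolean one; this is a one-line verification using $a \land (\neg a \lor b) = a \land b$ and $c = (c\land a) \lor (c \land \neg a) \leq b \lor \neg a$ whenever $a \land c \leq b$.
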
 

The algebraic semantics for \cplp\ is given by  classical implicative lattices. In formal terms:

\begin{theorem} \label{adeq-CPLP}
Let $\Gamma \cup \{\alpha\}$ be a set of formulas over the signature $\Sigma_+$. Then:
 $\Gamma \vdash_{\cplp} \alpha$ iff, for every \algcpl\ $\mathcal{A}$ and for  every homomorphism $h:For(\Sigma_+) \to \mathcal{A}$, if $h(\gamma)=1$ for every $\gamma \in \Gamma$ then $h(\alpha)=1$.
\end{theorem}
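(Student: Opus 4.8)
The plan is to establish this soundness-and-completeness result for \cplp\ with respect to the class of classical implicative lattices by the standard Lindenbaum--Tarski method adapted to the implicative-lattice setting. Since the statement is biconditional, I would split the argument into the soundness direction (left-to-right) and the completeness direction (right-to-left).

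For soundness, I would first verify that each of the axiom schemas \kax--\axouimp\ is valid, meaning that every homomorphism $h:For(\Sigma_+) \to \mathcal{A}$ into a classical implicative lattice $\mathcal{A}$ assigns the value $1$ to every instance. This is a routine but essential lattice-theoretic computation using the defining adjunction $a \imp b = \bigvee\{c : a \wedge c \leq b\}$; for instance \kax\ reduces to showing $\alpha \leq (\beta \imp \alpha)$, and \axouimp\ is exactly where the classical condition $a \vee (a \imp b) = 1$ is needed. Then I would check that \MP\ preserves the designated value $1$: if $h(\alpha)=1$ and $h(\alpha \imp \beta)=1$, then from $1 \wedge h(\alpha) \leq h(\beta)$ together with $h(\alpha)=1$ one concludes $h(\beta)=1$. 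A straightforward induction on the length of a derivation of $\alpha$ from $\Gamma$ then yields that if all premises in $\Gamma$ receive value $1$ under $h$, so does $\alpha$.

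For completeness I would argue by contraposition, constructing a canonical model from the Lindenbaum algebra. Assuming $\Gamma \nvdash_{\cplp} \alpha$, I would define the relation $\beta \equiv \gamma$ iff $\Gamma \vdash_{\cplp} \beta \imp \gamma$ and $\Gamma \vdash_{\cplp} \gamma \imp \beta$, show it is a congruence on $For(\Sigma_+)$ compatible with $\land,\lor,\imp$, and verify that the quotient $\mathcal{A}_\Gamma = For(\Sigma_+)/{\equiv}$ is a classical implicative lattice, with the partial order given by $[\beta] \leq [\gamma]$ iff $\Gamma \vdash_{\cplp} \beta \imp \gamma$ and top element $1 = [\gamma \imp \gamma]$. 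The canonical projection $h:For(\Sigma_+) \to \mathcal{A}_\Gamma$ is a homomorphism with $h(\gamma)=1$ for every $\gamma \in \Gamma$, yet $h(\alpha) \neq 1$ since $\Gamma \nvdash_{\cplp} \alpha$, giving the desired counter-model.

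The main obstacle I expect is the verification that $\mathcal{A}_\Gamma$ genuinely satisfies the implicative-lattice axioms, specifically that the algebraic implication $[\beta] \imp [\gamma]$ coincides with $\bigvee\{[\delta] : [\beta] \wedge [\delta] \leq [\gamma]\}$ and that this supremum actually exists as a maximum realized by $[\beta \imp \gamma]$. This amounts to proving, purely from the \cplp\ axioms, the residuation property $\Gamma \vdash \beta \wedge \delta \imp \gamma$ iff $\Gamma \vdash \delta \imp (\beta \imp \gamma)$, which relies on \axed, \axeea, \axeeb, \axTrans, and the deduction-theorem behavior encoded in \kax\ and \axTrans. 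The classical condition $[\beta] \vee ([\beta]\imp[\gamma]) = 1$ then follows directly from axiom \axouimp. All of these reductions are routine once the correct Hilbert-style derivations are assembled, so no genuinely deep difficulty arises; the work is in organizing the syntactic lemmas carefully.
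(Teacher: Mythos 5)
Your proposal is correct, but note that the paper never actually proves Theorem~\ref{adeq-CPLP}: it is quoted as a well-known fact (the algebraic semantics of \cplp\ via classical implicative lattices, in the tradition of Curry~\cite{C77}), so there is no in-paper argument to measure yours against. What can be said is that your Lindenbaum--Tarski construction is exactly the template the paper itself applies when it does prove completeness, namely for \ecpl\ (Theorem~\ref{adeq-CPLP1}) and for \mbc\ (Theorem~\ref{adeq-mbC}): the same relation $\alpha \equiv_\Gamma \beta$ iff $\Gamma \vdash \alpha \imp \beta$ and $\Gamma \vdash \beta \imp \alpha$, the same quotient construction, and the same claim that the quotient carries a structure of \algcpl\ in which $[\alpha \imp \beta]_\Gamma$ realizes the defining supremum as a maximum. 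So your proof supplies, in the paper's own style, the argument the paper leaves to the reader; and your identification of the import--export equivalence ($\Gamma \vdash (\beta \wedge \delta) \imp \gamma$ iff $\Gamma \vdash \delta \imp (\beta \imp \gamma)$, available via the deduction theorem coming from \kax, \axTrans\ and \MP) as the crux of the quotient verification is the right one.

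One point deserves to be made explicit in your soundness half. The validity of the axioms is ``routine'' only under the reading that the supremum defining $\imp$ is a maximum, i.e.\ that residuation holds: $c \leq a \imp b$ iff $a \wedge c \leq b$. This is what you implicitly invoke by calling the definition an adjunction, and it is also what yields distributivity, which is genuinely needed: for instance, the validity of (\axoue) fails in the five-element modular non-distributive lattice $M_3$, even though all the suprema $\bigvee\{c : a \wedge c \leq b\}$ exist there and the classical condition $a \vee (a \imp b) = 1$ holds, so $M_3$ satisfies a purely literal reading of the paper's definition while falsifying the theorem. Hence both halves of your argument, not only the completeness direction, rest on the residuation property; with it in hand, everything you outline goes through.
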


Now, a semantics of multialgebras of triples over a given  Boolean algebra $\mathcal{A}$, which are called {\em swap structures}, will be introduced for \ecpl. The idea is that a triple $(z_1,z_2,z_3)$  in such structures represents a (complex) truth-value in which $z_1$ interprets a given truth-value for a formula $\alpha$, while $z_2$ and $z_3$ represent a possible truth-value for $\neg \alpha$ and $\circ \alpha$, respectively.
The reason to take a Boolean algebra instead of a \algcpl\ is  the following: given an \lfi\ extending \ecpl, in order to prove completeness w.r.t. swap structures a \algcpl\ is naturally defined by means of a Lindenbaum-Tarski process. Since any \lfi\ can define a bottom formula, the obtained \algcpl\  becomes a Boolean algebra, by Proposition~\ref{BA-HA}(2).
In the case of \ecpl, a technical result (see propositions~\ref{propA*} and~\ref{propA*2} below) will allow to extend each  \algcpl\ to a Boolean algebra.

Let $\mathcal{A}=\langle A, \wedge, \vee, \to,0,1 \rangle$ be a Boolean algebra and let $\pi_{(j)}:A^3 \to A$ be the canonical projections, for $1 \leq j \leq 3$.
Observe that,  if $z \in A^3$ and $z_j=\pi_{(j)}(z)$ for $1 \leq j \leq 3$ then $z=(z_1,z_2,z_3)$.

\begin{definition} \label{univ-sw}
Let $\mathcal{A}$ be a Boolean algebra with domain $A$. The universe of swap structures for \ecpl\ over $\mathcal{A}$  is the set $\textsc{B}_\mathcal{A}^{\ecpl}= A^3$.
\end{definition}

\begin{definition} \label{Swap-str}
Let $\mathcal{A}=\langle A, \wedge, \vee, \to,0,1 \rangle$ be a Boolean algebra, and let $ B\subseteq \textsc{B}_\mathcal{A}^{\ecpl}$.
A {\em swap structure for \ecpl\ over $\mathcal{A}$} is any multialgebra $\mathcal{B}=\langle B, \wedge_\mathcal{B}, \vee_\mathcal{B},$\\ 
$\to_\mathcal{B},\neg_\mathcal{B}, \circ_\mathcal{B} \rangle$ over $\Sigma$ such that $0 \in \pi_1[B]$ and the multioperations satisfy the following, for every $z$ and $w$ in $B$:

\begin{itemize}
 \item[(i)] $\emptyset \neq z\#_\mathcal{B}w \subseteq \{u\in B \ : \ u_1=z_1\#w_1\}$, for each $\#\in \{\wedge,\vee, \to\}$;
\item[(ii)] $\emptyset \neq \neg_\mathcal{B} (z) \subseteq \{u\in B \ : \ u_1=z_2\}$;
 \item[(iii)] $\emptyset \neq \circ_\mathcal{B} (z) \subseteq \{u\in B \ : \ u_1=z_3\}$.
\end{itemize}
\end{definition}

When there is no risk of confusion, the subscript `$\mathcal{B}$' will be omitted when referring to the multioperations of  $\mathcal{B}$.

\begin{definition}
Let  $\Ksw$ be the class of swap structures for \ecpl.
The full subcategory in  $\malg(\Sigma)$ of swap structures for \ecpl\ will be denoted by \sw.
\end{definition}

From the previous definition,  the class of objects of \sw\  is \Ksw, and the morphisms between two given swap structures are just the homomorphisms between them as multialgebras.

\begin{definition} \label{BmaxCPL}
Let  $\mathcal{A}$ be a  Boolean algebra. The {\em full swap structure for \ecpl\ over $\mathcal{A}$}, denoted by $\mathcal{B}_\mathcal{A}^{\ecpl}$, is the unique swap structure for \ecpl\ over $\mathcal{A}$  with domain $\textsc{B}_\mathcal{A}^{\ecpl}= A^3$ such that, for every $z$ and $w$ in $A^3$:

\begin{itemize}
 \item[(i)] $z\#w = \{u\in A^3 \ : \ u_1=z_1\#w_1\}$, for each $\#\in \{\wedge,\vee, \to\}$;
\item[(ii)] $\neg (z) = \{u\in A^3 \ : \ u_1=z_2\}$;
 \item[(iii)] $\circ (z) = \{u\in A^3 \ : \ u_1=z_3\}$.\\[-2mm]
\end{itemize}
\end{definition}

\begin{remark} \label{obs pi(B)}
The term ``full'' is adopted in Definition~\ref{BmaxCPL} in analogy with the terminology used by S. Odintsov in~\cite{odin:03} with respect to twist structures. This is justified by the fact that swap structures can be considered as non-deterministic twist structures (or, from the opposite perspective, twist structures are particular cases of swap structures), as it will be argued in Section \ref{sectwist-swap}.

Observe that, if $\mathcal{B}$ is a swap structure for \ecpl\ over $\mathcal{A}$, then $\mathcal{B}$ is a submultialgebra of $\mathcal{B}_\mathcal{A}^{\ecpl}$ in the sense of Definition~\ref{subal}. Thus, $\mathcal{B}_\mathcal{A}^{\ecpl}$ is the greatest swap structure for \ecpl\ over $\mathcal{A}$. 
\end{remark}

\begin{proposition} \label{pi1-BA}
Let $\mathcal{B}$ be a swap structure for \ecpl\ over $\mathcal{A}$ and let $\mathbb{A}(\mathcal{B}) \defin$\\
$\pi_1[|\mathcal{B}|]$. Then, $\mathbb{A}(\mathcal{B})$ is a  Boolean subalgebra of $\mathcal{A}$.   Moreover, $\mathbb{A}\big(\mathcal{B}_\mathcal{A}^{\ecpl}\big) = \mathcal{A}$.
\end{proposition}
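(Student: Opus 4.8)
The plan is to show directly that the set $\pi_1[|\mathcal{B}|]$ contains the Boolean constants and is closed under the Boolean operations of $\mathcal{A}$, from which it follows that it is the support of a Boolean subalgebra of $\mathcal{A}$. Throughout, I would use only condition~(i) of Definition~\ref{Swap-str}, together with the hypothesis $0 \in \pi_1[|\mathcal{B}|]$ and the fact that, in the signature $\Sigma_{\rm BA}$, the remaining Boolean structure (the top element and the complement) is definable from $\to$ and $0$. Notice in advance that conditions~(ii) and~(iii), governing $\neg$ and $\circ$, play no role here: the first coordinate of a triple is not affected by those multioperations.

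First I would establish closure under $\#$ for each $\#\in\{\wedge,\vee,\to\}$. Let $a,b\in\pi_1[|\mathcal{B}|]$; by definition of the projection there are $z,w\in|\mathcal{B}|$ with $z_1=a$ and $w_1=b$. Condition~(i) of Definition~\ref{Swap-str} guarantees that $z\#w$ is nonempty and that every $u\in z\#w$ satisfies $u_1=z_1\#w_1=a\#b$. Picking any such $u\in|\mathcal{B}|$ yields $a\#b=u_1\in\pi_1[|\mathcal{B}|]$. The nonemptiness clause ``$\emptyset\neq z\#w$'' is exactly what is needed here: it secures the existence of a witness $u$, and this is really the only point at which any content of the swap-structure axioms is used.

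For the constants, $0\in\pi_1[|\mathcal{B}|]$ holds by hypothesis; choosing a witness $z\in|\mathcal{B}|$ with $z_1=0$ gives $1=z_1\to z_1\in\pi_1[|\mathcal{B}|]$ by the closure under $\to$ just established. Finally, to confirm that $\mathbb{A}(\mathcal{B})$ is genuinely a \emph{Boolean} subalgebra, and not merely a subset closed under $\{\land,\lor,\to\}$, I would observe that the Boolean complement of any $a\in\pi_1[|\mathcal{B}|]$ equals $a\to 0$, which lies in $\pi_1[|\mathcal{B}|]$ since both $a$ and $0$ do and the set is closed under $\to$. Hence $\pi_1[|\mathcal{B}|]$ is closed under all operations of $\Sigma_{\rm BA}$ and contains $0$ and $1$, so it carries, with the operations inherited from $\mathcal{A}$, the structure of a Boolean subalgebra $\mathbb{A}(\mathcal{B})$ of $\mathcal{A}$.

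For the ``moreover'' part, the full swap structure $\mathcal{B}_\mathcal{A}^{\ecpl}$ has support $A^3$ by Definition~\ref{BmaxCPL}; since $(a,a,a)\in A^3$ for every $a\in A$, the projection $\pi_1$ maps $A^3$ onto $A$, whence $\mathbb{A}\big(\mathcal{B}_\mathcal{A}^{\ecpl}\big)$ has support $A$ and inherits the operations of $\mathcal{A}$, i.e.\ it equals $\mathcal{A}$. I do not expect any genuine obstacle in this argument; the only point requiring (minor) care is the upgrade from ``closed under $\land,\lor,\to$'' to ``Boolean subalgebra'', which is dispatched by the definability of the complement as $a\to 0$.
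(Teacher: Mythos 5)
Your proof is correct and follows essentially the same route as the paper's: both arguments use the nonemptiness and first-coordinate clauses of Definition~\ref{Swap-str} to get closure of $\pi_1[|\mathcal{B}|]$ under $\{\wedge,\vee,\to\}$, obtain $0$ from the hypothesis and $1$ as $0\to 0$, and conclude that $\mathbb{A}(\mathcal{B})$ is a Boolean subalgebra of $\mathcal{A}$. In fact you are slightly more explicit than the paper, which leaves implicit both the closure under complement (your observation that ${\sim}a = a \to 0$) and the ``moreover'' claim $\mathbb{A}\big(\mathcal{B}_\mathcal{A}^{\ecpl}\big)=\mathcal{A}$, which you verify via the triples $(a,a,a)$.
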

\begin{proof} Let $\mathcal{B}$ be a swap structure for \ecpl\ over $\mathcal{A}$.
For each $a \in \mathbb{A}(\mathcal{B})$ choose an element $z(a)$ in $|\mathcal{B}|$ such that $\pi_1(z(a))=a$.
Observe that $0 \in \mathbb{A}(\mathcal{B})$, by Definition~\ref{Swap-str}. Since $|\mathcal{B}|$ is closed under the multioperations of $\mathcal{B}$ then $z(0) \to z(0) \subseteq |\mathcal{B}|$ and so $\{1\}=\pi_1[z(0) \to z(0)] \subseteq \mathbb{A}(\mathcal{B})$. That is, $1 \in \mathbb{A}(\mathcal{B})$.

For  each $\#\in \{\wedge,\vee, \to\}$ observe that  $\pi_1[z(a)\#z(b)] = \{a \# b\}$ for every $a,b \in \mathbb{A}(\mathcal{B})$, by Definition~\ref{Swap-str}. This means that $a \# b \in \mathbb{A}(\mathcal{B})$ for every $a,b \in \mathbb{A}(\mathcal{B})$ and for  each $\#\in \{\wedge,\vee, \to\}$. Therefore $\mathbb{A}(\mathcal{B})$ is a  Boolean  subalgebra of $\mathcal{A}$.
\end{proof}

\

Elements of a swap structure for \ecpl\ are called {\em snapshots for \ecpl}.   
Since no axioms or rules are given in \ecpl\ for the unary connectives  $\neg$ and $\circ$, the multioperations associated to them in a swap structure just put in evidence (or `swap') on the first coordinate  the corresponding value, leaving free the values of the other coordinates. This produces two (nonempty) sets of snapshots, defining so  multioperations for the conectives $\neg$ and $\circ$. As we shall see in the next sections, when axioms are considered for these unary connectives, the multioperations (and the domain of the swap structures themselves) must be restricted accordingly, obtaining so different classes of multialgebras.

\

\section{Swap structures semantics for \ecpl}

Recall the semantics associated to Nmatrices introduced by A. Avron and I. Lev:

\begin{definition} [\cite{avr:lev:01}] \label{valNmat} 
Let $\mathcal{M}=(\mathcal{B},D)$ be an Nmatrix over a signature $\Theta$. A {\em valuation} over $\mathcal{M}$ is a function $v: For(\Theta)\to |\mathcal{B}|$ such that, for every $c \in \Theta_n$ and every $\varphi_1,\ldots,\varphi_n \in For(\Theta)$:
$$v(c(\varphi_1,\ldots,\varphi_n)) \in c^\mathcal{B}(v(\varphi_1),\ldots,v(\varphi_n)).$$
In particular, $v(c) \in c^\mathcal{B}$, for every $c \in \Theta_0$.
\end{definition}

\begin{definition}  \label{semNmat} 
Let $\mathcal{M}=(\mathcal{B},D)$ be an Nmatrix over a signature $\Theta$, and let $\Gamma \cup \{\varphi\} \subseteq For(\Theta)$. We say that $\varphi$ is a consequence of $\Gamma$  in the Nmatrix $\mathcal{M}$, denoted by $\Gamma\models_{\mathcal{M}} \varphi$, if the following holds: for every valuation  $v$ over $\mathcal{M}$, if $v[\Gamma] \subseteq D$ then $v(\varphi) \in D$. In particular, $\varphi$ is valid in $\mathcal{M}$, denoted by $\models_{\mathcal{M}} \varphi$, if $v(\varphi) \in D$ for every valuation  $v$ over $\mathcal{M}$.
\end{definition}

The generalization of Nmatrix semantics to classes of Nmatrices is immediate:

\begin{definition} \label{conseqclassNmat}
Let $\mathbb{M}$ be a nonempty class of Nmatrices over a signature $\Theta$, and let  $\Gamma \cup \{\varphi\} \subseteq For(\Theta)$  be a set of formulas over $\Theta$. We say that $\varphi$ is a consequence of $\Gamma$  in the class $\mathbb{M}$ of Nmatrices, denoted by $\Gamma\models_{\mathbb{M}} \varphi$, if  $\Gamma\models_{\mathcal{M}} \varphi$ for every $\mathcal{M} \in \mathbb{M}$.  In particular, $\varphi$ is valid in $\mathbb{M}$, denoted by $\models_{\mathbb{M}} \varphi$,  if it is valid in every  $\mathcal{M} \in \mathbb{M}$. 
\end{definition}

\begin{remark}
Given  a signature $\Theta$, the (absolutely free) algebra of formulas  $For(\Theta)$ over $\Theta$ generated by the set $\mathcal{V}$ of propositional variables can be considered as a multialgebra $\mathcal{F}or(\Theta)$ over $\Theta$ in which the multioperators (the conectives of  $\Theta$ themselves) are single-valued. That is, $c^{\mathcal{F}or(\Theta)}(\alpha_1,\ldots,\alpha_n) \defin \{c(\alpha_1,\ldots,\alpha_n)\}$ for every $n$-ary connective $c \in \Theta$ and every $\alpha_1,\ldots,\alpha_n \in For(\Theta)$. Being so, it is interesting to notice that a valuation $v:For(\Theta) \to |\mathcal{B}|$ over an Nmatrix $\mathcal{M}=(\mathcal{B},D)$ in the sense of Definition~\ref{defHom}(i) is an homomorphism $v:\mathcal{F}or(\Theta) \to \mathcal{B}$ in the category $\malg(\Theta)$ of multialgebras. This means that the semantics of  Nmatrices constitutes a genuine generalization of the standard matrix semantics, provided that the category of multiagebras into consideration is precisely $\malg(\Theta)$.
\end{remark}

\

Recall that  $\Ksw$ denotes the class of swap structures for \ecpl. As it was done in~\cite[Chapter~6]{CC16} with several \lfis, it is easy to see that each $\mathcal{B} \in \Ksw$ induces naturally a non-deterministic matrix  such that the class of such Nmatrices semantically characterizes \ecpl.   More precisely:

\begin{definition} \label{Nmatrix}
For each $\mathcal{B} \in \Ksw$ let $D_\mathcal{B} = \{z \in |\mathcal{B}| \ :  \  z_1=1\}$. The {\em Nmatrix associated to} $\mathcal{B}$ is $\mathcal{M}(\mathcal{B})=(\mathcal{B}, D_\mathcal{B})$.  Let $$Mat(\Ksw) = \big\{\mathcal{M}(\mathcal{B}) \ : \ \mathcal{B} \in  \Ksw\big\}.$$
\end{definition}

In this particular case, Definition~\ref{valNmat} assumes the following form:

\begin{definition} \label{valNmat1} Let $\mathcal{B} \in \Ksw$ and $\mathcal{M}(\mathcal{B})$ as above. A {\em valuation} over $\mathcal{M}(\mathcal{B})$ is a function $v: For(\Sigma)\to |\mathcal{B}|$ such that, for every $\varphi_1,\varphi_2 \in For(\Sigma)$:
\begin{enumerate}[(i)]
 \item  $v(\varphi_1 \#\varphi_2) \in v(\varphi_1) \# v(\varphi_2)$, for every $\#\in \{\wedge,\vee, \to\}$;
 \item $v(\neg\varphi_1) \in \neg v(\varphi_1)$;
 \item $v(\circ\varphi_1) \in \circ v(\varphi_1)$.
\end{enumerate}
\end{definition}

In order to prove the adequacy of \ecpl\ w.r.t. swap structures (that is, w.r.t. the class  $Mat(\Ksw)$ of Nmatrices, by using Definition~\ref{conseqclassNmat}), some previous technical results must be obtained. Given a \algcpl\ $\mathcal{A}$, it is always possible to formally ``duplicate'' $\mathcal{A}$ by considering $A^* \defin A \times \{0,1\}$ such that, for any $a \in A$, the pairs $(a,1)$ and $(a,0)$ can be considered in $A^*$ as representing uniquely $a$ and its Bolean complement ${\sim}a$, respectively. In formal terms:

\begin{definition} \label{defoperA*}
Let $\mathcal{A}=\langle A, \wedge, \vee, \to \rangle$ be a \algcpl, and let $A^* \defin A \times \{0,1\}$. Consider the operations $\land$, $\lor$ and $\imp$ defined over $A^*$ as  follows, for every $a,b \in A$:
\begin{itemize}
\item[] $(a,1) \# (b,1) = (a \#b, 1)$,  for $\# \in \{\land, \lor, \imp\}$;
\item[] $(a,1) \land (b,0) = (b,0) \land (a,1) = (a \imp b,0)$;
 \item[] $(a,0) \land (b,0) = (a \lor b,0)$;
\item[] $(a,1) \lor (b,0) = (b,0) \lor (a,1) = (b \imp a,1)$;
 \item[] $(a,0) \lor (b,0) = (a \land b,0)$;
\item[] $(a,1) \imp (b,0) = (a \land b,0)$;
\item[] $(a,0) \imp (b,1) = (a \lor b,1)$;
 \item[] $(a,0) \imp (b,0) = (b \imp a,1)$.
\end{itemize}
\end{definition}

\begin{proposition} \label{propA*}
The structure $\mathcal{A}^* =\langle A^*, \wedge, \vee, \to, 0^*,1^* \rangle$, where the binary operators $\{\land, \lor, \imp\}$ are defined as in Definition~\ref{defoperA*}, is a Boolean algebra such that $0^* \defin(1,0)$ and $1^* \defin(1,1)$.
\end{proposition}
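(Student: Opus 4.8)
The plan is to avoid a head-on verification of the Boolean algebra axioms and instead to exploit a transfer principle that reduces every required identity to a classical propositional tautology. The guiding intuition is that $(a,1)$ stands for $a$ and $(a,0)$ for its (as yet unavailable) Boolean complement $\sneg a$: under this reading each clause of Definition~\ref{defoperA*} records exactly the expected Boolean value, e.g. $(a,1)\wedge(b,0)$ ought to be $a\wedge\sneg b=\sneg(a\imp b)$, which is set to $(a\imp b,0)$. The construction is therefore ``correct by design'', and the real task is to show that this design is internally consistent, i.e. yields an honest Boolean algebra.

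First I would establish the transfer principle: if $s$ and $t$ are terms in the signature $\Sigma_+=\{\wedge,\vee,\imp\}$ such that $s=t$ holds in the two-element Boolean algebra $\mathbf{2}$, then $s=t$ holds in every \algcpl\ $\mathcal{A}$. This follows from Theorem~\ref{adeq-CPLP} together with residuation: in any \algcpl\ one has $x\le y$ iff $x\imp y=1$ (take $c=1$ in the defining adjunction), so $s=t$ holds in all classical implicative lattices iff both $s\imp t$ and $t\imp s$ take the value $1$ under every homomorphism, i.e. iff $\vdash_{\cplp}s\imp t$ and $\vdash_{\cplp}t\imp s$; and since \cplp\ is sound and complete for the positive fragment of classical logic, this is equivalent to $s\imp t$ and $t\imp s$ being tautologies in $\mathbf{2}$, hence to $s=t$ holding in $\mathbf{2}$.

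With this in hand, the body of the proof verifies the Boolean algebra axioms for $\mathcal{A}^*$ by expanding the operations of Definition~\ref{defoperA*} and splitting into cases according to the second coordinates ($1$ or $0$) of the arguments. Each resulting equation between elements of $A^*$ is, coordinatewise, an equation between $\Sigma_+$-terms in parameters $a,b,c,\ldots\in A$, and the transfer principle reduces its verification to a routine check in $\mathbf{2}$. Concretely, the lattice axioms and distributivity of $\wedge,\vee$ reduce to the corresponding positive tautologies; $1^*=(1,1)$ and $0^*=(1,0)$ are shown to be top and bottom (using $1=a\imp a$); the map $(a,i)\mapsto(a,1-i)$ is shown to be a complementation, the two key computations being $(a,1)\wedge(a,0)=(a\imp a,0)=(1,0)=0^*$ and $(a,1)\vee(a,0)=(a\imp a,1)=(1,1)=1^*$; and finally $\imp$ is shown to agree with the Boolean implication $\sneg(\cdot)\vee(\cdot)$, e.g. $(a,0)\imp(b,1)=(a\vee b,1)=(a,1)\vee(b,1)=\sneg(a,0)\vee(b,1)$. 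Thus $\mathcal{A}^*$ is a bounded distributive complemented lattice whose $\imp$ is the induced implication, that is, a Boolean algebra, and the identifications $0^*=(1,0)$, $1^*=(1,1)$ are exactly those claimed.

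The main obstacle is purely bookkeeping rather than conceptual: the case analysis on second coordinates is lengthy, since each binary operation splits into four cases and associativity and distributivity then fan out into several sub-cases, so one must cover every combination and read off the correct $\Sigma_+$-equation in each. No single case is deep—once the transfer principle is in place each reduces to checking a classical tautology—but the combinatorics must be organized systematically, for instance by first establishing that $(a,i)\mapsto(a,1-i)$ is a complementation and then verifying its compatibility with $\wedge$, $\vee$ and $\imp$, which streamlines the remaining verifications.
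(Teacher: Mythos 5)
Your proposal is correct, but it takes a genuinely different route from the paper, whose entire proof is the heuristic you start from: read $(a,1)$ as $a$ and $(a,0)$ as $\sneg a$, and declare the verification ``straightforward''. What you add, and what the paper does not contain, is the transfer principle: every $\Sigma_+$-identity valid in the two-element Boolean algebra holds in every \algcpl, obtained by chaining Theorem~\ref{adeq-CPLP} with the fact that \cplp\ proves exactly the positive two-valued tautologies, together with the observation that $x \leq y$ iff $x \to y = 1$. This is a real gain over ``straightforward'': since $\mathcal{A}$ is a bare \algcpl\ (no bottom, no complements), a head-on check of the identities produced by your case analysis --- e.g.\ $(c \to b) \to a = (b \to a) \wedge (a \vee c)$, which comes from distributivity with mixed second coordinates and is \emph{not} a Heyting-algebra identity --- would require ad hoc use of residuation and of the law $a \vee (a \to b) = 1$ in each case, whereas your principle reduces every case to a truth-table computation. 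Your route is also non-circular: it relies only on the completeness theorem, not on any embedding of classical implicative lattices into Boolean algebras, which is precisely what Propositions~\ref{propA*} and~\ref{propA*2} are meant to establish. Two small points to tighten: the two-valued completeness of \cplp\ is an external (though entirely standard) fact not stated in the paper, so it should be invoked with a citation rather than silently; and at the end you should record explicitly that in a bounded complemented distributive lattice the operation $\sneg x \vee y$ is the relative pseudocomplement and satisfies $x \vee (x \to y)=1$, so that $\mathcal{A}^*$ is a Boolean algebra in the paper's sense of Proposition~\ref{BA-HA}(2).
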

\begin{proof}
By considering $(a,1)$ and $(a,0)$  as representing  in $A^*$ the elements $a$ of $A$ and its Bolean complement ${\sim}a$, respectively, the proof is straightforward.
\end{proof}

\begin{proposition} \label{propA*2}
Given a \algcpl\ $\mathcal{A}$, let $\mathcal{A}^*$ as in Proposition~\ref{propA*}.\\
(1) Let $i^*:A \to A^*$ be the mapping given by $i^*(a)=(a,1)$, for every $a \in A$. Then $i^*$ is a monomorphism of classical implicative lattices.\\
(2) The pair $(\mathcal{A}^*,i^*)$ has the following universal property: if $\mathcal{A}'$ is a Boolean algebra and $h:\mathcal{A} \to \mathcal{A}'$ is a homomorphism of classical implicative lattices then there exists a unique homomorphism of Boolean algebras $h^*:\mathcal{A}^* \to \mathcal{A}'$ such that $h=h^* \circ i^*$. That is, the diagram below commutes.
$$\xymatrix{\mathcal{A} \ar@{^{(}->}[rr]^{\hspace*{-2mm}{i^*}} \ar[rrd]_{h} && \mathcal{A}^*\ar@{.>}[d]^{{h^*}}\\
&&  \mathcal{A}'}$$
\end{proposition}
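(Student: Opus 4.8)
The plan is to treat the two parts separately, with part~(1) essentially immediate and part~(2) requiring a routine but careful verification. For part~(1) I would simply invoke the first clause of Definition~\ref{defoperA*}, which says that the operations of $\mathcal{A}^*$ restricted to pairs with second coordinate~$1$ reproduce those of $\mathcal{A}$: namely $(a,1)\,\#\,(b,1)=(a\,\#\,b,1)$ for each $\#\in\{\land,\lor,\imp\}$. Hence $i^*(a\,\#\,b)=(a\,\#\,b,1)=i^*(a)\,\#\,i^*(b)$, so $i^*$ is a homomorphism of classical implicative lattices; it is injective since $(a,1)=(b,1)$ forces $a=b$, and an injective homomorphism is a monomorphism in the category of classical implicative lattices.

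For part~(2) I would read off the definition of $h^*$ from the intended meaning of the pairs: since $(a,1)$ represents $a$ and $(a,0)$ represents the Boolean complement $\sneg a$, I set $h^*(a,1)\defin h(a)$ and $h^*(a,0)\defin\sneg h(a)$, where $\sneg$ denotes complementation in $\mathcal{A}'$. The commutativity $h^*\circ i^*=h$ is then immediate, since $h^*(i^*(a))=h^*(a,1)=h(a)$. I would also dispatch the constants at once: because $h$ preserves $\imp$ it preserves the top $1=a\imp a$, so $h^*(1^*)=h^*(1,1)=h(1)=1'$ and $h^*(0^*)=h^*(1,0)=\sneg h(1)=0'$, where $1'$ and $0'$ are the top and bottom of $\mathcal{A}'$.

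The bulk of the work, and the only step requiring attention, is checking that $h^*$ preserves $\land$, $\lor$ and $\imp$ across all four combinations of second coordinates appearing in Definition~\ref{defoperA*}. The crucial simplification is that in the Boolean algebra $\mathcal{A}'$ one has $x\imp y=\sneg x\lor y$; writing $x=h(a)$ and $y=h(b)$ and using that $h$ preserves $\land$, $\lor$ and $\imp$, each defining clause reduces, after applying $h^*$, to a Boolean identity. For instance, from $(a,1)\land(b,0)=(a\imp b,0)$ one gets $h^*(a\imp b,0)=\sneg(x\imp y)=x\land\sneg y=h^*(a,1)\land h^*(b,0)$, and the remaining cases are entirely analogous. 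I expect no genuine obstacle here, only the need to run through the list systematically; the verification is tedious rather than hard precisely because the definability of $\imp$ in $\mathcal{A}'$ collapses every clause to a complement-join computation.

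Finally, for uniqueness I would observe that $(a,0)$ is the Boolean complement of $(a,1)$ inside $\mathcal{A}^*$: indeed $(a,1)\land(a,0)=(a\imp a,0)=(1,0)=0^*$ and $(a,1)\lor(a,0)=(a\imp a,1)=(1,1)=1^*$. Thus the image of $i^*$ generates $\mathcal{A}^*$ as a Boolean algebra, so any Boolean homomorphism $g$ with $g\circ i^*=h$ must satisfy $g(a,1)=h(a)$ and, preserving complements, $g(a,0)=\sneg g(a,1)=\sneg h(a)$; hence $g=h^*$, establishing uniqueness.
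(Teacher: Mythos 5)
Your proposal is correct and follows exactly the paper's approach: the paper's own proof consists precisely of defining $h^*(a,1)=h(a)$ and $h^*(a,0)={\sim}h(a)$ and leaving the remaining verification to the reader. You have simply supplied those omitted details (the case-by-case preservation check via $x\imp y = {\sim}x\lor y$ in $\mathcal{A}'$, and uniqueness from the fact that $(a,0)$ is the complement of $i^*(a)$ in $\mathcal{A}^*$), all of which are sound.
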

\begin{proof}
Let $h^*(a,1)=h(a)$ and $h^*(a,0)={\sim}h(a)$ for every $a \in A$, where $\sim$ denotes the Boolean complement in $\mathcal{A}'$. The details of the proof are left to the reader.
\end{proof}

Consider now the consequence relation $\models_{Mat(\Ksw)}$ as in Definition~\ref{conseqclassNmat}, generated by the class $Mat(\Ksw)$ of Nmatrices associated to swap structures for \ecpl. Thus:

\begin{theorem} [Adequacy of \ecpl\ w.r.t. swap structures] \label{adeq-CPLP1} \ \\
Let $\Gamma \cup \{\varphi\} \subseteq For(\Sigma)$  be a set of formulas of \ecpl. Then:
$\Gamma \vdash_{\ecpl} \varphi$ \ iff \  $\Gamma\models_{Mat(\Ksw)} \varphi$.
\end{theorem}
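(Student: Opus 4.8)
The plan is to prove the two directions of the equivalence separately, with soundness (the forward direction) being the routine part and completeness (the backward direction) being where the real work lies. For soundness, I would show that every axiom schema of \ecpl\ is valid in each Nmatrix $\mathcal{M}(\mathcal{B})$ with $\mathcal{B} \in \Ksw$, and that \MP\ preserves truth in the designated set $D_\mathcal{B}$. The key observation is that the first coordinate of any snapshot behaves exactly like an element of the Boolean subalgebra $\mathbb{A}(\mathcal{B}) = \pi_1[|\mathcal{B}|]$ of $\mathcal{A}$ (Proposition~\ref{pi1-BA}), and that membership in $D_\mathcal{B}$ depends only on this first coordinate being $1$. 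Since Definition~\ref{Swap-str}(i) forces $\pi_1[z \# w] = \{z_1 \# w_1\}$, any valuation $v$ over $\mathcal{M}(\mathcal{B})$ yields, via $\pi_1 \circ v$, a genuine homomorphism from $For(\Sigma_+)$ into the \algcpl\ (indeed Boolean algebra) $\mathbb{A}(\mathcal{B})$. Soundness then follows immediately from the algebraic adequacy of \cplp\ (Theorem~\ref{adeq-CPLP}), because the $\neg$ and $\circ$ axioms do not exist in \ecpl.

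The completeness direction is the main obstacle, and I would attack it by a Lindenbaum-Tarski style construction producing a single swap structure that refutes any non-derivable consequence. Suppose $\Gamma \nvdash_{\ecpl} \varphi$. The plan is to extend $\Gamma$ to a maximal set $\Delta$ of formulas that is nontrivial with respect to $\varphi$ (a standard Lindenbaum extension using the finitary and structural character of \ecpl), so that $\Delta$ is closed under $\vdash_{\ecpl}$ and $\varphi \notin \Delta$. Define on $For(\Sigma_+)$ the relation $\alpha \equiv \beta$ iff $\Delta \vdash \alpha \imp \beta$ and $\Delta \vdash \beta \imp \alpha$; since the $\{\land,\lor,\imp\}$-fragment of \ecpl\ is exactly \cplp, the quotient $\mathcal{A}_\Delta = For(\Sigma_+)/{\equiv}$ is a \algcpl, where the designated class is the class of $\Delta$-provable formulas. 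Because \ecpl\ alone does not allow one to define a bottom element (there are no axioms governing $\neg$ or $\circ$), this quotient need not be a Boolean algebra, and this is precisely the gap that propositions~\ref{propA*} and~\ref{propA*2} were set up to bridge.

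The crucial step is therefore to apply the duplication construction: form the Boolean algebra $\mathcal{A}_\Delta^*$ of Proposition~\ref{propA*} together with the canonical embedding $i^*:\mathcal{A}_\Delta \to \mathcal{A}_\Delta^*$ of Proposition~\ref{propA*2}. Over this Boolean algebra I would build a swap structure $\mathcal{B} \in \Ksw$ whose snapshots are triples $\big(i^*([\psi]), i^*([\neg\psi]), i^*([\circ\psi])\big)$ for $\psi \in For(\Sigma)$, where $[\psi]$ denotes the $\equiv$-class of the $\{\land,\lor,\imp\}$-reduct — one must check that these triples form a subset of $A_\Delta^{*3}$ closed under multioperations satisfying Definition~\ref{Swap-str}, and that $0 \in \pi_1[|\mathcal{B}|]$. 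The map $v(\psi) = \big(i^*([\psi]), i^*([\neg\psi]), i^*([\circ\psi])\big)$ is then a valuation over $\mathcal{M}(\mathcal{B})$ by construction, since the conditions in Definition~\ref{valNmat1} reduce to provable \ecpl-equivalences; in particular $v(\psi) \in D_\mathcal{B}$ iff $i^*([\psi]) = 1^*$ iff $\Delta \vdash \psi$. Hence $v[\Gamma] \subseteq D_\mathcal{B}$ while $v(\varphi) \notin D_\mathcal{B}$, witnessing $\Gamma \not\models_{Mat(\Ksw)} \varphi$. The delicate points to verify are that the swap-structure conditions (especially the nonemptiness and first-coordinate constraints) hold for the constructed multialgebra, and that the freedom in the second and third coordinates of $\neg$ and $\circ$ snapshots is correctly tracked by the provability of $\neg\psi$ and $\circ\psi$ in $\Delta$.
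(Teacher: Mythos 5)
Your overall strategy is the same as the paper's: soundness via the observation that $\pi_1 \circ v$ is a $\Sigma_+$-homomorphism into a \algcpl\ plus Theorem~\ref{adeq-CPLP}, and completeness via a Lindenbaum--Tarski quotient promoted to a Boolean algebra by the duplication construction of Propositions~\ref{propA*} and~\ref{propA*2}, refuted by the valuation $\psi \mapsto \big(i^*([\psi]),i^*([\neg\psi]),i^*([\circ\psi])\big)$. (Your preliminary maximalization of $\Gamma$ to $\Delta$ is harmless but unnecessary: the paper quotients directly by $\Gamma$-interderivability, and nothing in the argument needs maximality.)

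There is, however, a genuine flaw in your completeness construction, located exactly at the point you flag as ``to be verified'' --- and the verification fails. You take as domain of $\mathcal{B}$ only the triples $\big(i^*([\psi]),i^*([\neg\psi]),i^*([\circ\psi])\big)$ for $\psi \in For(\Sigma)$, so that $\pi_1[|\mathcal{B}|] = i^*[A_\Delta] = A_\Delta \times \{1\}$. But the bottom of $\mathcal{A}_\Delta^*$ is $0^* = (1,0)$, which is never of the form $(a,1)$; hence $0^* \notin \pi_1[|\mathcal{B}|]$, and the condition $0 \in \pi_1[B]$ required by Definition~\ref{Swap-str} fails. This is unavoidable: $i^*$ is a homomorphism of classical implicative lattices, not of bounded lattices, and its image misses $0^*$ by construction; even if $\mathcal{A}_\Delta$ happens to have a bottom $0_\Delta$, one has $i^*(0_\Delta) = (0_\Delta,1) \neq (1,0) = 0^*$. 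Consequently the $\mathcal{B}$ you build is not a member of $\Ksw$, its Nmatrix is not in $Mat(\Ksw)$, and it cannot witness $\Gamma \not\models_{Mat(\Ksw)} \varphi$. The repair is precisely what the paper does: do not cut the domain down to the definable triples, but use the \emph{full} swap structure $\mathcal{B}_{(\mathcal{A}_\Delta)^*}^{\ecpl}$ of Definition~\ref{BmaxCPL}, whose domain is all of $((A_\Delta)^*)^3$, so that $0 \in \pi_1[B]$ holds trivially and closure under the multioperations is automatic; your map $v$ is still a valuation over it in the sense of Definition~\ref{valNmat1}, because the full multioperations constrain only the first coordinate, and the counterexample then goes through verbatim. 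A smaller repair: the quotient must be taken on all of $For(\Sigma)$ (equivalently, on $\Sigma_+$-formulas over the extended set of atoms given by the formulas $\neg\psi$ and $\cons\psi$), not on $For(\Sigma_+)$ as literally written, since otherwise the classes $[\neg\psi]$ and $[\cons\psi]$ needed for your triples do not exist.
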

\begin{proof}
`Only if' part (Soundness):
Observe that, if $v$ is a valuation over a swap structure $\mathcal{B}$ for \ecpl\ then $h=\pi_1\circ v:For(\Sigma) \to \mathcal{A}$ is a $\Sigma_+$-homomorphism such that $h(\gamma)=1$ iff $v(\gamma) \in D_\mathcal{B}$, by the very definitions.
Thus, suppose that $\Gamma \vdash_{\ecpl} \varphi$, and let  $v$ is a valuation over  $\mathcal{B} \in \Ksw$ such that $v[\Gamma] \subseteq D_\mathcal{B}$. As observed above, $h=\pi_1\circ v$ is a $\Sigma_+$-homomorphism such that $h[\Gamma] \subseteq \{1\}$ and so, by Theorem~\ref{adeq-CPLP}, $h(\varphi)=1$. Hence $v(\varphi) \in D_\mathcal{B}$, showing that  $\Gamma\models_{Mat(\Ksw)} \varphi$.\\ [2mm]
`If' part (Completeness): Suppose that $\Gamma \nvdash_{\ecpl} \varphi$. Define in $For(\Sigma)$ the following relation: $\alpha \equiv_\Gamma \beta$ iff $\Gamma \vdash_{\ecpl} \alpha \imp \beta$ and $\Gamma \vdash_{\ecpl} \beta \imp \alpha$. It is clearly an equivalence relation. Let $A_\Gamma \defin For(\Sigma)/_{\equiv_\Gamma}$ be the quotient set, and define over $A_\Gamma$ the following operations: $[\alpha]_{\Gamma} \,\#\, [\beta]_{\Gamma} \defin [\alpha \# \beta]_{\Gamma}$, for $\# \in \{\land,\lor,\imp\}$  (here, $[\alpha]_\Gamma$ denotes the equivalence class of $\alpha$ w.r.t. $\equiv_\Gamma$). These operations are clearly well-defined, and so they induce a structure of \algcpl\ over the set   $A_\Gamma$. Let $\mathcal{A}_\Gamma$ be the obtained \algcpl, and let
 $(\mathcal{A}_\Gamma)^*$ be the Boolean algebra induced by $\mathcal{A}_\Gamma$ as in Definition~\ref{defoperA*}. Let  $\mathcal{B}_{(\mathcal{A}_\Gamma)^*}^{\ecpl}$ be the corresponding swap structure in \Ksw\ as in Definition~\ref{BmaxCPL}, and let $\mathcal{M}_\Gamma^{\ecpl} \defin\mathcal{M}(\mathcal{B}_{(\mathcal{A}_\Gamma)^*}^{\ecpl})$. Consider now a mapping $v_\Gamma^*:For(\Sigma) \to   (A_\Gamma^*)^3$ given by $v_\Gamma^*(\alpha) =(([\alpha]_{\Gamma},1),([\neg \alpha]_{\Gamma},1), ([\circ \alpha]_{\Gamma},1))$. Then, it is easy to see that $v_\Gamma^*$  is a valuation over the Nmatrix $\mathcal{M}_\Gamma^{\ecpl}$ such that $v_\Gamma^*(\alpha) \in D_{\mathcal{B}_{(\mathcal{A}_\Gamma)^*}^{\ecpl}}$ iff $\Gamma \vdash_{\ecpl} \alpha$, for every $\alpha$. Hence, $v_\Gamma^*(\gamma) \in D_{\mathcal{B}_{(\mathcal{A}_\Gamma)^*}^{\ecpl}}$ for every $\gamma \in \Gamma$, but  $v_\Gamma^*(\varphi) \not\in D_{\mathcal{B}_{(\mathcal{A}_\Gamma)^*}^{\ecpl}}$. From this $\Gamma\not\models_{Mat(\Ksw)} \varphi$, by Definition~\ref{conseqclassNmat}.
\end{proof}

\

\section{Swap structures for \mbc} \label{sect-sw-mbc}

A special subclass of \Ksw\ is formed by the swap structures for \mbc, defined as follows:

\begin{definition} \label{univ-sw-mbc}
The universe of swap structures  for \mbc\ over a Boolean algebra $\mathcal{A}$ is the set $\textsc{B}_\mathcal{A}^\mbc=\{z \in A^3 \ : \ z_1 \vee z_2=1 \ \mbox{ and } \ z_1 \wedge z_2 \wedge z_3=0  \}$.
\end{definition}

\begin{definition} \label{Swap-mbc}
Let $\mathcal{A}$ be a Boolean algebra.
A swap structure for \ecpl\ over $\mathcal{A}$ is said to be a
{\em swap structure for \mbc\ over $\mathcal{A}$} if its domain is included in $\textsc{B}_\mathcal{A}^\mbc$. 
Let $\Kmbc=\{{\cal B} \in \Ksw \ : \ {\cal B} \  \mbox{ is a swap structure for \mbc}\}$ be the class of swap structures for \mbc.
\end{definition}

If $\mathcal{M}$ is an Nmatrix and ({\bf ax}) is an axiom schema over the same signature, we say that  $\mathcal{M}$ {\em validates} ({\bf ax}) whenever $\models_{\mathcal{M}} \gamma$ for every instance $\gamma$ of ({\bf ax}). Then:

\begin{proposition} \label{CarAxKmbc}
$\Kmbc = \{ \mathcal{B} \in \Ksw \ : \  \  
\mbox{$\mathcal{M}(\mathcal{B})$ validates (\axtnd) and  (\axexp)} \}$.
\end{proposition}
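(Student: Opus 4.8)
The plan is to reduce, for a fixed swap structure $\mathcal{B} \in \Ksw$ over a Boolean algebra $\mathcal{A}$, the validity of each of the two axioms in the associated Nmatrix $\mathcal{M}(\mathcal{B})$ to an algebraic condition on the first coordinates of the snapshots of $\mathcal{B}$, and then to recognize these two conditions as exactly the clauses $z_1 \vee z_2 = 1$ and $z_1 \wedge z_2 \wedge z_3 = 0$ defining $\textsc{B}_\mathcal{A}^\mbc$. Recall from Definition~\ref{Swap-mbc} that $\mathcal{B} \in \Kmbc$ holds precisely when $|\mathcal{B}| \subseteq \textsc{B}_\mathcal{A}^\mbc$, that is, when every $z \in |\mathcal{B}|$ satisfies both clauses; so it suffices to match each clause with one axiom.

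The key step is a direct computation of the first coordinate of the value of each axiom instance under an arbitrary valuation $v$ over $\mathcal{M}(\mathcal{B})$ (in the sense of Definition~\ref{valNmat1}), using clauses (i)--(iii) of Definition~\ref{Swap-str}. Those clauses pin down exactly the first coordinate of every multioperation, so the first coordinate of any compound value is completely determined, independently of the non-deterministic choices made by $v$. Concretely, writing $z = v(\alpha)$ and $t = v(\beta)$, I would first check that $v(\alpha \lor \neg \alpha)$ has first coordinate $z_1 \vee z_2$, since $v(\neg\alpha)$ has first coordinate $z_2$ while $\lor_\mathcal{B}$ acts as $\vee$ on first coordinates. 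Unwinding (\axexp) from the inside out in the same way gives that $v\big(\cons\alpha \imp (\alpha \imp (\neg\alpha \imp \beta))\big)$ has first coordinate $z_3 \imp \big(z_1 \imp (z_2 \imp t_1)\big)$.

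Next I would translate membership in $D_\mathcal{B}$, recalling that $v(\gamma) \in D_\mathcal{B}$ iff the first coordinate of $v(\gamma)$ equals $1$. Since every snapshot $z \in |\mathcal{B}|$ is realized as $v(p)$ for a propositional variable $p$ and a suitable valuation $v$ (such valuations exist for any assignment of the variables because the multioperations are nonempty), validity of (\axtnd) is equivalent to $z_1 \vee z_2 = 1$ for every $z \in |\mathcal{B}|$, which is the first defining clause. For (\axexp) I would use the Boolean identity stating that $z_3 \imp (z_1 \imp (z_2 \imp t_1)) = 1$ iff $z_1 \wedge z_2 \wedge z_3 \leq t_1$. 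If (\axexp) is valid, this inequality must hold for every $t_1 \in \pi_1[|\mathcal{B}|]$; taking $t_1 = 0$, which is available because $0 \in \pi_1[|\mathcal{B}|]$ by Definition~\ref{Swap-str}, forces $z_1 \wedge z_2 \wedge z_3 = 0$, the second clause. Conversely, if $z_1 \wedge z_2 \wedge z_3 = 0$ for every $z \in |\mathcal{B}|$, then the first coordinate computed above equals $1$ for every $t_1$, so (\axexp) is valid.

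Putting the two equivalences together yields that $\mathcal{M}(\mathcal{B})$ validates both (\axtnd) and (\axexp) iff every $z \in |\mathcal{B}|$ satisfies $z_1 \vee z_2 = 1$ and $z_1 \wedge z_2 \wedge z_3 = 0$, i.e. iff $\mathcal{B} \in \Kmbc$, as required. I expect the only genuine subtlety to lie in (\axexp): one must quantify over the value $t = v(\beta)$ and exploit the guaranteed presence of $0$ in $\pi_1[|\mathcal{B}|]$ in order to collapse the inequality $z_1 \wedge z_2 \wedge z_3 \leq t_1$ into the equality $z_1 \wedge z_2 \wedge z_3 = 0$; the corresponding argument for (\axtnd) is comparatively immediate. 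The observation that the first coordinates of compound values are insensitive to the non-deterministic choices, a direct consequence of the shape of Definition~\ref{Swap-str}, is what makes all of these computations well-defined.
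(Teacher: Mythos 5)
Your proposal is correct and follows essentially the same route as the paper's proof: both reduce validity of (\axtnd) and (\axexp) to the first-coordinate computations forced by Definition~\ref{Swap-str}, and both use the guaranteed availability of $0$ in $\pi_1[|\mathcal{B}|]$ (the paper via a second variable $q$ with $\pi_1(v(q))=0$, you via the instantiation $t_1=0$) to collapse the inequality $z_1 \wedge z_2 \wedge z_3 \leq t_1$ into the defining equation of $\textsc{B}_\mathcal{A}^\mbc$. The only cosmetic difference is that you organize the argument as two axiom-by-axiom equivalences while the paper splits it into the two inclusions; the mathematical content is the same.
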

\begin{proof}
Let  $\mathcal{B}$ be a swap structure for \mbc, and let $v$ be a valuation over $\mathcal{B}$. By definition of $\textsc{B}_\mathcal{A}^\mbc$ it follows that $\pi_1(v(\alpha)) \vee \pi_2(v(\alpha))=1$ and $\pi_1(v(\alpha)) \wedge \pi_2(v(\alpha)) \wedge \pi_3(v(\alpha))=0$. 
Let $\gamma=\alpha \vee \neg\alpha$ and $\gamma'=\cons\alpha \to (\alpha \to(\neg \alpha \to \beta))$ be  instances of axioms (\axtnd) and  (\axexp), respectively. By Definition~\ref{valNmat1} it follows that $\pi_1(v(\cons\alpha))=\pi_3(v(\alpha))$ and $\pi_1(v(\neg\alpha))=\pi_2(v(\alpha))$. Hence $\pi_1(v(\gamma)) = \pi_1(v(\alpha)) \vee  \pi_1(v(\neg\alpha))=\pi_1(v(\alpha)) \vee \pi_2(v(\alpha))=1$, obtaining so that $\mathcal{B}$ validates (\axtnd). On the other hand, $\pi_1(v(\gamma')) =  \pi_3(v(\alpha)) \to (\pi_1(v(\alpha)) \to (\pi_2(v(\alpha)) \to \pi_1(v(\beta))))=1$, since  $\pi_1(v(\alpha)) \wedge \pi_2(v(\alpha)) \wedge \pi_3(v(\alpha))=0$. This means that $\mathcal{B}$ validates (\axexp).

Conversely, let $\mathcal{B} \in \Ksw$ such that $\mathcal{M}(\mathcal{B})$ validates (\axtnd) and  (\axexp), and let $p$ and $q$ be two different propositional variables. Let $z \in |\mathcal{B}|$, and  consider a valuation $v$ over  $\mathcal{B}$ such that $v(p)=z$ and $\pi_1(v(q))=0$ (this is always possible since, by Definition~\ref{Swap-str}, $0 \in \pi_1[|\mathcal{B}|]$). Then $v(\neg p) \in \{w \in |\mathcal{B}| \ : \ w_1 = \pi_2(v(p))\} = \{w \in |\mathcal{B}| \ : \ w_1 = z_2\}$ and so  $v(p \lor\neg p) \in \{ u \in |\mathcal{B}| \ : \ u_1 = z_1 \vee \pi_1(v(\neg p))\} = \{ u \in |\mathcal{B}| \ : \ u_1 = z_1 \vee z_2 \}$.
But $v(p \lor\neg p) \in D_\mathcal{B}$, by hypothesis, then $\pi_1(v(p \lor\neg p))=z_1 \lor z_2=1$. On the other hand $v(\cons p \to (p \to (\neg p \to q))) \in D_\mathcal{B}$, since by  hypothesis  $\mathcal{B}$ validates (\axexp). Hence, $\pi_1(v(\cons p \to (p \to (\neg p \to q))))=1$. From this, and reasoning as above, $\pi_3(v(p)) \to (\pi_1(v(p)) \to (\pi_2(v(p)) \to 0))=1$. This means that $\pi_1(v(p)) \land \pi_2(v(p)) \land \pi_3(v(p))=0$, that is, $z_1 \land z_2 \land z_3=0$. Therefore $|\mathcal{B}| \subseteq \textsc{B}_\mathcal{A}^\mbc$, whence $\mathcal{B} \in \Kmbc$, by Definition~\ref{Swap-mbc}. 
\end{proof}

\begin{definition}
The full subcategory in  \sw\ of swap structures for \mbc\ will be denoted by \swmbc. 
\end{definition}

Clearly,  \swmbc\ is a full subcategory in  $\malg(\Sigma)$.
Thus,  the class of objects of \swmbc\  is $\Kmbc$, and the morphisms between two given swap structures for \mbc\ are the homomorphisms between them, seeing as multialgebras over $\Sigma$.

\begin{definition} \label{maxBmbC}
Let  $\mathcal{A}$ be a Boolean algebra. The {\em full  swap structure  for \mbc\ over  $\mathcal{A}$}, denoted by $\mathcal{B}_\mathcal{A}^\mbc$, is the unique swap structure  for \mbc\ with domain $\textsc{B}_\mathcal{A}^\mbc$ such that, for every $z$ and $w$ in $\textsc{B}_\mathcal{A}^\mbc$:

\begin{itemize}
 \item[(i)] $z\#w = \{u\in \textsc{B}_\mathcal{A}^\mbc \ : \ u_1=z_1\#w_1\}$, for each $\#\in \{\wedge,\vee, \to\}$;
\item[(ii)] $\neg (z) = \{u\in \textsc{B}_\mathcal{A}^\mbc \ : \ u_1=z_2\}$;
 \item[(iii)] $\circ (z) = \{u\in \textsc{B}_\mathcal{A}^\mbc \ : \ u_1=z_3\}$.
\end{itemize}
\end{definition}

Let $\{\mathcal{A}_i \ : \ i \in I\}$ be a family of Boolean algebras such that $I \neq \emptyset$, and $\mathcal{A}_i = \langle A_i, \wedge_i, \vee_i, \to_i, 0_i, 1_i \rangle$ for every $i \in I$.
Let $A=\prod_{i \in I} A_i$ be the standard construction of the cartesian product of the family of sets $\{A_i \ : \ i \in I\}$ with canonical projections $\pi_i:A \to A_i$ for every $i \in I$. Let $\mathcal{A}$ be the algebra with domain $A$ such that, for every $a,b \in A$ and $\# \in \{\wedge,\vee, \to\}$, $a \# b \in A$ is given by $(a \# b)(i) = a(i) \#_i b(i)$, for every $i \in I$. Let $0_\mathcal{A}, 1_\mathcal{A} \in A$ such that  $0_\mathcal{A}(i) = 0_i$ and $1_\mathcal{A}(i) = 1_i$, for every $i \in I$.  It is well known that $\mathcal{A} = \langle A, \wedge, \vee, \to, 0, 1 \rangle$ is a Boolean algebra where the canonical projections $\pi_i:A \to A_i$ are homomorphisms of  Boolean algebras such  that $\langle \mathcal{A}, \{\pi_i \ : \ i \in I\}\rangle$ is the product of the family $\{\mathcal{A}_i \ : \ i \in I\}$ in the category of Boolean algebras. The Boolean algebra $\mathcal{A}$ will be denoted by  $\prod_{i \in I} \mathcal{A}_i$. The case for $I=\emptyset$ is obvious, producing the one element Boolean algebra.

Consider again a family $\mathcal{F} = \{\mathcal{A}_i \ : \ i \in I\}$ of Boolean algebras such that $I \neq \emptyset$, and let $\mathcal{A}=\prod_{i \in I} \mathcal{A}_i$ be its product in the category of Boolean algebras, as described above. We want to show that the product $\mathcal{B}=\prod_{i \in I}\mathcal{B}_{\mathcal{A}_i}^\mbc$  in  $\malg(\Sigma)$ (recall Proposition~\ref{prods})  of the family of multialgebras $\{\mathcal{B}_{\mathcal{A}_i}^\mbc \ : \ i \in I\}$ is isomorphic  in  $\malg(\Sigma)$ (recall Proposition~\ref{isos}) to the multialgebra $\mathcal{B}_\mathcal{A}^\mbc$ (recall Definition~\ref{maxBmbC}).

To begin with, some notation is required. Let $\pi^i_{(j)}:(A_i)^3 \to A_i$ be the canonical projections, for $i \in I$ and $1 \leq j \leq 3$. Observe that, if $a \in |\mathcal{B}|=\prod_{i \in I} \textsc{B}_{\mathcal{A}_i}^\mbc$ and $i \in I$ then $a(i) \in \textsc{B}_{\mathcal{A}_i}^\mbc \subseteq (A_i)^3$. Thus, for every $1 \leq j \leq 3$ let  $z_j \in \prod_{i \in I} A_i$ such that, for every $i \in I$, $z_j(i)=\pi^i_{(j)}(a(i))$. Then $z=(z_1,z_2,z_3)$ belongs to  $|\mathcal{A}|^3$. Moreover, it can be proven that $z$ belongs to  $\textsc{B}_\mathcal{A}^\mbc$. Indeed, for every $i \in I$, $z_1(i) \vee_i z_2(i) = \pi^i_{(1)}(a(i)) \vee_i  \pi^i_{(2)}(a(i)) = 1_i$ since $a(i) \in \textsc{B}_{\mathcal{A}_i}^\mbc$. From this, $z_1 \vee z_2 = 1_\mathcal{A}$. Analogously it can be proven that $z_1 \wedge z_2 \wedge z_3 = 0_\mathcal{A}$. 

This allows to define a mapping $f_\mathcal{F}:\prod_{i \in I} \textsc{B}_{\mathcal{A}_i}^\mbc \to \textsc{B}_{\prod_{i \in I} \mathcal{A}_i}^\mbc$ such that, for every  $a \in \prod_{i \in I} \textsc{B}_{\mathcal{A}_i}^\mbc$, $f_\mathcal{F}(a)=z$ where $z=(z_1,z_2,z_3)$ is defined as above.

\begin{proposition} \label{isofam}
Let $\mathcal{F} = \{\mathcal{A}_i \ : \ i \in I\}$ be  a family of Boolean algebras such that $I \neq \emptyset$. Then, the mapping $f_\mathcal{F}:\prod_{i \in I} \textsc{B}_{\mathcal{A}_i}^\mbc \to \textsc{B}_{\prod_{i \in I} \mathcal{A}_i}^\mbc$ is an isomorphism in $\malg(\Sigma)$.
\end{proposition}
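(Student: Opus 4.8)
The plan is to invoke Proposition~\ref{isos}, which reduces the claim to showing that $f_\mathcal{F}$ is simultaneously a bijective function and a full homomorphism $f_\mathcal{F}:\prod_{i \in I}\mathcal{B}_{\mathcal{A}_i}^\mbc \to_s \mathcal{B}_{\prod_{i \in I}\mathcal{A}_i}^\mbc$. The discussion preceding the statement already establishes that $f_\mathcal{F}$ is well-defined, i.e.\ that $f_\mathcal{F}(a)$ genuinely lands in $\textsc{B}_{\prod_{i\in I}\mathcal{A}_i}^\mbc$. First I would check bijectivity by exhibiting the inverse explicitly: given $z=(z_1,z_2,z_3)\in\textsc{B}_{\prod_{i\in I}\mathcal{A}_i}^\mbc$ (so that each $z_j\in\prod_{i\in I}A_i$), define $g(z)\in\prod_{i\in I}(A_i)^3$ by $g(z)(i)=(z_1(i),z_2(i),z_3(i))$. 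That $g(z)(i)\in\textsc{B}_{\mathcal{A}_i}^\mbc$ for each $i$ follows by evaluating the identities $z_1\vee z_2=1_\mathcal{A}$ and $z_1\wedge z_2\wedge z_3=0_\mathcal{A}$ at the coordinate $i$, which yields $z_1(i)\vee_i z_2(i)=1_i$ and $z_1(i)\wedge_i z_2(i)\wedge_i z_3(i)=0_i$. A direct inspection shows that $g$ and $f_\mathcal{F}$ are mutually inverse, so $f_\mathcal{F}$ is a bijection.

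For fullness I would treat the five multioperations uniformly. Writing $\mathcal{B}=\prod_{i\in I}\mathcal{B}_{\mathcal{A}_i}^\mbc$, recall from Proposition~\ref{prods} that for a binary $\#\in\{\wedge,\vee,\to\}$ and $a,b\in|\mathcal{B}|$ one has $a\#^\mathcal{B}b=\prod_{i\in I}\big(a(i)\#^{\mathcal{B}_{\mathcal{A}_i}^\mbc}b(i)\big)$, and by Definition~\ref{maxBmbC}(i) the $i$-th factor is the set of $u\in\textsc{B}_{\mathcal{A}_i}^\mbc$ with $u_1=\pi^i_{(1)}(a(i))\#_i\pi^i_{(1)}(b(i))$. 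Setting $z=f_\mathcal{F}(a)$ and $z'=f_\mathcal{F}(b)$, an element $c$ of $a\#^\mathcal{B}b$ is exactly a choice function with $c(i)\in\textsc{B}_{\mathcal{A}_i}^\mbc$ and $\pi^i_{(1)}(c(i))=(z_1\#z'_1)(i)$ for every $i$; hence $f_\mathcal{F}[a\#^\mathcal{B}b]$ equals the set of those $u\in\textsc{B}_{\prod_{i\in I}\mathcal{A}_i}^\mbc$ with $u_1=z_1\#z'_1$, which by Definition~\ref{maxBmbC}(i) applied to $\mathcal{B}_{\prod_{i\in I}\mathcal{A}_i}^\mbc$ is precisely $z\#^{\mathcal{B}_{\prod_{i\in I}\mathcal{A}_i}^\mbc}z'=f_\mathcal{F}(a)\#f_\mathcal{F}(b)$. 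The unary cases for $\neg$ and $\circ$ are identical, reading off the second and third coordinates in place of the first.

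The routine content is the two inclusions of this set equality. The $\subseteq$ direction is immediate from the defining first-coordinate condition together with the fact, noted before the statement, that $f_\mathcal{F}$ carries tuples of factor snapshots into $\textsc{B}_{\prod_{i\in I}\mathcal{A}_i}^\mbc$. The step I expect to be the real content is the $\supseteq$ direction, which asserts that every snapshot $u$ over the product Boolean algebra having the prescribed first coordinate actually arises from a tuple of snapshots over the factors. This is exactly where surjectivity of $f_\mathcal{F}$ is used: given such a $u$, its preimage $c=g(u)$ satisfies $c(i)\in\textsc{B}_{\mathcal{A}_i}^\mbc$ with the correct first coordinate in each factor, so $c\in a\#^\mathcal{B}b$ and $f_\mathcal{F}(c)=u$. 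The whole argument hinges on the observation that the full swap structures $\mathcal{B}^\mbc$ constrain only the first coordinate of each multioperation, while the membership condition defining $\textsc{B}_\bullet^\mbc$ is tested coordinatewise; these two facts are precisely what allow the product to commute past the swap construction.
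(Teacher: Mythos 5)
Your proof is correct and takes essentially the same approach as the paper's: it establishes bijectivity by exhibiting the coordinatewise inverse, verifies the full-homomorphism identities $f_\mathcal{F}[a\,\#\,b]=f_\mathcal{F}(a)\,\#\,f_\mathcal{F}(b)$ (and the analogues for $\neg$ and $\circ$), and concludes via Proposition~\ref{isos}. The paper declares these verifications ``clear'' and leaves the details to the reader; your write-up supplies precisely those omitted details, correctly locating the real content in the $\supseteq$ inclusion, where surjectivity of $f_\mathcal{F}$ onto the snapshots over the product Boolean algebra is what is actually used.
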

\begin{proof}
Clearly $f_\mathcal{F}$ is a bijective mapping such that its inverse mapping is given by  $f_\mathcal{F}^{-1}:\textsc{B}_{\prod_{i \in I} \mathcal{A}_i}^\mbc \to \prod_{i \in I} \textsc{B}_{\mathcal{A}_i}^\mbc$ where  $f_\mathcal{F}^{-1}(z_1,z_2,z_3) = a$, with $a(i)=(z_1(i),z_2(i),z_3(i))$ for every $i \in I$. It is also clear that, for every $a,b \in \prod_{i \in I} \textsc{B}_{\mathcal{A}_i}^\mbc$ and $\#\in \{\wedge,\vee, \to\}$:
\begin{itemize}
 \item[(i)] $f_\mathcal{F}[a\#b] = f_\mathcal{F}(a) \# f_\mathcal{F}(b)$;
\item[(ii)] $f_\mathcal{F}[\neg a] = \neg f_\mathcal{F}(a)$; and
 \item[(iii)] $f_\mathcal{F}[\circ a] = \circ f_\mathcal{F}(a)$
\end{itemize}
(the details are left to the reader). The result follows from Proposition \ref{isos}.
\end{proof}

\begin{proposition} \label{prods-sw}
The category \swmbc\  has arbitrary products.
\end{proposition}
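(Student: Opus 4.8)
The plan is to reduce everything to the ambient category $\malg(\Sigma)$, which already has all products by Proposition~\ref{prods}, and then to show that the $\malg(\Sigma)$-product of a family of swap structures for \mbc\ is again (isomorphic to) an object of \swmbc. Since \swmbc\ is a \emph{full} subcategory of $\malg(\Sigma)$, any such product cone that happens to live in \swmbc\ is automatically a product there, because the universal property quantifies over all objects and morphisms of $\malg(\Sigma)$, and the morphisms between objects of \swmbc\ are exactly the $\malg(\Sigma)$-morphisms. First I would dispose of the case $I=\emptyset$: the full swap structure $\mathcal{B}_\mathcal{T}^\mbc$ over the one-element (trivial) Boolean algebra $\mathcal{T}$ is a one-element multialgebra, hence a terminal object in $\malg(\Sigma)$, and it belongs to $\Kmbc$; so it is terminal in \swmbc.

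For $I\neq\emptyset$, let $\{\mathcal{B}_i : i \in I\}$ be a family in $\Kmbc$, each $\mathcal{B}_i$ being a swap structure for \mbc\ over a Boolean algebra $\mathcal{A}_i$, and set $\mathcal{A}=\prod_{i\in I}\mathcal{A}_i$. Let $\langle\mathcal{B},\{\rho_i\}\rangle$ be the product of the $\mathcal{B}_i$ in $\malg(\Sigma)$ from Proposition~\ref{prods}. The first key step is the observation that products preserve submultialgebras: since $\mathcal{B}_i \subseteq \mathcal{B}_{\mathcal{A}_i}^\mbc$ for each $i$ (by Definitions~\ref{Swap-mbc} and~\ref{maxBmbC}, exactly as in Remark~\ref{obs pi(B)}) and the multioperations of a product are computed coordinatewise, it follows that $\mathcal{B}$ is a submultialgebra of $\prod_{i\in I}\mathcal{B}_{\mathcal{A}_i}^\mbc$. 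The second key step is to feed this into Proposition~\ref{isofam}: composing the inclusion $\mathcal{B}\hookrightarrow\prod_{i\in I}\mathcal{B}_{\mathcal{A}_i}^\mbc$ with the isomorphism $f_\mathcal{F}:\prod_{i\in I}\mathcal{B}_{\mathcal{A}_i}^\mbc \to \mathcal{B}_\mathcal{A}^\mbc$ yields an injective homomorphism $g:\mathcal{B}\to\mathcal{B}_\mathcal{A}^\mbc$. By the final clause of Proposition~\ref{epi-mono}, its corestriction $\bar g:\mathcal{B}\to\mathcal{B}'$ onto $\mathcal{B}'\defin g(\mathcal{B})$ is an isomorphism in $\malg(\Sigma)$, where $\mathcal{B}'$ is a submultialgebra of $\mathcal{B}_\mathcal{A}^\mbc$.

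Next I would verify that $\mathcal{B}'\in\Kmbc$. Its domain is contained in $\textsc{B}_\mathcal{A}^\mbc$ because $\mathcal{B}'\subseteq\mathcal{B}_\mathcal{A}^\mbc$, so by Definition~\ref{Swap-mbc} it only remains to check that $\mathcal{B}'$ is a swap structure for \ecpl\ in the sense of Definition~\ref{Swap-str}. The inclusions in conditions (i)--(iii) are inherited directly from $\mathcal{B}_\mathcal{A}^\mbc$; nonemptiness of every multioperation of $\mathcal{B}'$ holds because these are the $\bar g$-images of the corresponding multioperations of $\mathcal{B}$, which are nonempty, being products of the nonempty multioperations of the $\mathcal{B}_i$; and $0_\mathcal{A}\in\pi_1[|\mathcal{B}'|]$ follows by choosing for each $i$ a snapshot $z^i\in|\mathcal{B}_i|$ with $\pi_1(z^i)=0_i$ (possible since each $\mathcal{B}_i$ is a swap structure) and applying $g$ to the tuple $(z^i)_{i\in I}$. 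Finally, since $\bar g$ is an isomorphism, $\langle\mathcal{B}',\{\rho_i\circ\bar g^{-1}\}\rangle$ is again a product of $\{\mathcal{B}_i\}$ in $\malg(\Sigma)$; as $\mathcal{B}'$ and every $\mathcal{B}_i$ lie in the full subcategory \swmbc, this cone is the desired product in \swmbc.

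The routine parts (coordinatewise preservation of submultialgebras, and the checking of conditions (i)--(iii) and the two closure conditions for $\mathcal{B}'$) are straightforward from the definitions. The main point demanding care is the bookkeeping that transports the universal property from $\malg(\Sigma)$ to \swmbc\ through the two isomorphisms $f_\mathcal{F}$ and $\bar g$: the abstract $\malg(\Sigma)$-product $\mathcal{B}$ need not itself be a triple-based swap structure, so the work is precisely in exhibiting the concrete object $\mathcal{B}'\in\Kmbc$ carrying the product structure, which is exactly where Propositions~\ref{isofam} and~\ref{epi-mono} are indispensable.
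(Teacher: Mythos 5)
Your proof is correct and follows essentially the same route as the paper's: reduce to the $\malg(\Sigma)$-product of Proposition~\ref{prods}, transport it into $\mathcal{B}_{\prod_{i\in I}\mathcal{A}_i}^\mbc$ via the isomorphism of Proposition~\ref{isofam} together with the epi-mono factorization of Proposition~\ref{epi-mono}, and then use fullness of \swmbc\ to transfer the universal property. The only difference is that you spell out the verification that the image $\mathcal{B}'$ actually lies in $\Kmbc$ (and the empty-product case), details the paper leaves implicit.
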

\begin{proof} Let $\mathcal{F} = \{\mathcal{B}_i \ : \ i \in I\}$ be a family of swap structures for \mbc, and assume that $I \neq \emptyset$ (the case $I=\emptyset$ is trivial). By definition of $\Kmbc$, for each $i \in I$ there is a Boolean algebra $\mathcal{A}_i $ such that $\mathcal{B}_i  \subseteq \mathcal{B}_{\mathcal{A}_i}^\mbc$. Since \swmbc\ is a subcategory of $\malg(\Sigma)$ (where $\Sigma$ is the signature of \mbc), and the latter has arbitrary products (cf. Proposition \ref{prods}), there exists the product $\langle \mathcal{B}, \{\pi_i \ : \ i \in I\}\rangle$ of $\mathcal{F}$ in $\malg(\Sigma)$. By the proof of  Proposition~\ref{prods}, it is possible to define $\mathcal{B}$ in such a way that $\mathcal{B} \subseteq \prod_{i \in I} \mathcal{B}_{\mathcal{A}_i}^\mbc$, where the multialgebra $\prod_{i \in I} \mathcal{B}_{\mathcal{A}_i}^\mbc$ is also constructed as in the proof of Proposition~\ref{prods}. Let $h:\mathcal{B} \to \prod_{i \in I} \mathcal{B}_{\mathcal{A}_i}^\mbc$ be the inclusion homomorphism. Now, let $\mathcal{G} = \{\mathcal{A}_i \ : \ i \in I\}$ and let $f_\mathcal{G}:\prod_{i \in I} \mathcal{B}_{\mathcal{A}_i}^\mbc \to \mathcal{B}_{\prod_{i \in I} \mathcal{A}_i}^\mbc$ be the isomorphism in $\malg(\Sigma)$ of Proposition~\ref{isofam}. Then, the  homomorphism $f_\mathcal{G} \circ h:\mathcal{B} \to \mathcal{B}_{\prod_{i \in I} \mathcal{A}_i}^\mbc$ is an injective function
$$\xymatrix{\mathcal{B} \ar@{^{(}->}[rr]^{\hspace*{-8mm}h} \ar@{_{(}->}[rrd]_{f_\mathcal{G} \circ h} && \prod_{i \in I} \mathcal{B}_{\mathcal{A}_i}^\mbc \ar[d]^{f_\mathcal{G}}\\
&& \mathcal{B}_{\prod_{i \in I} \mathcal{A}_i}^\mbc}$$
and so it induces an isomorphism  $\overline{f_\mathcal{G} \circ h}$ in $\malg(\Sigma)$ between $\mathcal{B}$ and the submultialgebra $\mathcal{B}'=(f_\mathcal{G} \circ h)(\mathcal{B})$ of $\mathcal{B}_{\prod_{i \in I} \mathcal{A}_i}^\mbc$, by Proposition~\ref{epi-mono}. 
This means that $\langle \mathcal{B}', \{\pi_i \circ (\overline{f_\mathcal{G} \circ h})^{-1} \ : \ i \in I\}\rangle$ is another realization of the product of $\mathcal{F}$ in $\malg(\Sigma)$. 
$$\xymatrix{&\mathcal{B}  \ar[ld]_{\pi_i}\ar@{^{(}->}[rr]^{\hspace*{-8mm}f_\mathcal{G} \circ h} && \mathcal{B}_{\prod_{i \in I} \mathcal{A}_i}^\mbc\\
\mathcal{B}_i&&& \mathcal{B}' \ar[llu]^{(\overline{f_\mathcal{G} \circ h})^{-1}} \ar@{^{(}->}[u] \ar[lll]^{\pi_i \circ (\overline{f_\mathcal{G} \circ h})^{-1}} }$$
Given that  \swmbc\ is a full subcategory of $\malg(\Sigma)$ and by observing that $\mathcal{B}'$ is an object of  \swmbc, it follows that    $\langle \mathcal{B}', \{\pi_i \circ (\overline{f_\mathcal{G} \circ h})^{-1} \ : \ i \in I\}\rangle$ is a construction for the product  in \swmbc\ of the family $\mathcal{F}$.
\end{proof}

\

Let \balg\ be the category of Boolean algebras defined over signature $\Sigma_{\rm BA}=\{\land, \lor, \to,0,1\}$, with Boolean algebras homomorphisms as their morphisms. 
Then, the assignment $\mathcal{A} \in \balg \ \mapsto \ \mathcal{B}_\mathcal{A}^\mbc \in \swmbc$ is functorial, as it will be stated in Corollary~\ref{functor} below.

\begin{proposition} Let $f:\mathcal{A} \to \mathcal{A}'$ be a homomorphism between Boolean algebras. Then it induces a homomorphism  $f_\ast:\mathcal{B}_\mathcal{A}^\mbc \to \mathcal{B}_{\mathcal{A}'}^\mbc$ of multialgebras given by $f_\ast(z) = (f(z_1),f(z_2),f(z_3))$. Moreover, $(f \circ g)_\ast = f_\ast \circ g_\ast$ and $(id_\mathcal{A})_\ast = id_{\mathcal{B}_\mathcal{A}^\mbc}$, where $id_\mathcal{A}:\mathcal{A} \to \mathcal{A}$ and $id_{\mathcal{B}_\mathcal{A}^\mbc}:\mathcal{B}_\mathcal{A}^\mbc \to \mathcal{B}_\mathcal{A}^\mbc$ are the corresponding identity homomorphisms.
\end{proposition}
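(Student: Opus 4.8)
The plan is to verify three things in turn: that $f_\ast$ is well-defined as a map $\textsc{B}_\mathcal{A}^\mbc \to \textsc{B}_{\mathcal{A}'}^\mbc$; that it satisfies the homomorphism condition of Definition~\ref{defHom}(i) for each connective in $\Sigma$; and that the two functoriality identities hold. Throughout, the essential observation is that the multioperations of a full swap structure for \mbc\ (Definition~\ref{maxBmbC}) constrain only the first coordinate of their outputs, whereas $f$ acts coordinatewise and preserves the Boolean operations together with the constants $0$ and $1$.

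First I would check well-definedness. Given $z \in \textsc{B}_\mathcal{A}^\mbc$, by Definition~\ref{univ-sw-mbc} we have $z_1 \vee z_2 = 1$ and $z_1 \wedge z_2 \wedge z_3 = 0$ in $\mathcal{A}$. Applying $f$ and using that it is a homomorphism of Boolean algebras yields $f(z_1) \vee f(z_2) = f(z_1 \vee z_2) = f(1) = 1$ and $f(z_1) \wedge f(z_2) \wedge f(z_3) = f(0) = 0$ in $\mathcal{A}'$, so $f_\ast(z) = (f(z_1),f(z_2),f(z_3)) \in \textsc{B}_{\mathcal{A}'}^\mbc$. This is the step where the Boolean-homomorphism hypothesis on $f$ is genuinely used, and it is the only mildly delicate point of the argument; everything else is formal bookkeeping.

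Next I would verify the homomorphism condition. For a binary connective $\# \in \{\land,\lor,\to\}$, take any $u \in z \# w$; by Definition~\ref{maxBmbC}(i) this means $u \in \textsc{B}_\mathcal{A}^\mbc$ and $u_1 = z_1 \# w_1$. Then $f_\ast(u) \in \textsc{B}_{\mathcal{A}'}^\mbc$ by the previous step, and its first coordinate is $f(u_1) = f(z_1 \# w_1) = f(z_1) \# f(w_1)$, which is exactly the first-coordinate condition defining $f_\ast(z) \# f_\ast(w)$ in $\mathcal{B}_{\mathcal{A}'}^\mbc$; hence $f_\ast(u) \in f_\ast(z) \# f_\ast(w)$, establishing $f_\ast[z \# w] \subseteq f_\ast(z) \# f_\ast(w)$. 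The cases of $\neg$ and $\cons$ are identical: from $u_1 = z_2$ (respectively $u_1 = z_3$) one gets $(f_\ast(u))_1 = f(z_2) = (f_\ast(z))_2$ (respectively $f(z_3) = (f_\ast(z))_3$), so $f_\ast(u)$ lands in $\neg f_\ast(z)$ (respectively $\cons f_\ast(z)$).

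Finally, functoriality is a direct coordinatewise computation: $(f \circ g)_\ast(z) = (f(g(z_1)),f(g(z_2)),f(g(z_3))) = f_\ast(g_\ast(z))$, and $(id_\mathcal{A})_\ast(z) = (z_1,z_2,z_3) = z$. I expect no obstacle here. In summary, the only substantive point is the preservation of the two defining equations of $\textsc{B}_\mathcal{A}^\mbc$ under $f$, after which the homomorphism and functoriality claims follow immediately because the \mbc-multioperations are dictated solely by first coordinates.
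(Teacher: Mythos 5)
Your proof is correct and follows essentially the same route as the paper's: the homomorphism condition is verified coordinatewise for $\#\in\{\wedge,\vee,\to\}$, $\neg$ and $\cons$ exactly as the paper does, and functoriality is the same direct computation. If anything, you are slightly more careful than the paper, which never explicitly checks that $f_\ast(z)$ lands in $\textsc{B}_{\mathcal{A}'}^\mbc$ — your well-definedness step, using that $f$ preserves $\vee$, $\wedge$, $0$ and $1$, is implicitly required for the paper's inclusions such as $f_\ast[z \# w] \subseteq f_\ast(z) \# f_\ast(w)$ to even make sense.
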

\begin{proof}
Given a homomorphism $f:\mathcal{A} \to \mathcal{A}'$ between Boolean algebras, let  $f_\ast:\textsc{B}_\mathcal{A}^\mbc \to \textsc{B}_{\mathcal{A}'}^\mbc$ be the mapping such that $f_\ast(z) = (f(z_1),f(z_2),f(z_3))$ for every $z \in \textsc{B}_\mathcal{A}^\mbc$. If $z,w \in \textsc{B}_\mathcal{A}^\mbc$ and $\#\in \{\wedge,\vee, \to\}$ then, for every $u \in (z \# w)$, $u_1=z_1 \# w_1$ and so $f(u_1)=f(z_1) \# f(w_1)$. That is, $(f_\ast(u))_1 = (f_\ast(z))_1 \# (f_\ast(w))_1$. This means that $f_\ast[z \# w] = \{f_\ast(u) \ : \ u \in  (z \# w)\} \subseteq \{u' \in   \textsc{B}_{\mathcal{A}'}^\mbc \ : \ u'_1 = (f_\ast(z))_1 \# (f_\ast(w))_1\} = f_\ast(z) \# f_\ast(w)$. On the other hand, if $z \in \textsc{B}_\mathcal{A}^\mbc$ and $u \in \neg z$ then $u_1=z_2$ whence $(f_\ast(u))_1=f(u_1)=f(z_2)=(f_\ast(z))_2$. This means that $f_\ast(u) \in \{u' \in \textsc{B}_{\mathcal{A}'}^\mbc \ : \ u'_1 = (f_\ast(z))_2\} = \neg f_\ast(z)$ and so $f_\ast[\neg z] \subseteq  \neg f_\ast(z)$. Analogously it can be proven that $f_\ast[\circ z] \subseteq  \circ f_\ast(z)$. This shows that $f_\ast$ is indeed a homomorphism $f_\ast:\mathcal{B}_\mathcal{A}^\mbc \to \mathcal{B}_{\mathcal{A}'}^\mbc$ in \swmbc. The rest of the proof is immediate, by the very definition of $f_\ast$.
\end{proof}

\begin{corollary} \label{functor} There exists a functor $K^*_\mbc:\balg \to \swmbc$ given by $K^*_\mbc(\mathcal{A})=\mathcal{B}_\mathcal{A}^\mbc$ for every Boolean algebra $\mathcal{A}$, and $K^*_\mbc(f)=f_\ast$ for every  homomorphism $f:\mathcal{A} \to \mathcal{A}'$ in \balg.
\end{corollary}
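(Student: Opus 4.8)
The plan is to invoke the definition of a functor directly: to exhibit $K^*_\mbc$ as a functor $\balg \to \swmbc$ it suffices to check that (a) the object assignment $\mathcal{A} \mapsto \mathcal{B}_\mathcal{A}^\mbc$ sends each Boolean algebra to an object of $\swmbc$; (b) the morphism assignment $f \mapsto f_\ast$ sends each Boolean algebra homomorphism $f:\mathcal{A} \to \mathcal{A}'$ to a $\swmbc$-morphism with the correct domain and codomain, namely $f_\ast:\mathcal{B}_\mathcal{A}^\mbc \to \mathcal{B}_{\mathcal{A}'}^\mbc$; and (c) the two functoriality equations $K^*_\mbc(f \circ g) = K^*_\mbc(f) \circ K^*_\mbc(g)$ and $K^*_\mbc(id_\mathcal{A}) = id_{K^*_\mbc(\mathcal{A})}$ hold. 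Almost all of this is already furnished by the immediately preceding proposition, so the corollary is essentially a matter of reassembling that result into categorical language.

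For (a) I would simply note that, by Definition~\ref{maxBmbC}, $\mathcal{B}_\mathcal{A}^\mbc$ is the full swap structure for \mbc\ over $\mathcal{A}$; hence it lies in $\Kmbc$ and is by definition an object of the full subcategory $\swmbc$. For (b), the preceding proposition establishes precisely that $f_\ast$, defined coordinatewise by $f_\ast(z) = (f(z_1),f(z_2),f(z_3))$, is a homomorphism of multialgebras $f_\ast:\mathcal{B}_\mathcal{A}^\mbc \to \mathcal{B}_{\mathcal{A}'}^\mbc$; since $\swmbc$ is a full subcategory of $\malg(\Sigma)$, this homomorphism is automatically a morphism of $\swmbc$ with exactly the domain and codomain recorded in (b).

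For (c), the two required identities are literally the equations $(f \circ g)_\ast = f_\ast \circ g_\ast$ and $(id_\mathcal{A})_\ast = id_{\mathcal{B}_\mathcal{A}^\mbc}$ already proved in the preceding proposition, read through the defining assignments $K^*_\mbc(f) = f_\ast$ and $K^*_\mbc(\mathcal{A}) = \mathcal{B}_\mathcal{A}^\mbc$. Thus nothing new has to be computed here. The only step not contained word-for-word in the proposition is the object-level well-definedness (a), and that reduces to unwinding Definition~\ref{maxBmbC} together with the definition of $\swmbc$. Consequently there is no genuine obstacle: the substance of the argument---showing that $f_\ast$ respects the multioperations $\land, \lor, \to, \neg, \circ$ and that the assignment is compositional and identity-preserving---was already discharged in the preceding proposition, and the corollary follows by collecting these facts under the definition of a functor.
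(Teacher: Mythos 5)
Your proposal is correct and follows essentially the same route as the paper: the paper treats this corollary as an immediate consequence of the preceding proposition, which is precisely what you do by combining the proposition's homomorphism, composition, and identity facts with the observation that $\mathcal{B}_\mathcal{A}^\mbc$ is an object of \swmbc\ and that \swmbc\ is a full subcategory of $\malg(\Sigma)$. Nothing is missing; the object-level well-definedness you single out in step (a) is indeed the only point not literally contained in the proposition, and it unwinds directly from Definition~\ref{maxBmbC} and Definition~\ref{Swap-mbc}.
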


\begin{definition} \label{KalmanFunc}
The functor $K^*_\mbc:\balg \to \swmbc$ of Corollary~\ref{functor} is called {\em dual Kalman's functor for \swmbc}.
\end{definition}

\begin{remark} [Kalman's construction and twist structures] \label{obs-Kalman}
The name {\em dual Kalman's functor} was used in Definition~\ref{KalmanFunc} because of the analogy with a construction proposed in 1958 by J. Kalman (see~\cite{kal:58}). This point will be clarified in sections~\ref{Kalman-twist} and~\ref{swap=twist}.   
\end{remark}

\begin{proposition} \label{preservprods}
The dual Kalman's functor $K^*_\mbc:\balg \to \swmbc$  preserves arbitrary products.
\end{proposition}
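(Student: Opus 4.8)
The plan is to show that the dual Kalman's functor $K^*_\mbc$ preserves arbitrary products by combining the explicit product constructions already available in both categories. Recall that a functor $F$ preserves a product $\langle \prod_{i \in I} X_i, \{\pi_i\}\rangle$ if $\langle F(\prod_{i \in I} X_i), \{F(\pi_i)\}\rangle$ is a product of the family $\{F(X_i)\}$ in the target category. So, starting from a family $\{\mathcal{A}_i : i \in I\}$ of Boolean algebras, I must show that $\langle \mathcal{B}_{\prod_{i \in I} \mathcal{A}_i}^\mbc, \{(\pi_i)_\ast\}\rangle$ is a product of the family $\{\mathcal{B}_{\mathcal{A}_i}^\mbc : i \in I\}$ in \swmbc, where $\pi_i: \prod_{i \in I} \mathcal{A}_i \to \mathcal{A}_i$ are the canonical Boolean-algebra projections.

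First I would dispose of the trivial case $I = \emptyset$: the empty product of Boolean algebras is the one-element Boolean algebra, and its image under $K^*_\mbc$ must be shown to be a terminal object in \swmbc; this is checked directly from Definition~\ref{maxBmbC}. For $I \neq \emptyset$, the key observation is that Proposition~\ref{isofam} already gives an isomorphism $f_\mathcal{F}:\prod_{i \in I} \mathcal{B}_{\mathcal{A}_i}^\mbc \to \mathcal{B}_{\prod_{i \in I} \mathcal{A}_i}^\mbc$ in $\malg(\Sigma)$, where the left-hand side is the $\malg(\Sigma)$-product of the family $\{\mathcal{B}_{\mathcal{A}_i}^\mbc\}$ with canonical projections $\pi_i$ (cf. Proposition~\ref{prods}). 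Since a functor preserving a product in the form of one concrete realization preserves it up to the canonical isomorphism, it suffices to verify that this isomorphism is compatible with the two families of projections, namely that $\pi_i = (\pi_i)_\ast \circ f_\mathcal{F}$ for each $i \in I$, where $(\pi_i)_\ast: \mathcal{B}_{\prod_{i \in I}\mathcal{A}_i}^\mbc \to \mathcal{B}_{\mathcal{A}_i}^\mbc$ is the induced homomorphism supplied by Corollary~\ref{functor}.

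This compatibility is a direct computation from the definitions of $f_\mathcal{F}$ and of $(\pi_i)_\ast$. Given $a \in \prod_{i \in I} \textsc{B}_{\mathcal{A}_i}^\mbc$, we have $f_\mathcal{F}(a)=(z_1,z_2,z_3)$ with $z_j(i)=\pi^i_{(j)}(a(i))$; applying $(\pi_i)_\ast$ then yields the triple $(\pi_i(z_1),\pi_i(z_2),\pi_i(z_3)) = (z_1(i),z_2(i),z_3(i)) = a(i) = \pi_i(a)$, so indeed $(\pi_i)_\ast \circ f_\mathcal{F} = \pi_i$. Because $f_\mathcal{F}$ is an isomorphism and $\langle \prod_{i \in I} \mathcal{B}_{\mathcal{A}_i}^\mbc, \{\pi_i\}\rangle$ is a product in $\malg(\Sigma)$, and since \swmbc\ is a full subcategory of $\malg(\Sigma)$ all of whose relevant objects (each $\mathcal{B}_{\mathcal{A}_i}^\mbc$ and $\mathcal{B}_{\prod_{i \in I}\mathcal{A}_i}^\mbc$) lie in \swmbc\ by Definition~\ref{maxBmbC}, transporting the product structure along $f_\mathcal{F}$ shows that $\langle \mathcal{B}_{\prod_{i \in I}\mathcal{A}_i}^\mbc, \{(\pi_i)_\ast\}\rangle$ is itself a product of $\{\mathcal{B}_{\mathcal{A}_i}^\mbc\}$ in \swmbc. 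This is exactly the statement that $K^*_\mbc$ preserves the product.

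The main obstacle, such as it is, is conceptual rather than computational: one must be careful that the projections obtained from the functor, $K^*_\mbc(\pi_i) = (\pi_i)_\ast$, are precisely the projections that make $\mathcal{B}_{\prod_{i \in I}\mathcal{A}_i}^\mbc$ into a product, rather than some other morphisms; this is exactly what the identity $(\pi_i)_\ast \circ f_\mathcal{F} = \pi_i$ guarantees. The legitimate transport of the limit along $f_\mathcal{F}$ relies essentially on the fact, recorded in Proposition~\ref{isofam} together with the product construction in the proof of Proposition~\ref{prods-sw}, that \swmbc\ inherits products from $\malg(\Sigma)$ via these full swap structures; everything else reduces to the verification displayed above, which I would leave to the reader as routine.
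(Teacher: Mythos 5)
Your proof is correct and follows essentially the same route as the paper's, which simply declares the result an immediate consequence of Proposition~\ref{isofam} together with the fullness of \swmbc\ in $\malg(\Sigma)$. Your additional verification that $(\pi_i)_\ast \circ f_\mathcal{F} = \pi_i$ — i.e., that $f_\mathcal{F}$ is the canonical comparison morphism, not just some isomorphism — is precisely the detail the paper leaves implicit under the word ``immediate,'' and spelling it out is a genuine (if modest) improvement in rigor.
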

\begin{proof}
It is an immediate consequence of Proposition \ref{isofam} and the fact that \swmbc\ is a full subcategory of $\malg(\Sigma)$.
\end{proof}

\begin{proposition} \label{preservsubs}
The  dual Kalman's functor $K^*_\mbc:\balg \to \swmbc$  preserves subalgebras in the following sense: if $\mathcal{A}$ is  a subalgebra de $\mathcal{A}'$ in the category of Boolean algebras, then $\mathcal{B}_\mathcal{A}^\mbc  \subseteq \mathcal{B}_{\mathcal{A}'}^\mbc$ according to Definition~\ref{subal}.
\end{proposition}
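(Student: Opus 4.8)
The plan is to unfold the definitions of subalgebra for Boolean algebras and of submultialgebra (Definition~\ref{subal}) and verify the two defining conditions directly. Suppose $\mathcal{A} = \langle A, \wedge, \vee, \to, 0, 1 \rangle$ is a subalgebra of $\mathcal{A}' = \langle A', \wedge', \vee', \to', 0', 1' \rangle$ in \balg, so that $A \subseteq A'$, the element $0, 1$ coincide in both, and the Boolean operations of $\mathcal{A}$ are the restrictions of those of $\mathcal{A}'$. First I would check condition~(i) of Definition~\ref{subal}, namely that $\textsc{B}_\mathcal{A}^\mbc \subseteq \textsc{B}_{\mathcal{A}'}^\mbc$. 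This is where the only genuine content lies: by Definition~\ref{univ-sw-mbc}, a triple $z = (z_1,z_2,z_3) \in A^3$ lies in $\textsc{B}_\mathcal{A}^\mbc$ iff $z_1 \vee z_2 = 1$ and $z_1 \wedge z_2 \wedge z_3 = 0$. Since $A \subseteq A'$, such a triple belongs to $(A')^3$; and because the meet, join, and the constants $0,1$ of $\mathcal{A}$ agree with those of $\mathcal{A}'$ on elements of $A$, the two equations witnessing membership continue to hold when computed in $\mathcal{A}'$. Hence $z \in \textsc{B}_{\mathcal{A}'}^\mbc$.

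Next I would verify condition~(ii), the inclusion of multioperations. The domains of $\mathcal{B}_\mathcal{A}^\mbc$ and $\mathcal{B}_{\mathcal{A}'}^\mbc$ are $\textsc{B}_\mathcal{A}^\mbc$ and $\textsc{B}_{\mathcal{A}'}^\mbc$ respectively, and their multioperations are the full ones described in Definition~\ref{maxBmbC}. For $z,w \in \textsc{B}_\mathcal{A}^\mbc$ and $\# \in \{\wedge,\vee,\to\}$ one has, by Definition~\ref{maxBmbC}, that $z \# w$ computed in $\mathcal{B}_\mathcal{A}^\mbc$ equals $\{u \in \textsc{B}_\mathcal{A}^\mbc : u_1 = z_1 \# w_1\}$, while the corresponding multioperation in $\mathcal{B}_{\mathcal{A}'}^\mbc$ equals $\{u \in \textsc{B}_{\mathcal{A}'}^\mbc : u_1 = z_1 \#' w_1\}$. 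Since $z_1 \# w_1 = z_1 \#' w_1$ (the operation of $\mathcal{A}$ is the restriction of that of $\mathcal{A}'$) and $\textsc{B}_\mathcal{A}^\mbc \subseteq \textsc{B}_{\mathcal{A}'}^\mbc$ by the first step, every $u$ in the former set also lies in the latter, giving the required inclusion. The unary cases $\neg$ and $\circ$ are handled identically, using clauses (ii) and (iii) of Definition~\ref{maxBmbC} and the fact that $u_1 = z_2$ (resp.\ $u_1 = z_3$) is a condition not involving any operation at all.

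The two steps together establish both clauses of Definition~\ref{subal}, so $\mathcal{B}_\mathcal{A}^\mbc \subseteq \mathcal{B}_{\mathcal{A}'}^\mbc$. I do not anticipate any serious obstacle here: the statement is essentially a bookkeeping argument, and the only point requiring attention is the compatibility of the constants and operations between $\mathcal{A}$ and $\mathcal{A}'$ on the common domain $A$, which is exactly what being a subalgebra in \balg\ provides. In the write-up I would keep the argument compact, emphasizing the universe inclusion $\textsc{B}_\mathcal{A}^\mbc \subseteq \textsc{B}_{\mathcal{A}'}^\mbc$ as the substantive observation and treating the inclusion of multioperations as a routine consequence of the explicit formulas in Definition~\ref{maxBmbC}.
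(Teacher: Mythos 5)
Your proposal is correct and is exactly the argument the paper has in mind: the paper's proof simply states that the result is an immediate consequence of the definitions, and your write-up spells out that unfolding (universe inclusion from the preservation of $\wedge$, $\vee$, $0$, $1$ under passing to a Boolean subalgebra, then inclusion of the multioperations of Definition~\ref{maxBmbC} as a routine consequence). No discrepancy in approach or substance.
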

\begin{proof}
It is an immediate consequence of the definitions.
\end{proof}

\

Moreover, the following holds:

\begin{proposition} \label{preservmonos}
The  dual Kalman's functor $K^*_\mbc:\balg \to \swmbc$  preserves monomorphisms.
\end{proposition}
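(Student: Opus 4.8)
The plan is to show that the dual Kalman's functor $K^*_\mbc$ sends monomorphisms in \balg\ to monomorphisms in \swmbc. The key observation is that monomorphisms in \balg\ are precisely the injective homomorphisms of Boolean algebras (this is standard, since \balg\ is an equational class and thus its monomorphisms coincide with the injective homomorphisms). By Proposition~\ref{monos}, in the category $\malg(\Sigma)$ an injective homomorphism is already a monomorphism; and since \swmbc\ is a full subcategory of $\malg(\Sigma)$, a monomorphism in $\malg(\Sigma)$ between objects of \swmbc\ is automatically a monomorphism in \swmbc. Hence it suffices to prove that if $f:\mathcal{A} \to \mathcal{A}'$ is injective, then the induced map $f_\ast:\mathcal{B}_\mathcal{A}^\mbc \to \mathcal{B}_{\mathcal{A}'}^\mbc$ is injective as a function on supports.

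First I would recall that by the previous proposition $f_\ast$ is a well-defined homomorphism in \swmbc\ given by $f_\ast(z) = (f(z_1),f(z_2),f(z_3))$. The injectivity of $f_\ast$ then follows immediately from the injectivity of $f$ coordinatewise: if $f_\ast(z) = f_\ast(w)$ for $z,w \in \textsc{B}_\mathcal{A}^\mbc$, then $f(z_j)=f(w_j)$ for each $j \in \{1,2,3\}$, and since $f$ is injective we get $z_j=w_j$ for each $j$, hence $z=w$. Thus $f_\ast$ is an injective function.

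Putting the pieces together: starting from a monomorphism $f:\mathcal{A} \to \mathcal{A}'$ in \balg, I identify it with an injective Boolean homomorphism, conclude by the coordinatewise argument above that $f_\ast$ is injective, invoke Proposition~\ref{monos} to get that $f_\ast$ is a monomorphism in $\malg(\Sigma)$, and finally use that \swmbc\ is a full subcategory of $\malg(\Sigma)$ to transfer this to a monomorphism $f_\ast = K^*_\mbc(f)$ in \swmbc.

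There is essentially no hard part here, as all the substantive work has been done in the preceding results. The only point that requires a moment's care is the justification that monomorphisms in \balg\ are exactly the injective homomorphisms, but this is a routine fact about varieties of algebras and can be cited as such. The coordinatewise injectivity argument is elementary, and the passage through the full subcategory is purely formal.
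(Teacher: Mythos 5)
Your proposal is correct and follows essentially the same route as the paper's own proof: identify monomorphisms in \balg\ with injective Boolean homomorphisms, deduce injectivity of $f_\ast$ coordinatewise, apply Proposition~\ref{monos} to get a monomorphism in $\malg(\Sigma)$, and transfer it to \swmbc\ via the (full) subcategory inclusion. You merely spell out the coordinatewise injectivity step that the paper calls ``immediate,'' which is fine.
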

\begin{proof}
Let $f:\mathcal{A} \to \mathcal{A}'$ be a monomomorphism between Boolean algebras, and let $f_\ast:\mathcal{B}_\mathcal{A}^\mbc \to \mathcal{B}_{\mathcal{A}'}^\mbc$ be the induced homomorphism of multialgebras given by $f_\ast(z) = (f(z_1),f(z_2),f(z_3))$. It is well-known that every monomorphism in \balg\ is an injective function, and then $f$ is injective. From this it is immediate to see that $f_\ast$ is also an injective function. As a consequence of Proposition~\ref {monos}, $f_\ast$ is a monomorphism in the category $\malg(\Sigma)$. Given that \swmbc\ is a full subcategory of $\malg(\Sigma)$, it follows that $f_\ast$ is a monomorphism in the category \swmbc.
\end{proof}

\section{Swap structures semantics for \mbc} \label{swap-sem-mbc}

As it was done in Definition~\ref{Nmatrix},  each $\mathcal{B} \in \Kmbc$ induces naturally a non-deterministic matrix  $\mathcal{M}(\mathcal{B})=(\mathcal{B}, D_\mathcal{B})$. Moreover,  in~\cite[Theorem 6.4.8]{CC16} it was proven that the class $Mat(\Kmbc) = \{\mathcal{M}(\mathcal{B}) \ : \ \mathcal{B} \in  \Kmbc\}$ semantically characterizes \mbc, by considering the consequence relation $\models_{Mat(\Kmbc)}$ as in Definition~\ref{conseqclassNmat}. However, the proof given in~\cite{CC16} is indirect: it lies on the equivalence between the swap-structures semantics and the  Fidel structures  semantics for \mbc, together with the adequacy of \mbc\ \mbox{w.r.t.} the latter structures. Now, a direct proof of the adequacy of \mbc\ w.r.t. swap structures will be given (recalling the consequence relation introduced in Definition~\ref{conseqclassNmat}).

\begin{theorem}  [Adequacy of \mbc\ w.r.t. swap structures] \label{adeq-mbC}
Let $\Gamma \cup \{\varphi\} \subseteq For(\Sigma)$  be a set of formulas. Then:
$\Gamma \vdash_\mbc \varphi$ \ iff \  $\Gamma\models_{Mat(\Kmbc)} \varphi$.
\end{theorem}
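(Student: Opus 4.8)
The plan is to prove the two directions separately, closely following the adequacy proof for \ecpl\ (Theorem~\ref{adeq-CPLP1}) and exploiting the fact that \mbc\ is the axiomatic extension of \ecpl\ by (\axtnd) and (\axexp), together with the characterization of $\Kmbc$ inside $\Ksw$ supplied by Proposition~\ref{CarAxKmbc}.

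For the soundness (``only if'') direction, I would argue by induction on the length of a derivation of $\varphi$ from $\Gamma$ in \mbc. Fix $\mathcal{B} \in \Kmbc$ and a valuation $v$ over $\mathcal{M}(\mathcal{B})$ with $v[\Gamma] \subseteq D_\mathcal{B}$, and recall from the soundness part of Theorem~\ref{adeq-CPLP1} that $h = \pi_1 \circ v : For(\Sigma) \to \mathcal{A}$ is a $\Sigma_+$-homomorphism into the underlying Boolean algebra $\mathcal{A}$ with $h(\gamma)=1$ iff $v(\gamma) \in D_\mathcal{B}$. Each instance of a \cplp\ axiom is a theorem of \cplp, so by Theorem~\ref{adeq-CPLP} applied to the \algcpl\ $\mathcal{A}$ it is sent to $1$ by $h$, hence into $D_\mathcal{B}$ by $v$; each instance of (\axtnd) and (\axexp) is sent into $D_\mathcal{B}$ because $\mathcal{M}(\mathcal{B})$ validates these axioms by Proposition~\ref{CarAxKmbc}; and \MP\ preserves membership in $D_\mathcal{B}$ since $\pi_1(v(\alpha \imp \beta)) = \pi_1(v(\alpha)) \imp \pi_1(v(\beta))$ forces $\pi_1(v(\beta))=1$ whenever $\pi_1(v(\alpha))=\pi_1(v(\alpha \imp \beta))=1$. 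This yields $v(\varphi) \in D_\mathcal{B}$, and hence $\Gamma \models_{Mat(\Kmbc)} \varphi$.

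For the completeness (``if'') direction, I would run the Lindenbaum--Tarski construction exactly as in Theorem~\ref{adeq-CPLP1}, now relative to $\vdash_\mbc$. Assuming $\Gamma \nvdash_\mbc \varphi$, define $\alpha \equiv_\Gamma \beta$ iff $\Gamma \vdash_\mbc \alpha \imp \beta$ and $\Gamma \vdash_\mbc \beta \imp \alpha$, and form the quotient $A_\Gamma = For(\Sigma)/_{\equiv_\Gamma}$ with the induced $\Sigma_+$-operations; since \mbc\ extends \cplp\ this is a \algcpl\ $\mathcal{A}_\Gamma$. The crucial new point, which replaces the duplication device $\mathcal{A}^*$ of the \ecpl\ case, is that $\mathcal{A}_\Gamma$ is already a Boolean algebra: using (\axexp) one derives $\vdash_\mbc (\alpha \land \neg\alpha \land \cons\alpha) \imp \beta$ for every $\beta$, so the class $[\alpha \land \neg\alpha \land \cons\alpha]_\Gamma$ is a bottom element of $A_\Gamma$, whence $\mathcal{A}_\Gamma$ is Boolean by Proposition~\ref{BA-HA}(2). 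I would then take the full swap structure $\mathcal{B}_{\mathcal{A}_\Gamma}^\mbc$ (Definition~\ref{maxBmbC}) and the canonical map $v_\Gamma(\alpha) = ([\alpha]_\Gamma, [\neg\alpha]_\Gamma, [\cons\alpha]_\Gamma)$, checking first that each $v_\Gamma(\alpha)$ lies in $\textsc{B}_{\mathcal{A}_\Gamma}^\mbc$: indeed $[\alpha]_\Gamma \vee [\neg\alpha]_\Gamma = [\alpha \lor \neg\alpha]_\Gamma = 1$ by (\axtnd), while $[\alpha]_\Gamma \wedge [\neg\alpha]_\Gamma \wedge [\cons\alpha]_\Gamma = 0$ by the bottom computed above. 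A routine verification then shows $v_\Gamma$ is a valuation over $\mathcal{M}(\mathcal{B}_{\mathcal{A}_\Gamma}^\mbc) \in Mat(\Kmbc)$ with $v_\Gamma(\alpha) \in D_{\mathcal{B}_{\mathcal{A}_\Gamma}^\mbc}$ iff $[\alpha]_\Gamma = 1$ iff $\Gamma \vdash_\mbc \alpha$, so that $v_\Gamma[\Gamma] \subseteq D_{\mathcal{B}_{\mathcal{A}_\Gamma}^\mbc}$ while $v_\Gamma(\varphi) \notin D_{\mathcal{B}_{\mathcal{A}_\Gamma}^\mbc}$, giving $\Gamma \not\models_{Mat(\Kmbc)} \varphi$.

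The main obstacle I anticipate is precisely the step ensuring that the construction lands inside $\Kmbc$ rather than merely in $\Ksw$. This has two facets: confirming that the Lindenbaum--Tarski algebra $\mathcal{A}_\Gamma$ is genuinely Boolean (so that $\mathcal{B}_{\mathcal{A}_\Gamma}^\mbc$ is even defined, since full swap structures for \mbc\ are built over Boolean algebras), which hinges on extracting the bottom element from the controlled explosion axiom (\axexp); and verifying that the canonical triples satisfy both defining constraints $z_1 \vee z_2 = 1$ and $z_1 \wedge z_2 \wedge z_3 = 0$ of $\textsc{B}_{\mathcal{A}_\Gamma}^\mbc$, which is where (\axtnd) and (\axexp) are used a second time at the semantic level. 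The remaining verifications that $v_\Gamma$ is a valuation and the inductive soundness argument are routine given the results already established.
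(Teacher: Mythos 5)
Your proposal is correct and follows essentially the same route as the paper's own proof: soundness by combining Theorem~\ref{adeq-CPLP1} with Proposition~\ref{CarAxKmbc} and closure of $D_\mathcal{B}$ under \MP, and completeness via the Lindenbaum--Tarski quotient $\mathcal{A}_\Gamma$, turned into a Boolean algebra by the bottom element extracted from (\axexp) and Proposition~\ref{BA-HA}(2), together with the canonical valuation $v_\Gamma(\alpha)=([\alpha]_\Gamma,[\neg\alpha]_\Gamma,[\cons\alpha]_\Gamma)$ into the full swap structure $\mathcal{B}_{\mathcal{A}_\Gamma}^\mbc$. The only differences are presentational: your explicit induction on derivation length in the soundness part and your explicit check that the canonical triples satisfy the two defining constraints of $\textsc{B}_{\mathcal{A}_\Gamma}^\mbc$ are details the paper leaves implicit.
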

\begin{proof}
The proof is similar to that for Theorem~\ref{adeq-CPLP1}.\\
`Only if' part (Soundness): Assume that $\Gamma \vdash_\mbc \varphi$. Let  $\mathcal{B}$ be a swap structure for \mbc, and let  $v$ be a valuation over $\mathcal{B}$ such that $v(\gamma) \in D_\mathcal{B}$ for every $\gamma \in \Gamma$. 
By Theorem~\ref{adeq-CPLP1}, $v$ validates every axiom of \cplp. On the other hand, $v$ also validates (\axtnd) and  (\axexp), by Proposition~\ref{CarAxKmbc}. 
In addition,  $v(\beta) \in D_\mathcal{B}$ whenever $v(\alpha)  \in D_\mathcal{B}$ and $v(\alpha \to\beta) \in D_\mathcal{B}$, and so trueness in $v$ is preserved by~(\MP). Hence,  it follows that $v(\varphi) \in D_\mathcal{B}$. This shows that   $\Gamma\models_{Mat(\Kmbc)} \varphi$.\\ [2mm]
`If' part (Completeness): Assume that $\Gamma \nvdash_{\mbc} \varphi$. Define in $For(\Sigma)$ the following relation: $\alpha \equiv_\Gamma \beta$ iff $\Gamma \vdash_{\mbc} \alpha \imp \beta$ and $\Gamma \vdash_{\mbc} \beta \imp \alpha$. As in the proof of  Theorem~\ref{adeq-CPLP1} it follows that $\equiv_\Gamma$ is an equivalence relation such that  the quotient set $A_\Gamma \defin For(\Sigma)/_{\equiv_\Gamma}$ is a \algcpl, where $[\alpha]_{\Gamma} \,\#\, [\beta]_{\Gamma} \defin [\alpha \# \beta]_{\Gamma}$, for $\# \in \{\land,\lor,\imp\}$. Moreover, $0_\Gamma \defin [p_1 \wedge \neg p_1 \wedge \cons p_1]_\Gamma$ and $1_\Gamma \defin [p_1 \to p_1]_\Gamma$ are the bottom and top elements of $A_\Gamma$, respectively, and so $A_\Gamma$ is the domain of a Boolean algebra $\mathcal{A}_\Gamma$, by Proposition~\ref{BA-HA}(2). Let  $\mathcal{B}_{\mathcal{A}_\Gamma}^{\mbc}$ be the corresponding full swap structure for \mbc\ (recall Definition~\ref{maxBmbC}), and let $\mathcal{M}_\Gamma^{\mbc} \defin\mathcal{M}(\mathcal{B}_{\mathcal{A}_\Gamma}^{\mbc})$. The mapping $v_\Gamma:For(\Sigma) \to  \textsc{B}_{\mathcal{A}_\Gamma}^\mbc$ given by $v_\Gamma(\alpha) =([\alpha]_{\Gamma},[\neg \alpha]_{\Gamma}, [\circ \alpha]_{\Gamma})$  is a valuation over the Nmatrix $\mathcal{M}_\Gamma^{\mbc}$ such that $v_\Gamma(\alpha) \in D_{\mathcal{B}_{\mathcal{A}_\Gamma}^{\mbc}}$ iff $\Gamma \vdash_{\mbc} \alpha$, for every $\alpha$. From this, $v_\Gamma[\Gamma] \subseteq D_{\mathcal{B}_{\mathcal{A}_\Gamma}^{\mbc}}$ but  $v_\Gamma(\varphi) \not\in D_{\mathcal{B}_{\mathcal{A}_\Gamma}^{\mbc}}$. Therefore $\Gamma\not\models_{Mat(\Kmbc)} \varphi$, by Definition~\ref{conseqclassNmat}.
\end{proof}

The Nmatrix $\mathcal{M}_5^\mbc=\mathcal{M}\big(\mathcal{B}_{\mathbb{A}_2}^\mbc\big)$ induced by the full swap structure $\mathcal{B}_{\mathbb{A}_2}^\mbc$ defined over the two-element Boolean algebra  $\mathbb{A}_2$  (see Definition~\ref{maxBmbC}) was originally introduced by A. Avron in  \cite{avr:05},  in order to semantically characterize  the logic \mbc. The domain of the multialgebra $\mathcal{B}_{\mathbb{A}_2}^\mbc$ is the set  $\textsc{B}_{\mathbb{A}_2}^\mbc = \big\{T, \, t, \, t_0, \, F, \, f_0\big\}$ such that $T=(1,0,1)$, $t=(1,1,0)$, $t_0=(1,0,0)$, $F=  (0,1,1)$, and $f_0=(0,1,0)$. 
Let D be the set of designated elements of the Nmatrix $\mathcal{M}_5^\mbc$. Then, $\textrm{D}=\{T, \, t, \, t_0\}$. Let $\textrm{ND}=\big\{F, \, f_0\big\}$ be the set of non-designated truth-values. 
The multioperations proposed by Avron over the set $\textsc{B}_{\mathbb{A}_2}^\mbc$ corresponds exactly with  that for $\mathcal{B}_{\mathbb{A}_2}^\mbc$ described in Definition~\ref{maxBmbC}. Namely,

\

\begin{center}
\begin{tabular}{|c|c|c|c|c|c|}
\hline
 $\wedge^\matM$ & $T$   & $t$ & $t_0$ & $F$ & $f_0$ \\
 \hline \hline
    $T$    & D  & D & D & ND & ND   \\ \hline
     $t$    & D  & D & D & ND & ND  \\ \hline
     $t_0$    & D  & D & D & ND & ND  \\ \hline
     $F$    & ND  & ND & ND & ND & ND  \\ \hline
     $f_0$    & ND  & ND & ND & ND & ND  \\ \hline
\end{tabular}
\hspace{0.5cm}
\begin{tabular}{|c|c|c|c|c|c|}
\hline
 $\vee^\matM$ & $T$   & $t$ & $t_0$ & $F$ & $f_0$ \\
 \hline \hline
    $T$    & D  & D & D & D & D   \\ \hline
     $t$    & D  & D & D & D & D  \\ \hline
     $t_0$    & D  & D & D & D & D  \\ \hline
     $F$    & D  & D & D & ND & ND  \\ \hline
     $f_0$    & D  & D & D & ND & ND  \\ \hline
\end{tabular}
\end{center}

\

\begin{center}
\begin{tabular}{|c|c|c|c|c|c|}
\hline
 $\to^\matM$ & $T$   & $t$ & $t_0$ & $F$ & $f_0$ \\
 \hline \hline
    $T$    & D  & D & D & ND & ND   \\ \hline
     $t$    & D  & D & D & ND & ND  \\ \hline
     $t_0$    & D  & D & D & ND & ND  \\ \hline
     $F$    & D  & D & D & D & D  \\ \hline
     $f_0$    & D  & D & D & D & D  \\ \hline
\end{tabular}
\hspace{0.5cm}
\begin{tabular}{|c||c|} \hline
$\quad$ & $\neg^\matM$ \\
 \hline \hline
    $T$   & ND    \\ \hline
     $t$   & D    \\ \hline
     $t_0$   & ND    \\ \hline
     $F$   & D    \\ \hline
     $f_0$   & D    \\ \hline
\end{tabular}
\hspace{0.5cm}
\begin{tabular}{|c||c|}
\hline
 $\quad$ & $\circ^\matM$ \\
 \hline \hline
    $T$   & D    \\ \hline
     $t$   & ND    \\ \hline
     $t_0$   & ND    \\ \hline
     $F$   & D    \\ \hline
     $f_0$   & ND    \\ \hline
\end{tabular}
\end{center}

\ \\

\noindent  It was  proved in~\cite{avr:05} that \mbc\ is adequate for $\mathcal{M}_5^\mbc$:

\begin{theorem} \label{comp-mbc-full}
For every set of formulas $\Gamma \cup \{\varphi\} \subseteq For(\Sigma)$: 
$\Gamma \vdash_\mbc \varphi$ \ iff \  $\Gamma\models_{\mathcal{M}_5^\mbc} \varphi$.
\end{theorem}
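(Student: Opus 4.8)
The plan is to obtain this single-Nmatrix adequacy theorem as a consequence of the adequacy of \mbc\ with respect to the whole class $Mat(\Kmbc)$ already established in Theorem~\ref{adeq-mbC}, the only extra ingredient being a reduction of an arbitrary Boolean algebra to the two-element algebra $\mathbb{A}_2$ via Stone's representation theorem.

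The soundness direction ($\Gamma \vdash_\mbc \varphi \Rightarrow \Gamma\models_{\mathcal{M}_5^\mbc}\varphi$) is immediate. Since $\mathbb{A}_2$ is a Boolean algebra, $\mathcal{B}_{\mathbb{A}_2}^\mbc \in \Kmbc$ (recall Definition~\ref{maxBmbC}), and hence $\mathcal{M}_5^\mbc=\mathcal{M}(\mathcal{B}_{\mathbb{A}_2}^\mbc)$ is one of the Nmatrices in $Mat(\Kmbc)$. If $\Gamma \vdash_\mbc \varphi$ then $\Gamma\models_{Mat(\Kmbc)}\varphi$ by Theorem~\ref{adeq-mbC}, and by Definition~\ref{conseqclassNmat} this means $\Gamma\models_{\mathcal{M}}\varphi$ for \emph{every} $\mathcal{M}\in Mat(\Kmbc)$; in particular for $\mathcal{M}_5^\mbc$.

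For completeness I would argue by contraposition: assume $\Gamma\nvdash_\mbc\varphi$. First I would reuse verbatim the Lindenbaum--Tarski construction from the completeness part of Theorem~\ref{adeq-mbC}, producing the Boolean algebra $\mathcal{A}_\Gamma$ and the valuation $v_\Gamma(\alpha)=([\alpha]_\Gamma,[\neg\alpha]_\Gamma,[\cons\alpha]_\Gamma)$ over $\mathcal{M}(\mathcal{B}_{\mathcal{A}_\Gamma}^\mbc)$, which satisfies $\pi_1(v_\Gamma(\alpha))=[\alpha]_\Gamma$ and $v_\Gamma(\alpha)$ designated iff $\Gamma\vdash_\mbc\alpha$; in particular $[\varphi]_\Gamma\neq 1_\Gamma$. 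The key step is to collapse $\mathcal{A}_\Gamma$ onto $\mathbb{A}_2$ so as to witness this failure: by Stone's representation theorem (equivalently, the Boolean ultrafilter lemma), since $[\varphi]_\Gamma\neq 1_\Gamma$ there is a Boolean homomorphism $h:\mathcal{A}_\Gamma\to\mathbb{A}_2$ with $h([\varphi]_\Gamma)=0$ (take an ultrafilter missing $[\varphi]_\Gamma$). I would then lift $h$ through the dual Kalman's functor of Corollary~\ref{functor}, obtaining a multialgebra homomorphism $h_\ast=K^*_\mbc(h):\mathcal{B}_{\mathcal{A}_\Gamma}^\mbc\to\mathcal{B}_{\mathbb{A}_2}^\mbc$ given by $h_\ast(z)=(h(z_1),h(z_2),h(z_3))$. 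Recalling that valuations over an Nmatrix are exactly the multialgebra homomorphisms from the formula algebra, and that homomorphisms compose, $v\defin h_\ast\circ v_\Gamma$ is a valuation over $\mathcal{M}_5^\mbc$; its first coordinate on any $\alpha$ is $h([\alpha]_\Gamma)$, so $v(\gamma)$ is designated for each $\gamma\in\Gamma$ (because $h([\gamma]_\Gamma)=h(1_\Gamma)=1$) while $v(\varphi)$ is not designated (because $h([\varphi]_\Gamma)=0$). This gives $\Gamma\not\models_{\mathcal{M}_5^\mbc}\varphi$.

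The crux of the argument is precisely this collapse onto $\mathbb{A}_2$: one must invoke a nonconstructive separation principle (Stone/ultrafilter lemma) to push the non-top element $[\varphi]_\Gamma$ down to $0$, and then check that the transfer along $h_\ast$ preserves designation of the premises (since $h$ preserves the top element) and the non-designation of the conclusion (by the choice of $h$). Everything else is routine once one notes that $h_\ast$ is a homomorphism in $\malg(\Sigma)$ and that valuations are homomorphisms, so the composite $v$ is automatically a valuation.
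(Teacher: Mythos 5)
Your proof is correct, but it follows a genuinely different route from the paper's. The paper does not redo any Lindenbaum--Tarski work for this theorem: it recalls that the result is originally Avron's~\cite{avr:05}, and then sketches the proof of~\cite[Corollary~6.4.10]{CC16}, which imports the bivaluation semantics of \mbc\ (Theorem~\ref{comp-bival-mbC}, an external result from~\cite{CCM}) and checks that the map $v_\mu^\mbc(\alpha)=(\mu(\alpha),\mu(\neg\alpha),\mu(\cons\alpha))$ of Definition~\ref{val-bival-mbC} turns any bivaluation $\mu$ into a valuation over $\mathcal{M}\big(\mathcal{B}_{\mathbb{A}_2}^\mbc\big)$ with $v_\mu^\mbc(\alpha)\in D$ iff $\mu(\alpha)=1$; completeness is then immediate. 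You instead stay entirely inside the swap-structure framework: you reuse the valuation $v_\Gamma$ from the completeness proof of Theorem~\ref{adeq-mbC}, collapse the Lindenbaum algebra $\mathcal{A}_\Gamma$ onto $\mathbb{A}_2$ via an ultrafilter separating $[\varphi]_\Gamma$ from $1_\Gamma$, and push $v_\Gamma$ forward along $h_\ast=K^*_\mbc(h)$, using two facts the paper does supply: $h_\ast$ is a homomorphism in $\malg(\Sigma)$ (the proposition preceding Corollary~\ref{functor}), and valuations over Nmatrices are exactly the multialgebra homomorphisms from $\mathcal{F}or(\Sigma)$, so the composite is again a valuation. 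The details check out: $[\gamma]_\Gamma=1_\Gamma$ for every $\gamma\in\Gamma$ (since $\Gamma\vdash_\mbc\gamma$), so designation of premises survives the collapse, while $h([\varphi]_\Gamma)=0$ destroys designation of the conclusion. As for what each approach buys: the paper's route is shorter because the hard semantic work is outsourced to the bivaluation adequacy theorem of~\cite{CCM}, and it exhibits a direct translation between the two semantics, which is of independent interest; your route is self-contained relative to the paper's own machinery, is more algebraic in spirit, and makes explicit that the single-Nmatrix characterization is really the statement that $\mathcal{B}_{\mathbb{A}_2}^\mbc$ generates the class $\Kmbc$ --- essentially the same collapse that drives the representation Theorem~\ref{teo-repr-Kmbc} --- at the cost of invoking the nonconstructive ultrafilter lemma.
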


A new proof of the latter result was obtained in~\cite[Corollary~6.4.10]{CC16}, by relating bivaluations for \mbc\  with the Nmatrix $\mathcal{M}\big(\mathcal{B}_{\mathbb{A}_2}^\mbc\big)$.

\begin{definition} [\cite{CCM}] \label{bivalold}
A function $\mu:For(\Sigma)\to \big\{0,1\big\}$ is a {\em bivaluation for \mbc}
if it  satisfies the   following clauses:\\[2mm]
{\bf (\vale)} \ $\mu(\alpha \land \beta) = 1$  \ iff \  $\mu(\alpha) = 1$ \ and \
$\mu(\beta) = 1$ \\[2mm]
{\bf (\valou)} \ $\mu(\alpha \lor \beta) = 1$ \  iff \ $\mu(\alpha) = 1$ \ or \
$\mu(\beta) = 1$ \\[2mm]
{\bf (\valimp)} \ $\mu(\alpha \to \beta) = 1$ \  iff \  $\mu(\alpha) = 0$ \ or \
$\mu(\beta) = 1$ \\[2mm]
{\bf (\valnot)} \ $\mu(\lnot \alpha)=0$ \ implies  \ $\mu(\alpha)=1$ \\[2mm]
{\bf (\valbola)} \ $\mu(\cons \alpha) = 1$ \ implies \ $\mu(\alpha)=0$ \ or \ $\mu(\lnot \alpha)=0$. \\[2mm]
The consequence relation of \mbc\ w.r.t. bivaluations is defined as follows: for every set of formulas $\Gamma \cup \{\varphi\} \subseteq For(\Sigma)$, $\Gamma \models_\mbc^2 \varphi$ \ iff \ $\mu(\varphi)=1$ for every bivaluation for \mbc\ such that $\mu[\Gamma] \subseteq \{1\}$.
\end{definition}

\begin{theorem} [\cite{CCM}]  \label{comp-bival-mbC}
For every set of formulas $\Gamma \cup \{\varphi\} \subseteq For(\Sigma)$: 
$\Gamma \vdash_\mbc \varphi$ \ iff \  $\Gamma\models_{\mbc}^2 \varphi$.
\end{theorem}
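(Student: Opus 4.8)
The plan is to reduce the statement to Avron's adequacy theorem (Theorem~\ref{comp-mbc-full}) by establishing a tight, designation-preserving correspondence between bivaluations for \mbc\ and valuations over the five-valued Nmatrix $\mathcal{M}_5^\mbc=\mathcal{M}(\mathcal{B}_{\mathbb{A}_2}^\mbc)$. Since, by Theorem~\ref{comp-mbc-full}, $\Gamma \vdash_\mbc \varphi$ iff $\Gamma\models_{\mathcal{M}_5^\mbc}\varphi$, it suffices to prove the purely semantic equivalence $\Gamma\models_{\mathcal{M}_5^\mbc}\varphi$ iff $\Gamma\models_\mbc^2\varphi$, and the theorem follows by composition. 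The whole content is thus concentrated in two conversion lemmas relating the two kinds of evaluation.

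First I would show that every valuation $v$ over $\mathcal{M}_5^\mbc$ yields a bivaluation $\mu=\pi_1\circ v$. Clauses (\vale), (\valou) and (\valimp) follow at once from the fact that, for $\#\in\{\wedge,\vee,\to\}$, each element of $v(\alpha)\#v(\beta)$ has first coordinate $\pi_1(v(\alpha))\#\pi_1(v(\beta))$ computed in the two-element Boolean algebra $\mathbb{A}_2$. Clause (\valnot) is exactly the first-coordinate reading of the defining constraint $z_1\vee z_2=1$ of $\textsc{B}_{\mathbb{A}_2}^\mbc$, using that $\pi_1(v(\neg\alpha))=\pi_2(v(\alpha))$; and clause (\valbola) is the first-coordinate reading of $z_1\wedge z_2\wedge z_3=0$, using that $\pi_1(v(\cons\alpha))=\pi_3(v(\alpha))$. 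Conversely, given a bivaluation $\mu$, I would set $v(\alpha)=(\mu(\alpha),\mu(\neg\alpha),\mu(\cons\alpha))$ and check that this triple belongs to $\textsc{B}_{\mathbb{A}_2}^\mbc$: the requirement $z_1\vee z_2=1$ is precisely (\valnot), and $z_1\wedge z_2\wedge z_3=0$ is precisely (\valbola). That $v$ is a valuation over $\mathcal{M}_5^\mbc$ is then immediate from (\vale)--(\valimp) together with the description of the multioperations in Definition~\ref{maxBmbC}, and by construction $\pi_1\circ v=\mu$.

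With both directions of the correspondence available, and noting that in each case $v(\alpha)\in\mathrm{D}$ iff $\mu(\alpha)=\pi_1(v(\alpha))=1$, the equivalence is routine: from a bivaluation witnessing $\Gamma\not\models_\mbc^2\varphi$ one obtains, via the converse lemma, a valuation witnessing $\Gamma\not\models_{\mathcal{M}_5^\mbc}\varphi$, and symmetrically in the other direction, preservation of designation on $\Gamma\cup\{\varphi\}$ being guaranteed by $\pi_1\circ v=\mu$. Composing this equivalence with Theorem~\ref{comp-mbc-full} yields $\Gamma \vdash_\mbc \varphi$ iff $\Gamma\models_\mbc^2\varphi$.

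I do not expect a genuine obstacle here, since the argument is essentially bookkeeping; the only point requiring care is the converse lemma, namely verifying that the triple $(\mu(\alpha),\mu(\neg\alpha),\mu(\cons\alpha))$ really is an admissible element of $\textsc{B}_{\mathbb{A}_2}^\mbc$ and that the induced $v$ respects the multioperations. This is exactly where the two one-directional clauses (\valnot) and (\valbola) must be matched against the two constraints defining $\textsc{B}_{\mathbb{A}_2}^\mbc$; it is worth double-checking that the single implications in those clauses suffice, which they do, as both constraints have the form ``some coordinate is $0$''. An alternative, self-contained route would be a direct proof: soundness by checking that (\vale)--(\valbola) validate all axioms of \cplp\ together with (\axtnd) and (\axexp) and that (\MP) preserves truth, and completeness by taking a maximal set $\Delta\supseteq\Gamma$ with $\varphi\notin\Delta$ and defining $\mu(\alpha)=1$ iff $\alpha\in\Delta$; there the delicate steps are deriving the disjunction property of $\Delta$ (to obtain (\valnot) from (\axtnd)) and invoking (\axexp) to obtain (\valbola).
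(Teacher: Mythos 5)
Your proof is correct, but it is not the route the paper takes: the paper does not prove Theorem~\ref{comp-bival-mbC} at all, importing it from \cite{CCM}, where it is established directly --- soundness by checking that bivaluations validate all axioms and that (\MP) preserves designation, and completeness via a Lindenbaum--Asser maximal-set construction whose characteristic function is verified to be a bivaluation (exactly the ``alternative, self-contained route'' you sketch at the end). What you do instead is reduce the theorem to Avron's Theorem~\ref{comp-mbc-full} through a two-way, designation-preserving correspondence between bivaluations and valuations over $\mathcal{M}_5^\mbc$; both halves are sound, and your care about the one-directional clauses (\valnot) and (\valbola) matching the constraints $z_1\vee z_2=1$ and $z_1\wedge z_2\wedge z_3=0$, and about $\mathcal{B}_{\mathbb{A}_2}^\mbc$ being the \emph{full} swap structure, is exactly what is needed. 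Note, however, that this inverts the paper's own logical order: the paper uses Theorem~\ref{comp-bival-mbC} together with the map $v_\mu^\mbc$ of Definition~\ref{val-bival-mbC} (which is precisely your converse lemma) to give a \emph{new proof} of Theorem~\ref{comp-mbc-full}, following \cite[Corollary~6.4.10]{CC16}. Your reduction is therefore non-circular only because Theorem~\ref{comp-mbc-full} has Avron's independent proof in \cite{avr:05}, which is how the paper cites it; had Theorem~\ref{comp-mbc-full} been justified via the route of \cite{CC16}, your argument would be circular. What each approach buys: yours obtains the result cheaply from an already-available matrix-adequacy theorem and makes the equivalence of the two semantics explicit in both directions; the direct proof of \cite{CCM} is self-contained, independent of Avron's result, and is the one that allows the correspondence to be exploited in the direction the paper actually needs.
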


\begin{definition} [\cite{CC16}]  \label{val-bival-mbC}
Let $\mu$ be a bivaluation for \mbc. The valuation over  the Nmatrix $\mathcal{M}\big(\mathcal{B}_{\mathbb{A}_2}^\mbc\big)$ induced by $\mu$ is given by $v_\mu^\mbc(\alpha) \defin (\mu(\alpha),\mu(\neg\alpha),\mu(\cons\alpha))$ for every formula $\alpha$.
\end{definition}

By showing that $v_\mu^\mbc$ is indeed a valuation over $\mathcal{M}\big(\mathcal{B}_{\mathbb{A}_2}^\mbc\big)$ such that $v_\mu^\mbc(\alpha) \in D$ iff $\mu(\alpha)=1$, Theorem~\ref{comp-mbc-full} follows easily (see~\cite[Corollary~6.4.10]{CC16}). 

As observed in \cite[Chapter 6]{CC16}, Avron's result means that the Nmatrix induced by the full swap structure $\mathcal{B}_{\mathbb{A}_2}^\mbc$ defined over the two-element Boolean algebra  $\mathbb{A}_2$ is sufficient for characterizing the logic \mbc, and so it represents, in a certain way, the whole class $\Kmbc$  of swap structures for \mbc.
One interesting question is to prove that the 5-element multialgebra $\mathcal{B}_{\mathbb{A}_2}^\mbc$ generates (in some sense) the class $\Kmbc$, in analogy to the fact that the 2-element Boolean algebra  $\mathbb{A}_2$ generates the class of Boolean algebras.

Indeed, in~\cite{Birk:35}  G. Birkhoff proves that, for  every Boolean algebra $\mathcal{A}$, there exists a set $I$ and a monomorphism  of Boolean algebras $h:\mathcal{A} \to \prod_{i \in I}\mathbb{A}_2$. Moreover, in 1944 he obtained the nowadays known as  {\em Birkhoff's representation theorem}, which states that if $\mathbb{K}$ is an equationally defined class of algebras then every algebra in the class is a subdirect product of subdirectly irreducible algebras of $\mathbb{K}$ (see~\cite{GB2}). The generalization of this theorem to multialgebras is an open problem (see Section~\ref{conclusion}). From  the representation theorem for Boolean algebras~\cite{Birk:35}, and taking into account the properties of the dual Kalman's functor $K^*_\mbc:\balg \to \swmbc$, a representation theorem for the class $\Kmbc$  of swap structures for \mbc\ can be obtained:

\begin{theorem} [Representation Theorem for $\Kmbc$] \label{teo-repr-Kmbc}  Let $\mathcal{B}$ be a swap structure  for \mbc. Then, there exists a set $I$ and a monomorphism  of multialgebras $\hat h:\mathcal{B} \to \prod_{i \in I}\mathcal{B}_{\mathbb{A}_2}^\mbc$.
\end{theorem}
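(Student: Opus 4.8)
The plan is to assemble the result from three ingredients that are already available: the fact that every swap structure for \mbc\ embeds into a full swap structure, Birkhoff's 1935 representation of Boolean algebras, and the good behaviour of the dual Kalman's functor $K^*_\mbc$ with respect to monomorphisms and products.

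First I would observe that $\mathcal{B}$, being an object of $\Kmbc$, is by Definition~\ref{Swap-mbc} a swap structure for \ecpl\ over some Boolean algebra $\mathcal{A}$ whose domain is contained in $\textsc{B}_\mathcal{A}^\mbc$. Arguing exactly as in Remark~\ref{obs pi(B)}, one checks directly from the definitions that $\mathcal{B}$ is then a submultialgebra of the full swap structure $\mathcal{B}_\mathcal{A}^\mbc$: the domain restriction to $\textsc{B}_\mathcal{A}^\mbc$ forces each multioperation of $\mathcal{B}$ to be contained in the corresponding full multioperation of $\mathcal{B}_\mathcal{A}^\mbc$. Hence the inclusion $\iota:\mathcal{B} \to \mathcal{B}_\mathcal{A}^\mbc$ is an injective homomorphism in $\malg(\Sigma)$.

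Next I would apply Birkhoff's representation theorem for Boolean algebras (see~\cite{Birk:35}) to $\mathcal{A}$: there is a set $I$ and a monomorphism of Boolean algebras $h:\mathcal{A} \to \prod_{i \in I}\mathbb{A}_2$, where each factor is the two-element Boolean algebra. Applying the dual Kalman's functor gives, by Proposition~\ref{preservmonos}, an injective homomorphism $h_\ast = K^*_\mbc(h):\mathcal{B}_\mathcal{A}^\mbc \to \mathcal{B}_{\prod_{i \in I}\mathbb{A}_2}^\mbc$. Finally, since $K^*_\mbc$ preserves products (Proposition~\ref{preservprods}), Proposition~\ref{isofam} supplies an isomorphism $f_\mathcal{G}:\prod_{i \in I}\mathcal{B}_{\mathbb{A}_2}^\mbc \to \mathcal{B}_{\prod_{i \in I}\mathbb{A}_2}^\mbc$ in $\malg(\Sigma)$, whose inverse $f_\mathcal{G}^{-1}$ is again an isomorphism, in particular a bijective homomorphism. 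The desired map is then the composite
$$\hat h \defin f_\mathcal{G}^{-1} \circ h_\ast \circ \iota : \mathcal{B} \to \prod_{i \in I}\mathcal{B}_{\mathbb{A}_2}^\mbc.$$
Each of its three factors is a homomorphism in $\malg(\Sigma)$, so $\hat h$ is a homomorphism; each factor is moreover injective as a function ($\iota$ is an inclusion, $h_\ast$ is injective because $h$ is, and $f_\mathcal{G}^{-1}$ is a bijection), so $\hat h$ is injective. By Proposition~\ref{monos} an injective homomorphism is a monomorphism in $\malg(\Sigma)$, which yields the claim.

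I do not expect a genuine obstacle here: the real content has been front-loaded into the functoriality and product-preservation lemmas for $K^*_\mbc$ and into Birkhoff's theorem, so the argument reduces to a diagram chase composing three structure-preserving injections. The only point requiring a little care is the very first step, namely confirming that an arbitrary member of $\Kmbc$ really does sit inside a \emph{full} swap structure $\mathcal{B}_\mathcal{A}^\mbc$ rather than merely inside $\mathcal{B}_\mathcal{A}^{\ecpl}$; as noted above this is immediate from the definitions, once one tracks the restriction of the domain to $\textsc{B}_\mathcal{A}^\mbc$.
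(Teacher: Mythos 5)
Your proposal is correct and is essentially identical to the paper's own proof: both embed $\mathcal{B}$ into the full swap structure $\mathcal{B}_\mathcal{A}^\mbc$, apply Birkhoff's 1935 theorem to $\mathcal{A}$, lift the resulting Boolean monomorphism via Proposition~\ref{preservmonos}, and compose with the inverse of the isomorphism of Proposition~\ref{isofam} to obtain $\hat h = f_\mathcal{G}^{-1}\circ h_\ast\circ \iota$. The only (harmless) difference is that you spell out the injectivity bookkeeping and the verification that $\mathcal{B}\subseteq\mathcal{B}_\mathcal{A}^\mbc$, which the paper takes as immediate from the definitions.
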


\begin{proof}
 Let $\mathcal{B}$ be a swap structure for \mbc. Then, there is a Boolean algebra $\mathcal{A}$ such that $\mathcal{B} \subseteq \mathcal{B}_\mathcal{A}^\mbc$. Let $g:\mathcal{B} \to \mathcal{B}_\mathcal{A}^\mbc$ be the inclusion monomorphism in \swmbc. Using Birkhoff's  representation theorem for Boolean algebras, there exists a set $I$ and a monomorphism $h:\mathcal{A} \to \prod_{i \in I}\mathcal{A}'_i$  of Boolean algebras, where 
$\mathcal{A}'_i =\mathbb{A}_2$, for every $i \in I$. By Proposition~\ref{preservmonos}, there is a monomorphism $h_\ast:\mathcal{B}_\mathcal{A}^\mbc \to \mathcal{B}_{\prod_{i \in I} \mathcal{A}'_i}^\mbc$. Let 
$f_\mathcal{G}:\prod_{i \in I} \mathcal{B}_{\mathcal{A}'_i}^\mbc \to \mathcal{B}_{\prod_{i \in I} \mathcal{A}'_i}^\mbc$ be the isomorphism in $\malg(\Sigma)$ of Proposition~\ref{isofam}, where  $\mathcal{G} = \{\mathcal{A}'_i \ : \ i \in I\}$. By definition of $\mathcal{A}'_i$ it follows that $\mathcal{B}_{\mathcal{A}'_i}^\mbc = \mathcal{B}_{\mathbb{A}_2}^\mbc$, for every $i \in I$. Then $\hat h:\mathcal{B} \to \prod_{i \in I}\mathcal{B}_{\mathbb{A}_2}^\mbc$ is a monomorphism in   $\malg(\Sigma)$, where $\hat h=f_\mathcal{G}^{-1} \circ h_\ast \circ g$.
  \end{proof}

\begin{remark} \label{rem-subdir-irred}
 It is not clear whether the latter result is a representation theorem in the stronger sense of~\cite{GB2}. Indeed, the notion of  subdirectly irreducible multialgebras should be studied. After this, it should be proved that the factors $\mathcal{B}_{\mathbb{A}_2}^\mbc$ are indeed subdirectly irreducible in that sense.
\end{remark}

\

In universal algebra, a variety is an equationally defined class of algebras. It is equivalent to require that the class is  closed under products, subalgebras and homomorphic images.
From the previous result, and given that an equation theory for multialgebras is still incipient, it is natural to ask about the possibility of the class \Kmbc\ being  closed under products, submultialgebras and homomorphic images. We known that \Kmbc\ is closed  under products (by Proposition~\ref{prods-sw}) and submultialgebras (by the very definitions). Unfortunately, the class is not closed under homomorphic images.

Indeed, recall the notions of multicongruence (Definition~\ref{mcongru}), quotient multialgebra (Definition~\ref{quomultialg}) and the canonical map $p:A \to A/_\Theta$ for every multicongruence $\Theta$ (Proposition~\ref{canproj-quo-epi}). Now, let $\textrm{D}=\{z^1, z^2, z^3\}$ and $\textrm{ND}=\big\{z^4, z^5\big\}$ be an enumeration of the elements of the domain $\textsc{B}_{\mathbb{A}_2}^\mbc = \textrm{D} \cup \textrm{ND}$ of the  multialgebra $\mathcal{B}_{\mathbb{A}_2}^\mbc$. Let $\Theta$ be the equivalence relation asociated to the partition $\{a,b\}$ of $\textsc{B}_{\mathbb{A}_2}^\mbc$ such that $a=\{z^1,z^4\}$ and $b=\{z^2,z^3,z^5\}$. The relation  $\Theta$ has the following property: for every $z \in   \textrm{D}$ there exists some $w \in  \textrm{ND}$ such that $(z,w) \in \Theta$, and vice versa. From this, and by observing the definition of the multioperations in the  multialgebra $\mathcal{B}_{\mathbb{A}_2}^\mbc$, it follows that $\Theta$ is a  multicongruence over $\mathcal{B}_{\mathbb{A}_2}^\mbc$. It is easy to prove that the multioperations in the quotient multialgebra ${\mathcal{B}_{\mathbb{A}_2}^\mbc}/_\Theta$ are trivial, that is: for every $x,y \in  \{a,b\}$ and $\# \in \{\wedge,\vee,\to\}$, $(x \# y) = \neg x = \circ x = \{a,b\}$. Clearly ${\mathcal{B}_{\mathbb{A}_2}^\mbc}/_\Theta$  is not a swap structure for \mbc: otherwise, it would generate a trivial Nmatrix where the set of designated values is the whole domain. This would contradict \cite[Proposition 6.4.5(ii)]{CC16}, where it was proven that no Nmatrix in the class $Mat(\Kmbc)$ is trivial. This shows that ${\mathcal{B}_{\mathbb{A}_2}^\mbc}/_\Theta$, the homomorphic image of the canonical  map $p:\mathcal{B}_{\mathbb{A}_2}^\mbc \to {\mathcal{B}_{\mathbb{A}_2}^\mbc}/_\Theta$, does not belong to the class \Kmbc, despite its domain $\mathcal{B}_{\mathbb{A}_2}^\mbc$ is in \Kmbc.

We thus prove the following:

\begin{proposition}
The class \Kmbc\ of multialgebras is closed under submultialgebras and (direct) products, but it is not closed under homomorphic images.
\end{proposition}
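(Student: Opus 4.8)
The plan is to treat the two positive closure properties as immediate consequences of earlier results and to spend the effort on the negative one. Closure under submultialgebras I would read off Definition~\ref{Swap-mbc}: if $\mathcal{C}$ is a submultialgebra of some $\mathcal{B} \in \Kmbc$ that is itself a swap structure for \ecpl, then choosing a Boolean algebra $\mathcal{A}$ with $|\mathcal{B}| \subseteq \textsc{B}_\mathcal{A}^\mbc$ gives, via Definition~\ref{subal}, $|\mathcal{C}| \subseteq |\mathcal{B}| \subseteq \textsc{B}_\mathcal{A}^\mbc$; since the constraints $z_1 \vee z_2 = 1$ and $z_1 \wedge z_2 \wedge z_3 = 0$ defining $\textsc{B}_\mathcal{A}^\mbc$ are imposed pointwise, they survive passage to any subset, so $\mathcal{C} \in \Kmbc$. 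Closure under products is in essence Proposition~\ref{prods-sw}: for $\{\mathcal{B}_i : i \in I\} \subseteq \Kmbc$ with $\mathcal{B}_i \subseteq \mathcal{B}_{\mathcal{A}_i}^\mbc$, the product $\prod_{i \in I}\mathcal{B}_i$ formed in $\malg(\Sigma)$ is a submultialgebra of $\prod_{i \in I}\mathcal{B}_{\mathcal{A}_i}^\mbc$, and restricting the isomorphism of Proposition~\ref{isofam} carries it onto a submultialgebra of $\mathcal{B}_{\prod_{i \in I}\mathcal{A}_i}^\mbc$, hence identifies it with a member of \Kmbc.

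The core of the statement is the failure of closure under homomorphic images, for which I would use the counterexample assembled in the discussion above. Over the two-element Boolean algebra $\mathbb{A}_2$, take the five-element full swap structure $\mathcal{B}_{\mathbb{A}_2}^\mbc$, with designated set $\textrm{D} = \{z^1,z^2,z^3\}$ and non-designated set $\textrm{ND} = \{z^4,z^5\}$, and let $\Theta$ be the equivalence relation whose classes are $a = \{z^1,z^4\}$ and $b = \{z^2,z^3,z^5\}$. The decisive feature of $\Theta$ is that each class meets both $\textrm{D}$ and $\textrm{ND}$. I would then verify clause~(ii) of Definition~\ref{mcongru} --- that for $\Theta$-related inputs every output of a multioperation admits a $\Theta$-related companion output --- by inspecting the five multioperation tables of $\mathcal{B}_{\mathbb{A}_2}^\mbc$; this verification, which is the only genuinely laborious step, goes through precisely because each class contains one designated and one non-designated snapshot. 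With $\Theta$ established as a multicongruence, Proposition~\ref{canproj-quo-epi} makes the canonical map $p:\mathcal{B}_{\mathbb{A}_2}^\mbc \to \mathcal{B}_{\mathbb{A}_2}^\mbc/_\Theta$ a surjective homomorphism, so $\mathcal{B}_{\mathbb{A}_2}^\mbc/_\Theta$ is a homomorphic image of a member of \Kmbc.

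To conclude, I would compute the quotient multioperations from Definition~\ref{quomultialg} and record that every one of them returns the whole domain $\{a,b\}$. Were $\mathcal{B}_{\mathbb{A}_2}^\mbc/_\Theta$ a swap structure for \mbc, its associated Nmatrix would then be trivial, with the set of designated values exhausting the domain, contradicting \cite[Proposition~6.4.5(ii)]{CC16}. Hence $\mathcal{B}_{\mathbb{A}_2}^\mbc/_\Theta \notin \Kmbc$ and \Kmbc\ is not closed under homomorphic images. The only real obstacle is the multicongruence check for $\Theta$; the positive clauses and the final triviality computation are routine.
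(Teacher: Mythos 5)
Your proposal is correct and takes essentially the same route as the paper: closure under submultialgebras from the definitions, closure under products via Proposition~\ref{prods-sw} together with Proposition~\ref{isofam}, and the identical counterexample for homomorphic images, namely the multicongruence $\Theta$ on $\mathcal{B}_{\mathbb{A}_2}^\mbc$ with classes $\{z^1,z^4\}$ and $\{z^2,z^3,z^5\}$, each meeting both $\textrm{D}$ and $\textrm{ND}$. The paper concludes exactly as you do: the quotient has all multioperations equal to the whole domain, so if it were a swap structure for \mbc\ its Nmatrix would be trivial, contradicting \cite[Proposition~6.4.5(ii)]{CC16}.
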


\section{Swap structures for some extensions of \mbc} \label{swap-ext-mbc}

In  \cite[Chapter 6]{CC16} the concept of swap structure for \mbc\ was generalized to some axiomatic extensions of \mbc. As observed in the beginning of Section~\ref{sect-sw-mbc}, these structures will be reintroduced here in a slightly modified form, more suitable to an algebraic study of them.

\begin{definition} (\cite[Definition 3.1.1]{CC16})  \label{mbcciw}
The logic \mbcciw\ is obtained from \mbc\  by adding the axiom schema 
$$\circ\alpha \vee (\alpha \wedge \neg \alpha)       \hspace{2cm}     (\axwci)$$
\end{definition}

\begin{definition} \label{univ-sw-mbcciw}
Let $\mathcal{A}$ be a Boolean algebra. The universe of swap structures  for \mbcciw\ over $\mathcal{A}$ is the set $\textsc{B}_\mathcal{A}^\axwci=\{z \in \textsc{B}_\mathcal{A}^\mbc \ : \ z_3 \vee (z_1 \wedge z_2)=1\}$.
\end{definition}

\

Clearly, $\textsc{B}_\mathcal{A}^\axwci=\{z \in A^3 \ :  \ z_1 \vee z_2=1 \ \mbox{ and } \  z_3 =  {\sim} (z_1 \wedge z_2)\}$.\footnote{Recall that, in this paper, $\sim$ denotes the Boolean complement in a Bolean algebra.}

\begin{definition} \label{Swap-mbcciw}
Let $\mathcal{A}$ be a Boolean algebra.
A swap structure for \mbc\ over $\mathcal{A}$ is said to be a
{\em swap structure for \mbcciw\ over $\mathcal{A}$} if its domain is included in $\textsc{B}_\mathcal{A}^\axwci$.
\end{definition}

Let $\Kmbcciw=\{{\cal B} \in \Kmbc \ : \ {\cal B} \  \mbox{ is a swap structure for \mbcciw}\}$ be the class of swap structures for \mbcciw.  The following result justifies Definition~\ref{Swap-mbcciw}:

\begin{proposition} \label{CarAxKmbcciw} The following holds:
$$\begin{array}{lll}
\Kmbcciw & = & \{ \mathcal{B} \in \Kmbc \ : \  \ 
\mbox{$\mathcal{M}(\mathcal{B})$ validates (\axwci)} \}\\[2mm]
& = &  \{ \mathcal{B} \in \Ksw \ : \  \ 
\mbox{$\mathcal{M}(\mathcal{B})$ validates (\axtnd), (\axexp) and (\axwci)} \}.
\end{array}$$
\end{proposition}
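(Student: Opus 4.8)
The plan is to establish the two equalities in succession, deriving the second one from the first together with Proposition~\ref{CarAxKmbc}. The argument is a close imitation of the proof of Proposition~\ref{CarAxKmbc}; the crux is to isolate the first coordinate of the relevant instance of (\axwci) under an arbitrary valuation. Throughout I will use the standard facts, immediate from Definition~\ref{valNmat1}, that any valuation $v$ over a swap structure satisfies $\pi_1(v(\neg\alpha)) = \pi_2(v(\alpha))$, $\pi_1(v(\cons\alpha)) = \pi_3(v(\alpha))$, and $\pi_1(v(\alpha \# \beta)) = \pi_1(v(\alpha)) \# \pi_1(v(\beta))$ for $\# \in \{\land,\lor,\imp\}$.

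For the first equality I would prove both inclusions of $\Kmbcciw = \{\mathcal{B}\in\Kmbc \ : \ \mbox{$\mathcal{M}(\mathcal{B})$ validates (\axwci)}\}$. For the left-to-right inclusion, take $\mathcal{B} \in \Kmbcciw$ over a Boolean algebra $\mathcal{A}$, so that $|\mathcal{B}| \subseteq \textsc{B}_\mathcal{A}^\axwci$, and let $v$ be any valuation over $\mathcal{B}$. Writing $z = v(\alpha)$, the first-coordinate computation yields
$$\pi_1\big(v(\cons\alpha \lor (\alpha \land \neg\alpha))\big) = \pi_3(v(\alpha)) \lor \big(\pi_1(v(\alpha)) \land \pi_2(v(\alpha))\big) = z_3 \lor (z_1 \land z_2),$$
which equals $1$ exactly because $z \in \textsc{B}_\mathcal{A}^\axwci$ (Definition~\ref{univ-sw-mbcciw}); hence $\mathcal{M}(\mathcal{B})$ validates (\axwci). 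For the reverse inclusion, start from $\mathcal{B} \in \Kmbc$ whose Nmatrix validates (\axwci), and show $|\mathcal{B}| \subseteq \textsc{B}_\mathcal{A}^\axwci$. Given an arbitrary $z \in |\mathcal{B}|$, pick a valuation $v$ over $\mathcal{B}$ with $v(p) = z$ for some propositional variable $p$; this is possible because every multioperation of a swap structure is nonempty, so any assignment on variables extends recursively to a valuation. Validity of (\axwci) forces $\pi_1(v(\cons p \lor (p \land \neg p))) = 1$, and the computation above identifies this first coordinate with $z_3 \lor (z_1 \land z_2)$, so that $z_3 \lor (z_1 \land z_2) = 1$. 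As $z$ was arbitrary and $|\mathcal{B}| \subseteq \textsc{B}_\mathcal{A}^\mbc$ already holds, we obtain $|\mathcal{B}| \subseteq \textsc{B}_\mathcal{A}^\axwci$, i.e.\ $\mathcal{B} \in \Kmbcciw$ by Definition~\ref{Swap-mbcciw}.

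The second equality I would then read off immediately: Proposition~\ref{CarAxKmbc} states that $\mathcal{B} \in \Kmbc$ iff $\mathcal{B} \in \Ksw$ and $\mathcal{M}(\mathcal{B})$ validates (\axtnd) and (\axexp), so substituting this into the first line converts the side condition ``$\mathcal{B}\in\Kmbc$ and $\mathcal{M}(\mathcal{B})$ validates (\axwci)'' into ``$\mathcal{B}\in\Ksw$ and $\mathcal{M}(\mathcal{B})$ validates (\axtnd), (\axexp) and (\axwci)''.

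I do not anticipate any real obstacle, as the whole proof is projection bookkeeping. The only two points worth a word of care are the existence of a valuation taking a prescribed value on a single variable (resting on nonemptiness of the multioperations), and the observation that, since (\axwci) contains a single schematic variable, no auxiliary variable $q$ with $\pi_1(v(q)) = 0$ is needed here --- in contrast to the proof of Proposition~\ref{CarAxKmbc}, where such a $q$ served to discharge the consequent of the explosion axiom (\axexp).
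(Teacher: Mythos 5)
Your proof is correct and follows essentially the same route as the paper's: the forward inclusion by projecting the first coordinate of an instance of (\axwci) and invoking the defining condition of $\textsc{B}_\mathcal{A}^{\axwci}$, and the converse by fixing an arbitrary $z \in |\mathcal{B}|$, extending the assignment $v(p)=z$ to a valuation, and reading off $z_3 \vee (z_1 \wedge z_2)=1$; the second equality is then obtained, as the paper tacitly does, by substituting Proposition~\ref{CarAxKmbc}. Your two remarks of care (nonemptiness of multioperations guaranteeing the valuation's existence, and the absence of any need for an auxiliary variable $q$ with $\pi_1(v(q))=0$) are accurate and, if anything, make the argument slightly more explicit than the paper's.
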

\begin{proof} 
Let  $\mathcal{B}$ be a swap structure for \mbcciw, and let $\gamma=\cons\alpha \vee(\alpha \wedge \neg \alpha)$ be an instance of axiom (\axwci). Let
$v$ be a valuation over $\mathcal{B}$. Since $v(\alpha) \in \textsc{B}_\mathcal{A}^\axwci$ it follows that $\pi_3(v(\alpha)) \vee(\pi_1(v(\alpha)) \wedge \pi_2(v(\alpha)))=1$. By  the fact that $\mathcal{B} \in \Kmbc$ and by Definition~\ref{valNmat1} it follows that $\pi_1(v(\cons\alpha))=\pi_3(v(\alpha))$ and $\pi_1(v(\neg\alpha))=\pi_2(v(\alpha))$, whence $\pi_1(v(\cons\alpha \vee(\alpha \wedge \neg \alpha)))=\pi_3(v(\alpha)) \vee (\pi_1(v(\alpha)) \land \pi_2(v(\alpha))) = 1$. This means that $v(\gamma) \in D_\mathcal{B}$ for every instance $\gamma$ of axiom (\axwci).

Conversely, let $\mathcal{B} \in \Kmbc$ such that $\mathcal{M}(\mathcal{B})$ validates (\axwci), and let $p$ be a propositional variable. Let $z \in |\mathcal{B}|$, and  consider a valuation $v$ over  $\mathcal{B}$ such that $v(p)=z$. Reasoning as in the proof of Proposition~\ref{CarAxKmbc} it can be seen that $\pi_1(v(\cons p \vee(p \wedge \neg p)))=z_3 \vee (z_1 \wedge z_2)$. Since  $\mathcal{B}$ validates (\axwci), by  hypothesis, it follows that $z_3 \vee (z_1 \wedge z_2)=1$. That is,  $|\mathcal{B}| \subseteq \textsc{B}_\mathcal{A}^\axwci$, whence $\mathcal{B} \in \Kmbcciw$, by Definition~\ref{Swap-mbcciw}. 
\end{proof}

\begin{definition} 
The full subcategory in  \sw\ of swap structures for \mbcciw\ will be denoted by \swmbcciw. 
\end{definition}

By the very definitions,  \swmbcciw\ is  a full subcategory in \swmbc, and a full subcategory in  $\malg(\Sigma)$.
Hence,  the class of objects of \swmbcciw\  is $\Kmbcciw$, and the morphisms between two given swap structures for \mbcciw\ are just the homomorphisms between them as multialgebras over $\Sigma$.

\begin{definition} \label{maxBmbCciw}
Let  $\mathcal{A}$ be a  Boolean algebra. The {\em full swap structure for \mbcciw\ over $\mathcal{A}$}, denoted by $\mathcal{B}_\mathcal{A}^\mbcciw$, is the unique swap structure for \mbcciw\  over $\mathcal{A}$ with domain $\textsc{B}_\mathcal{A}^\axwci$ such that, for every $z$ and $w$ in $\textsc{B}_\mathcal{A}^\axwci$:

\begin{itemize}
 \item[(i)] $z\#w = \{u\in \textsc{B}_\mathcal{A}^\axwci \ : \ u_1=z_1\#w_1\}$, for each $\#\in \{\wedge,\vee, \to\}$;
\item[(ii)] $\neg (z) = \{u\in \textsc{B}_\mathcal{A}^\axwci \ : \ u_1=z_2\}$;
 \item[(iii)] $\circ (z) = \{u\in \textsc{B}_\mathcal{A}^\axwci \ : \ u_1=z_3\}$.\\[-2mm]
\end{itemize}
\end{definition}

The class $Mat(\Kmbcciw)$ of Nmatrices associated to swap structures for \mbcciw\ is defined analogously to the class $Mat(\Ksw)$ introduced in Definition~\ref{Nmatrix}. Let  $\models_{Mat(\Kmbcciw)}$ be the consequence relation associated to the class $Mat(\Kmbcciw)$ as in Definition~\ref{conseqclassNmat}. Then:

\begin{theorem} (\cite[Theorem 6.5.4]{CC16})
Let $\Gamma \cup \{\varphi\} \subseteq For(\Sigma)$  be a set of formulas. Then:
$\Gamma \vdash_\mbcciw \varphi$ \ iff \  $\Gamma\models_{Mat(\Kmbcciw)} \varphi$.
\end{theorem}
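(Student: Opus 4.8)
The plan is to mirror, almost verbatim, the proof of the adequacy theorem for \mbc\ (Theorem~\ref{adeq-mbC}), making only the minimal adjustments forced by the extra axiom (\axwci) that distinguishes \mbcciw\ from \mbc.

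For the soundness direction (`only if'), I would proceed exactly as before. Assuming $\Gamma \vdash_\mbcciw \varphi$, I take a swap structure $\mathcal{B}$ for \mbcciw\ and a valuation $v$ over $\mathcal{B}$ with $v[\Gamma] \subseteq D_\mathcal{B}$. Since $\mathcal{B} \in \Kmbcciw \subseteq \Kmbc$, the valuation $v$ validates every \cplp-axiom (by the soundness part of Theorem~\ref{adeq-CPLP1}) together with (\axtnd) and (\axexp) (by Proposition~\ref{CarAxKmbc}); the only new ingredient is that $v$ also validates (\axwci), which is precisely the content of Proposition~\ref{CarAxKmbcciw}. As modus ponens preserves membership in $D_\mathcal{B}$, a routine induction on the length of a derivation of $\varphi$ from $\Gamma$ yields $v(\varphi) \in D_\mathcal{B}$, hence $\Gamma \models_{Mat(\Kmbcciw)} \varphi$.

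For completeness (`if'), suppose $\Gamma \nvdash_\mbcciw \varphi$. I would run the same Lindenbaum-Tarski construction: define $\alpha \equiv_\Gamma \beta$ iff $\Gamma \vdash_\mbcciw \alpha \imp \beta$ and $\Gamma \vdash_\mbcciw \beta \imp \alpha$, form the quotient $A_\Gamma \defin For(\Sigma)/_{\equiv_\Gamma}$ with the induced operations, and observe that, exactly as for \mbc, $A_\Gamma$ is a \algcpl\ with bottom $0_\Gamma \defin [p_1 \wedge \neg p_1 \wedge \cons p_1]_\Gamma$ and top $1_\Gamma \defin [p_1 \imp p_1]_\Gamma$, hence a Boolean algebra $\mathcal{A}_\Gamma$ by Proposition~\ref{BA-HA}(2) (since \mbcciw\ extends \mbc, all the derivations needed to establish these facts remain available). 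I then take the full swap structure $\mathcal{B}_{\mathcal{A}_\Gamma}^\mbcciw$ of Definition~\ref{maxBmbCciw} and the canonical candidate valuation $v_\Gamma(\alpha) \defin ([\alpha]_\Gamma, [\neg\alpha]_\Gamma, [\cons\alpha]_\Gamma)$.

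The one genuinely new verification, and the point I expect to require care, is that $v_\Gamma$ actually takes values in the smaller universe $\textsc{B}_{\mathcal{A}_\Gamma}^\axwci$ rather than merely in $\textsc{B}_{\mathcal{A}_\Gamma}^\mbc$. Beyond the conditions $[\alpha]_\Gamma \vee [\neg\alpha]_\Gamma = 1_\Gamma$ and $[\alpha]_\Gamma \wedge [\neg\alpha]_\Gamma \wedge [\cons\alpha]_\Gamma = 0_\Gamma$ inherited from the \mbc\ construction, I must check the extra membership condition $[\cons\alpha]_\Gamma \vee ([\alpha]_\Gamma \wedge [\neg\alpha]_\Gamma) = 1_\Gamma$. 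By the definition of the operations on $A_\Gamma$, this reduces to $\Gamma \vdash_\mbcciw \cons\alpha \vee (\alpha \wedge \neg\alpha)$, which holds because $\cons\alpha \vee (\alpha \wedge \neg\alpha)$ is an instance of axiom (\axwci) and hence a theorem derivable from any $\Gamma$. Thus $v_\Gamma(\alpha) \in \textsc{B}_{\mathcal{A}_\Gamma}^\axwci$ for every $\alpha$, so $v_\Gamma$ is a legitimate valuation over the Nmatrix $\mathcal{M}(\mathcal{B}_{\mathcal{A}_\Gamma}^\mbcciw)$. As in the proof of Theorem~\ref{adeq-mbC}, one verifies that $v_\Gamma(\alpha) \in D_{\mathcal{B}_{\mathcal{A}_\Gamma}^\mbcciw}$ iff $\Gamma \vdash_\mbcciw \alpha$; consequently $v_\Gamma[\Gamma] \subseteq D_{\mathcal{B}_{\mathcal{A}_\Gamma}^\mbcciw}$ while $v_\Gamma(\varphi) \notin D_{\mathcal{B}_{\mathcal{A}_\Gamma}^\mbcciw}$, witnessing $\Gamma \not\models_{Mat(\Kmbcciw)} \varphi$ by Definition~\ref{conseqclassNmat}. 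The main obstacle is therefore not conceptual but the bookkeeping of confirming that the familiar canonical valuation remains well-defined into the restricted universe; everything else transfers unchanged from the \mbc\ proof.
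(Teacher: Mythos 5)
Your proof is correct, and the first thing to note is that the paper itself contains no proof of this statement: the theorem is quoted from \cite[Theorem~6.5.4]{CC16}, and the remark immediately following it only asserts that a direct proof is possible ``by extending the proof of Theorem~\ref{adeq-mbC}''. Your proposal carries out precisely that program, so it supplies an argument the paper leaves implicit rather than reproducing one it contains; the comparison is then with the proof cited in \cite{CC16}, which (as the paper explains for the analogous \mbc\ case) is indirect, passing through the equivalence of swap-structure semantics with other semantics for the logic, whereas your Lindenbaum--Tarski argument is self-contained within the swap-structure framework. You also isolate correctly the one point where the \mbc\ proof does not transfer verbatim: the canonical valuation $v_\Gamma(\alpha)=([\alpha]_\Gamma,[\neg\alpha]_\Gamma,[\cons\alpha]_\Gamma)$ must land in the smaller universe $\textsc{B}_{\mathcal{A}_\Gamma}^{\axwci}$ of Definition~\ref{univ-sw-mbcciw}, and since the quotient operations give $[\cons\alpha]_\Gamma \vee ([\alpha]_\Gamma \wedge [\neg\alpha]_\Gamma) = [\cons\alpha \vee (\alpha \wedge \neg\alpha)]_\Gamma$ while $[\gamma]_\Gamma = 1_\Gamma$ iff $\Gamma \vdash_\mbcciw \gamma$, this membership follows from axiom (\axwci); with that established, $v_\Gamma$ is a legitimate valuation over $\mathcal{M}\big(\mathcal{B}_{\mathcal{A}_\Gamma}^{\mbcciw}\big)$ and the remainder of the completeness argument, as well as the soundness half (resting on Proposition~\ref{CarAxKmbcciw} for the new axiom, and on Proposition~\ref{CarAxKmbc} and Theorem~\ref{adeq-CPLP1} plus preservation under \MP\ for the rest), goes through exactly as for \mbc.
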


\begin{remark}
It is possible to give  a direct proof of the latter theorem, by extending the proof of Theorem~\ref{adeq-mbC} presented here. 
\end{remark}

Now, stronger extensions of \mbc\ will be analized:

\begin{definition}  \label{mbcci-eCPL} Consider the following extensions of \mbc:\\[2mm]
(1)  The logic \mbcci\ (\cite[Definition 3.1.7]{CC16}) is obtained from \mbc\  by adding the axiom schema 
$$\neg{\circ}\alpha \imp (\alpha \wedge \neg \alpha)       \hspace{2cm}     (\axci)$$
(2)  The logic \ci\ (\cite[Remark 3.5.18]{CC16}) is obtained from \mbcci\  by adding the axiom schema 
$$\neg\neg\alpha \imp \alpha       \hspace{2cm}     (\axcf)$$
(3)  The logic \eecpl\  is obtained from \mbc\  by adding the axiom schema 
$${\circ}\alpha       \hspace{4cm}     (\axcons)$$
\end{definition}

\

\begin{proposition} \label{car-extensions} 
(1)  The logic \mbcci\ properly extends \mbcciw, and \ci\ properly extends \mbcci.\\[1mm]
(2)  The logic \eecpl\ is a presentation of \cpl\ over $\Sigma$, in which the connective $\circ$ gives a top particle. Thus,  \eecpl\ properly extends \ci\ and it is semantically characterized by the usual 2-valued truth-tables for \cpl\ plus the operator $\circ(x)=1$ for every $x \in \{0,1\}$.
\end{proposition}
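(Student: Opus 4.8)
The plan is to separate, for each claimed proper extension, the \emph{containment} direction (handled by short syntactic derivations in the relevant Hilbert calculi) from the \emph{strictness} direction (handled semantically or, where the requisite Nmatrix semantics is not yet available in this paper, by appeal to \cite{CCM,CC16}). For part (1), I would first show that \mbcci\ extends \mbcciw\ by deriving (\axwci) inside \mbcci. The key step is a proof-by-cases argument: from the instance $\cons\alpha \vee \neg\cons\alpha$ of (\axtnd), together with $\cons\alpha \imp (\cons\alpha \vee (\alpha\wedge\neg\alpha))$ (axiom \axouda) in the first case and $\neg\cons\alpha\imp(\alpha\wedge\neg\alpha)$ (which is \axci) followed by $(\alpha\wedge\neg\alpha)\imp(\cons\alpha\vee(\alpha\wedge\neg\alpha))$ (axiom \axoudb) in the second case, the disjunction-elimination axiom \axoue\ of \cplp\ yields $\cons\alpha\vee(\alpha\wedge\neg\alpha)$, i.e.\ (\axwci). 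Hence every theorem of \mbcciw\ is a theorem of \mbcci. That \ci\ extends \mbcci\ is immediate, since \ci\ is by definition \mbcci\ augmented with (\axcf).

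For the strictness of these inclusions, it suffices to refute in the smaller logic a theorem of the larger one. For \mbcci\ versus \mbcciw\ I would show that (\axci) is \emph{not} derivable in \mbcciw\ by a counter-model in the full swap structure $\mathcal{B}_{\mathbb{A}_2}^\mbcciw$ over the two-element Boolean algebra (Definition~\ref{maxBmbCciw}), whose domain $\textsc{B}_{\mathbb{A}_2}^\axwci$ consists exactly of the three snapshots $(1,0,1)$, $(0,1,1)$, $(1,1,0)$. Choosing a valuation $v$ with $v(\alpha)=(1,0,1)$ and $v(\cons\alpha)=(1,1,0)$ is a legitimate non-deterministic choice, since $\pi_1(1,1,0)=1=z_3$ for $z=v(\alpha)$; then $v(\neg\cons\alpha)$ is forced to have first coordinate $1$ (hence is designated), while $v(\alpha\wedge\neg\alpha)$ has first coordinate $z_1\wedge\pi_1(v(\neg\alpha))=1\wedge 0=0$ (hence is undesignated), so that $v\big(\neg\cons\alpha\imp(\alpha\wedge\neg\alpha)\big)$ has first coordinate $1\imp 0=0$ and is undesignated. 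By the adequacy of \mbcciw\ with respect to $Mat(\Kmbcciw)$ (\cite[Theorem~6.5.4]{CC16}, quoted above), (\axci) is thus not a theorem of \mbcciw. For \ci\ versus \mbcci\ I would refute (\axcf): since the swap-structure semantics adequate for \mbcci\ is not developed at this point, I would invoke the separation established in \cite[Chapter~3]{CC16} (equivalently, exhibit a finite paraconsistent matrix validating all of \mbcci\ in which $\neg\neg\alpha\imp\alpha$ fails).

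For part (2), the central observation is that \eecpl\ proves full explosion: applying \MP\ to the axiom (\axcons), namely $\cons\alpha$, and to the instance $\cons\alpha\imp(\alpha\imp(\neg\alpha\imp\beta))$ of (\axexp) gives $\vdash_\eecpl\alpha\imp(\neg\alpha\imp\beta)$ for all $\alpha,\beta$. Together with (\axtnd) and the classical positive base \cplp, this makes $\neg$ a classical negation, so the $\{\land,\lor,\imp,\neg\}$-reduct of \eecpl\ coincides with \cpl; moreover (\axcons) makes $\cons$ a top particle, since $\vdash_\eecpl\cons\alpha$ is interderivable with $\alpha\imp\alpha$ (both being theorems) and so adds no expressive power. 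This establishes that \eecpl\ is a presentation of \cpl\ over $\Sigma$. The matrix characterization then follows from soundness (each instance of \kax--\axexp\ and of (\axcons) is a tautology under the $2$-valued tables with $\cons(x)=1$, and \MP\ preserves the value $1$) together with completeness, which reduces to completeness of \cpl\ for its $2$-valued tables plus the trivial reading of $\cons$. Finally, \eecpl\ properly extends \ci: the containment holds because in \eecpl\ both (\axci) and (\axcf) are derivable (the former from explosion applied to $\cons\alpha$ together with $\vdash_\eecpl\cons\alpha$, the latter from classicality), while strictness follows from the fact that \ci\ is a genuine \lfi\ (cf.\ \cite{CCM,CC16}): by Definition~\ref{defLFI}(i) there are $\varphi,\psi$ with $\varphi,\neg\varphi\nvdash_\ci\psi$, whereas $\vdash_\ci\cons\varphi$ would, via clause (iii), give $\varphi,\neg\varphi\vdash_\ci\psi$, a contradiction; hence $\nvdash_\ci\cons\varphi$ while $\vdash_\eecpl\cons\varphi$. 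A concrete witness is the $3$-valued logic \dacdot, an extension of \ci\ in which $\cons$ takes a non-designated value.

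The main obstacle is the strictness of the inclusion \mbcci\ in \ci, i.e.\ the independence of (\axcf) from \mbcci. Unlike the \mbcciw/\mbcci\ separation, no swap-structure (Nmatrix) semantics adequate for \mbcci\ is available at this stage of the paper, so a fully self-contained argument would require either constructing an explicit finite \mbcci-model that refutes $\neg\neg\alpha\imp\alpha$ while verifying all \mbcci\ axioms in it, or deferring to the completeness results of \cite{CC16}. The remaining steps --- the syntactic derivations of the containments and the routine soundness/completeness verification for the $2$-valued matrix --- are straightforward.
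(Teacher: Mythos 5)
Your proposal is correct, and it is considerably more self-contained than the paper's own proof, which handles part (1) entirely by citation: the paper refers to \cite[Proposition~3.1.10]{CC16} for the fact that \mbcci\ properly extends \mbcciw, and to the bivaluation semantics of \cite[Chapter~3]{CC16} for the \ci/\mbcci\ separation. You instead derive the containment syntactically (your proof-by-cases from (\axtnd), (\axouda), (\axoudb) and (\axoue) is exactly the right derivation of (\axwci) from (\axci) over \mbc), and you obtain strictness from an explicit countermodel in the 3-valued Nmatrix $\mathcal{M}\big(\mathcal{B}_{\mathbb{A}_2}^\mbcciw\big)$; this is legitimate at this point of the paper, since the adequacy theorem \cite[Theorem~6.5.4]{CC16} has already been quoted there, and your valuation (with $v(\alpha)=T$ and the non-deterministic choice $v(\cons\alpha)=t$) does refute (\axci). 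For the strictness of \ci\ over \mbcci\ you defer to \cite{CC16}, exactly as the paper does, and you correctly diagnose why a swap-structure argument is not yet available at that stage, so no gap arises there. For part (2) your argument coincides with the paper's (explosion from (\axcons), (\axexp) and \MP; classicality of $\neg$ via (\axtnd); $\cons$ as a top particle), and you additionally spell out what the paper dismisses as obvious: the derivability of (\axci) and (\axcf) in \eecpl, and the non-derivability of (\axcons) in \ci\ via clauses (i) and (iii) of Definition~\ref{defLFI}, with the matrix of \dacdot\ as a concrete witness. The trade-off is clear: the paper's proof is shorter but opaque, being pure citation for part (1), while yours replaces one of those citations by machinery already developed in the paper, at the cost of still needing \cite{CC16} for the single separation whose semantics only appears later.
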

\begin{proof}
(1) For the first part, see \cite[Proposition 3.1.10]{CC16}. The second part can be proved analogously by considering bivaluations semantics for these logics, which is defined from the one for \mbc\ introduced in Definition~\ref{bivalold}. Details can be found in~\cite[Chapter~3]{CC16}).\\[1mm]
(2) Observe that, by (\axcons), (\axexp) and \MP,  the negation $\neg$ is explosive in \eecpl\ and so it coincides with the classical negation, by axiom (\axtnd). Since \cplp\ is included in \eecpl\ then this logic is nothing more than a presentation of \cpl\ by adding an unary connective  $\circ$ such that $\circ\alpha$ is a top particle for every  $\alpha$. The rest of the proof is obvious.  
\end{proof}

\begin{definition} \label{Kci}
(1) A {\em swap structure for \mbcci} is any $\mathcal{B} \in \Kmbcciw$ such that, for every $z \in |\mathcal{B}|$, $\cons(z) \defin \{({\sim}(z_1 \land z_2),z_1 \land z_2,1)\}$.  The class of swap structures for \mbcci\ will be denoted by \Kmbcci.\\[1mm]
(2) A {\em swap structure for \ci} is any $\mathcal{B} \in \Kmbcci$ such that, for every $z \in |\mathcal{B}|$, $\neg(z) \subseteq \{u\in |\mathcal{B}| \ : \ u_1=z_2 \ \mbox{ and } \  u_2  \leq z_1\}$.  The class of swap structures for \ci\ will be denoted by \Kci.\\
\end{definition}

\begin{definition} \label{maxBmbCci-Ci}
Let  $\mathcal{A}$ be a  Boolean algebra.\\[1mm]
(1) The {\em full swap structure for \mbcci\ over $\mathcal{A}$},  denoted by $\mathcal{B}_\mathcal{A}^\mbcci$, is the unique swap structure for \mbcci\ over $\mathcal{A}$ with domain $\textsc{B}_\mathcal{A}^\axwci$ such that, for every $z$ and $w$ in $\textsc{B}_\mathcal{A}^\axwci$:

\begin{itemize}
 \item[(i)] $z\#w = \{u\in \textsc{B}_\mathcal{A}^\axwci \ : \ u_1=z_1\#w_1\}$, for each $\#\in \{\wedge,\vee, \to\}$;
\item[(ii)] $\neg (z) = \{u\in \textsc{B}_\mathcal{A}^\axwci \ : \ u_1=z_2\}$;
 \item[(iii)] $\circ (z) = \{({\sim}(z_1 \land z_2),z_1 \land z_2,1)\}$.\\[-2mm]
\end{itemize}
(2) The {\em full swap structure for \ci\ over $\mathcal{A}$},  denoted by $\mathcal{B}_\mathcal{A}^\ci$, is the unique swap structure for \ci\ over $\mathcal{A}$ with domain $\textsc{B}_\mathcal{A}^\axwci$ such that the multioperations (other than $\neg$) are defined as in $\mathcal{B}_\mathcal{A}^\mbcci$ and, for every $z$ in $\textsc{B}_\mathcal{A}^\axwci$:

\begin{itemize}
\item[(ii)'\,] $\neg (z) = \{u\in \textsc{B}_\mathcal{A}^\axwci \ : \ u_1=z_2 \ \mbox{ and } \  u_2  \leq z_1\}$.\\[-2mm]
\end{itemize}
\end{definition}

The classes $Mat(\Kmbcci)$ and $Mat(\Kci)$ of Nmatrices  are defined analogously to the class $Mat(\Ksw)$ introduced in Definition~\ref{Nmatrix}. Thus:

\begin{theorem} (\cite[Theorems 6.5.11 and 6.5.23]{CC16})
Let $\Gamma \cup \{\varphi\} \subseteq For(\Sigma)$  be a set of formulas. Then:\\[1mm]
(1) $\Gamma \vdash_\mbcci \varphi$ \ iff \  $\Gamma\models_{Mat(\Kmbcci)} \varphi$.\\[1mm]
(2) $\Gamma \vdash_\ci \varphi$ \ iff \  $\Gamma\models_{Mat(\Kci)} \varphi$.
\end{theorem}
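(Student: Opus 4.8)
The plan is to prove both parts by adapting the proof of Theorem~\ref{adeq-mbC}, exploiting the inclusions $\Kci \subseteq \Kmbcci \subseteq \Kmbc$ so that the work already carried out for \mbc\ can be reused verbatim, with only the new axioms (\axci) and (\axcf) requiring fresh attention. I would treat soundness and completeness for both logics in parallel.

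\textbf{Soundness.} For part (1), suppose $\Gamma \vdash_\mbcci \varphi$ and let $v$ be a valuation over some $\mathcal{B} \in \Kmbcci$ with $v[\Gamma] \subseteq D_\mathcal{B}$. Since $\mathcal{B} \in \Kmbc$, the soundness part of Theorem~\ref{adeq-mbC} already guarantees that $v$ validates every axiom of \mbc\ and that \MP\ preserves membership in $D_\mathcal{B}$; it remains only to check (\axci). Writing $z=v(\alpha)$, the defining clause of $\Kmbcci$ (Definition~\ref{Kci}(1)) forces $\cons(z) = \{(\sneg(z_1 \land z_2), z_1 \land z_2, 1)\}$, so $v(\cons\alpha)$ has second coordinate $z_1 \land z_2$; hence $\pi_1(v(\neg\cons\alpha)) = z_1 \land z_2 = \pi_1(v(\alpha \land \neg\alpha))$, and therefore $\pi_1(v(\neg\cons\alpha \imp (\alpha \land \neg\alpha))) = (z_1 \land z_2) \imp (z_1 \land z_2) = 1$, so (\axci) is validated. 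For part (2), any $\mathcal{B} \in \Kci$ lies in $\Kmbcci$, so all \mbcci\ axioms are validated as above; for (\axcf) note that $v(\neg\alpha) = w \in \neg(z)$ satisfies $w_2 \leq z_1$ by Definition~\ref{Kci}(2), whence $\pi_1(v(\neg\neg\alpha)) = w_2 \leq z_1 = \pi_1(v(\alpha))$ and so $\pi_1(v(\neg\neg\alpha \imp \alpha)) = 1$.

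\textbf{Completeness.} For each logic I would repeat the Lindenbaum--Tarski construction from the proof of Theorem~\ref{adeq-mbC}: assuming $\Gamma \nvdash \varphi$, form the quotient $A_\Gamma = For(\Sigma)/_{\equiv_\Gamma}$ (with $\equiv_\Gamma$ defined via the relevant $\vdash$), which is again a Boolean algebra $\mathcal{A}_\Gamma$ by Proposition~\ref{BA-HA}(2) since $0_\Gamma = [p_1 \land \neg p_1 \land \cons p_1]_\Gamma$ is a bottom element. Take the full swap structure $\mathcal{B}_{\mathcal{A}_\Gamma}^\mbcci$ (resp. $\mathcal{B}_{\mathcal{A}_\Gamma}^\ci$) of Definition~\ref{maxBmbCci-Ci}, and define the canonical map $v_\Gamma(\alpha) = ([\alpha]_\Gamma, [\neg\alpha]_\Gamma, [\cons\alpha]_\Gamma)$. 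Once $v_\Gamma$ is shown to be a valuation over the associated Nmatrix, the identity $\pi_1(v_\Gamma(\alpha)) = [\alpha]_\Gamma$ gives $v_\Gamma(\alpha) \in D$ iff $\Gamma \vdash \alpha$, so that $v_\Gamma[\Gamma] \subseteq D$ while $v_\Gamma(\varphi) \notin D$, yielding $\Gamma \not\models_{Mat(\Kmbcci)} \varphi$ (resp. $\Gamma \not\models_{Mat(\Kci)} \varphi$).

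\textbf{Main obstacle.} The delicate step is verifying that $v_\Gamma$ genuinely respects the more rigid multioperations of these full swap structures. For \mbcci\ the operation $\cons$ is now deterministic, so $v_\Gamma(\cons\alpha) \in \cons(v_\Gamma(\alpha))$ forces the three identities $[\cons\alpha]_\Gamma = \sneg([\alpha]_\Gamma \land [\neg\alpha]_\Gamma)$, $[\neg\cons\alpha]_\Gamma = [\alpha]_\Gamma \land [\neg\alpha]_\Gamma$ and $[\cons\cons\alpha]_\Gamma = 1_\Gamma$; the first expresses that $[\cons\alpha]_\Gamma$ is the Boolean complement of $[\alpha \land \neg\alpha]_\Gamma$, which follows from the provable (\axwci) (giving $[\cons\alpha]_\Gamma \lor [\alpha \land \neg\alpha]_\Gamma = 1_\Gamma$) together with the instance $\cons\alpha, \alpha, \neg\alpha \vdash \beta$ of (\axexp) (giving $[\cons\alpha]_\Gamma \land [\alpha \land \neg\alpha]_\Gamma = 0_\Gamma$); the second requires the biconditional strengthening of (\axci), namely $\vdash_\mbcci \neg\cons\alpha \leftrightarrow (\alpha \land \neg\alpha)$; and the third requires $\vdash_\mbcci \cons\cons\alpha$. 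The same computation also shows $v_\Gamma(\alpha) \in \textsc{B}_{\mathcal{A}_\Gamma}^\axwci$. For \ci\ one must in addition check that $v_\Gamma(\neg\alpha)$ lands in the restricted set $\neg(v_\Gamma(\alpha))$, i.e. $[\neg\neg\alpha]_\Gamma \leq [\alpha]_\Gamma$, which is exactly (\axcf). I expect the derivation of these auxiliary schemas (especially the converse direction of (\axci) and $\cons\cons\alpha$) to be the real labour; they can either be established syntactically inside each calculus or imported from the bivaluation characterizations of \mbcci\ and \ci\ alluded to in Proposition~\ref{car-extensions}, after which the remainder of the argument is a routine transcription of the \mbc\ case.
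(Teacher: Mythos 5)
Your proposal is correct, and it is exactly the direct argument that the paper gestures at but does not write out: the theorem is stated by citation of \cite[Theorems~6.5.11 and~6.5.23]{CC16} (whose proofs are indirect, as the paper notes happened already for \mbc), and the remark immediately following it says only that a direct proof is possible ``by extending the proof of Theorem~\ref{adeq-mbC}''. Your soundness computations for (\axci) and (\axcf) coincide with the corresponding halves of the paper's Proposition~\ref{CarAxKmbcci}, and your completeness half is the Lindenbaum--Tarski construction of Theorem~\ref{adeq-mbC} with the genuinely new obligations correctly isolated: since $\cons$ is single-valued in $\mathcal{B}_{\mathcal{A}_\Gamma}^\mbcci$, the clause $v_\Gamma(\cons\alpha)\in\cons(v_\Gamma(\alpha))$ amounts precisely to your three identities, and for \ci\ the extra clause on $\neg$ is exactly (\axcf). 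The auxiliary schemas you flag as the remaining labour are indeed available, so there is no gap: $(\alpha\land\neg\alpha)\imp\neg\cons\alpha$ is already a theorem of \mbc\ (from (\axexp) one gets $\alpha,\neg\alpha\vdash\cons\alpha\imp\neg\cons\alpha$, and then (\axtnd) instantiated at $\cons\alpha$ together with proof by cases via (\axoue) yields $\neg\cons\alpha$), which combined with (\axci) gives the needed biconditional; the identity $[\cons\alpha]_\Gamma={\sim}([\alpha]_\Gamma\land[\neg\alpha]_\Gamma)$ follows, as you say, from (\axwci) (derivable in \mbcci\ by Proposition~\ref{car-extensions}(1)) together with (\axexp); and $\vdash_\mbcci\cons\cons\alpha$ then follows by applying that identity to $\cons\alpha$ itself, since $[\cons\alpha]_\Gamma\wedge[\neg\cons\alpha]_\Gamma=0_\Gamma$ by (\axci) and (\axexp). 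Compared with the indirect route of \cite{CC16}, your direct proof buys self-containedness and uniformity with the \mbc\ case at the cost of only these short syntactic derivations.
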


\begin{remark}
As in the case of \mbcciw, it is possible to give  a direct proof of the latter theorem, by extending the proof of Theorem~\ref{adeq-mbC}. 
\end{remark}

\begin{proposition} \label{CarAxKmbcci} The following holds:
$$\begin{array}{lll}
\Kmbcci & = & \{ \mathcal{B} \in \Kmbcciw \ : \  \ 
\mbox{$\mathcal{M}(\mathcal{B})$ validates (\axci)} \}\\[2mm]
& = & \{ \mathcal{B} \in \Kmbc \ : \  \ 
\mbox{$\mathcal{M}(\mathcal{B})$ validates (\axci)} \}\\[2mm]
& = &  \{ \mathcal{B} \in \Ksw \ : \  \ 
\mbox{$\mathcal{M}(\mathcal{B})$ validates (\axtnd), (\axexp) and (\axci)} \}.
\end{array}$$
and
$$\begin{array}{lll}
\Kci & = & \{ \mathcal{B} \in \Kmbcci \ : \  \ 
\mbox{$\mathcal{M}(\mathcal{B})$ validates (\axcf)} \}\\[2mm]
& = & \{ \mathcal{B} \in \Kmbcciw \ : \  \ 
\mbox{$\mathcal{M}(\mathcal{B})$ validates (\axci) and (\axcf)} \}\\[2mm]
& = & \{ \mathcal{B} \in \Kmbc \ : \  \ 
\mbox{$\mathcal{M}(\mathcal{B})$ validates (\axci) and (\axcf)} \}\\[2mm]
& = &  \{ \mathcal{B} \in \Ksw \ : \  \ 
\mbox{$\mathcal{M}(\mathcal{B})$ validates (\axtnd), (\axexp), (\axci) and (\axcf)} \}.
\end{array}$$
\end{proposition}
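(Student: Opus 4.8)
The plan is to obtain each of the two displayed chains of equalities by combining two genuinely new equivalences --- each identifying a validation condition with a structural condition on a multioperation --- with a number of purely formal steps that merely substitute the already available axiomatic characterizations of $\Kmbc$ (Proposition~\ref{CarAxKmbc}) and $\Kmbcciw$ (Proposition~\ref{CarAxKmbcciw}). The two new equivalences are $\Kmbcci = \{\mathcal{B}\in\Kmbcciw : \mathcal{M}(\mathcal{B})$ validates $(\axci)\}$ and $\Kci = \{\mathcal{B}\in\Kmbcci : \mathcal{M}(\mathcal{B})$ validates $(\axcf)\}$, and both would be proved following the template of Propositions~\ref{CarAxKmbc} and~\ref{CarAxKmbcciw}.

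First I would prove the core equality for $\Kmbcci$. For the inclusion from left to right, take $\mathcal{B}\in\Kmbcci$ and a valuation $v$ with $z=v(\alpha)$; Definition~\ref{Kci}(1) forces $v(\cons\alpha)=({\sim}(z_1\land z_2),\,z_1\land z_2,\,1)$, hence $\pi_1(v(\neg\cons\alpha))=z_1\land z_2$ and $\pi_1\big(v(\neg\cons\alpha\imp(\alpha\land\neg\alpha))\big)=(z_1\land z_2)\imp(z_1\land z_2)=1$, so $(\axci)$ is validated. For the converse, assume $\mathcal{B}\in\Kmbcciw$ validates $(\axci)$, fix $z\in|\mathcal{B}|$ and an arbitrary $c\in\cons(z)$, and choose a valuation with $v(p)=z$ and $v(\cons p)=c$ (legitimate because all multioperations are nonempty, exactly as in the proof of Proposition~\ref{CarAxKmbc}). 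Validation of $(\axci)$ gives $c_2\leq z_1\land z_2$, while $c\in\textsc{B}_\mathcal{A}^\axwci$ gives $c_1=z_3={\sim}(z_1\land z_2)$ and $c_1\vee c_2=1$, whence $c_2\geq z_1\land z_2$; therefore $c_2=z_1\land z_2$ and $c_3={\sim}(c_1\land c_2)=1$, so $c=({\sim}(z_1\land z_2),\,z_1\land z_2,\,1)$. As $c\in\cons(z)$ was arbitrary and $\cons(z)\neq\emptyset$, this is precisely the defining condition of Definition~\ref{Kci}(1), so $\mathcal{B}\in\Kmbcci$.

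The remaining equalities in the first chain are light. That $\{\mathcal{B}\in\Kmbcciw : \cdots\}=\{\mathcal{B}\in\Kmbc : \cdots\}$ holds because $\Kmbcciw\subseteq\Kmbc$ and, conversely, for $\mathcal{B}\in\Kmbc$ validation of $(\axci)$ already forces validation of $(\axwci)$: choosing $c\in\cons(z)$ as above one has $c_2\leq z_1\land z_2$ and $z_3\vee c_2=c_1\vee c_2=1$, so $z_3\vee(z_1\land z_2)=1$, and Proposition~\ref{CarAxKmbcciw} then places $\mathcal{B}$ in $\Kmbcciw$. The last equality of the chain is the mere substitution of the characterization of $\Kmbc$ from Proposition~\ref{CarAxKmbc}. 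For the second chain the only new content is $\Kci=\{\mathcal{B}\in\Kmbcci : \mathcal{M}(\mathcal{B})$ validates $(\axcf)\}$, proved by the same pattern: if $\mathcal{B}\in\Kci$ then $d=v(\neg\alpha)$ satisfies $d_2\leq z_1$ by Definition~\ref{Kci}(2), so $\pi_1(v(\neg\neg\alpha\imp\alpha))=d_2\imp z_1=1$; conversely, validation of $(\axcf)$ applied to a valuation with $v(p)=z$ and $v(\neg p)=d$ (for arbitrary $d\in\neg(z)$) yields $d_2\imp z_1=1$, i.e. $d_2\leq z_1$, which is exactly the constraint of Definition~\ref{Kci}(2). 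The three further equalities of the second chain then follow formally by replacing $\Kmbcci$, $\Kmbcciw$ and $\Kmbc$ with their axiomatic descriptions (the first chain, together with Propositions~\ref{CarAxKmbcciw} and~\ref{CarAxKmbc}).

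The Boolean computations are all one-line identities, so I expect no real difficulty there; the one point requiring care is the recurring construction of a valuation realizing a prescribed snapshot at a variable and at its immediate $\neg$- or $\cons$-image, which is the same device used tacitly in the proof of Proposition~\ref{CarAxKmbc} and is justified by the nonemptiness of every multioperation. The genuine risk is organizational rather than mathematical: one must reduce each ``outer'' equality to an already-established characterization instead of reproving it, so that the argument stays modular and the four ambient classes $\Ksw\supseteq\Kmbc\supseteq\Kmbcciw\supseteq\Kmbcci$ are threaded through in the right order.
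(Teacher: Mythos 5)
Your proposal is correct and takes essentially the same approach as the paper: the two core equivalences ($\Kmbcci = \{\mathcal{B}\in\Kmbcciw : \mathcal{M}(\mathcal{B}) \mbox{ validates (\axci)}\}$ and $\Kci = \{\mathcal{B}\in\Kmbcci : \mathcal{M}(\mathcal{B}) \mbox{ validates (\axcf)}\}$) are proved by exactly the same valuation computations as in the paper (forcing $\cons(z)=\{({\sim}(z_1\land z_2),z_1\land z_2,1)\}$ and $\neg(z)\subseteq\{u : u_1=z_2,\ u_2\leq z_1\}$), and the outer equalities are reduced to the earlier characterizations. If anything you are slightly more careful than the paper, which proves only the first equality of each chain and leaves the $\Kmbc$/$\Ksw$ levels implicit; your direct argument that validation of (\axci) over $\Kmbc$ already forces $z_3\vee(z_1\wedge z_2)=1$, hence membership in $\Kmbcciw$, makes that omitted step explicit, and your explicit quantification over an arbitrary $c\in\cons(z)$ (resp.\ $d\in\neg(z)$) tightens a point the paper handles tacitly.
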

\begin{proof} Let us begin with \Kmbcci.
Let  $\mathcal{B}$ be a swap structure for \mbcci, and let $\gamma=\neg{\circ}\alpha \imp (\alpha \wedge \neg \alpha)$ be an instance of axiom (\axci). Let
$v$ be a valuation over $\mathcal{B}$, and let $z=v(\alpha)$. Given that $v(\cons\alpha)=({\sim}(z_1 \land z_2),z_1 \land z_2,1)$ and $v(\neg\cons\alpha) \in \{ w \in |\mathcal{B}| \ : \ w_1=\pi_2(v(\cons\alpha))\}$ then   $v(\neg\cons\alpha) \in \{ w \in |\mathcal{B}| \ : \ w_1=z_1 \land z_2\}$. On the other hand $v(\alpha \wedge \neg \alpha) \in \{ w \in |\mathcal{B}| \ : \ w_1=z_1 \land z_2\}$. Being so, $v(\gamma) \in \{ w \in |\mathcal{B}| \ : \ w_1=(z_1 \land z_2) \to (z_1 \land z_2)\} =  D_\mathcal{B}$ for every instance $\gamma$ of axiom (\axci).

Conversely, let $\mathcal{B} \in \Kmbcciw$ such that $\mathcal{M}(\mathcal{B})$ validates (\axci), and let $z \in |\mathcal{B}|$. Let $p$ be a propositional variable, and  consider a valuation $v$ over  $\mathcal{B}$ such that $v(p)=z$. Then $\pi_1(v(\neg\cons p \to(p \wedge \neg p)))=\pi_1(v(\neg\cons p)) \to \pi_1(v(p \wedge \neg p)) = \pi_2(v(\cons p)) \to (z_1 \wedge z_2) =1$, since  $\mathcal{B}$ validates (\axci). Hence,  $\pi_2(v(\cons p)) \leq z_1 \wedge z_2$. On the other hand, $\pi_1(v(\cons p)) = z_3={\sim}(z_1 \wedge z_2)$. Therefore $z_1 \wedge z_2={\sim}\pi_1(v(\cons p)) \leq \pi_2(v(\cons p))$ (by observing that, if $u \in \textsc{B}_\mathcal{A}^\axwci$ then $u_1 \vee u_2=1$ and so  ${\sim}u_1 \leq u_2$). That is, $\pi_2(v(\cons p)) = z_1 \wedge z_2$. This means that $\circ (z) = \{({\sim}(z_1 \land z_2),z_1 \land z_2,1)\}$, whence  $\mathcal{B} \in \Kmbcci$, by Definition~\ref{Kci}(1). 

Finally, let us analyze \Kci. Let  $\mathcal{B}$ be a swap structure for \ci, and let $\gamma=\neg\neg\alpha \imp \alpha$ be an instance of axiom (\axcf). Let
$v$ be a valuation over $\mathcal{B}$, and let $z=v(\alpha)$. Observe that $v(\neg \alpha) \in \{ u \in |\mathcal{B}| \ : \ u_1=z_2 \ \mbox{ and } \  u_2  \leq z_1\}$. From this, 
$v(\neg\neg \alpha) \in \neg v(\neg \alpha) \subseteq \{ w \in |\mathcal{B}| \ : \ w_1=\pi_2(v(\neg \alpha))\} \subseteq \{w\in |\mathcal{B}| \ : \ w_1 \leq z_1\}$.
Thus, $\pi_1(v(\gamma))= \pi_1(v(\neg\neg \alpha)) \to z_1 = 1$ and so $v(\gamma) \in D_\mathcal{B}$ for every instance $\gamma$ of axiom (\axcf).

Conversely, let $\mathcal{B} \in \Kmbcci$ such that $\mathcal{M}(\mathcal{B})$ validates (\axcf). Let $z \in |\mathcal{B}|$ and $u \in \neg(z)$. Let $p$ be a propositional variable, and  consider a valuation $v$ over  $\mathcal{B}$ such that $v(p)=z$ and $v(\neg p)=u$. Then $v(\neg\neg p) \in \neg v(\neg p) = \neg u$, whence $\pi_1(v(\neg\neg p))=u_2$. From this $\pi_1(v(\neg\neg p \to p)) =  \pi_1(v(\neg\neg p)) \to z_1 = u_2 \to z_1=1$, provided that $\mathcal{M}(\mathcal{B})$ validates (\axcf). Therefore $u_2 \leq z_1$. This means that $\neg (z) \subseteq \{u\in |\mathcal{B}| \ : \ u_1=z_2 \ \mbox{ and } \  u_2  \leq z_1\}$, whence  $\mathcal{B} \in \Kci$, by Definition~\ref{Kci}(2).
\end{proof}

Finally \eecpl, classical propositional logic defined over $\Sigma$, will be characterized by means of swap structures.

\begin{definition} 
Let $\mathcal{A}$ be a Boolean algebra with domain $A$. The universe of swap structures for \eecpl\ over $\mathcal{A}$ is the set 
$$\textsc{B}_\mathcal{A}^{\eecpl}= \{z\in \textsc{B}_\mathcal{A}^\axwci \ : \ z_2 = {\sim} z_1\}  = \{(a,{\sim}a,1) \ : \ a \in A\} \simeq A.$$
\end{definition}

\begin{definition} 
A {\em swap structure for \eecpl} is any $\mathcal{B} \in \Kci$ such that $|\mathcal{B}|\subseteq \textsc{B}_\mathcal{A}^{\eecpl}$.   The class of swap structures for \eecpl\ will be denoted by \Keecpl.
\end{definition}

\begin{proposition} \label{CarAxKeecpl} The following holds:
$$\begin{array}{lll}
\Keecpl & = & \{ \mathcal{B} \in \Kci \ : \ \ 
\mbox{$\mathcal{M}(\mathcal{B})$ validates (\textbf{cons})} \}\\[2mm]
& = & \{  \mathcal{B} \in \Kmbc \ : \ \ 
\mbox{$\mathcal{M}(\mathcal{B})$ validates (\textbf{cons})} \}\\[2mm]
& = &  \{ \mathcal{B} \in \Ksw \ : \ \ 
\mbox{$\mathcal{M}(\mathcal{B})$ validates (\axtnd), (\axexp) and (\textbf{cons})} \}.
\end{array}$$
\end{proposition}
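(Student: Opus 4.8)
The plan is to prove the three equalities by routing everything through the middle description and leaning on the already-established characterizations of $\Kmbc$ (Proposition~\ref{CarAxKmbc}) and of $\Kci$ (Definition~\ref{Kci}). The pivotal computation, which I would record first, is a semantic reading of the axiom (\axcons): for any swap structure $\mathcal{B}$, the Nmatrix $\mathcal{M}(\mathcal{B})$ validates (\axcons) if and only if $z_3 = 1$ for every $z \in |\mathcal{B}|$. This holds because, by Definition~\ref{Swap-str}(iii) and Definition~\ref{valNmat1}, $\pi_1(v(\cons\alpha)) = \pi_3(v(\alpha))$ for every valuation $v$; taking a valuation with $v(p) = z$ for a propositional variable $p$ shows that validity of $\cons p$ amounts to $z_3 = 1$, while $z$ ranges over all of $|\mathcal{B}|$.

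Next I would show that every $\mathcal{B} \in \Kmbc$ whose Nmatrix validates (\axcons) already lies in $\Keecpl$. So suppose $\mathcal{B} \in \Kmbc$ and $\mathcal{M}(\mathcal{B})$ validates (\axcons); the computation above gives $z_3 = 1$ for all $z \in |\mathcal{B}|$. Since $\mathcal{B} \in \Kmbc$ yields $z_1 \vee z_2 = 1$ and $z_1 \wedge z_2 \wedge z_3 = 0$, substituting $z_3 = 1$ forces $z_1 \wedge z_2 = 0$ and hence $z_2 = {\sim}z_1$; thus $|\mathcal{B}| \subseteq \textsc{B}_\mathcal{A}^{\eecpl}$, and $z_3 \vee (z_1 \wedge z_2) = 1$ gives $\mathcal{B} \in \Kmbcciw$. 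The more delicate part, which I expect to be the main obstacle, is to verify that such a $\mathcal{B}$ belongs to $\Kci$, since this constrains the multioperations and not merely the domain. For $\cons$: from $\cons(z) \subseteq \{u \in |\mathcal{B}| : u_1 = z_3 = 1\}$ and the fact that $(1,0,1)$ is the only snapshot with first coordinate $1$ (every snapshot having the form $(a,{\sim}a,1)$), nonemptiness of $\cons(z)$ yields $\cons(z) = \{(1,0,1)\} = \{({\sim}(z_1 \wedge z_2), z_1 \wedge z_2, 1)\}$, which is exactly the condition of Definition~\ref{Kci}(1), placing $\mathcal{B}$ in $\Kmbcci$. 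For $\neg$: any $u \in \neg(z)$ satisfies $u_1 = z_2$, so $u_2 = {\sim}u_1 = {\sim}z_2 = z_1 \leq z_1$, matching Definition~\ref{Kci}(2). Hence $\mathcal{B} \in \Kci$, and therefore $\mathcal{B} \in \Keecpl$.

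For the reverse direction, $\Keecpl \subseteq \Kci$ holds by definition, and if $|\mathcal{B}| \subseteq \textsc{B}_\mathcal{A}^{\eecpl}$ then every snapshot has third coordinate $1$, so $\mathcal{M}(\mathcal{B})$ validates (\axcons) by the pivotal computation. Combined with the trivial inclusions induced by $\Kci \subseteq \Kmbc$, this closes the chain $\Keecpl \subseteq \{\mathcal{B} \in \Kci : \mathcal{M}(\mathcal{B}) \text{ validates } (\axcons)\} \subseteq \{\mathcal{B} \in \Kmbc : \mathcal{M}(\mathcal{B}) \text{ validates } (\axcons)\} \subseteq \Keecpl$, which yields the first two equalities.

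Finally, the third description is obtained for free from Proposition~\ref{CarAxKmbc}: a structure in $\Ksw$ validates (\axtnd) and (\axexp) exactly when it belongs to $\Kmbc$, so adjoining validity of (\axcons) simply reproduces the middle description. No further computation is required.
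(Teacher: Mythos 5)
Your proposal is correct and follows essentially the same route as the paper: both reduce validity of (\axcons) to the condition $z_3=1$ on every snapshot, combine this with the domain conditions $z_1\vee z_2=1$ and $z_1\wedge z_2\wedge z_3=0$ of $\Kmbc$ to get $z_2={\sim}z_1$ and hence $|\mathcal{B}|\subseteq \textsc{B}_\mathcal{A}^{\eecpl}$, close the chain $\Keecpl \subseteq \{\mathcal{B}\in\Kci : \cdots\} \subseteq \{\mathcal{B}\in\Kmbc : \cdots\} \subseteq \Keecpl$, and obtain the third description from Proposition~\ref{CarAxKmbc}. The two small differences are to your credit: you read $z_1\wedge z_2\wedge z_3=0$ directly off Definition~\ref{univ-sw-mbc} instead of re-deriving it semantically via the auxiliary variable $q$ as the paper does, and you explicitly verify the step the paper waves off as ``straightforward to see,'' namely that such a $\mathcal{B}$ lands in $\Kci$ --- your observation that $(1,0,1)$ is the only snapshot of $\textsc{B}_\mathcal{A}^{\eecpl}$ with first coordinate $1$ (forcing $\cons(z)=\{(1,0,1)\}=\{({\sim}(z_1\wedge z_2),z_1\wedge z_2,1)\}$, per Definition~\ref{Kci}(1)) and that $u\in\neg(z)$ gives $u_2={\sim}z_2=z_1$ (per Definition~\ref{Kci}(2)) is exactly the missing content.
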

\begin{proof}
Let  $\mathcal{B} \in \Keecpl$, and let $\gamma=\cons\alpha$ be an instance of axiom (\textbf{cons}). Let
$v$ be a valuation over $\mathcal{B}$, and let $z=v(\alpha)$. Given that $z \in \textsc{B}_\mathcal{A}^{\eecpl}$ then  $z_1 \land z_2 =0$. Since $v(\cons \alpha) = ({\sim}(z_1 \land z_2),z_1 \land z_2,1)$ then  $v(\gamma) \in D_\mathcal{B}$ for every instance $\gamma$ of axiom (\textbf{cons}).

Now, let $\mathcal{B} \in \Kmbc$ such that $\mathcal{M}(\mathcal{B})$ validates (\textbf{cons}), and let $p$ and $q$ be two different propositional variables. Let $z \in |\mathcal{B}|$, and  consider a valuation $v$ over  $\mathcal{B}$ such that $v(p)=z$ and $\pi_1(v(q))=0$ (this is always possible since, by Definition~\ref{Swap-str}, $0 \in \pi_1[|\mathcal{B}|]$). As in the proof of Proposition~\ref{CarAxKmbc} it follows that $\pi_3(v(p)) \to (\pi_1(v(p)) \to (\pi_2(v(p)) \to 0))=1$. But $z_3=\pi_3(v(p)) = \pi_1(v(\cons p))=1$, since  $\mathcal{M}(\mathcal{B})$ validates (\textbf{cons}). Therefore $\pi_1(v(p)) \to (\pi_2(v(p)) \to 0)=1$ and so $\pi_1(v(p)) \land \pi_2(v(p))=0$. That is, $z_1 \land z_2=0$, whence  $z_2 ={\sim}z_1$. This means that $z \in \textsc{B}_\mathcal{A}^{\eecpl}$ and so $|\mathcal{B}| \subseteq \textsc{B}_\mathcal{A}^{\eecpl}$. From this is straightforward to see that $\mathcal{B} \in \Kci$, therefore  $\mathcal{B} \in \Keecpl$.
\end{proof}

\

The full subcategory in  \sw\ of swap structures for \mbcci\ and for \eecpl\ will be denoted by \swmbcci\ and \sweecpl, respectively. 
By the very definitions, they are full subcategories in \swmbc, and full subcategories in  $\malg(\Sigma)$.

\begin{remark} \label{obsalge}
(1) If $\mathcal{B}  \in \Keecpl$ then $\mathcal{B}$ can be seen as a Boolean algebra isomorphic to the Boolean algebra $\pi_1[|\mathcal{B}|]$. Indeed, $(a,{\sim}a,1) \mapsto a$ is a bijection. On the other hand, the operations in $\mathcal{B}$ are defined as follows, for every $(a,{\sim}a,1)$ and $(b,{\sim}b,1)$ in $|\mathcal{B}|$:
\begin{itemize}
 \item[(i)] $(a,{\sim}a,1)\# (b,{\sim}b,1) = \{(a\#b,{\sim}(a\#b),1)\}$, for each $\#\in \{\wedge,\vee, \to\}$;
\item[(ii)] $\neg (a,{\sim}a,1) =\{({\sim}a,a,1)\}$;
 \item[(iii)] $\circ (a,{\sim}a,1) = \{(1,0,1)\}$.
\end{itemize}
(2) Observe that $$\Keecpl \subset \Kci \subset \Kmbcci \subset \Kmbcciw \subset \Kmbc \subset \Ksw$$ 
while 
$$\eecpl \supset \ci \supset \mbcci \supset  \mbcciw \supset \mbc \supset \ecpl.$$
\end{remark}

As analyzed in \cite[Chapter 6]{CC16}, the logic \mbcciw\ can be characterized by a single 3-valued Nmatrix, by considering the full swap structure over the two-valued Boolean algebra $\mathbb{A}_2$. Indeed the Nmatrix $\mathcal{M}_3^\mbcciw$ induced by the full swap structure $\mathcal{B}_{\mathbb{A}_2}^\mbcciw$ (recall  Definition~\ref{maxBmbCciw}) was originally considered by A. Avron in  \cite{avr:05},  obtaining so a semantical characterization of \mbcciw. The domain of the multialgebra $\mathcal{B}_{\mathbb{A}_2}^\mbcciw$ is the set  $\textsc{B}_{\mathbb{A}_2}^\axwci = \big\{T, \, t, \,  F\big\}$ such that $T=(1,0,1)$, $t=(1,1,0)$ and $F=  (0,1,1)$, where  $\textrm{D}_3=\{T, \, t\}$ is the set of designated values. 
The multioperations are defined as follows:

\ 

\begin{center}
\begin{tabular}{|c|c|c|c|}
\hline
 $\land$ & $T$   & $t$ & $F$ \\
 \hline \hline
    $T$    & $\{t,T\}$   & $\{t,T\}$   & $\{F\}$   \\ \hline
     $t$    & $\{t,T\}$   & $\{t,T\}$   & $\{F\}$  \\ \hline
     $F$    & $\{F\}$  & $\{F\}$ & $\{F\}$  \\ \hline
\end{tabular}
\hspace{1cm}
\begin{tabular}{|c|c|c|c|}
\hline
 $\lor$ & $T$   & $t$  & $F$  \\
 \hline \hline
    $T$    & $\{t,T\}$   & $\{t,T\}$  & $\{t,T\}$    \\ \hline
     $t$    & $\{t,T\}$   & $\{t,T\}$  & $\{t,T\}$   \\ \hline
     $F$    & $\{t,T\}$   & $\{t,T\}$  & $\{F\}$   \\ \hline
\end{tabular}
\end{center}

\

\begin{center}
\begin{tabular}{|c|c|c|c|}
\hline
 $\imp$ & $T$   & $t$ & $F$ \\
 \hline \hline
    $T$    & $\{t,T\}$   & $\{t,T\}$  & $\{F\}$   \\ \hline
     $t$    & $\{t,T\}$   & $\{t,T\}$  & $\{F\}$  \\ \hline
     $F$    & $\{t,T\}$   & $\{t,T\}$  & $\{t,T\}$   \\ \hline
\end{tabular}
\hspace{1cm}
\begin{tabular}{|c||c|} \hline
$\quad$ & $\neg$ \\
 \hline \hline
    $T$   & $\{F\}$    \\ \hline
     $t$   & $\{t,T\}$     \\ \hline
     $F$   & $\{t,T\}$     \\ \hline
\end{tabular}
\hspace{1cm}
\begin{tabular}{|c||c|}
\hline
 $\quad$ & $\cons$ \\
 \hline \hline
    $T$   & $\{t,T\}$     \\ \hline
     $t$   & $\{F\}$    \\ \hline
     $F$   & $\{t,T\}$     \\ \hline
\end{tabular}
\end{center}

\ \\

It is clear that $\mathcal{B}_{\mathbb{A}_2}^\mbcciw$ is a submultialgebra of $\mathcal{B}_{\mathbb{A}_2}^\mbc$. Moreover, by an analysis similar to the one presented above, it is possible to prove the following:

\begin{theorem} [Representation Theorem for \Kmbcciw]  Let $\mathcal{B}$ be a swap structure  for \mbcciw. Then, there exists a set $I$ and a monomorphism  of multialgebras $\hat h:\mathcal{B} \to \prod_{i \in I}\mathcal{B}_{\mathbb{A}_2}^\mbcciw$.
\end{theorem}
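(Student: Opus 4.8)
The plan is to reproduce, \emph{mutatis mutandis}, the argument used to prove Theorem~\ref{teo-repr-Kmbc}: reduce the representation of an arbitrary swap structure to Birkhoff's representation theorem for Boolean algebras~\cite{Birk:35}, transporting the latter through the dual Kalman's functor and through the isomorphism identifying a product of full swap structures with the full swap structure over the product Boolean algebra. The only difference with the \mbc\ case is that one must verify that the two auxiliary results used there --- Proposition~\ref{preservmonos} (the dual Kalman's functor preserves monomorphisms) and Proposition~\ref{isofam} (the map $f_\mathcal{F}$ is an isomorphism in $\malg(\Sigma)$) --- restrict correctly to the subcategory \swmbcciw, that is, that they respect the additional constraint defining $\textsc{B}_\mathcal{A}^\axwci$.

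Concretely, I would proceed as follows. Let $\mathcal{B}$ be a swap structure for \mbcciw. By Definition~\ref{Swap-mbcciw} there is a Boolean algebra $\mathcal{A}$ with $\mathcal{B} \subseteq \mathcal{B}_\mathcal{A}^\mbcciw$, and I take the inclusion monomorphism $g:\mathcal{B} \to \mathcal{B}_\mathcal{A}^\mbcciw$ in \swmbcciw. Applying Birkhoff's representation theorem, I obtain a set $I$ and a monomorphism of Boolean algebras $h:\mathcal{A} \to \prod_{i \in I}\mathbb{A}_2$. Next, I would establish the \mbcciw-analogue of Proposition~\ref{preservmonos}, producing from $h$ an induced monomorphism $h_\ast:\mathcal{B}_\mathcal{A}^\mbcciw \to \mathcal{B}_{\prod_{i \in I}\mathbb{A}_2}^\mbcciw$ given by $h_\ast(z)=(h(z_1),h(z_2),h(z_3))$. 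Then, letting $\mathcal{G}=\{\mathbb{A}_2 \ : \ i \in I\}$, I would establish the \mbcciw-analogue of Proposition~\ref{isofam}, namely that the map $f_\mathcal{G}:\prod_{i \in I}\mathcal{B}_{\mathbb{A}_2}^\mbcciw \to \mathcal{B}_{\prod_{i \in I}\mathbb{A}_2}^\mbcciw$ is an isomorphism in $\malg(\Sigma)$. Composing, the map $\hat h=f_\mathcal{G}^{-1}\circ h_\ast \circ g:\mathcal{B} \to \prod_{i \in I}\mathcal{B}_{\mathbb{A}_2}^\mbcciw$ is a composite of injective homomorphisms, hence an injective homomorphism, and thus a monomorphism in $\malg(\Sigma)$ by Proposition~\ref{monos}, as required.

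The main work, and the only genuinely new point, lies in checking that these two auxiliary results survive the passage from $\textsc{B}_\mathcal{A}^\mbc$ to $\textsc{B}_\mathcal{A}^\axwci$. Recall that $z \in \textsc{B}_\mathcal{A}^\axwci$ iff $z_1 \vee z_2 = 1$ and $z_3 = {\sim}(z_1 \wedge z_2)$. For the functor, I must check that $h_\ast$ maps $\textsc{B}_\mathcal{A}^\axwci$ into $\textsc{B}_{\prod_{i\in I}\mathbb{A}_2}^\axwci$: since a Boolean homomorphism $h$ preserves $\wedge$, $\vee$, $1$ and the complement ${\sim}$, from $z_1 \vee z_2 = 1$ we get $h(z_1) \vee h(z_2)=1$, and from $z_3={\sim}(z_1 \wedge z_2)$ we get $h(z_3)={\sim}(h(z_1) \wedge h(z_2))$, so $h_\ast(z)$ indeed satisfies the defining condition; the verification that $h_\ast$ is a homomorphism and is injective then proceeds exactly as in Proposition~\ref{preservmonos}. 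For the isomorphism, I must check that $f_\mathcal{G}$ restricts to a bijection between $\prod_{i\in I}\textsc{B}_{\mathbb{A}_2}^\axwci$ and $\textsc{B}_{\prod_{i\in I}\mathbb{A}_2}^\axwci$: since meet, join and complement in the product Boolean algebra are computed coordinatewise, the condition $z_3(i)={\sim}(z_1(i)\wedge z_2(i))$ holding for every $i$ is equivalent to $z_3={\sim}(z_1 \wedge z_2)$ in the product, and likewise for $z_1 \vee z_2 = 1$; thus the correspondence of Proposition~\ref{isofam} carries $\axwci$-snapshots to $\axwci$-snapshots in both directions, and the homomorphism clauses are verified as before. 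Because these checks only use that Boolean homomorphisms commute with the operations occurring in the definition of $\textsc{B}_\mathcal{A}^\axwci$ and that products of Boolean algebras are formed coordinatewise, they are routine; this is precisely the ``analysis similar to the one presented above'' alluded to in the statement.
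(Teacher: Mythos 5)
Your proposal is correct and takes essentially the same approach as the paper: the paper gives no separate proof of this theorem, stating only that it follows ``by an analysis similar to the one presented above,'' i.e., by repeating the proof of Theorem~\ref{teo-repr-Kmbc} with $\mathcal{B}_\mathcal{A}^\mbcciw$ in place of $\mathcal{B}_\mathcal{A}^\mbc$. Your explicit verification that the defining conditions of $\textsc{B}_\mathcal{A}^\axwci$ (namely $z_1 \vee z_2 = 1$ and $z_3 = {\sim}(z_1 \wedge z_2)$) are preserved by Boolean homomorphisms and by the coordinatewise identification of Proposition~\ref{isofam} is precisely the intended ``similar analysis,'' so nothing is missing.
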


Concerning \mbcci\ and \ci, similar results can be obtained. Indeed,  A. Avron has proven in  \cite{avr:05} that \mbcci\ can be characterized by a single 3-valued Nmatrix. In \cite[Chapter 6]{CC16} it was proved that Avron's Nmatrix is exactly the one obtained from the 3-valued full swap structure
$\mathcal{B}_{\mathbb{A}_2}^\mbcci$ over $\mathbb{A}_2$ (see Definition~\ref{maxBmbCci-Ci}(1)). The full swap structure $\mathcal{B}_{\mathbb{A}_2}^\mbcci$ coincides with $\mathcal{B}_{\mathbb{A}_2}^\mbcciw$ with exception of $\circ$. Indeed, in $\mathcal{B}_{\mathbb{A}_2}^\mbcci$  the multioperator  $\circ$ is now single-valued, and it is defined as follows:

\ 

\begin{center}
\begin{tabular}{|c||c|}
\hline
 $\quad$ & $\cons$ \\
 \hline \hline
    $T$   & $\{T\}$     \\ \hline
     $t$   & $\{F\}$    \\ \hline
     $F$   & $\{T\}$     \\ \hline
\end{tabular}
\end{center}

\ \\

Clearly, $\mathcal{B}_{\mathbb{A}_2}^\mbcci$ is a submultialgebra of $\mathcal{B}_{\mathbb{A}_2}^\mbcciw$ and so of $\mathcal{B}_{\mathbb{A}_2}^\mbc$. Moreover:

\begin{theorem} [Representation Theorem for \Kmbcci]  Let $\mathcal{B}$ be a swap structure  for \mbcci. Then, there exists a set $I$ and a monomorphism  of multialgebras $\hat h:\mathcal{B} \to \prod_{i \in I}\mathcal{B}_{\mathbb{A}_2}^\mbcci$.
\end{theorem}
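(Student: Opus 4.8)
The plan is to reproduce, \emph{mutatis mutandis}, the argument used to prove Theorem~\ref{teo-repr-Kmbc} for $\Kmbc$, replacing $\mbc$ by $\mbcci$ throughout. First I would observe that, by Definition~\ref{Kci}(1), any swap structure $\mathcal{B}$ for $\mbcci$ satisfies $\mathcal{B} \subseteq \mathcal{B}_\mathcal{A}^{\mbcci}$ for some Boolean algebra $\mathcal{A}$, where $\mathcal{B}_\mathcal{A}^{\mbcci}$ is the full swap structure of Definition~\ref{maxBmbCci-Ci}(1); let $g:\mathcal{B} \to \mathcal{B}_\mathcal{A}^{\mbcci}$ be the inclusion, which is a monomorphism in $\malg(\Sigma)$. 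Then I would invoke Birkhoff's representation theorem to obtain a set $I$ and a monomorphism of Boolean algebras $h:\mathcal{A} \to \prod_{i\in I}\mathcal{A}'_i$ with each $\mathcal{A}'_i = \mathbb{A}_2$.

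The two facts that must be transported to the $\mbcci$ setting are the analogues of Proposition~\ref{preservmonos} and Proposition~\ref{isofam}. For the first, given a Boolean homomorphism $f:\mathcal{A}\to\mathcal{A}'$ I would define $f_\ast(z)=(f(z_1),f(z_2),f(z_3))$ exactly as for $\mbc$ and check that it is a homomorphism $f_\ast:\mathcal{B}_\mathcal{A}^{\mbcci}\to\mathcal{B}_{\mathcal{A}'}^{\mbcci}$. The conditions for $\wedge,\vee,\to,\neg$ are verified verbatim as in the $\mbc$ case; the only new point is the consistency connective. Since in $\mbcci$ one has $\circ(z)=\{({\sim}(z_1\wedge z_2),z_1\wedge z_2,1)\}$, and $f$ preserves $\wedge$, the Boolean complement ${\sim}$ and the constant $1$, a direct computation gives $f_\ast[\circ z]=\{(f({\sim}(z_1\wedge z_2)),f(z_1\wedge z_2),f(1))\}=\{({\sim}(f(z_1)\wedge f(z_2)),f(z_1)\wedge f(z_2),1)\}=\circ f_\ast(z)$, so the clause holds, in fact with equality since $\circ$ is single-valued. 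Preservation of monomorphisms then follows exactly as in Proposition~\ref{preservmonos}: $f$ injective forces $f_\ast$ injective, hence a monomorphism in $\malg(\Sigma)$ and in $\swmbcci$.

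For the product isomorphism I would reuse the same bijection $f_\mathcal{F}$ of Proposition~\ref{isofam}, now regarded as a map $\prod_{i\in I}\textsc{B}_{\mathcal{A}_i}^{\axwci}\to\textsc{B}_{\prod_{i\in I}\mathcal{A}_i}^{\axwci}$ (note that swap structures for $\mbcci$ share the underlying universe $\textsc{B}_\mathcal{A}^{\axwci}$ with those for $\mbcciw$). The verification that $f_\mathcal{F}$ respects $\wedge,\vee,\to,\neg$ is unchanged; for $\circ$ one checks $f_\mathcal{F}[\circ a]=\circ f_\mathcal{F}(a)$ using that the product Boolean operations $\wedge$, ${\sim}$ and the constant $1$ are computed coordinatewise, so both sides equal the single snapshot with coordinates $({\sim}(z_1\wedge z_2),z_1\wedge z_2,1)$. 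By Proposition~\ref{isos} this makes $f_\mathcal{G}:\prod_{i\in I}\mathcal{B}_{\mathcal{A}'_i}^{\mbcci}\to\mathcal{B}_{\prod_{i\in I}\mathcal{A}'_i}^{\mbcci}$ an isomorphism, where $\mathcal{G}=\{\mathcal{A}'_i : i\in I\}$.

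Finally I would assemble $\hat h = f_\mathcal{G}^{-1}\circ h_\ast\circ g$, where $h_\ast:\mathcal{B}_\mathcal{A}^{\mbcci}\to\mathcal{B}_{\prod_{i\in I}\mathcal{A}'_i}^{\mbcci}$ is the monomorphism induced by $h$. Since $\mathcal{A}'_i=\mathbb{A}_2$ yields $\mathcal{B}_{\mathcal{A}'_i}^{\mbcci}=\mathcal{B}_{\mathbb{A}_2}^{\mbcci}$, the composite $\hat h$ is a monomorphism $\mathcal{B}\to\prod_{i\in I}\mathcal{B}_{\mathbb{A}_2}^{\mbcci}$, as required. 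The main obstacle, and really the only nonroutine point, is confirming that the single-valued consistency operation of $\mbcci$ is compatible both with Boolean homomorphisms and with products; once this is settled, the categorical skeleton of Theorem~\ref{teo-repr-Kmbc} carries over intact.
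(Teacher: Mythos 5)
Your proposal is correct and is essentially the proof the paper intends: the paper leaves this theorem to the reader as an adaptation of the argument for Theorem~\ref{teo-repr-Kmbc} (inclusion into the full structure $\mathcal{B}_\mathcal{A}^\mbcci$, Birkhoff's theorem for Boolean algebras, the induced monomorphism $h_\ast$, and the product isomorphism $f_\mathcal{G}$), which is exactly the skeleton you reproduce. Your explicit check that the single-valued operation $\circ(z)=\{({\sim}(z_1\wedge z_2),z_1\wedge z_2,1)\}$ is preserved by $f_\ast$ and by $f_\mathcal{F}$ is precisely the only new verification required, so the proposal matches the paper's route.
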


Consider now \ci. In~\cite{avr:05} A. Avron has obtained a semantical characterization of \ci\ in terms of a single 3-valued Nmatrix $\mathcal{M}_\ci$. In \cite[Chapter 6]{CC16} it was shown that the underlying multialgebra of $\mathcal{M}_\ci$ is $\mathcal{B}_{\mathbb{A}_2}^\ci$, the full swap structure for \ci\  over $\mathbb{A}_2$ (see Definition~\ref{maxBmbCci-Ci}(2)). This multialgebra coincides with 
$\mathcal{B}_{\mathbb{A}_2}^\mbcci$ with exception of  the multioperator $\neg$, which  is now defined as follows:

\ 

\begin{center}
\begin{tabular}{|c||c|} \hline
$\quad$ & $\neg$ \\
 \hline \hline
    $T$   & $\{F\}$    \\ \hline
     $t$   & $\{t,T\}$     \\ \hline
     $F$   & $\{T\}$     \\ \hline
\end{tabular}
\end{center}

\ \\

It is clear that $\mathcal{B}_{\mathbb{A}_2}^\ci$ is a submultialgebra of $\mathcal{B}_{\mathbb{A}_2}^\mbcci$ and so of $\mathcal{B}_{\mathbb{A}_2}^\mbcciw$ and $\mathcal{B}_{\mathbb{A}_2}^\mbc$. Moreover, the following representation result holds:

\begin{theorem} [Representation Theorem for \Kci]  Let $\mathcal{B}$ be a swap structure  for \ci. Then, there exists a set $I$ and a monomorphism  of multialgebras $\hat h:\mathcal{B} \to \prod_{i \in I}\mathcal{B}_{\mathbb{A}_2}^\ci$.
\end{theorem}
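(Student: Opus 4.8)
The plan is to mirror exactly the proof of Theorem~\ref{teo-repr-Kmbc}, replacing the full swap structures for \mbc\ by those for \ci. Two ingredients used there must first be re-established in the present setting: an analogue of Proposition~\ref{isofam} asserting that $\prod_{i \in I}\mathcal{B}_{\mathcal{A}_i}^\ci$ is isomorphic in $\malg(\Sigma)$ to $\mathcal{B}_{\prod_{i \in I}\mathcal{A}_i}^\ci$, and an analogue of Proposition~\ref{preservmonos} asserting that a monomorphism $f:\mathcal{A}\to\mathcal{A}'$ of Boolean algebras induces a monomorphism $f_\ast:\mathcal{B}_\mathcal{A}^\ci \to \mathcal{B}_{\mathcal{A}'}^\ci$ of the corresponding full swap structures, where $f_\ast(z)=(f(z_1),f(z_2),f(z_3))$. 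Since every swap structure for \ci\ is, by Definition~\ref{Kci}(2), a submultialgebra of $\mathcal{B}_\mathcal{A}^\ci$ for a suitable $\mathcal{A}$, these two facts suffice.

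First I would verify the product isomorphism. The underlying bijection and its inverse are exactly as in Proposition~\ref{isofam}, since the universes $\textsc{B}_\mathcal{A}^\axwci$ and the operations $\#\in\{\wedge,\vee,\to\}$ are unchanged. The only new checks concern the constrained operations of Definition~\ref{maxBmbCci-Ci}(2): for $\cons$, one uses that $z_1\wedge z_2$ and its complement are computed coordinatewise in $\prod_{i\in I}\mathcal{A}_i$, so that the singleton $\{({\sim}(z_1\wedge z_2),z_1\wedge z_2,1)\}$ is matched; for $\neg$, the clause $u_2\leq z_1$ translates correctly because the order of $\prod_{i\in I}\mathcal{A}_i$ is the coordinatewise order, whence $w_2\leq z_1$ holds in the product iff $w_2(i)\leq z_1(i)$ for every $i$. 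Hence the map is a full and bijective homomorphism, so an isomorphism by Proposition~\ref{isos}.

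Next I would verify that $f_\ast$ is a homomorphism. The clauses for $\#$ and for the first coordinate of $\neg$ are handled as in Proposition~\ref{preservmonos}. For the \ci-specific clause of $\neg$, if $u\in\neg(z)$ then $u_2\leq z_1$, and since a Boolean homomorphism preserves the order we get $f(u_2)\leq f(z_1)$, i.e. $(f_\ast u)_2\leq (f_\ast z)_1$, so $f_\ast(u)\in\neg(f_\ast(z))$. For $\cons$, since $f$ preserves $\wedge$ and the complement ${\sim}$, we have $f_\ast[\cons z]=\{({\sim}(f(z_1)\wedge f(z_2)),f(z_1)\wedge f(z_2),1)\}=\cons(f_\ast(z))$. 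If moreover $f$ is injective then so is $f_\ast$, and by Proposition~\ref{monos} it is a monomorphism in $\malg(\Sigma)$, hence also in the full subcategory of swap structures for \ci.

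Finally I would assemble the argument as in Theorem~\ref{teo-repr-Kmbc}. Given a swap structure $\mathcal{B}$ for \ci, take $\mathcal{A}$ with $\mathcal{B}\subseteq\mathcal{B}_\mathcal{A}^\ci$ and let $g$ be the inclusion monomorphism. By Birkhoff's representation theorem~\cite{Birk:35} there is a monomorphism $h:\mathcal{A}\to\prod_{i\in I}\mathcal{A}'_i$ of Boolean algebras with $\mathcal{A}'_i=\mathbb{A}_2$ for every $i$; the induced $h_\ast:\mathcal{B}_\mathcal{A}^\ci\to\mathcal{B}_{\prod_{i\in I}\mathcal{A}'_i}^\ci$ is a monomorphism by the previous step, and composing with the inverse of the product isomorphism $f_\mathcal{G}$ and noting that $\mathcal{B}_{\mathcal{A}'_i}^\ci=\mathcal{B}_{\mathbb{A}_2}^\ci$ yields the desired monomorphism $\hat h=f_\mathcal{G}^{-1}\circ h_\ast\circ g:\mathcal{B}\to\prod_{i\in I}\mathcal{B}_{\mathbb{A}_2}^\ci$. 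The only real work, and the point where the \ci\ case genuinely differs from the \mbc\ case, is checking that the order constraint $u_2\leq z_1$ defining $\neg$ and the single-valued $\cons$ are respected both by the product isomorphism and by the functor; as both reduce to the fact that Boolean homomorphisms preserve $\leq$ and ${\sim}$, no essential obstacle arises.
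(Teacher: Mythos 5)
Your proposal is correct and takes essentially the same route as the paper: the paper proves the \mbc\ case (Theorem~\ref{teo-repr-Kmbc}) in detail via Proposition~\ref{isofam} and Proposition~\ref{preservmonos}, and then asserts the \ci\ case follows by the analogous analysis, which is precisely what you carry out. Your \ci-specific verifications (that the coordinatewise order handles the clause $u_2 \leq z_1$ for $\neg$, and that Boolean homomorphisms preserve ${\sim}$ and $\wedge$ for the single-valued $\cons$) are exactly the details the paper leaves implicit, and they are correct.
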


Finally, the case of \eecpl\ is quite simple. By Remark~\ref{obsalge}(1), there is only one swap structure for \eecpl\ with domain $\textsc{B}_\mathcal{A}^{\eecpl}$, which is precisely the full swap structure denoted by $\mathcal{B}_\mathcal{A}^{\eecpl}$.
In particular, the swap structure $\mathcal{B}_{\mathbb{A}_2}^{\eecpl}$ has domain $\{T,F\}$ where $T=(1,0,1)$ and $F=(0,1,1)$. The multioperations are single-valued, producing a Boolean algebra isomorphic to $\mathbb{A}_2$, by Remark~\ref{obsalge}(1). Using the notation introduced in Definition~\ref{subal} it is clear that
$$\mathcal{B}_{\mathbb{A}_2}^{\eecpl} \subseteq \mathcal{B}_{\mathbb{A}_2}^\ci \subseteq \mathcal{B}_{\mathbb{A}_2}^\mbcci \subseteq \mathcal{B}_{\mathbb{A}_2}^\mbcciw \subseteq\mathcal{B}_{\mathbb{A}_2}^\mbc  \subseteq\mathcal{B}_{\mathbb{A}_2}^{\ecpl}.$$
Additionally:

\begin{theorem} [Representation Theorem for \Keecpl]  Let $\mathcal{B}$ be a swap structure  for \eecpl. Then, there exists a set $I$ and a monomorphism  of algebras $\hat h:\mathcal{B} \to \prod_{i \in I}\mathcal{B}_{\mathbb{A}_2}^{\eecpl}$.
\end{theorem}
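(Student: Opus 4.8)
The plan is to mimic the proof of Theorem~\ref{teo-repr-Kmbc} (the Representation Theorem for $\Kmbc$), replacing $\mbc$ by $\eecpl$ throughout. The statement speaks of a \emph{monomorphism of algebras} rather than of multialgebras because, by Remark~\ref{obsalge}(1), every swap structure for $\eecpl$ has single-valued multioperations and is in fact isomorphic to a Boolean algebra; hence the target $\prod_{i\in I}\mathcal{B}_{\mathbb{A}_2}^{\eecpl}$ is a genuine algebra and $\hat h$ a genuine homomorphism.

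First I would record, as was done in Section~\ref{sect-sw-mbc} for $\mbc$, the $\eecpl$-analogues of the two auxiliary results used in that proof: (a) if $f:\mathcal{A}\to\mathcal{A}'$ is a monomorphism of Boolean algebras, then the induced map $f_\ast(a,{\sim}a,1)=(f(a),{\sim}f(a),1)$ is a monomorphism $f_\ast:\mathcal{B}_\mathcal{A}^{\eecpl}\to\mathcal{B}_{\mathcal{A}'}^{\eecpl}$ (the analogue of Proposition~\ref{preservmonos}); and (b) for a nonempty family $\{\mathcal{A}_i : i\in I\}$ of Boolean algebras there is an isomorphism $f_\mathcal{G}:\prod_{i\in I}\mathcal{B}_{\mathcal{A}_i}^{\eecpl}\to\mathcal{B}_{\prod_{i\in I}\mathcal{A}_i}^{\eecpl}$ in $\malg(\Sigma)$ (the analogue of Proposition~\ref{isofam}). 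Both are immediate: since $\textsc{B}_\mathcal{A}^{\eecpl}=\{(a,{\sim}a,1):a\in A\}$ and $\pi_1$ restricts to a bijection onto $A$ commuting with all operations, these reduce to the corresponding trivial facts about Boolean-algebra homomorphisms and products.

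With these in hand the construction runs exactly as before. Given $\mathcal{B}\in\Keecpl$, pick a Boolean algebra $\mathcal{A}$ with $\mathcal{B}\subseteq\mathcal{B}_\mathcal{A}^{\eecpl}$ and let $g:\mathcal{B}\to\mathcal{B}_\mathcal{A}^{\eecpl}$ be the inclusion, a monomorphism in $\malg(\Sigma)$ by Proposition~\ref{monos}. Birkhoff's representation theorem for Boolean algebras yields a set $I$ and a monomorphism $h:\mathcal{A}\to\prod_{i\in I}\mathbb{A}_2$. By (a) this lifts to a monomorphism $h_\ast:\mathcal{B}_\mathcal{A}^{\eecpl}\to\mathcal{B}_{\prod_{i\in I}\mathbb{A}_2}^{\eecpl}$, and by (b), taking each factor to be $\mathbb{A}_2$, there is an isomorphism $f_\mathcal{G}:\prod_{i\in I}\mathcal{B}_{\mathbb{A}_2}^{\eecpl}\to\mathcal{B}_{\prod_{i\in I}\mathbb{A}_2}^{\eecpl}$. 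Then $\hat h\defin f_\mathcal{G}^{-1}\circ h_\ast\circ g:\mathcal{B}\to\prod_{i\in I}\mathcal{B}_{\mathbb{A}_2}^{\eecpl}$ is a composite of injective maps, hence an injective homomorphism, i.e. a monomorphism.

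I do not expect a genuine obstacle here: unlike the $\mbc$ case, the $\eecpl$-swap structures carry no non-determinism, so the whole argument is essentially Birkhoff's theorem transported along the isomorphism $\mathcal{B}\simeq\pi_1[|\mathcal{B}|]$ of Remark~\ref{obsalge}(1) and Proposition~\ref{pi1-BA}. In fact the slickest route bypasses the machinery altogether: set $\mathcal{A}_0\defin\pi_1[|\mathcal{B}|]$, observe $\mathcal{B}\simeq\mathcal{A}_0$ as Boolean algebras, apply Birkhoff to $\mathcal{A}_0$, and compose with the componentwise isomorphism $\prod_{i\in I}\mathbb{A}_2\simeq\prod_{i\in I}\mathcal{B}_{\mathbb{A}_2}^{\eecpl}$. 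The only point deserving a line of care is checking that $|\mathcal{B}|$ equals $\{(a,{\sim}a,1):a\in\mathcal{A}_0\}$, which holds because $\pi_1$ restricted to $\textsc{B}_\mathcal{A}^{\eecpl}$ is injective and $\mathcal{A}_0$ is closed under complement.
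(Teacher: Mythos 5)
Your proposal is correct and takes essentially the same approach as the paper: the paper's entire justification of this theorem is the remark that it ``is just the original G. Birkhoff's theorem for Boolean algebras, under a different presentation,'' which is precisely your second, slicker route via the identification $\mathcal{B}\simeq\pi_1[|\mathcal{B}|]$ from Remark~\ref{obsalge}(1). Your first route (transporting the $\Kmbc$ machinery of propositions~\ref{isofam} and~\ref{preservmonos}) is also sound and is the pattern the paper follows for \mbcciw, \mbcci\ and \ci, but for \eecpl\ the fully deterministic, degenerate situation makes the direct appeal to Birkhoff sufficient, exactly as you observe.
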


The last theorem is just the original G. Birkhoff's theorem for Boolean algebras~\cite{Birk:35}, under a different presentation.

\begin{remark} \label{pairs}
Recall from Definition~\ref{univ-sw-mbcciw} that the universe of swap structures  for \mbcciw\ over $\mathcal{A}$ is  $\textsc{B}_\mathcal{A}^\axwci=\{z \in A^3 \ :  \ z_1 \vee z_2=1 \ \mbox{ and } \  z_3 =  {\sim} (z_1 \wedge z_2)\}$. Thus, the third coordinate of the snapshots is defined in terms of the other two, being so redundant. This means that, in swap structures for \mbcciw\ and its extensions,  the snapshots could be considered as being pairs instead of triples. This feature is obvious in the case of  \eecpl, in which any snapshot $(a,{\sim}a,1)$ could be represented as $(a,{\sim}a)$ (or simply by $a$ itself). As it will be discussed in the next section, this fact evidences the close relationship between swap structures and the so-called {\em twist structures}.
\end{remark}

\

\section{Twist structures as special cases of swap structures} \label{sectwist-swap}

The swap structures semantics for some \lfis\ presented in the previous sections was based on multialgebras since the given logics are not algebraizable in the classical sense. Being so, multialgebras arise as a natural alternative to algebras. In sections~\ref{secLFI1} and~\ref{sectCiore} the same techniques will be applied to algebraizable logics which are characterized by a single 3-valued logical matrix. It will be seen that the algebras associated to these logics will be recovered as special cases of swap structures, obtaining so an interesting relationship with the twist-structures semantics. This connection suggest that swap structures can be seen as non-deterministic twist structures, as it will be argued in Section~\ref{swap=twist} below.

\subsection{Swap structures for \dacdot: restoring determinism} \label{secLFI1}

The logic \dacdot\ was introduced in 1970 by I.~M.~L.  D'Ottaviano and N.~C.~A. da Costa  as a 3-valued modal logic (see~\cite{dot:dac:70}). Afterwards, this logic has been re-introduced independently by several authors, presented in different signatures. For instance, it was re-discovered in  2000 by W. Carnielli, J. Marcos and S. de Amo  as  a 3-valued \lfi\ called \lfium, apt to deal with inconsistent databases (see~\cite{car:mar:dea:00}). More recently,  M. Coniglio and L. Silvestrini propose in~\cite{Con:Sil} a generalization of the  notion of quasi-truth (see~\cite{mik:costa:chu:86}) based on a 3-valued paraconsistent logic called \mpt\ with was proved to be equivalent, up to laguage, with \dacdot\ (and so to \lfium). More historical remarks about this logic can be found in~\cite[Chapter~4]{CC16}.

A new axiomatization of this logic, presented as an \lfi\ over signature $\Sigma$,  was proposed in~\cite{CC16} under the name of $\lfium_\cons$. For the sake of convenience, this will be the presentation of this logic to be adopted here. From now on we will write $\alpha \leftrightarrow \beta$ as an abbreviation of the formula $(\alpha \to \beta) \wedge (\beta \to \alpha)$. 

\begin{definition} \label{LFI1-lfi} (\cite[Definition~4.4.41]{CC16}) Let $\lfium_\cons$ be the logic over $\Sigma$ obtained from \ci\   (see Definition~\ref{mbcci-eCPL}(2))  by adding the following axiom schemas:
$$\begin{array}{ll}
\alpha \imp \neg\neg\alpha &    \hspace{2cm}        (\axce)\\[2mm]
\neg(\alpha \lor \beta) \leftrightarrow (\neg \alpha \land \neg\beta) & \hspace{2cm} (\axnegou)\\[2mm]
\neg(\alpha \land \beta) \leftrightarrow (\neg \alpha \lor \neg\beta) & \hspace{2cm} (\axnege)\\[2mm]
\neg(\alpha \imp \beta) \leftrightarrow (\alpha \land \neg\beta)   & \hspace{2cm} (\axnegimp)\\[2mm]
\end{array}$$
\end{definition}

As proven in~\cite[Theorem~4.4.45]{CC16}, the logic $\lfium_\cons$ is semantically characterized by a 3-valued logical matrix with domain  $\textsc{B}_{\mathbb{A}_2}^\axwci = \big\{T, \, t, \,  F\big\}$ such that  $\textrm{D}_3=\{T, \, t\}$ is the set of designated values. 
The operations are defined as follows:

\ 

\begin{center}
\begin{tabular}{|c|c|c|c|}
\hline
 $\land$ & $T$   & $t$ & $F$ \\
 \hline \hline
    $T$    & $T$   & $t$   & $F$   \\ \hline
     $t$    & $t$   & $t$   & $F$  \\ \hline
     $F$    & $F$  & $F$ & $F$  \\ \hline
\end{tabular}
\hspace{1cm}
\begin{tabular}{|c|c|c|c|}
\hline
 $\lor$ & $T$   & $t$  & $F$  \\
 \hline \hline
    $T$    & $T$   & $T$  & $T$    \\ \hline
     $t$    & $T$   & $t$  & $t$   \\ \hline
     $F$    & $T$   & $t$  & $F$   \\ \hline
\end{tabular}
\end{center}

\

\begin{center}
\begin{tabular}{|c|c|c|c|}
\hline
 $\imp$ & $T$   & $t$ & $F$ \\
 \hline \hline
    $T$    & $T$   & $t$  & $F$   \\ \hline
     $t$    & $T$   & $t$  & $F$  \\ \hline
     $F$    & $T$   & $T$  & $T$   \\ \hline
\end{tabular}
\hspace{1cm}
\begin{tabular}{|c||c|} \hline
$\quad$ & $\neg$ \\
 \hline \hline
    $T$   & $F$    \\ \hline
     $t$   & $t$     \\ \hline
     $F$   & $T$     \\ \hline
\end{tabular}
\hspace{1cm}
\begin{tabular}{|c||c|}
\hline
 $\quad$ & $\cons$ \\
 \hline \hline
    $T$   & $T$     \\ \hline
     $t$   & $F$    \\ \hline
     $F$   & $T$     \\ \hline
\end{tabular}
\end{center}

\ \\

This logical matrix corresponds to the usual presentation of \lfium\ as a 3-valued logic over signature $\Sigma$, and it is equivalent to  \dacdot\ up to language, as mentioned above.

Taking into account Remark~\ref{pairs}, in order to simplify the presentation of swap structures for $\lfium_\cons$ the snapshots will taken as pairs instead of triples.
That is, along the rest of this paper the  universe of swap structures for \mbcciw\ and its extensions will be the set $\textsc{B}_\mathcal{A}^\axwci=\{z \in A^2 \ : \ z_1 \vee z_2=1  \}$. In particular, the universe of the swap structures over the two-element Boolean algebra $\mathbb{A}_2$ will be the set  $\textsc{B}_{\mathbb{A}_2}^\axwci = \big\{T, \, t, \,  F\big\}$ such that $T=(1,0)$, $t=(1,1)$ and $F=  (0,1)$. The elements of $\textsc{B}_{\mathbb{A}_2}^\axwci$ can be identified with the elements of the logical matrix of \lfium\ described above (which justifies the use of the same notation for both structures).

By using the axioms of $\lfium_\cons$  we arrive to the following definition, which will be rigorously justified by Proposition~\ref{CarAxKlfium} below:

\begin{definition}  \label{defKlfi1}
A {\em swap structure for $\lfium_\cons$} is any $\mathcal{B} \in \Kci$ such that the multioperations are single-valued and defined as follows, for every $(z_1,z_2),(w_1,w_2) \in |\mathcal{B}|$:

\begin{itemize}
 \item[(i)] $(z_1,z_2)\wedge (w_1,w_2) = \{(z_1 \wedge w_1,z_2 \vee w_2) \}$;
 \item[(ii)]  $(z_1,z_2)\vee (w_1,w_2) = \{(z_1 \vee w_1,z_2 \wedge w_2) \}$;
  \item[(iii)]  $(z_1,z_2)\imp (w_1,w_2) = \{(z_1 \imp w_1,z_1 \wedge w_2) \}$;
\item[(iv)] $\neg (z_1,z_2) = \{(z_2,z_1)\}$;
 \item[(iii)] $\circ (z_1,z_2) = \{({\sim}(z_1 \wedge z_2),z_1 \wedge z_2)\}$.
\end{itemize}
The class of swap structures for $\lfium_\cons$ will be denoted by \Klfium.
\end{definition}

\begin{remark} \label{obs-twist-J3}
It is interesting to notice the similarity between the swap structures for $\lfium_\cons$  and the twist structures for paraconsistent Nelson's logic \nel\ considered by S. Odintsov in~\cite{odin:03}. There are two differences between both structures: on the one hand, the latter are defined over implicative lattices, while the former are defined over Boolean algebras (which are implicative lattices with a bottom element satisfying additionally that $a \vee (a \to b)=1$ for every $a,b$, recall Proposition~\ref{BA-HA}). On the other hand, the former are an expansion of the latter by adding the unary operator $\circ$.
This should not be surprising since this fact already appears at the syntactical presentation of the logics as Hilbert calculi: $\lfium_\cons$ is obtained from \nel\ by adding axioms (\axouimp) and (\axtnd) plus the consistency operator $\cons$ governed by axioms (\axexp) and (\axci).
% It is worth noting that U. Rivieccio considers in~\cite{riv:2014} a class of related structures called {\em implicative twist-structures}, which are twist structures defined over classical implicative lattices in the $\{\neg,\to\}$-fragment of $\Sigma$. The operations corresponding to $\neg$ and $\to$ are defined exactly as in the swap structures for $\lfium_\cons$ (hence as in the twist structures for \nel).
As a matter of fact, it is worth noting that $\lfium_\cons$ (and so \dacdot) can be presented over the signature $\Sigma_0=\{\land, \lor, \to, \neg, \bot\}$, where $\bot$ is a constant for denoting the bottom element. Thus,  in this signature $\lfium_\cons$ corresponds to an axiomatic extension of $\nel^\bot$ (the expansion of \nel\ by adding a bottom $\bot$, see~\cite[Section~8.6]{odin:08}) in which the consistency operator is defined  as $\cons\alpha\defin {\sim}(\alpha \land \neg \alpha)$, where ${\sim}\alpha\defin \alpha \to \bot$. The swap/twist structures for this presentation of $\lfium_\cons$  are defined as in Definition~\ref{defKlfi1}, by taking $\bot \defin (0,1)$ (hence  ${\sim}(z_1,z_2) = ({\sim}z_1,z_1)$). The close relationship between swap structures and twist structures will be analyzed with more detail in sections~\ref{Kalman-twist} and~\ref{swap=twist}.
\end{remark}

%It is easy to prove that the operations presented in  Definition~\ref{defKlfi1} are well-defined, in the sense that the singleton sets $z \# w$ and $\#'(z)$ are contained in  $\textsc{B}_{\mathcal{A}}^\axwci$ for every $z,w \in \textsc{B}_{\mathcal{A}}^\axwci$,  $\#\in \{\wedge,\vee, \to\}$ and  $\#'\in \{\neg,\cons\}$. 

\begin{definition} \label{maxLFI1}
Given a Boolean algebra $\mathcal{A}$, the {\em full swap structure  for $\lfium_\cons$ over $\mathcal{A}$}, denoted by $\mathcal{B}_\mathcal{A}^{\lfium_\cons}$, is the unique swap structure  for $\lfium_\cons$ defined over $\mathcal{A}$ with domain $\textsc{B}_\mathcal{A}^\axwci$.
\end{definition}

\begin{proposition} \label{CarAxKlfium} Let $\mathbf{Ax}$ be the set of axioms added to \ci\ in order to obtain $\lfium_\cons$ (recall Definition~\ref{LFI1-lfi}). Then:
$$\begin{array}{lll}
\Klfium & = & \{ \mathcal{B} \in \Kci \ : \ \ 
\mbox{$\mathcal{M}(\mathcal{B})$ validates all the axioms in $\mathbf{Ax}$} \}.
\end{array}$$
\end{proposition}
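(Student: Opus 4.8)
The plan is to prove the set-equality by double inclusion, following exactly the template of Proposition~\ref{CarAxKmbcci} and Proposition~\ref{CarAxKeecpl}: translate each axiom in $\mathbf{Ax}$ into a constraint on the multioperations of a swap structure, and check that imposing all four constraints is equivalent to the defining clauses of $\Klfium$ in Definition~\ref{defKlfi1}. Throughout I work in the pair-notation for snapshots ($z=(z_1,z_2)$ with $z_1 \vee z_2=1$) and repeatedly use, for every $\mathcal{B}\in\Kci$, that the first coordinate of the value of $\neg\alpha$ and of $\alpha\#\beta$ (for $\#\in\{\wedge,\vee,\imp\}$) is forced by Definition~\ref{Swap-str}, that $\cons$ is already single-valued, and that $\neg(z)\subseteq\{u : u_1=z_2,\ u_2\leq z_1\}$ by Definition~\ref{Kci}(2).

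For the inclusion $\subseteq$ (soundness) I would take $\mathcal{B}\in\Klfium$ and a valuation $v$ over $\mathcal{M}(\mathcal{B})$, and verify directly that each instance of the four axioms receives a designated value. Since the operations are single-valued and explicit (Definition~\ref{defKlfi1}), this is a finite Boolean computation: for (\axce) one checks $v(\neg\neg\alpha)=v(\alpha)$, so $\pi_1(v(\alpha\imp\neg\neg\alpha))=z_1\imp z_1=1$; for each of (\axnegou), (\axnege), (\axnegimp) one computes that the two sides of the biconditional share the same first coordinate (for instance, for (\axnege) both $\neg(\alpha\land\beta)$ and $\neg\alpha\lor\neg\beta$ get first coordinate $z_2\vee w_2$), whence the biconditional gets first coordinate $1$.

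The substantive direction is $\supseteq$. Here I would take $\mathcal{B}\in\Kci$ such that $\mathcal{M}(\mathcal{B})$ validates all of $\mathbf{Ax}$, and show each multioperation is forced to the single-valued operation of Definition~\ref{defKlfi1} by pinning down the free second coordinate. First, via (\axce): given $z\in|\mathcal{B}|$ and $u\in\neg(z)$, choose a valuation with $v(p)=z$ and $v(\neg p)=u$; then $\pi_1(v(\neg\neg p))=u_2$, and validity of (\axce) forces $z_1\imp u_2=1$, i.e.\ $z_1\leq u_2$, which combined with $u_2\leq z_1$ from Definition~\ref{Kci}(2) gives $u_2=z_1$, so $\neg(z)=\{(z_2,z_1)\}$. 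Next, for each binary $\#$ and each $u\in z\#w$, I choose a valuation with $v(p)=z$, $v(q)=w$, $v(p\#q)=u$, observe that the two sides of the corresponding negation-biconditional have first coordinates $u_2$ and the intended value (e.g.\ $z_2\vee w_2$ for $\land$ via (\axnege), $z_2\wedge w_2$ for $\lor$ via (\axnegou), $z_1\wedge w_2$ for $\imp$ via (\axnegimp)), and read off from validity that both $u_2\imp(\text{intended})=1$ and $(\text{intended})\imp u_2=1$, hence $u_2$ equals the intended value. Since in each case every element of the nonempty set $z\#w$ (resp.\ $\neg(z)$) shares both coordinates, the operation is single-valued with precisely the prescribed value, so $\mathcal{B}\in\Klfium$.

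The main obstacle is bookkeeping rather than conceptual: one must correctly extract from a validated biconditional $\xi\leftrightarrow\eta$ the equality $\pi_1(v(\xi))=\pi_1(v(\eta))$, using that $a\imp b=b\imp a=1$ iff $a=b$ in a Boolean algebra applied to the conjunction defining $\leftrightarrow$, and one must justify the existence of a valuation taking prescribed values simultaneously on the distinct variables $p,q$ and on the single compound $p\#q$ — which is routine for Nmatrices, since $u$ is by hypothesis an admissible value for $p\#q$ once $v(p),v(q)$ are fixed. Note finally that $\cons$ requires no attention, being already single-valued for every $\mathcal{B}\in\Kmbcci\supseteq\Kci$.
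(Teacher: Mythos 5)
Your proposal is correct and follows essentially the same strategy as the paper's proof: a direct computation for soundness using the single-valued operations of Definition~\ref{defKlfi1}, and for the converse, pinning down $\neg$ via (\axce) combined with Definition~\ref{Kci}(2), then forcing the second coordinate of each binary multioperation via the corresponding negation axiom, with valuations assigning prescribed admissible values to $p$, $q$, and $p\#q$. The only difference is one of detail: the paper works out only the disjunction case and leaves the rest to the reader, while you sketch all three binary connectives and make explicit the bookkeeping (biconditional extraction, valuation existence, and the automatic single-valuedness of $\cons$) that the paper leaves implicit.
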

\begin{proof}
{\bf Part 1:} If  $\mathcal{B} \in \Klfium$ then $\mathcal{B} \in \Kci$ such that $\mathcal{M}(\mathcal{B})$ validates all the axioms in $\mathbf{Ax}$.\\
Let $\mathcal{B} \in \Klfium$, and let $v$ be a valuation over $\mathcal{B}$.
Let  $\gamma=\alpha \imp \neg\neg\alpha$ be an instance of axiom (\axce), and let $z=v(\alpha)$. Then $v(\neg \alpha)=(z_2,z_1)$ and so $v(\neg\neg \alpha)=z=v(\alpha)$. From this, $\pi_1(v(\gamma))= z_1 \to \pi_1(v(\neg\neg \alpha)) = z_1 \to z_1 = 1$ and so $v(\gamma) \in D_\mathcal{B}$ for every instance $\gamma$ of axiom (\axce).

Now, let $\gamma'= \neg(\alpha \lor \beta) \leftrightarrow (\neg \alpha \land \neg\beta)$ be an instance of axiom (\axnegou). Let $z=v(\alpha)$ and $w=v(\beta)$. Then $v(\alpha \lor \beta)=(z_1 \vee w_1,z_2 \land w_2)$ and so $v(\neg(\alpha \lor \beta))=(z_2 \land w_2,z_1 \vee w_1)$. On the other hand $v(\neg\alpha)=(z_2,z_1)$ and $v(\neg\beta)=(w_2,w_1)$, and so $v(\neg \alpha \land \neg\beta) = (z_2 \land w_2,z_1 \vee w_1) = v(\neg(\alpha \lor \beta))$. Thus, $\pi_1(v(\gamma'))=1$ for every instance $\gamma'$ of axiom (\axnegou). Analogously, it can be proven that $\mathcal{B}$ validates all the other axioms in $\mathbf{Ax}$.\\[2mm]
{\bf Part 2:} If  $\mathcal{B} \in \Kci$ such that $\mathcal{M}(\mathcal{B})$ validates all the axioms in $\mathbf{Ax}$ then $\mathcal{B} \in \Klfium$.\\
Fix $\mathcal{B} \in \Kci$ such that $\mathcal{M}(\mathcal{B})$ validates all the axioms in $\mathbf{Ax}$. Let $z \in |\mathcal{B}|$ and $u \in \neg(z)$. Then $u_1=z_2$ and $u_2 \leq z_1$, by Definition~\ref{Kci}(2). On the other hand, the  validation of axiom~(\axce) forces to have $z_1 \leq u_2$ and so $u_2=z_1$ That is, $\neg (z_1,z_2) = \{(z_2,z_1)\}$. 

With respect to the disjunction multioperator,  let $z,w,u \ \in |\mathcal{B}|$ such that $u \in z \vee w$.  By Definition~\ref{Kci}(2) it follows that $u_1=z_1 \vee w_1$. Consider two different propositional variables $p,q$ and a valuation $v$ over $\mathcal{B}$ such that $v(p)=z$, $v(q)=w$ and $v(p \vee q)=u$. Then $\pi_1(v(\neg p \land \neg q))= \pi_1(v(\neg p)) \land \pi_1(v(\neg q)) = \pi_2(v(p)) \land \pi_2(v(q)) = z_2 \land w_2$. On the other hand, $\pi_1(v(\neg(p \vee q))) = \pi_2(v(p \vee q)) = u_2$. By axiom $(\axnegou)$, $\pi_1(v(\neg p \land \neg q))=\pi_1(v(\neg(p \vee q)))$ and so $u_2 = z_2 \land w_2$. This means that $z \vee w = \{(z_1 \vee w_1,z_2 \wedge w_2)\}$ for every $z,w$.

The other multioperations are treated in the same way. The details are left to the reader.
\end{proof}

\

The class $Mat(\Klfium)$ of Nmatrices is defined analogously to the class $Mat(\Ksw)$ introduced in Definition~\ref{Nmatrix}.
The adequacy of $\lfium_\cons$ w.r.t. swap structures can be proven by extending the proof of Theorem~\ref{adeq-mbC} for \mbc.

\begin{theorem}  [Adequacy of $\lfium_\cons$ w.r.t. swap structures] \label{adeqLFIum}
Let $\Gamma \cup \{\varphi\} \subseteq For(\Sigma)$  be a set of formulas. Then:
$\Gamma \vdash_{\lfium_\cons} \varphi$ \ iff \  $\Gamma\models_{Mat(\Klfium)} \varphi$.
\end{theorem}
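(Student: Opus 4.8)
The plan is to follow the two-part structure of the proof of Theorem~\ref{adeq-mbC}, the only genuinely new ingredient being the passage to the \emph{deterministic} swap structures of Definition~\ref{defKlfi1} in the completeness argument.

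For the soundness (`only if') direction I would argue exactly as for \mbc. Suppose $\Gamma \vdash_{\lfium_\cons} \varphi$, let $\mathcal{B} \in \Klfium$ and let $v$ be a valuation over $\mathcal{M}(\mathcal{B})$ with $v[\Gamma] \subseteq D_\mathcal{B}$. The composite $\pi_1 \circ v$ is a $\Sigma_+$-homomorphism into a Boolean algebra, so by Theorem~\ref{adeq-CPLP1} every axiom of \cplp\ is validated by $v$. Since $\Klfium \subseteq \Kmbc$, axioms (\axtnd) and (\axexp) are validated by Proposition~\ref{CarAxKmbc}; since $\Klfium \subseteq \Kci$, axioms (\axci) and (\axcf) are validated by Proposition~\ref{CarAxKmbcci}; and the remaining axioms (\axce), (\axnegou), (\axnege), (\axnegimp) are validated by Proposition~\ref{CarAxKlfium}. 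As membership in $D_\mathcal{B}$ is preserved by (\MP), an induction on the length of a derivation of $\varphi$ from $\Gamma$ gives $v(\varphi) \in D_\mathcal{B}$, whence $\Gamma \models_{Mat(\Klfium)} \varphi$.

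For completeness (`if'), assume $\Gamma \nvdash_{\lfium_\cons} \varphi$ and build the Lindenbaum--Tarski algebra $\mathcal{A}_\Gamma = For(\Sigma)/_{\equiv_\Gamma}$, where $\alpha \equiv_\Gamma \beta$ iff $\Gamma \vdash_{\lfium_\cons} \alpha \leftrightarrow \beta$. Because $\lfium_\cons$ extends \mbc, the reasoning of Theorem~\ref{adeq-mbC} transfers verbatim: $\mathcal{A}_\Gamma$ is a \algcpl\ with bottom $0_\Gamma = [p_1 \wedge \neg p_1 \wedge \cons p_1]_\Gamma$ and top $1_\Gamma = [p_1 \to p_1]_\Gamma$, hence a Boolean algebra by Proposition~\ref{BA-HA}(2). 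Using the pair representation of Remark~\ref{pairs}, I take the full swap structure $\mathcal{B}_{\mathcal{A}_\Gamma}^{\lfium_\cons}$ of Definition~\ref{maxLFI1} and set $v_\Gamma(\alpha) = ([\alpha]_\Gamma, [\neg\alpha]_\Gamma)$. Axiom (\axtnd) yields $[\alpha]_\Gamma \vee [\neg\alpha]_\Gamma = 1_\Gamma$, so each $v_\Gamma(\alpha)$ genuinely lies in $\textsc{B}_{\mathcal{A}_\Gamma}^\axwci$.

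The heart of the matter --- and what distinguishes this from the \mbc\ case --- is checking that $v_\Gamma$ is a valuation over $\mathcal{M}(\mathcal{B}_{\mathcal{A}_\Gamma}^{\lfium_\cons})$. Because the multioperations of Definition~\ref{defKlfi1} are single-valued, this is no longer automatic: the second coordinate of $v_\Gamma(\alpha\,\#\,\beta)$, namely $[\neg(\alpha\,\#\,\beta)]_\Gamma$, is forced to equal a prescribed Boolean combination of $[\neg\alpha]_\Gamma$ and $[\neg\beta]_\Gamma$, and likewise for the unary connectives. This is precisely where the additional axioms of $\lfium_\cons$ intervene: (\axnege) gives $[\neg(\alpha \wedge \beta)]_\Gamma = [\neg\alpha]_\Gamma \vee [\neg\beta]_\Gamma$, matching the conjunction clause; (\axnegou) matches the disjunction clause; (\axnegimp) matches the implication clause; (\axce) and (\axcf) together give $[\neg\neg\alpha]_\Gamma = [\alpha]_\Gamma$, matching the negation clause; and the inclusion $\Klfium \subseteq \Kmbcci$, whose defining identity for $\cons$ was established in Proposition~\ref{CarAxKmbcci}, matches the consistency clause. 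Granting these identities, $v_\Gamma$ is a homomorphism of multialgebras, and the standard computation shows $v_\Gamma(\alpha) \in D_{\mathcal{B}_{\mathcal{A}_\Gamma}^{\lfium_\cons}}$ iff $[\alpha]_\Gamma = 1_\Gamma$ iff $\Gamma \vdash_{\lfium_\cons} \alpha$. Thus $v_\Gamma[\Gamma] \subseteq D$ while $v_\Gamma(\varphi) \notin D$, giving $\Gamma \not\models_{Mat(\Klfium)} \varphi$. I expect the main obstacle to be exactly this determinism verification: the bookkeeping that confirms each negation-propagation axiom pins down the corresponding second coordinate, rather than any conceptual difficulty, since the skeleton of the Lindenbaum--Tarski construction is already in place from Theorem~\ref{adeq-mbC}.
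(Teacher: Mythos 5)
Your proposal is correct and follows essentially the same route as the paper's own proof: soundness via the axiom-validation propositions together with preservation under (\MP), and completeness via the Lindenbaum--Tarski Boolean algebra $\mathcal{A}_\Gamma$, the full swap structure $\mathcal{B}_{\mathcal{A}_\Gamma}^{\lfium_\cons}$, and the canonical valuation $v_\Gamma(\alpha)=([\alpha]_\Gamma,[\neg\alpha]_\Gamma)$. In fact you supply the determinism bookkeeping that the paper leaves implicit (and your citation of Proposition~\ref{CarAxKlfium} for soundness is the intended one, where the paper's text points to Proposition~\ref{CarAxKeecpl}); the only slight imprecision is that the $\cons$-clause for $v_\Gamma$ rests on the derivability in $\lfium_\cons$ of the identities $[\cons\alpha]_\Gamma={\sim}([\alpha]_\Gamma\wedge[\neg\alpha]_\Gamma)$ and $[\neg\cons\alpha]_\Gamma=[\alpha]_\Gamma\wedge[\neg\alpha]_\Gamma$ in $\mathcal{A}_\Gamma$, a syntactic fact, rather than on the semantic characterization of Proposition~\ref{CarAxKmbcci} as such.
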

\begin{proof}
The proof is similar to that for Theorem~\ref{adeq-mbC}.\\
`Only if' part (Soundness): It is a consequence of Proposition~\ref{CarAxKeecpl} and the fact that trueness is preserved by~(\MP).\\ [2mm]
`If' part (Completeness): Suppose that $\Gamma \nvdash_{\lfium_\cons} \varphi$. Define in $For(\Sigma)$ the following relation: $\alpha \equiv_\Gamma \beta$ iff $\Gamma \vdash_{\lfium_\cons} \alpha \imp \beta$ and $\Gamma \vdash_{\lfium_\cons} \beta \imp \alpha$. As in the proof of  Theorem~\ref{adeq-mbC} it follows that $\equiv_\Gamma$ is an equivalence relation such that $A_\Gamma \defin For(\Sigma)/_{\equiv_\Gamma}$ is the domain of a Boolean algebra $\mathcal{A}_\Gamma$ in which $[\alpha]_{\Gamma} \,\#\, [\beta]_{\Gamma} \defin [\alpha \# \beta]_{\Gamma}$, for $\# \in \{\land,\lor,\imp\}$, $0_\Gamma \defin [p_1 \wedge \neg p_1 \wedge \cons p_1]_\Gamma$ and $1_\Gamma \defin [p_1 \to p_1]_\Gamma$. Let  $\mathcal{B}_{\mathcal{A}_\Gamma}^{\lfium_\cons}$ be the corresponding full swap structure for $\lfium_\cons$ (recall Definition~\ref{maxLFI1}), 
and let $\mathcal{M}_\Gamma^{\lfium_\cons} \defin\mathcal{M}(\mathcal{B}_{\mathcal{A}_\Gamma}^{\lfium_\cons})$. The mapping $v_\Gamma:For(\Sigma) \to  \textsc{B}_{\mathcal{A}_\Gamma}^{\lfium_\cons}$ given by $v_\Gamma(\alpha) =([\alpha]_{\Gamma},[\neg \alpha]_{\Gamma})$  is a valuation over the Nmatrix $\mathcal{M}_\Gamma^{\lfium_\cons}$ such that $v_\Gamma(\alpha) \in D_{\mathcal{B}_{\mathcal{A}_\Gamma}^{\lfium_\cons}}$ iff $\Gamma \vdash_{\lfium_\cons} \alpha$, for every $\alpha$. From this, $v_\Gamma[\Gamma] \subseteq D_{\mathcal{B}_{\mathcal{A}_\Gamma}^{\lfium_\cons}}$ but  $v_\Gamma(\varphi) \not\in D_{\mathcal{B}_{\mathcal{A}_\Gamma}^{\lfium_\cons}}$. Therefore $\Gamma\not\models_{Mat(\Klfium)} \varphi$, by Definition~\ref{conseqclassNmat}.
\end{proof}

\

Let $\mathcal{B}_{\mathbb{A}_2}^{\lfium_\cons}$  be the full swap structure for $\lfium_\cons$ over $\mathbb{A}_2$. Clearly it is equivalent to the 3-valued logical matrix for \lfium\ presented above, in which any truth-value $z$ is replaced by the singleton $\{z\}$ on each entry of the tables (that is, by considering each operator as a single-valued multioperator). 
By using a technique similar to the one employed by \mbc\ and the other \lfis\ analyzed in the previous sections, it will be proven the adequacy of $\lfium_\cons$ w.r.t. the 3-valued Nmatrix  $\mathcal{B}_{\mathbb{A}_2}^{\lfium_\cons}$, see  Theorem~\ref{adeqLFIumA2} below. Clearly, this result corresponds to the adequacy of $\lfium_\cons$ w.r.t. the 3-valued standard logical matrix for \lfium/\dacdot\ (see~\cite[Theorem~4.4.45]{CC16}).

\begin{definition} [\cite{CC16}] \label{bivalLFI1}
A bivaluation $\mu:For(\Sigma)\to \big\{0,1\big\}$ for \mbc\ (recall Definition~\ref{bivalold}) is a {\em bivaluation for $\lfium_\cons$}
if it  satisfies in addition the   following clauses:\\[2mm]
{\bf (\valCi)} \ $\mu(\neg\cons \alpha) = 1$ \ implies \ $\mu(\alpha)=\mu(\neg \alpha)=1$ \\[2mm]
{\bf (\valcef)} \ $\mu(\lnot\lnot \alpha)=1$ \ iff  \ $\mu(\alpha)=1$ \\[2mm]
{\bf (\valdm$_\land$)} \ $\mu(\neg(\alpha\land \beta))=1$ \ iff  \ $\mu(\neg\alpha)=1$ \ or \ $\mu(\neg\beta)=1$.\\[2mm]
{\bf (\valdm$_\lor$)} \ $\mu(\neg(\alpha \lor\beta))=1$ \ iff  \ $\mu(\neg \alpha)=\mu(\neg \beta)=1$.\\[2mm]
{\bf (\valcip$_\imp$)} \ $\mu(\neg(\alpha\imp \beta))=1$ \ iff  \ $\mu(\alpha)=\mu(\neg\beta)=1$. \\[2mm]
The consequence relation of $\lfium_\cons$ w.r.t. bivaluations is defined as follows: for every set of formulas $\Gamma \cup \{\varphi\} \subseteq For(\Sigma)$, $\Gamma \models_{\lfium_\cons}^2 \varphi$ \ iff \ $\mu(\varphi)=1$ for every bivaluation for $\lfium_\cons$ such that $\mu[\Gamma] \subseteq \{1\}$.
\end{definition}

\begin{theorem} [\cite{CCM}]  \label{comp-bival-LFI1}
For every set of formulas $\Gamma \cup \{\varphi\} \subseteq For(\Sigma)$: 
$\Gamma \vdash_{\lfium_\cons} \varphi$ \ iff \  $\Gamma\models_{\lfium_\cons}^2 \varphi$.
\end{theorem}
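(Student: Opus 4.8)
The plan is to prove the two directions separately, following the standard Lindenbaum--\L o\'s strategy adapted to Tarskian, finitary, structural logics containing \cplp. Soundness (the `only if' part) is routine; the completeness direction (`if') is where the work lies.

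For soundness I would argue by induction on the length of a derivation of $\varphi$ from $\Gamma$ in $\lfium_\cons$. Since the clauses defining a bivaluation for $\lfium_\cons$ (Definition~\ref{bivalLFI1}) are obtained by \emph{adding} clauses to those defining a bivaluation for \mbc, every bivaluation for $\lfium_\cons$ is in particular a bivaluation for \mbc; hence by Theorem~\ref{comp-bival-mbC} it already assigns $1$ to every \mbc-axiom and to (\axtnd), (\axexp). It therefore suffices to check that each additional axiom of $\lfium_\cons$ over \mbc\ --- namely (\axci), (\axcf), (\axce), (\axnegou), (\axnege), (\axnegimp) --- receives value $1$ under an arbitrary bivaluation $\mu$. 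Using clauses (\vale), (\valou), (\valimp) one first computes that $\mu(\varphi \leftrightarrow \psi)=1$ iff $\mu(\varphi)=\mu(\psi)$. From this, clause (\valcef) validates both (\axce) and (\axcf), clause (\valCi) validates (\axci), and clauses (\valdm$_\lor$), (\valdm$_\land$), (\valcip$_\imp$) validate (\axnegou), (\axnege), (\axnegimp) respectively. That (\MP) preserves truth follows from (\valimp), completing $\Gamma\vdash_{\lfium_\cons}\varphi \Rightarrow \Gamma\models_{\lfium_\cons}^2\varphi$.

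For completeness I would use maximal non-trivial theories. Assume $\Gamma\nvdash_{\lfium_\cons}\varphi$. Since $\lfium_\cons$ is finitary and structural, Lindenbaum's lemma yields a set $\Delta\supseteq\Gamma$ that is \emph{$\varphi$-saturated}, i.e.\ maximal among the sets $\Delta'$ with $\Gamma\subseteq\Delta'$ and $\Delta'\nvdash_{\lfium_\cons}\varphi$. Such a $\Delta$ is a closed theory ($\Delta\vdash_{\lfium_\cons}\psi$ iff $\psi\in\Delta$), and, because \cplp\ is a sublogic (so the deduction metatheorem and proof-by-cases are available), $\Delta$ is \emph{prime}: $\alpha\land\beta\in\Delta$ iff $\alpha\in\Delta$ and $\beta\in\Delta$; $\alpha\lor\beta\in\Delta$ iff $\alpha\in\Delta$ or $\beta\in\Delta$; and $\alpha\imp\beta\in\Delta$ iff ($\alpha\notin\Delta$ or $\beta\in\Delta$). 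Define $\mu_\Delta:For(\Sigma)\to\{0,1\}$ by $\mu_\Delta(\psi)=1$ iff $\psi\in\Delta$. The heart of the argument is to verify that $\mu_\Delta$ is a bivaluation for $\lfium_\cons$: primeness yields (\vale), (\valou), (\valimp) directly; (\valnot) follows from (\axtnd) via primeness; (\valbola) from (\axexp) (if $\cons\alpha,\alpha,\neg\alpha$ all lay in $\Delta$ then $\Delta$ would prove every formula, in particular $\varphi$); (\valCi) from (\axci); (\valcef) from (\axce) and (\axcf); and (\valdm$_\lor$), (\valdm$_\land$), (\valcip$_\imp$) from the bi-implication axioms of Definition~\ref{LFI1-lfi} together with primeness. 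Once $\mu_\Delta$ is a bivaluation, $\mu_\Delta[\Gamma]\subseteq\{1\}$ (as $\Gamma\subseteq\Delta$) while $\mu_\Delta(\varphi)=0$ (as $\varphi\notin\Delta$), so $\Gamma\not\models_{\lfium_\cons}^2\varphi$, which is the contrapositive of the `if' part.

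The main obstacle is the primeness (disjunction property) of the saturated set $\Delta$, namely that $\alpha\lor\beta\in\Delta$ forces $\alpha\in\Delta$ or $\beta\in\Delta$. This is precisely what powers the nontrivial directions of (\valou), (\valnot), (\valdm$_\lor$) and of the De Morgan clauses, and it rests on the deduction metatheorem and the disjunction-elimination axiom (\axoue) of \cplp. Once this lemma on saturated theories is established, the remaining clause verifications are routine syntactic manipulations using closure of $\Delta$ under the specific axioms of $\lfium_\cons$ and (\MP).
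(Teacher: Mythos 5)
The paper itself gives no proof of this theorem---it is imported from \cite{CCM}---and your proposal is a correct reconstruction of the standard argument used in that source: soundness by induction on derivations (every bivaluation for $\lfium_\cons$ is one for \mbc, plus a routine check of the extra axioms against the extra clauses), and completeness via a $\varphi$-saturated Lindenbaum--Asser extension $\Delta$ whose characteristic function is verified to be a bivaluation for $\lfium_\cons$. The only detail left implicit is that the clause (\valimp) requires axiom (\axouimp) combined with the disjunction property of $\Delta$, not primeness alone; this is routine, and otherwise your decomposition and key lemma (primeness of saturated theories via the deduction metatheorem and (\axoue)) coincide with the cited proof.
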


\begin{definition}   \label{val-bival-LFI1}
Let $\mu$ be a bivaluation for $\lfium_\cons$. The valuation over  the Nmatrix $\mathcal{M}\big(\mathcal{B}_{\mathbb{A}_2}^{\lfium_\cons}\big)$ induced by $\mu$ is defined as follows: $v_\mu^{\lfium_\cons}(\alpha) \defin (\mu(\alpha),\mu(\neg\alpha))$, for every formula $\alpha$.
\end{definition}

\

Observe that, by  Remark~\ref{pairs}, the snapshots are pairs instead of triples.
Hence, in difference to $v_\mu^\mbc$ (see Definition~\ref{val-bival-mbC}), a third coordinate for $v_\mu^{\lfium_\cons}$ is  not necessary.

\begin{proposition}
Let $\mu$ be a bivaluation for $\lfium_\cons$. Then $v_\mu^{\lfium_\cons}$ is a valuation over $\mathcal{M}\big(\mathcal{B}_{\mathbb{A}_2}^{\lfium_\cons}\big)$ such that $v_\mu^{\lfium_\cons}(\alpha) \in D$ iff $\mu(\alpha)=1$, for every formula $\alpha$.
\end{proposition}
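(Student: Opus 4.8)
The plan is to establish the two assertions separately, exploiting the crucial fact that in $\mathcal{B}_{\mathbb{A}_2}^{\lfium_\cons}$ all multioperations are single-valued (Definition~\ref{defKlfi1}). Consequently the valuation condition $v_\mu^{\lfium_\cons}(c(\vec\varphi)) \in c^{\mathcal{B}}(v_\mu^{\lfium_\cons}(\vec\varphi))$ collapses to an \emph{equality}: I only need to check that the pair $v_\mu^{\lfium_\cons}(c(\vec\varphi))$ coincides, coordinate by coordinate, with the unique output pair prescribed by Definition~\ref{defKlfi1}. As a preliminary step I would confirm that $v_\mu^{\lfium_\cons}(\alpha)=(\mu(\alpha),\mu(\neg\alpha))$ actually lands in the domain $\textsc{B}_{\mathbb{A}_2}^\axwci$, i.e.\ that $\mu(\alpha)\vee\mu(\neg\alpha)=1$; this is immediate from the \mbc-clause (\valnot), whose contrapositive reads $\mu(\alpha)=0 \Rightarrow \mu(\neg\alpha)=1$.

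Next I would dispatch the binary connectives and negation, each coordinate being controlled by a single bivaluation clause. Writing $z=v_\mu^{\lfium_\cons}(\alpha)$ and $w=v_\mu^{\lfium_\cons}(\beta)$, the first coordinate of $v_\mu^{\lfium_\cons}(\alpha\#\beta)$ is $\mu(\alpha\#\beta)$, which equals $z_1\#w_1$ by the Boolean reading of (\vale), (\valou) and (\valimp) for $\#\in\{\wedge,\vee,\imp\}$; the second coordinate is $\mu(\neg(\alpha\#\beta))$, which matches the prescribed values $z_2\vee w_2$, $z_2\wedge w_2$ and $z_1\wedge w_2$ by the De Morgan-type clauses (\valdm$_\land$), (\valdm$_\lor$) and (\valcip$_\imp$), respectively. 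For negation, the first coordinate $\mu(\neg\alpha)=z_2$ is trivial, while the second coordinate requires $\mu(\neg\neg\alpha)=z_1=\mu(\alpha)$, which is exactly clause (\valcef). All of these are routine once the bookkeeping is in place.

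The hard part will be the consistency connective $\cons$, where I must show that $v_\mu^{\lfium_\cons}(\cons\alpha)=\big({\sim}(z_1\wedge z_2),\, z_1\wedge z_2\big)$ with $z_1\wedge z_2=\mu(\alpha)\wedge\mu(\neg\alpha)$. For the first coordinate, $\mu(\cons\alpha)=1$ forces $\mu(\alpha)\wedge\mu(\neg\alpha)=0$ by (\valbola), giving ${\sim}(z_1\wedge z_2)=1$; the converse is the delicate direction, and here I would chain clauses: if $\mu(\cons\alpha)=0$, then (\valnot) applied to the formula $\cons\alpha$ yields $\mu(\neg\cons\alpha)=1$, and clause (\valCi) then forces $\mu(\alpha)=\mu(\neg\alpha)=1$, so $z_1\wedge z_2=1$ and ${\sim}(z_1\wedge z_2)=0=\mu(\cons\alpha)$. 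This establishes $\mu(\cons\alpha)={\sim}(z_1\wedge z_2)$. For the second coordinate $\mu(\neg\cons\alpha)$, the forward direction is again (\valCi), and the backward direction reuses the same chain: $z_1\wedge z_2=1$ gives $\mu(\cons\alpha)=0$ by the first coordinate, whence $\mu(\neg\cons\alpha)=1$ by (\valnot). Thus it is precisely the interplay of (\valnot), (\valbola) and (\valCi) that restores the deterministic behaviour of $\cons$, and I expect this to be the only genuinely non-mechanical point of the argument.

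Finally, the designated-values claim is immediate: since $D=\{z\in|\mathcal{B}| : z_1=1\}$ and the first coordinate of $v_\mu^{\lfium_\cons}(\alpha)$ is $\mu(\alpha)$, we conclude $v_\mu^{\lfium_\cons}(\alpha)\in D$ iff $\mu(\alpha)=1$, which completes the proof.
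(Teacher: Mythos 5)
Your proof is correct and takes essentially the same approach as the paper's: a coordinate-by-coordinate check of $v_\mu^{\lfium_\cons}$ against the single-valued operations of Definition~\ref{defKlfi1}, hinging on the identities $\mu(\cons\alpha)={\sim}(\mu(\alpha)\wedge\mu(\neg\alpha))$ and $\mu(\neg\cons\alpha)=\mu(\alpha)\wedge\mu(\neg\alpha)$, which the paper merely cites from~\cite{CC16} and otherwise leaves the details to the reader. Your derivation of those two identities from the interplay of (\valnot), (\valbola) and (\valCi) correctly supplies exactly what the paper omits.
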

\begin{proof}
It is immediate from Definition~\ref{defKlfi1}, Definition~\ref{bivalLFI1} and the definition of the operations in the Boolean algebra $\mathbb{A}_2$, by observing that $\mu(\cons\alpha)={\sim}(\mu(\alpha) \land \mu(\neg\alpha))$ and $\mu(\neg\cons\alpha)=\mu(\alpha) \land \mu(\neg\alpha)$ (see~\cite{CC16}). The details are left to the reader.
\end{proof}

\

\begin{theorem}  [Adequacy of $\lfium_\cons$ w.r.t. $\mathcal{M}\big(\mathcal{B}_{\mathbb{A}_2}^{\lfium_\cons}\big)$]  \label{adeqLFIumA2}
Let $\Gamma \cup \{\varphi\}$   be a set of formulas in $For(\Sigma)$. Then:
$\Gamma \vdash_{\lfium_\cons} \varphi$ \ iff \  $\Gamma\models_{\mathcal{M}(\mathcal{B}_{\mathbb{A}_2}^{\lfium_\cons})} \varphi$.
\end{theorem}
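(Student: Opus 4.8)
The plan is to derive this adequacy result for a single finite matrix from the two adequacy theorems already available, exactly as Theorem~\ref{comp-mbc-full} was obtained for \mbc: soundness will come from the swap-structures semantics (Theorem~\ref{adeqLFIum}), since $\mathcal{M}(\mathcal{B}_{\mathbb{A}_2}^{\lfium_\cons})$ is itself one of the Nmatrices in the class $Mat(\Klfium)$; and completeness will come from the bivaluation semantics (Theorem~\ref{comp-bival-LFI1}) together with the preceding Proposition, which transfers a refuting bivaluation into a refuting valuation over the three-element matrix.

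For the `only if' (soundness) direction, I would first note that $\mathcal{B}_{\mathbb{A}_2}^{\lfium_\cons} \in \Klfium$ by Definition~\ref{maxLFI1}, so that $\mathcal{M}(\mathcal{B}_{\mathbb{A}_2}^{\lfium_\cons})$ belongs to $Mat(\Klfium)$. Assuming $\Gamma \vdash_{\lfium_\cons} \varphi$, the soundness half of Theorem~\ref{adeqLFIum} gives $\Gamma \models_{Mat(\Klfium)} \varphi$, and by Definition~\ref{conseqclassNmat} this consequence over the whole class specializes to each of its members; in particular $\Gamma \models_{\mathcal{M}(\mathcal{B}_{\mathbb{A}_2}^{\lfium_\cons})} \varphi$. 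Alternatively, soundness could be checked directly by verifying that every instance of each axiom of $\lfium_\cons$ receives a designated value under an arbitrary valuation of the matrix and that \MP\ preserves trueness, but routing it through the class $Mat(\Klfium)$ avoids repeating those computations.

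For the `if' (completeness) direction I would argue contrapositively. Suppose $\Gamma \nvdash_{\lfium_\cons} \varphi$. By Theorem~\ref{comp-bival-LFI1} there is a bivaluation $\mu$ for $\lfium_\cons$ with $\mu[\Gamma] \subseteq \{1\}$ and $\mu(\varphi)=0$. Applying the preceding Proposition to $\mu$, the induced map $v_\mu^{\lfium_\cons}(\alpha)=(\mu(\alpha),\mu(\neg\alpha))$ is a valuation over $\mathcal{M}(\mathcal{B}_{\mathbb{A}_2}^{\lfium_\cons})$ satisfying $v_\mu^{\lfium_\cons}(\alpha) \in D$ iff $\mu(\alpha)=1$. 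Hence $v_\mu^{\lfium_\cons}[\Gamma] \subseteq D_{\mathcal{B}_{\mathbb{A}_2}^{\lfium_\cons}}$ while $v_\mu^{\lfium_\cons}(\varphi) \notin D_{\mathcal{B}_{\mathbb{A}_2}^{\lfium_\cons}}$, so $\Gamma \not\models_{\mathcal{M}(\mathcal{B}_{\mathbb{A}_2}^{\lfium_\cons})} \varphi$, which is the desired conclusion.

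The substantive content is already isolated in the preceding Proposition and in Theorem~\ref{comp-bival-LFI1}; the present statement is essentially a gluing of the two semantics, so the only real work left is the bookkeeping that $\mathcal{M}(\mathcal{B}_{\mathbb{A}_2}^{\lfium_\cons})$ lies in $Mat(\Klfium)$ for soundness and that $v_\mu^{\lfium_\cons}$ tracks designation for completeness. The single point deserving care is that, because the snapshots for $\lfium_\cons$ are pairs (Remark~\ref{pairs}) and all multioperations are single-valued (Definition~\ref{defKlfi1}), the Nmatrix $\mathcal{M}(\mathcal{B}_{\mathbb{A}_2}^{\lfium_\cons})$ is in fact an ordinary deterministic three-valued matrix coinciding with the standard matrix for \lfium/\dacdot; one should keep the identification $v_\mu^{\lfium_\cons}(\alpha)=(\mu(\alpha),\mu(\neg\alpha))$ consistent with this two-coordinate presentation rather than the three-coordinate one used for \mbc.
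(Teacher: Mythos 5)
Your proposal is correct and follows essentially the same route as the paper: soundness by observing that $\mathcal{M}\big(\mathcal{B}_{\mathbb{A}_2}^{\lfium_\cons}\big) \in Mat(\Klfium)$ and specializing Theorem~\ref{adeqLFIum}, and completeness by combining Theorem~\ref{comp-bival-LFI1} with the Proposition showing that $v_\mu^{\lfium_\cons}$ is a designation-tracking valuation over the matrix. The only difference is that you phrase completeness contrapositively (extracting a refuting bivaluation and transferring it) while the paper argues directly (any bivaluation model of $\Gamma$ yields a matrix valuation, so matrix consequence implies bivaluation consequence), which is a purely cosmetic variation.
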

\begin{proof}
`Only if' part (Soundness): It is an immediate consequence of Theorem~\ref{adeqLFIum}, given that $\mathcal{M}\big(\mathcal{B}_{\mathbb{A}_2}^{\lfium_\cons}\big) \in Mat(\Klfium)$.\\[2mm]
`If' part (Completeness): Suppose that  $\Gamma\models_{\mathcal{M}(\mathcal{B}_{\mathbb{A}_2}^{\lfium_\cons})} \varphi$, and let $\mu$ be a bivaluation for $\lfium_\cons$ such that $\mu[\Gamma] \subseteq \{1\}$. Then $v_\mu^{\lfium_\cons}$ is a valuation over $\mathcal{M}\big(\mathcal{B}_{\mathbb{A}_2}^{\lfium_\cons}\big)$ such that $v_\mu^{\lfium_\cons}[\Gamma] \subseteq D$. By hypothesis, $v_\mu^{\lfium_\cons}(\varphi) = D$ and so $\mu(\varphi)=1$. This means that $\Gamma\models_{\lfium_\cons}^2 \varphi$, therefore  $\Gamma\vdash_{\lfium_\cons} \varphi$ by  Theorem~\ref{comp-bival-LFI1}.
\end{proof}

\

The latter result constitutes a new proof, from the perspective of swap structures, of the adequacy of $\lfium_\cons$ w.t.r. its 3-valued characteristic matrix. It shows that the standard matrix semantics for \dacdot\ (presented as \lfium) can be recovered by means of swap structures semantics. The swap structures for \lfium/\dacdot, seeing as algebras, are nothing else that twist structures. Moreover, this class of algebras is generated by the 3-valued characteristic matrix of \dacdot, as a consequence of Theorem~\ref{teo-repr-Klfium}  below. Thus, the class of algebraic models of  \dacdot\ (in the sense of Blok and Pigozzi) is recovered as an special case of swap structures semantics, as it will analyzed in Section~\ref{swap=twist}. 

As a first step, recall the dual Kalman's functor $K^*_\mbc:\balg \to \swmbc$ for \mbc\ (see Definition~\ref{KalmanFunc}). Clearly,  it can be modified  to a functor $K^*_{\lfium_\cons}:\balg \to \swlfium$, where \swlfium\ is the full subcategory in \swmbc\ formed by the swap structures for $\lfium_\cons$. As in the case of $K^*_\mbc$, the functor $K^*_{\lfium_\cons}$ preserves arbitrary products and monomorphisms and so a Birkhoff-like representation theorem similar to Theorem~\ref{teo-repr-Kmbc} holds for \Klfium:

\begin{theorem} [Representation Theorem for $\Klfium$] \label{teo-repr-Klfium}  Let $\mathcal{B}$ be a swap \linebreak
structure  for $\lfium_\cons$. Then, there exists a set $I$ and a monomorphism  of algebras $\hat h:\mathcal{B} \to \prod_{i \in I}\mathcal{B}_{\mathbb{A}_2}^{\lfium_\cons}$.
\end{theorem}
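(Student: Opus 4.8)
The plan is to reproduce the argument of Theorem~\ref{teo-repr-Kmbc} essentially verbatim, replacing \mbc\ by $\lfium_\cons$ and invoking the analogues, for $\lfium_\cons$, of the two structural facts that powered that proof: that the functor $K^*_{\lfium_\cons}:\balg \to \swlfium$ preserves monomorphisms (the analogue of Proposition~\ref{preservmonos}), and that the product in $\malg(\Sigma)$ of full swap structures over a family $\{\mathcal{A}_i\}$ of Boolean algebras is isomorphic to the full swap structure over $\prod_{i\in I}\mathcal{A}_i$ (the analogue of Proposition~\ref{isofam}).

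Concretely, given $\mathcal{B} \in \Klfium$, I would first locate a Boolean algebra $\mathcal{A}$ with $\mathcal{B} \subseteq \mathcal{B}_\mathcal{A}^{\lfium_\cons}$: since $\mathcal{B} \in \Kci \subseteq \Kmbc$, its domain satisfies $|\mathcal{B}| \subseteq \textsc{B}_\mathcal{A}^\axwci$ for some $\mathcal{A}$, and because the $\lfium_\cons$-operations of Definition~\ref{defKlfi1} are single-valued and determined by $\mathcal{A}$, they coincide on $|\mathcal{B}|$ with the operations of the full swap structure $\mathcal{B}_\mathcal{A}^{\lfium_\cons}$ (Definition~\ref{maxLFI1}); hence the inclusion gives a monomorphism $g:\mathcal{B}\to\mathcal{B}_\mathcal{A}^{\lfium_\cons}$ in $\malg(\Sigma)$, by Proposition~\ref{monos}. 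Next, Birkhoff's representation theorem for Boolean algebras furnishes a set $I$ and a monomorphism $h:\mathcal{A}\to\prod_{i\in I}\mathbb{A}_2$ in \balg. Applying the monomorphism-preserving functor $K^*_{\lfium_\cons}$ yields a monomorphism $h_\ast:\mathcal{B}_\mathcal{A}^{\lfium_\cons}\to\mathcal{B}_{\prod_{i\in I}\mathbb{A}_2}^{\lfium_\cons}$, and the Proposition~\ref{isofam} analogue supplies an isomorphism $f_\mathcal{G}:\prod_{i\in I}\mathcal{B}_{\mathbb{A}_2}^{\lfium_\cons}\to\mathcal{B}_{\prod_{i\in I}\mathbb{A}_2}^{\lfium_\cons}$, where $\mathcal{G}=\{\mathbb{A}_2 : i\in I\}$. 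The composite $\hat h = f_\mathcal{G}^{-1}\circ h_\ast \circ g$ is then the desired monomorphism $\mathcal{B}\to\prod_{i\in I}\mathcal{B}_{\mathbb{A}_2}^{\lfium_\cons}$; since every $\lfium_\cons$-structure is deterministic, it is in fact a monomorphism of algebras, as the statement requires.

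The genuine work, and the main obstacle, is re-establishing those two auxiliary facts in this new setting rather than merely citing the \mbc\ versions. Unlike the \mbc-operations, which constrain only the first coordinate, the $\lfium_\cons$-operations (e.g.\ $(z_1,z_2)\imp(w_1,w_2) = \{(z_1\imp w_1,\, z_1\wedge w_2)\}$ and $\neg(z_1,z_2)=\{(z_2,z_1)\}$) use both coordinates, so one must check that the coordinate-transposing bijection $f_\mathcal{G}$ of Proposition~\ref{isofam} still commutes with each of $\wedge,\vee,\imp,\neg,\cons$. This reduces to a componentwise computation: the operations of $\prod_{i\in I}\mathbb{A}_2$ are computed component by component, and the $\lfium_\cons$-operations are polynomial in the Boolean operations of the base algebra, so the two sides of each identity agree on every component. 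Being bijective and a full homomorphism, $f_\mathcal{G}$ is an isomorphism by Proposition~\ref{isos}; and the same coordinatewise check shows that $K^*_{\lfium_\cons}(h)$ is a well-defined homomorphism which is injective whenever $h$ is, yielding the monomorphism-preservation used above.
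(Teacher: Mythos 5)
Your proposal is correct and takes essentially the same route as the paper: the paper's (implicit) proof consists precisely of observing that the dual Kalman's functor $K^*_{\lfium_\cons}:\balg \to \swlfium$ preserves monomorphisms and arbitrary products (the analogues of Propositions~\ref{preservmonos} and~\ref{isofam}) and then repeating the proof of Theorem~\ref{teo-repr-Kmbc}, which is exactly your composite $\hat h = f_\mathcal{G}^{-1}\circ h_\ast\circ g$ built from the inclusion into the full swap structure and Birkhoff's theorem for Boolean algebras. Your explicit check that these two auxiliary facts still hold for the coordinate-mixing, single-valued $\lfium_\cons$-operations (because they are polynomial in the Boolean operations of the base algebra, hence compatible with componentwise computation) is precisely the detail the paper leaves implicit with ``Clearly, it can be modified\ldots''.
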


\

As it will be clarifed in sections~\ref{Kalman-twist}  and~\ref{swap=twist}, the algebra $\mathcal{B}_{\mathbb{A}_2}^{\lfium_\cons}$, together with its 2-element subalgebra $\{T,F\}$, are the only subdirectly irreducible algebras in the class $\Klfium$ of algebras for  \lfium/\dacdot\ (which is polynomially equivalent to the variety of MV-algebras of order 3). From this, Theorem~\ref{teo-repr-Klfium} is nothing else than the standard Birkhoff's representation theorem for  $\Klfium$.

\

\subsection{From Kalman-Cignoli construction to Fidel-Vakarelov twist structures} \label{Kalman-twist} 

For the reader's convenience,  in this section the notion of twist structures and its relationship with a construction of J.~A. Kalman, as it was shown and reworked by R. Cignoli, will be briefly surveyed.

\begin{definition} \label{defDM-Kleene}
A {\em De Morgan lattice} is an algebra $\mathcal{D}=\langle D,\land,\lor,\neg\rangle$ such that the reduct $\mathcal{D}_{\land,\lor}=\langle D, \land,\lor \rangle$ is a distributive lattice and $\neg$ is an unary operator which is a De Morgan negation, that is: $\neg\neg a = a$ and $\neg(a \vee b)=\neg a \land \neg b$ for every $a,b$ (hence $\neg(a \land b)=\neg a \lor \neg b$ for every $a,b$). If $\mathcal{D}_{\land,\lor}$ is a bounded lattice with  bottom and top elements $0$ and $1$, respectively, then  $\mathcal{D}=\langle D,\land,\lor,\neg,0,1\rangle$ is called a {\em De Morgan algebra}. A De Morgan algebra satisfying $a \land \neg a \leq b \lor \neg b$ for every $a,b$ is called a {\em Kleene algebra}. A Kleene algebra is said to be {\em centered} if it has an element $c$ (called a {\em center}) such that $\neg c = c$  (it follows that, if a Kleene algebra has a center, it is unique). 
\end{definition}

In 1958  J.~A. Kalman~\cite{kal:58} shown that, for every bounded distributive lattice $\mathcal{L}=\langle L,\land,\lor,0,1\rangle$ the set $K(\mathcal{L})=\{(a,b) \in L^2 \ : \ a \land b =0\}$ is a centered Kleene algebra where the operations are defined as follows:
$$(a,b) \land (c,d) = (a \land c, b \vee d)$$
$$(a,b) \lor (c,d) = (a \lor c, b \land d)$$
$$\neg (a,b) = (b, a).$$

\noindent The center of $K(\mathcal{L})$ is $(0,0)$. In 1986 R. Cignoli~\cite{cin:86} extended Kalman's construction to a functor as follows: $K(f)(a,b)=(f(a),f(b))$ for every lattice homomorphism $f:\mathcal{L} \to \mathcal{L}'$ and every $(a,b) \in K(\mathcal{L})$. Moreover, among other results, he proves that the functor $K$ has a left adjoint.

\begin{definition} \label{defNelson}
A {\em quasi-Nelson algebra} is a Kleene algebra $\mathcal{N}$  such that for every $a,b$  there exists the relative pseudocomplement $a \Rightarrow (\neg a \vee b)$, which it will be denoted by $a \to b$. That is, $a \to b \defin Max\{c \ : \  a \land c \leq \neg a \vee b \}$. A {\em Nelson algebra} is a quasi-Nelson algebra such that $(a \land b) \to c = a \to (b \to c)$ for every $a,b,c$.
\end{definition}

In~\cite{cin:86} Cignoli observes that M. Fidel~\cite{fid:78} and D. Vakarelov~\cite{vaka:77} have shown independently that the Kalman's construction $K(\mathcal{H})$ produces, for a Heyting algebra $\mathcal{H}$, a Nelson algebra in which $(a,b) \to (c,d) \defin (a \to c, a \land d)$ (on the right-hand side of this equation, and as it was done along the paper, the relative pseudocomplement in a Heyting algebra is denoted by $\to$). This construction is what is was  called {\em twist structures}. In~\cite{cin:86} it is obtained the converse of Fidel-Vakarelov result by showing that, for any bounded distributive lattice $\mathcal{L}$, the centered Kleene algebra $K(\mathcal{L})$ is a (centered) Nelson algebra if and only if $\mathcal{L}$  is a Heyting algebra. 
%This constitutes a representation theorem for Nelson algebras in terms of twist structures over Heyting algebras.
This construction allows us to study Nelson algebras in terms of twist structures over Heyting algebras. It is worth noting that, in 1966 J.~M. Dunn obtained in his PhD thesis~\cite{dun:66} a representation of De Morgan lattices by means of pairs of sets called {\em proposition surrogates} equipped with operations similar to the ones proposed by  Kalman and by Fidel-Vakarelov for  twist structures.

Besides their construction, Fidel-Vakarelov define a matrix semantics over twist structures in order to semantically characterize Nelson's logic. Given a twist structure  $\mathcal{N}$, the set of designated is given by 
$$D_\mathcal{N} = \{(a,b) \in |\mathcal{N}| \ : \ a=1\}.$$

Aferwards, twist structures semantics have been generalized in the literature to several classes of logics, including  modal logics (see, for instance, \cite{odin:09,odin:wan:10,ram:fer:2009}). In all the cases, each twist structure $\mathcal{N}$ have associated a logical matrix $\mathcal{M}(\mathcal{N})=(\mathcal{N},D_\mathcal{N})$ defined as above.

Returning to Kalman's construction, Cignoli have shown in~\cite[Lemma~4.1]{cin:86} that the Kalman's functor $K$, when restricted to Boolean algebras (which are, of course, special cases of Heyting algebras), produces Nelson algebras which are semisimple. On the other hand, A. Monteiro has shown in~\cite{mon:63} that the variety of semisimple Nelson algebras is polynomially equivalent  to the variety of MV-algebras of order 3 (see~\cite[Corollary~5.5]{cin:86}). As it is well-known, the latter is the variety associated to \L ukasiewicz 3-valued logic \luka\ by means of the Blok-Pigozzi algebraization technique. Being so, the Kalman's construction, when restricted to Boolean algebras, produces a twist-structures semantics for \luka. In particular, $K(\mathbb{A}_2)$ produces the 3-valued semisimple Nelson algebra $\mathcal{N}_3= \langle N_3, \land,\lor,\to,\neg,F,T\rangle$ such that $N_3=\{F,f,T\}$ where $F=(0,1)$, $f=(0,0)$ and $T=(1,0)$. The tables for $\land$ and $\lor$ correspond to meet and join lattice operations (assuming that $F \leq f \leq T$), while the De Morgan negation $\neg$
% and the so-called {\em weak negation}  $\div x \defin x \to F$  (see~\cite[Section~4]{cin:86}) are given by  
is given by

\begin{center}
\begin{tabular}{|c||c|} \hline
$\quad$ & $\neg$ \\
 \hline \hline
    $T$   & $F$    \\ \hline
     $f$   & $f$    \\ \hline
     $F$   & $T$   \\ \hline
\end{tabular}
\end{center}

\ \\

%the Moisil operator $\nabla$ can be defined as   $\nabla x \defin \div \neg x$
% Observe that $x \to y=\div x \vee y$.
\noindent 
By definition of twist structures semantics (see above), the set of designated values is given by $D_{\mathcal{N}_3}=\{T\}$. It is worth noting that the  usual implication $\to_J$ of \luka\ can be defined as $x \to_J y \defin (x \to y) \land (\neg y \to \neg x)$.   Thus, it is clear that this twist structures semantics produces, indeed, the usual class of models of \luka.

\subsection{Swap structures meet twist structures} \label{swap=twist}

As it was observed in Section~\ref{secLFI1}, the technique of swap structures allows a twist structures semantics for \lfium/\dacdot. An interesting fact is that this semantics is dual to the twist structures semantics for \L ukasiewicz 3-valued logic \luka\  obtained by R. Cignoli in~\cite[Section~4]{cin:86} by using the Kalman's functor, as described in the previous section. 

Indeed, consider again the dual Kalman's functor $K^*_{\lfium_\cons}:\balg \to \swlfium$ described at the end of Section~\ref{secLFI1}. It is worth noting that the Kalman's functor $K$ --restricted to the category \balg\ of Boolean algebras-- and  $K^*_{\lfium_\cons}$, despite being defined in the same way for morphisms, they do not coincide at the level of objects. However, they produce objects which are dual in the following sense: recalling that $|K^*_{\lfium_\cons}(\mathcal{A})|=\textsc{B}_\mathcal{A}^\axwci$ for every a  Boolean algebra $\mathcal{A}$,  the mapping  $*:K(\mathcal{A}) \to \textsc{B}_\mathcal{A}^\axwci$ given by $*(a,b)= ({\sim}a,{\sim}b)$ is a bijection such that
$*(z \land w) = *z \lor *w$; $*(z \lor w) = *z \land *w$; $*\neg z = \neg {*}z$; $* T = F$; $* f = t$ and $* F = T$.\footnote{In order to simplify the presentation, in these equations we are considering the single-valued full swap structure $\mathcal{B}_{\mathcal{A}}^{\lfium_\cons}$ over $\mathcal{A}$ as an ordinary algebra. Additionally, observe that $T$, $t$, $f$ and $F$ are defined for every Boolean algebra.}

On the other hand, in~\cite[Theorem~4.3]{blok:pig:01} W. Blok and D. Pigozzi have shown that two logic systems which are inter-translatable in a strong sense cannot be distingued from the point of view of algebra, in the sense that if one of the systems is algebraizable then the other will be also algebraizable w.r.t. the same quasi-variety. As an illustrative example, they observe in~\cite[Example~4.1.2]{blok:pig:01} that the class of algebraic models of \dacdot\ is polynomially equivalent to the variety of  MV-algebras of order 3, the class of algebraic models of \luka, given that both logics are inter-translatable in such sense. Being so, the class of swap structures (seen as algebras) for  $\lfium_\cons$ generated by $K^*_{\lfium_\cons}(\mathbb{A}_2)$ in the sense of Theorem~\ref{teo-repr-Klfium} coincides, up to language, with the class of algebras for \dacdot\ generated by $K(\mathbb{A}_2)$.

The relationship between $K$ and  $K^*_{\lfium_\cons}$ pointed out above justifies the name {\em dual Kalman's functors} given to the functors for swap structures introduced here.

\begin{remark}
Reinforcing this argument, recall that Cignoli's construction described at  the end of Section~\ref{Kalman-twist} constitutes an original twist-structure semantics for \luka. In such construction, the 3-valued characteristic matrix of \luka\ can be recovered from $K(\mathbb{A}_2)$ in which there is only one designated element, namely  $D_\mathcal{N}=\{T\}$. Our construction is dual in the sense that the 3-valued characteristic matrix of \lfium/\dacdot\ is recovered instead of that of \luka, hence there are now two designated elements given by the set $D_\mathcal{N}=\{T,t\}$. This confirms, from  a different perspective, that \dacdot\ and \luka\ are dual logics in which the latter is paracomplete (that is, a sentence and its negation can be both false, but never both true at the same time) while the former is  paraconsistent (that is, a sentence and its negation can be both true, but never both false at the same time). In terms of pairs: given $(a,b) \in  K(\mathcal{A})$, $a \land b=0$ but not necessarily $a \lor b=1$. On the other hand, if $(a,b) \in \textsc{B}_\mathcal{A}^\axwci$ then $a \lor b=1$ but it is not always the case that $a \land b=0$. 
\end{remark}

\

The logic $\lfium_\cons$ is obtained from \mbc, \mbcciw\ and the other \lfis\ studied here by adding enough axioms to such logics. The weaker systems are characterized by non-deterministic swap structures, while $\lfium_\cons$, because of the logical power of the additional axioms, produces deterministic swap structures, which can be identified with twist structures. Looking from the opposite perspective, it could be argued that swap structures in general can be seen as non-deterministic twist structures: for instance, the swap structures semantics obtained for \mbc, \mbcciw, \mbcci\ and \ci\ in the previous sections could be considered as a non-deterministic twist structures semantics for them.  Moreover, the fact that the Kalman-Cignoli functor can be generalized to the wider non-deterministic context of swap structures provides additional support for this claim. 

Clearly, the wider approach given by swap structures has several disvantages with respect to the more traditional approach given by twist structures. On the one hand, the latter is based on ordinary algebras, thus all the machinery of universal algebra can be used. On the other hand, swap structures are based on non-deterministic algebras and such structures, as it was briefly discussed in  Section~\ref{intro}, does not offer a uniform and well-established formal treatment as a generalized class of algebras: each notion from ordinary algebra admits several generalizations to the non-deterministic framework. Being so, it could be not expected that the dual Kalman's functors 
$K^*_{\mathbf{L}}$ for a given logic {\bf L} has a left adjoint as in the case of the Kalman's functor. The existence of such left adjoint for each logic {\bf L} is an interesting topic of further research.

\

\subsection{Swap/twist structures semantics for \ciore} \label{sectCiore}

Finally, the same techiques will be applied to obtain a twist structures semantics for a 3-valued \lfi\ called \ciore, as a particular (or limiting) case of swap structures. This will give additional support  to the idea that swap structures corresponds to non-deterministic twist structures, since when applied to algebraizable logics characterized by twist structures they produce exactly the algebras associated to it through the twist structures.

The main feature of \ciore\ is that it presents a strong property of propaga\-tion/retro-propagation of consistency. Thus, $\cons\alpha$ is implied by $\cons p$, for any propositional $p$ occurring in $\alpha$. In formal terms: 

\begin{definition} \label{ciore} (\cite[Definition~4.3.9]{CC16}) Let \ciore\ be the logic over $\Sigma$ obtained from \ci\   (see Definition~\ref{mbcci-eCPL}(2))  by adding the following axiom schemas:
$$\begin{array}{ll}
\alpha \imp \neg\neg\alpha &    \hspace{2cm}        (\axce)\\[2mm]
(\cons\alpha \lor \cons\beta) \leftrightarrow \cons(\alpha\land \beta) & \hspace{2cm} (\axco_1)\\[2mm]
(\cons\alpha \lor \cons\beta) \leftrightarrow \cons(\alpha\lor \beta) & \hspace{2cm} (\axco_2)\\[2mm]
   (\cons\alpha \lor \cons\beta) \leftrightarrow \cons(\alpha\imp \beta) & \hspace{2cm} (\axco_3)\\[2mm]
\end{array}$$
\end{definition}

\begin{remark}
It can be proven that $\cons\alpha \leftrightarrow \cons\neg\alpha$ is derivable in \ciore, for every $\alpha$. From this,  and as mentioned above, $\cons p \rightarrow \cons\alpha$ is derivable in \ciore, for any propositional variable $p$ occurring in $\alpha$.  As   $\lfium_\cons$, the logic \ciore\ is algebraizable in the sense of Blok and Pigozzi (see~\cite[Theorem~4.3.18]{CC16}).
\end{remark}

The semantics of \ciore\ is given by a 3-valued logical matrix which constitutes a slight variation of the corresponding for $\lfium_\cons$. It is defined over the domain  $\textsc{B}_{\mathbb{A}_2}^\axwci = \big\{T, \, t, \,  F\big\}$ such that  $\textrm{D}_3=\{T, \, t\}$ is the set of designated values, and the operations are defined as follows:

\ 

\begin{center}
\begin{tabular}{|c|c|c|c|}
\hline
 $\land$ & $T$   & $t$ & $F$ \\
 \hline \hline
    $T$    & $T$   & $T$   & $F$   \\ \hline
     $t$    & $T$   & $t$   & $F$  \\ \hline
     $F$    & $F$  & $F$ & $F$  \\ \hline
\end{tabular}
\hspace{1cm}
\begin{tabular}{|c|c|c|c|}
\hline
 $\lor$ & $T$   & $t$  & $F$  \\
 \hline \hline
    $T$    & $T$   & $T$  & $T$    \\ \hline
     $t$    & $T$   & $t$  & $T$   \\ \hline
     $F$    & $T$   & $T$  & $F$   \\ \hline
\end{tabular}
\end{center}

\

\begin{center}
\begin{tabular}{|c|c|c|c|}
\hline
 $\imp$ & $T$   & $t$ & $F$ \\
 \hline \hline
    $T$    & $T$   & $T$  & $F$   \\ \hline
     $t$    & $T$   & $t$  & $F$  \\ \hline
     $F$    & $T$   & $T$  & $T$   \\ \hline
\end{tabular}
\hspace{1cm}
\begin{tabular}{|c||c|} \hline
$\quad$ & $\neg$ \\
 \hline \hline
    $T$   & $F$    \\ \hline
     $t$   & $t$     \\ \hline
     $F$   & $T$     \\ \hline
\end{tabular}
\hspace{1cm}
\begin{tabular}{|c||c|}
\hline
 $\quad$ & $\cons$ \\
 \hline \hline
    $T$   & $T$     \\ \hline
     $t$   & $F$    \\ \hline
     $F$   & $T$     \\ \hline
\end{tabular}
\end{center}

\ \\

Consider now the swap structures for \ciore. By means of an analysis similar to that for \lfium/\dacdot, it will be shown tat the swap structures for \ciore\ are, indeed, twist structures given by single-valued operations.

Thus, fix $\#\in \{\wedge,\vee, \to\}$. Let $\mathcal{B} \in \Kci$ and let $z,w \ \in |\mathcal{B}|$. If $u \in z \# w$ then, by Definition~\ref{Kci}(2), $u_1=z_1 \# w_1$. Hence, $\pi_1[\cons(z \# w)]=\{{\sim}((z_1 \# w_1) \land u_2) \ : \ u \in z_1 \# w_1\}$. On the other hand, $\pi_1[\cons z \vee \cons w]=\{{\sim}(z_1 \land z_2) \vee {\sim}(w_1 \land w_2)\}$. By axioms (\axco$_1$)-(\axco$_3$) both sets coincide and so
${\sim}((z_1 \# w_1) \land u_2) =  {\sim}(z_1 \land z_2) \vee {\sim}(w_1 \land w_2)$, that is, $(z_1 \# w_1) \land u_2 =  (z_1 \land z_2) \wedge (w_1 \land w_2)$
for every $u \in z \# w$. This produces a system of two equations on the variable $u_2$ in the Boolean algebra~$\mathcal{A}$:

$$\begin{array}{lll}
a \land u_2 & = & b\\[2mm]
a \lor u_2 & = & 1\\[2mm]
\end{array}$$
where $a=z_1 \# w_1$ and $b= (z_1 \land z_2) \wedge (w_1 \land w_2)$.
It is easy to see that $b \leq a$ for every $\#$, thus  there is just one solution to these equations given by $u_2=(z_1 \# w_1) \to ((z_1 \land z_2) \wedge (w_1 \land w_2))$. Since the negation and the consistency operator behave as in $\lfium_\cons$, this leads us to the following definition:

\begin{definition}  \label{defKCiore}
A {\em swap structure for \ciore} is any $\mathcal{B} \in \Kci$ such that the multioperations are single-valued and defined as follows, for every $(z_1,z_2),(w_1,w_2) \in |\mathcal{B}|$:

\begin{itemize}
 \item[(i)] $(z_1,z_2)\# (w_1,w_2) = \{(z_1\#w_1,(z_1 \# w_1) \to ((z_1 \land z_2) \wedge (w_1 \land w_2)))\}$, for each $\#\in \{\wedge,\vee, \to\}$;
\item[(ii)] $\neg (z_1,z_2) = \{(z_2,z_1)\}$;
 \item[(iii)] $\circ (z_1,z_2) = \{({\sim}(z_1 \wedge z_2),z_1 \wedge z_2)\}$.\\[-2mm]
\end{itemize}
The class of swap structures for \ciore\ will be denoted by \Kciore.
\end{definition}

\begin{definition} \label{maxCiore}
Given a Boolean algebra $\mathcal{A}$, the {\em full swap structure  for \ciore\ over $\mathcal{A}$}, denoted by $\mathcal{B}_\mathcal{A}^{\ciore}$, is the unique swap structure  for \ciore\ defined over $\mathcal{A}$ with domain $\textsc{B}_\mathcal{A}^\axwci$.
\end{definition}

\begin{proposition} \label{CarAxKciore} Let $\mathbf{Ax}'$ be the set of axioms added to \ci\ in order to obtain \ciore\ (recall Definition~\ref{ciore}). Then:
$$\begin{array}{lll}
\Kciore & = & \{ \mathcal{B} \in \Kci \ : \ \ 
\mbox{$\mathcal{M}(\mathcal{B})$ validates all the axioms in $\mathbf{Ax}'$} \}.
\end{array}$$
\end{proposition}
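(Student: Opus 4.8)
The plan is to mirror the two-directional pattern used in Propositions~\ref{CarAxKmbcci} and~\ref{CarAxKlfium}, reusing wherever possible the computation already carried out in the paragraph immediately preceding Definition~\ref{defKCiore}. Throughout, recall that a valuation over $\mathcal{M}(\mathcal{B})$ validates a biconditional $\varphi \leftrightarrow \psi$ precisely when the first coordinates of $v(\varphi)$ and $v(\psi)$ coincide, so every verification reduces to a coordinatewise Boolean computation on $\pi_1$.

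First I would prove the inclusion $\Kciore \subseteq \{\mathcal{B} \in \Kci : \mathcal{M}(\mathcal{B}) \text{ validates } \mathbf{Ax}'\}$. Take $\mathcal{B} \in \Kciore$ and a valuation $v$ over $\mathcal{B}$, and write $z = v(\alpha)$, $w = v(\beta)$. Since the negation and consistency multioperations of $\mathcal{B}$ are exactly those prescribed for $\lfium_\cons$ (compare Definition~\ref{defKlfi1} with Definition~\ref{defKCiore}), validity of $(\axce)$ follows verbatim from Part~1 of the proof of Proposition~\ref{CarAxKlfium}. For $(\axco_1)$--$(\axco_3)$ I would fix $\#\in\{\wedge,\vee,\to\}$ and, using Definition~\ref{defKCiore}(i) and~(iii), compute $\pi_1(v(\cons(\alpha\#\beta))) = {\sim}\big((z_1\#w_1)\wedge\big((z_1\#w_1)\to((z_1\wedge z_2)\wedge(w_1\wedge w_2))\big)\big)$ and $\pi_1(v(\cons\alpha\vee\cons\beta)) = {\sim}(z_1\wedge z_2)\vee{\sim}(w_1\wedge w_2)$, then check that the two agree; this is a routine Boolean identity once one notes $(z_1\wedge z_2)\wedge(w_1\wedge w_2)\leq z_1\#w_1$.

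For the converse inclusion, I would show that any $\mathcal{B}\in\Kci$ whose Nmatrix validates $\mathbf{Ax}'$ already carries the single-valued operations of Definition~\ref{defKCiore}. Validity of $(\axce)$ forces $\neg(z_1,z_2)=\{(z_2,z_1)\}$, exactly as in Part~2 of the proof of Proposition~\ref{CarAxKlfium}. For the binary connectives, fix $\#$ and take $u\in z\#w$; then $u_1=z_1\#w_1$, and the displayed argument preceding Definition~\ref{defKCiore} shows that validity of $(\axco_1)$--$(\axco_3)$ forces $u_2$ to solve the system $a\wedge u_2=b$ and $a\vee u_2=1$, where $a=z_1\#w_1$ and $b=(z_1\wedge z_2)\wedge(w_1\wedge w_2)$. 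Since $b\leq a$, this system has the unique solution $u_2=(z_1\#w_1)\to\big((z_1\wedge z_2)\wedge(w_1\wedge w_2)\big)$, so $z\#w$ is the singleton required by Definition~\ref{defKCiore}(i). Together with the negation just determined and the consistency operator inherited from $\Kci$, this gives $\mathcal{B}\in\Kciore$.

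The only genuinely delicate point, which I expect to be the main obstacle, is this last step: one must verify that the two Boolean equations really pin down $u_2$ uniquely, which rests on checking $b\leq a$ for each of $\wedge$, $\vee$, $\to$. This is a short lattice-theoretic fact about the universe $\textsc{B}_\mathcal{A}^\axwci$ (where $z_1\vee z_2=1$), and once it is in place the remaining verifications are routine coordinatewise computations, entirely parallel to those already performed for $\lfium_\cons$.
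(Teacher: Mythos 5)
Your proposal is correct and follows essentially the same route as the paper's proof: both directions reduce (\axce) and the negation to the corresponding parts of Proposition~\ref{CarAxKlfium}, verify (\axco$_1$)--(\axco$_3$) by the coordinatewise computation $\pi_1(v(\cons(\alpha\#\beta)))={\sim}\big((z_1\#w_1)\wedge u_2\big)$ versus ${\sim}(z_1\wedge z_2)\vee{\sim}(w_1\wedge w_2)$, and in the converse direction pin down $u_2$ as the unique solution of $a\wedge u_2=b$, $a\vee u_2=1$ exactly as in the analysis preceding Definition~\ref{defKCiore}. One cosmetic remark: the inequality $b\leq a$ holds in any Boolean algebra (it does not use $z_1\vee z_2=1$); that condition is instead what supplies the second equation $a\vee u_2=1$, since $u\in\textsc{B}_\mathcal{A}^\axwci$ gives $u_1\vee u_2=1$.
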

\begin{proof} 
{\bf Part 1:} If  $\mathcal{B} \in \Kciore$ then $\mathcal{B} \in \Kci$ such that $\mathcal{M}(\mathcal{B})$ validates all the axioms in $\mathbf{Ax}'$.\\
Let $\mathcal{B} \in \Kciore$, and let $v$ be a valuation over $\mathcal{B}$.
Let  $\gamma=\alpha \imp \neg\neg\alpha$ be an instance of axiom (\axce). As in the proof of Proposition~\ref{CarAxKlfium}, it can be seen that $v(\gamma) \in D_\mathcal{B}$.

Now, let $\gamma'= (\cons\alpha \lor \cons\beta) \leftrightarrow \cons(\alpha\# \beta)$ be an instance of an axiom in (\axco$_1$)-(\axco$_3$). Let $z=v(\alpha)$ and $w=v(\beta)$. Hence, $v(\alpha \# \beta)=(z_1\#w_1,u_2)$ for $u_2=(z_1 \# w_1) \to ((z_1 \land z_2) \wedge (w_1 \land w_2))$.  Observe that$(z_1 \# w_1) \land u_2=(z_1 \land z_2) \wedge (w_1 \land w_2)$ by the analysis before Definition~\ref{defKCiore}. Then, by definition of $\cons$, $\pi_1(v(\cons(\alpha \# \beta)))={\sim}((z_1 \land z_2) \wedge (w_1 \land w_2))$. On the other hand $\pi_1(v(\cons\alpha))={\sim}(z_1 \wedge z_2)$ and $\pi_1(v(\cons\beta))={\sim}(w_1 \wedge w_2)$, and so $\pi_1(v(\cons \alpha \vee \cons\beta)) = {\sim}(z_1 \wedge z_2) \vee {\sim}(w_1 \wedge w_2) = \pi_1(v(\cons(\alpha \# \beta)))$. Thus, $\pi_1(v(\gamma'))=1$ for every instance $\gamma'$ of any axiom in (\axco$_1$)-(\axco$_3$).\\[2mm]
{\bf Part 2:} If  $\mathcal{B} \in \Kci$ such that $\mathcal{M}(\mathcal{B})$ validates all the axioms in $\mathbf{Ax}'$ then $\mathcal{B} \in \Kciore$.\\
Fix $\mathcal{B} \in \Kci$ such that $\mathcal{M}(\mathcal{B})$ validates all the axioms in $\mathbf{Ax}'$. Let $z \in |\mathcal{B}|$.  As in the proof of Proposition~\ref{CarAxKlfium} it can be seen that $\neg (z_1,z_2) = \{(z_2,z_1)\}$. 

With respect to the binary multioperators, fix $\#\in \{\wedge,\vee, \to\}$ and  let $z,w,u \ \in |\mathcal{B}|$ such that $u \in z \# w$.  By Definition~\ref{Kci}(2) it follows that $u_1=z_1 \# w_1$. Consider two different propositional variables $p,q$ and a valuation $v$ over $\mathcal{B}$ such that $v(p)=z$, $v(q)=w$ and $v(p \# q)=u$. Then $\pi_1(v(\cons p \vee \cons q))= \pi_1(v(\cons p)) \vee \pi_1(v(\cons q)) = {\sim}(z_1 \wedge z_2) \vee {\sim}(w_1 \wedge w_2)$. On the other hand, $\pi_1(v(\cons(p \# q))) = {\sim}((z_1 \# w_1) \wedge u_2)$. By axioms (\axco$_1$)-(\axco$_3$), $\pi_1(v(\cons p \vee \cons q))=\pi_1(v(\cons(p \# q)))$ and so, by applying $\sim$ to both sides of the last equation, $(z_1 \land z_2) \wedge (w_1 \land w_2)=(z_1 \# w_1) \land u_2$. Given that $(z_1 \# w_1) \vee u_2=1$ (since $u \in \textsc{B}_{\mathcal{A}}^\axwci$) it follows that
$u_2=(z_1 \# w_1) \to ((z_1 \land z_2) \wedge (w_1 \land w_2)))$, by the analysis done 
 before Definition~\ref{defKCiore}. Therefore each binary multioperation $\#$ in $\mathcal{B}$ is single-valued, and it is defined as in Definition~\ref{defKCiore}. That is, $\mathcal{B} \in \Kciore$.
\end{proof}

The following result can be proven by easily adapting the proof of Theorem~\ref{adeqLFIum}:

\begin{theorem}  [Adequacy of \ciore\ w.r.t. swap structures]  \label{adeqCiore}
Let $\Gamma \cup \{\varphi\}$  be a set of formulas  in $For(\Sigma)$. Then:
$\Gamma \vdash_{\ciore} \varphi$ \ iff \  $\Gamma\models_{Mat(\Kciore)} \varphi$.
\end{theorem}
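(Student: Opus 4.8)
The plan is to follow exactly the template used for Theorem~\ref{adeq-mbC} and Theorem~\ref{adeqLFIum}, splitting the proof into a soundness half and a completeness half, the latter built on a Lindenbaum--Tarski construction. Since the multioperations for \ciore\ are single-valued (Definition~\ref{defKCiore}), the completeness argument will be formally identical to the deterministic case of $\lfium_\cons$, the only genuinely new computation being the shape of the second coordinate of the binary connectives.

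\noindent\textbf{Soundness.} First I would note that $\Kciore \subseteq \Kci \subseteq \Kmbc$, so that for every $\mathcal{B} \in \Kciore$ the associated Nmatrix $\mathcal{M}(\mathcal{B})$ validates all the axioms of \ci\ (hence of \cplp\ and \mbc) by the previous characterization results, in particular (\axtnd), (\axexp), (\axci) and (\axcf). By Proposition~\ref{CarAxKciore}, $\mathcal{M}(\mathcal{B})$ in addition validates every axiom in $\mathbf{Ax}'$, namely (\axce) and (\axco$_1$)--(\axco$_3$). Finally, designation is preserved by (\MP): if $v(\alpha) \in D_\mathcal{B}$ and $v(\alpha \imp \beta) \in D_\mathcal{B}$ then $v(\beta) \in D_\mathcal{B}$, exactly as in the earlier adequacy proofs. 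A routine induction on the length of derivations then gives that $\Gamma \vdash_{\ciore} \varphi$ implies $\Gamma\models_{Mat(\Kciore)} \varphi$.

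\noindent\textbf{Completeness.} Assuming $\Gamma \nvdash_{\ciore} \varphi$, I would define on $For(\Sigma)$ the relation $\alpha \equiv_\Gamma \beta$ iff $\Gamma \vdash_{\ciore} \alpha \imp \beta$ and $\Gamma \vdash_{\ciore} \beta \imp \alpha$. As in Theorem~\ref{adeq-mbC}, $\equiv_\Gamma$ is an equivalence relation and the quotient $A_\Gamma \defin For(\Sigma)/_{\equiv_\Gamma}$ carries a \algcpl\ structure via $[\alpha]_\Gamma \# [\beta]_\Gamma \defin [\alpha \# \beta]_\Gamma$ for $\# \in \{\land,\lor,\imp\}$. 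Because \ciore\ extends \mbc, the classes $0_\Gamma \defin [p_1 \wedge \neg p_1 \wedge \cons p_1]_\Gamma$ and $1_\Gamma \defin [p_1 \imp p_1]_\Gamma$ are the bottom and top elements, so by Proposition~\ref{BA-HA}(2) $A_\Gamma$ is the domain of a Boolean algebra $\mathcal{A}_\Gamma$. I would then take the full swap structure $\mathcal{B}_{\mathcal{A}_\Gamma}^{\ciore}$ (Definition~\ref{maxCiore}), put $\mathcal{M}_\Gamma^{\ciore} \defin \mathcal{M}(\mathcal{B}_{\mathcal{A}_\Gamma}^{\ciore})$, and define $v_\Gamma(\alpha) = ([\alpha]_\Gamma,[\neg\alpha]_\Gamma)$ (a pair, by Remark~\ref{pairs}); this lands in $\textsc{B}_{\mathcal{A}_\Gamma}^\axwci$ since (\axtnd) forces $[\alpha]_\Gamma \vee [\neg\alpha]_\Gamma = 1_\Gamma$. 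Granting that $v_\Gamma$ is a valuation over $\mathcal{M}_\Gamma^{\ciore}$, we have $v_\Gamma(\alpha) \in D_{\mathcal{B}_{\mathcal{A}_\Gamma}^{\ciore}}$ iff $[\alpha]_\Gamma = 1_\Gamma$ iff $\Gamma \vdash_{\ciore}\alpha$; hence $v_\Gamma[\Gamma] \subseteq D_{\mathcal{B}_{\mathcal{A}_\Gamma}^{\ciore}}$ while $v_\Gamma(\varphi) \notin D_{\mathcal{B}_{\mathcal{A}_\Gamma}^{\ciore}}$, so $\Gamma\not\models_{Mat(\Kciore)} \varphi$ by Definition~\ref{conseqclassNmat}.

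\noindent\textbf{Main obstacle.} The only delicate point, and the place where the characteristic \ciore\ axioms are actually used, is the verification that $v_\Gamma$ respects the single-valued operations of Definition~\ref{defKCiore}. For $\neg$ and $\cons$ this reduces, as in Theorem~\ref{adeqLFIum}, to the derivable identities $[\neg\neg\alpha]_\Gamma = [\alpha]_\Gamma$ (from (\axcf) and (\axce)), $[\cons\alpha]_\Gamma = {\sim}([\alpha]_\Gamma \wedge [\neg\alpha]_\Gamma)$ and $[\neg\cons\alpha]_\Gamma = [\alpha]_\Gamma \wedge [\neg\alpha]_\Gamma$ (from the \ci\ axioms). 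The subtle case is the second coordinate of a binary connective, where one must establish $[\neg(\alpha\#\beta)]_\Gamma = ([\alpha]_\Gamma \# [\beta]_\Gamma) \imp \big(([\alpha]_\Gamma \wedge [\neg\alpha]_\Gamma) \wedge ([\beta]_\Gamma \wedge [\neg\beta]_\Gamma)\big)$. Here I would reuse the two-equation analysis carried out before Definition~\ref{defKCiore}: axioms (\axco$_1$)--(\axco$_3$) equate $[\cons(\alpha\#\beta)]_\Gamma$ with $[\cons\alpha]_\Gamma \vee [\cons\beta]_\Gamma$, and applying ${\sim}$ to this yields $([\alpha]_\Gamma \# [\beta]_\Gamma) \wedge [\neg(\alpha\#\beta)]_\Gamma = ([\alpha]_\Gamma \wedge [\neg\alpha]_\Gamma) \wedge ([\beta]_\Gamma \wedge [\neg\beta]_\Gamma)$; combined with $([\alpha]_\Gamma \# [\beta]_\Gamma) \vee [\neg(\alpha\#\beta)]_\Gamma = 1_\Gamma$ (valid since $v_\Gamma(\alpha\#\beta) \in \textsc{B}_{\mathcal{A}_\Gamma}^\axwci$), the second coordinate is uniquely determined as the indicated relative complement, exactly the unique solution of the Boolean system solved before Definition~\ref{defKCiore}. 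This is precisely what makes the swap structure deterministic and completes the verification that $v_\Gamma$ is a valuation.
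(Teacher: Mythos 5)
Your proposal is correct and follows exactly the route the paper intends: the paper's own "proof" of Theorem~\ref{adeqCiore} is just the remark that one easily adapts the proof of Theorem~\ref{adeqLFIum}, and your argument is precisely that adaptation (soundness via Proposition~\ref{CarAxKciore} plus (\MP), completeness via the Lindenbaum--Tarski Boolean algebra and the valuation $v_\Gamma(\alpha)=([\alpha]_\Gamma,[\neg\alpha]_\Gamma)$ over $\mathcal{B}_{\mathcal{A}_\Gamma}^{\ciore}$). In fact your write-up supplies the one detail the paper leaves implicit --- the verification, using (\axco$_1$)--(\axco$_3$) and the unique solution of the two Boolean equations preceding Definition~\ref{defKCiore}, that $v_\Gamma$ respects the single-valued binary multioperations --- so it is, if anything, more complete than the paper's text.
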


The logic \ciore\ can be characterized  in terms of the 3-valued Nmatrix defined over $\mathbb{A}_2$.  This corresponds to the adequacy of \ciore\ w.r.t. its 3-valued standard logical matrix, see~\cite[Theorem~4.4.29]{CC16}. Thus, consider the following notion of bivaluations for \ciore:

\begin{definition} [\cite{CC16}] \label{bivalciore}
A {\em bivaluation for \ciore} is a bivaluation $\mu:For(\Sigma)\to \big\{0,1\big\}$ for \mbc\ (recall Definition~\ref{bivalold}) which  satisfies, in addition, the   following clauses:\\[2mm]
{\bf (\valCi)} \ $\mu(\neg\cons \alpha) = 1$ \ implies \ $\mu(\alpha)=\mu(\neg \alpha)=1$ \\[2mm]
{\bf (\valcef)} \ $\mu(\lnot\lnot \alpha)=1$ \ iff  \ $\mu(\alpha)=1$ \\[2mm]
{\bf (\valco$_1$)} \ $\mu(\cons\alpha)=1$ \ or \ $\mu(\cons \beta)=1$ \ iff \  $\mu(\cons(\alpha\land \beta))=1$.\\[2mm]
{\bf (\valco$_2$)} \ $\mu(\cons\alpha)=1$ \ or \ $\mu(\cons \beta)=1$ \ iff \  $\mu(\cons(\alpha\lor \beta))=1$.\\[2mm]
{\bf (\valco$_3$)} \ $\mu(\cons\alpha)=1$ \ or \ $\mu(\cons \beta)=1$ \ iff \  $\mu(\cons(\alpha\imp \beta))=1$.
\end{definition}

\

The proof of the following result is analogous to that for $\lfium_\cons$:

\begin{theorem}  [Adequacy of \ciore\ w.r.t. $\mathcal{M}\big(\mathcal{B}_{\mathbb{A}_2}^{\ciore}\big)$]  \label{adeqCioreA2}
Let $\Gamma \cup \{\varphi\}$  be a set of formulas in $For(\Sigma)$. Then:
$\Gamma \vdash_{\ciore} \varphi$ \ iff \  $\Gamma\models_{\mathcal{M}(\mathcal{B}_{\mathbb{A}_2}^{\ciore})} \varphi$.
\end{theorem}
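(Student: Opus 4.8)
The plan is to mirror the proof of Theorem~\ref{adeqLFIumA2} for $\lfium_\cons$, reducing the adequacy with respect to the single Nmatrix $\mathcal{M}(\mathcal{B}_{\mathbb{A}_2}^{\ciore})$ to the already-established adequacy with respect to the whole class of swap structures (Theorem~\ref{adeqCiore}) together with the bivaluation semantics for \ciore\ (Definition~\ref{bivalciore}). The soundness direction is immediate: since $\mathcal{M}(\mathcal{B}_{\mathbb{A}_2}^{\ciore}) \in Mat(\Kciore)$, any $\varphi$ with $\Gamma \vdash_{\ciore}\varphi$ satisfies $\Gamma\models_{Mat(\Kciore)}\varphi$ by Theorem~\ref{adeqCiore}, and in particular $\Gamma\models_{\mathcal{M}(\mathcal{B}_{\mathbb{A}_2}^{\ciore})}\varphi$.

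For completeness I would first establish the analog of the bridging proposition used for $\lfium_\cons$ (the unlabelled proposition preceding Theorem~\ref{adeqLFIumA2}), namely that every bivaluation $\mu$ for \ciore\ induces a valuation $v_\mu^{\ciore}(\alpha)\defin(\mu(\alpha),\mu(\neg\alpha))$ over $\mathcal{M}(\mathcal{B}_{\mathbb{A}_2}^{\ciore})$ such that $v_\mu^{\ciore}(\alpha)\in D$ iff $\mu(\alpha)=1$. Recalling from \cite{CC16} that any such $\mu$ satisfies $\mu(\cons\gamma)={\sim}(\mu(\gamma)\wedge\mu(\neg\gamma))$ and $\mu(\neg\cons\gamma)=\mu(\gamma)\wedge\mu(\neg\gamma)$, the single-valued clauses for $\neg$ and $\cons$ in Definition~\ref{defKCiore} are matched directly: clause (\valcef) forces $\mu(\neg\neg\alpha)=\mu(\alpha)$, so $v_\mu^{\ciore}(\neg\alpha)=(\mu(\neg\alpha),\mu(\alpha))=\neg v_\mu^{\ciore}(\alpha)$, while the two displayed identities for $\cons$ give $v_\mu^{\ciore}(\cons\alpha)=\cons v_\mu^{\ciore}(\alpha)$.

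The crux --- and the step I expect to be the main obstacle --- is checking the binary clauses, i.e. that $v_\mu^{\ciore}(\alpha\#\beta)=v_\mu^{\ciore}(\alpha)\,\#\,v_\mu^{\ciore}(\beta)$ for $\#\in\{\wedge,\vee,\to\}$. Writing $z=v_\mu^{\ciore}(\alpha)$ and $w=v_\mu^{\ciore}(\beta)$, the first coordinate is handled by clauses (\vale), (\valou), (\valimp), giving $\mu(\alpha\#\beta)=z_1\#w_1$. For the second coordinate one must verify that $\mu(\neg(\alpha\#\beta))$ equals $(z_1\#w_1)\to((z_1\wedge z_2)\wedge(w_1\wedge w_2))$ as prescribed by Definition~\ref{defKCiore}. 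Here I would reuse exactly the computation carried out before Definition~\ref{defKCiore} and in Proposition~\ref{CarAxKciore}: combining clauses (\valco$_1$)--(\valco$_3$) with $\mu(\cons\delta)={\sim}(\mu(\delta)\wedge\mu(\neg\delta))$ yields $\mu(\alpha\#\beta)\wedge\mu(\neg(\alpha\#\beta))=(z_1\wedge z_2)\wedge(w_1\wedge w_2)$, while clause (\valnot) gives $\mu(\alpha\#\beta)\vee\mu(\neg(\alpha\#\beta))=1$. Since $(z_1\wedge z_2)\wedge(w_1\wedge w_2)\leq z_1\#w_1$ for each $\#$, this two-equation system over $\mathbb{A}_2$ has the unique solution $\mu(\neg(\alpha\#\beta))=(z_1\#w_1)\to((z_1\wedge z_2)\wedge(w_1\wedge w_2))$, exactly as required; this determinacy argument is the delicate point, as the negation of a binary connective is fixed only indirectly through the consistency clauses rather than by De Morgan laws.

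Once the bridging proposition is in place, the completeness argument closes as for $\lfium_\cons$. Assuming $\Gamma\models_{\mathcal{M}(\mathcal{B}_{\mathbb{A}_2}^{\ciore})}\varphi$, take any bivaluation $\mu$ for \ciore\ with $\mu[\Gamma]\subseteq\{1\}$; then $v_\mu^{\ciore}$ is a valuation with $v_\mu^{\ciore}[\Gamma]\subseteq D$, whence $v_\mu^{\ciore}(\varphi)\in D$ and so $\mu(\varphi)=1$. This shows that $\mu(\varphi)=1$ for every bivaluation $\mu$ for \ciore\ with $\mu[\Gamma]\subseteq\{1\}$, and the bivaluation completeness theorem for \ciore\ (see~\cite{CC16}) then gives $\Gamma\vdash_{\ciore}\varphi$.
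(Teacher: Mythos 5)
Your proposal is correct and follows essentially the same route as the paper: the paper's proof of Theorem~\ref{adeqCioreA2} is stated only as being ``analogous to that for $\lfium_\cons$'' (Theorem~\ref{adeqLFIumA2}), and your argument is precisely that analogy carried out --- soundness from Theorem~\ref{adeqCiore}, completeness via \ciore-bivaluations (Definition~\ref{bivalciore}) and the induced valuation $v_\mu^{\ciore}(\alpha)=(\mu(\alpha),\mu(\neg\alpha))$, closed off by the bivaluation adequacy theorem from~\cite{CC16}. In particular, your uniqueness argument for the second coordinate of $v_\mu^{\ciore}(\alpha\#\beta)$ reproduces exactly the two-equation analysis the paper performs before Definition~\ref{defKCiore}, so the bridging proposition you establish is the intended \ciore-analogue of the one stated for $\lfium_\cons$.
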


\

Finally, and as in the previous cases, a Birkhoff-like decomposition theorem   can be obtained for swaps structures for \ciore. Indeed,  the dual Kalman's functor $K^*_\mbc:\balg \to \swmbc$ for \mbc\ (see Definition~\ref{KalmanFunc}) can be easily modified  to a functor $K^*_{\ciore}:\balg \to \swciore$, where \swciore\ is the full subcategory in \swmbc\ formed by the swap structures for \ciore. By adapting the proof for  \mbc\ it can be seen that the functor $K^*_{\ciore}$ preserves arbitrary products and monomorphisms and so the following holds:

\begin{theorem} [Representation Theorem for $\Kciore$] \label{teo-repr-Kciore}  Let $\mathcal{B}$ be a swap \linebreak
structure  for \ciore. Then, there exists a set $I$ and a monomorphism  of algebras $\hat h:\mathcal{B} \to \prod_{i \in I}\mathcal{B}_{\mathbb{A}_2}^{\ciore}$.
\end{theorem}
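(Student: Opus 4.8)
The plan is to follow verbatim the strategy used for the Representation Theorem for $\Kmbc$ (Theorem~\ref{teo-repr-Kmbc}), since the only structural input needed is that the dual Kalman's functor $K^*_{\ciore}:\balg \to \swciore$ preserves monomorphisms and arbitrary products, which is precisely what has been asserted in the paragraph preceding the statement. First I would observe that, by Definition~\ref{defKCiore} together with Definition~\ref{maxCiore}, any swap structure $\mathcal{B}$ for \ciore\ is a submultialgebra of the full swap structure $\mathcal{B}_\mathcal{A}^{\ciore}$ for some Boolean algebra $\mathcal{A}$; hence the inclusion map $g:\mathcal{B} \to \mathcal{B}_\mathcal{A}^{\ciore}$ is an injective homomorphism, and therefore a monomorphism in $\malg(\Sigma)$ by Proposition~\ref{monos}.

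Next I would apply Birkhoff's representation theorem for Boolean algebras~\cite{Birk:35} to $\mathcal{A}$, obtaining a set $I$ and a monomorphism of Boolean algebras $h:\mathcal{A} \to \prod_{i \in I}\mathcal{A}'_i$ with $\mathcal{A}'_i = \mathbb{A}_2$ for each $i \in I$. Since $K^*_{\ciore}$ preserves monomorphisms, the induced map $h_\ast:\mathcal{B}_\mathcal{A}^{\ciore} \to \mathcal{B}_{\prod_{i \in I}\mathcal{A}'_i}^{\ciore}$ is a monomorphism in \swciore. Because $K^*_{\ciore}$ also preserves products, the \ciore-analogue of Proposition~\ref{isofam} supplies an isomorphism $f_\mathcal{G}:\prod_{i \in I}\mathcal{B}_{\mathcal{A}'_i}^{\ciore} \to \mathcal{B}_{\prod_{i \in I}\mathcal{A}'_i}^{\ciore}$ in $\malg(\Sigma)$, where $\mathcal{G} = \{\mathcal{A}'_i : i \in I\}$. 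As $\mathcal{A}'_i = \mathbb{A}_2$ gives $\mathcal{B}_{\mathcal{A}'_i}^{\ciore} = \mathcal{B}_{\mathbb{A}_2}^{\ciore}$, the composite $\hat h = f_\mathcal{G}^{-1} \circ h_\ast \circ g:\mathcal{B} \to \prod_{i \in I}\mathcal{B}_{\mathbb{A}_2}^{\ciore}$ is the desired map; being a composition of injective functions it is injective, hence a monomorphism in $\malg(\Sigma)$ by Proposition~\ref{monos}.

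The one place that genuinely requires checking — and which I expect to be the main (if routine) obstacle — is the verification underlying the two preservation properties of $K^*_{\ciore}$, that is, the \ciore-analogues of Propositions~\ref{isofam} and~\ref{preservmonos}. Unlike the \mbc\ case, the binary operations of a swap structure for \ciore\ are the single-valued operations of Definition~\ref{defKCiore}, whose second coordinate is $(z_1 \# w_1) \to ((z_1 \land z_2) \wedge (w_1 \land w_2))$. I would therefore confirm that this coordinate is computed componentwise across a product of Boolean algebras and is preserved by the induced map $f_\ast(z) = (f(z_1),f(z_2))$ for a Boolean homomorphism $f$; this reduces to the fact that $\land$, $\lor$, $\imp$ and the Boolean complement are all preserved by Boolean homomorphisms and evaluated pointwise in products, so that $f_\mathcal{G}$ and $h_\ast$ really are homomorphisms of swap structures for \ciore. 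Once these componentwise checks are in place, the composition argument above closes the proof exactly as for $\Kmbc$.
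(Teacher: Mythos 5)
Your proposal is correct and follows essentially the same route as the paper, which proves this theorem implicitly by noting that $K^*_{\ciore}$ preserves arbitrary products and monomorphisms and then invoking the argument of Theorem~\ref{teo-repr-Kmbc} (inclusion into the full swap structure $\mathcal{B}_\mathcal{A}^{\ciore}$, Birkhoff's theorem for Boolean algebras, the induced monomorphism $h_\ast$, and the product isomorphism $f_\mathcal{G}$). Your additional observation that the only genuinely new verification is the componentwise behavior and homomorphic preservation of the single-valued \ciore\ operations, in particular the second coordinate $(z_1 \# w_1) \to ((z_1 \land z_2) \wedge (w_1 \land w_2))$, is exactly the content hidden in the paper's phrase ``by adapting the proof for \mbc''.
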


\

Different from the case of  $\lfium_\cons$, it could not be asserted that the latter result is an ordinary Birkhoff's representation theorem. Indeed,  despite the structures of $\Kciore$ being ordinary algebras, it is not immediate to see that the algebra $\mathcal{B}_{\mathbb{A}_2}^{\ciore}$ is subdirectly irreducible in the class $\Kciore$ of \ciore-algebras. A formal study of the class $\Kciore$ deserves future research.

\

\section{Concluding remarks and future work} \label{conclusion}

This paper proposes the use of multialgebras as a valid alternative to the standard techniques from algebraic logics, apt to deal with logics which lie outside the scope of such techniques. 
Specifically, the class of multialgebras known as swap structures are studied from the point of view of universal algebra, by adapting standard concepts to multialgebras in a suitable way. This allows to analyze  categories of swap structures for several logics of formal inconsistency (\lfis), obtaining so a representation theorem for each class of swap structures  which resembles  the well-known Birkhoff's representation theorem for algebras. 

In the case of the algebraizable 3-valued logic \dacdot\ (which is dual to  \L ukasewicz 3-valued logic \luka) studied in Section~\ref{sectwist-swap}, our representation theorem coincides with the original Birkhoff's representation theorem. In addition, the swap structures became twist structures in the sense of Fidel~\cite{fid:78} and Vakarelov~\cite{vaka:77}.
Moreover,  the dual Kalman's functor for swap structures can be seen as a generalization of the original construction of Kalman applied to 3-valued logics.
This gives us support to argue that the swap structures semantics (which are non-deterministic algebras), together with the associated dual Kalman's functor, would corresponds to non-deterministic twist structures, able to give a multialgebraic counterpart to non-algebraizable logics. 

However, there are many questions to be answered. The original Kalman's functor (and the associated twist-structures semantics) allows to represent classes of algebras in terms of pairs of elements over other classes of algebras. For instance, Nelson algebras can be represented by means of pairs of elements in a Heyting algebra. It is fundamental to observe that the output of the Kalman functor can be abstracted to an axiomatized class of algebras. Thus, the output of the Kalman's functor applied to the class of Heyting can be abstracted by means of  the class of Nelson algebras. In general, it is an important issue to axiomatize a given class of twist structures in order to represent it as a class of standard algebras (see, for instance, \cite{ono:rivi:14,riv:2014}). One of the main topics of future research in the present framework is how to axiomatize given classes of swap structures, as it is done for twist structures. This leads us to the theory of varieties and quasi-varieties of multialgebras. More generally, the development of a equation theory in the framework of multialgebras  suitable to deal with such structures deserves future research.

The study of a theory of identities in multialgebras is also related to another important question to be investigated, namely the Birkhoff's representation theorem for multialgebras (and, in particular, for swap structures). The representation theorems given for \Kmbc\ and the other classes of swap structures can be seen as a generalized form of Birkhoff's representation theorem. As mentioned in Remark~\ref{rem-subdir-irred}, an open question is to characterize the notion of subdirectly irreducible multialgebras, which would lead to a satisfactory generalization of Birkhoff's theorem for multialgebras. 
Some results related with Birkhoff's theorem for multialgebras were already proposed in the literature, but the problem is far to be absolutely solved. For instance, G. Hansoul propose in~\cite{han:83} a version of Birkhoff's representation theorem only for finitary multialgebras,  that is, multialgebras in which the multioperations produce finite sets of possible-values for a given entry. On the other hand, D. Schweigert~\cite{sch:85} only sketches a possible proof of Birkhoff's theorem without specifying the basic definitions from  the theory of multialgebras being adopted.

It is worth mentioning that  X. Caicedo obtains  in  \cite{XC} a satisfactory generalization of Birkhoff's representation theorem for first-order structures. However, the application of Caicedo's result to multialgebras is not immediate, despite multialgebras being particular cases of first-order structures. The problem arises because of the tigh notions of homomorphisms and subalgebras coming from Model Theory, which are not compatible with the weaker ones  adopted here in the context of multialgebras. This is why obtaining a Birkhoff's representation theorem for swap structures (or, in general, for multialgebras) remains an important open problem.

To conclude, we consider that the use of multialgebras, and swap structures  in particular,  can expands the horizons of the traditional approach to algebraization of logics. Moreover, the study of multialgebras (and first-order structures in general) from the perspective of universal algebra is a  topic that deserves further research. 

\

\section*{Acknowledgments}
This paper is a revised and extended version of the preprint~\cite{CFG:2016}.
Coniglio was financially supported by an individual research
grant from CNPq, Brazil (308524/2014-4). Figallo-Orellano acknowledges support from a post-doctoral grant  from FAPESP, Brazil (2016/21928-0). Golzio was financially supported by a PhD grant  from FAPESP, Brazil  (2013/04568-1).

\

\end{document}